\numberwithin{equation}{section}
\newtheorem{theorem}{Theorem}[section]
\newtheorem{lemma}[theorem]{Lemma}
\newtheorem{proposition}[theorem]{Proposition}
\newtheorem{corollary}[theorem]{Corollary}
\theoremstyle{definition}
\newtheorem{example}[theorem]{Example}
\newtheorem{definition}[theorem]{Definition}
\newtheorem{remark}[theorem]{Remark}
\newtheorem{assumptio}{Assumption}
\newtheorem*{sassu}{Standing Assumption}
\newenvironment{assumptionp}[1]{
  
  \assumptionalt
}{\endassumptionalt}
\newcommand\blfootnote[1]{%
\begingroup
\renewcommand\thefootnote{}\footnote{#1}%
\addtocounter{footnote}{-1}%
\endgroup
}
\newcommand{\Rb}{{\mathbb R}}
\newcommand{\Nb}{{\mathbb N}}
\newcommand{\PP}{{\mathbb P}}
\newcommand{\FF}{{\mathbb F}}
\newcommand{\Pc}{{\mathcal P}}
\newcommand{\Xc}{{\mathcal X}}
\newcommand{\Zb}{{\mathbb Z}}
\newcommand{\Fc}{{\mathcal F}}
\newcommand{\pspace}{(\Omega, {\mathcal F}, \Pmb)} 
\newcommand{\pzspace}{(\wh{\Omega},\wh{\mathcal F},\wh{\Pmb})}
\newcommand{\fpspace}{(\Omega, {\mathcal F}, \FF, \Pmb)}
\newcommand{\filt}{\Fmb}
\newcommand{\filtm}{\Fc}
\newcommand{\hfilt}{\Hmb}
\newcommand{\hfiltm}{\Hmc}
\newcommand{\fpp}{filtration-Poisson process pair}
\newcommand{\leb}{{\rm Leb}}
\newcommand{\Sm}{\varsigma}
\newcommand{\gm}{\zeta}
\newcommand{\neigh}[1]{\Nmc_{#1}}				
\newcommand{\gneigh}[2]{\Nmc_{#1}(#2)}			
\newcommand{\cl}[1]{\te{cl}_{#1}}				
\newcommand{\gcl}[2]{\te{cl}_{#1}(#2)}				
\renewcommand{\root}{o}
\newcommand{\subg}[1]{[#1]}	
\newcommand{\nm}[1]{[#1]}
\newcommand{\ppmk}{\mathbf{\zeta}}
\newcommand{\vms}{\mathbf{\kappa}}	
\newcommand{\dvms}{\vartheta}
\newcommand{\ems}{\ov{\mathbf{\kappa}}}		
\newcommand{\vmsn}[1]{\mathbf{\kappa}^{#1}}		
\newcommand{\emsn}[1]{\ov{\mathbf{\kappa}}^{#1}}	
\newcommand{\pare}[2]{\pi_{#1}(#2)}	
\newcommand{\chil}[2]{c_{#1}(#2)}
\newcommand{\perc}[2]{\te{perc}_{#1}(#2)}
\newcommand{\sO}{{\mathcal O}}
\newcommand{\tree}{\Tmc}
\newcommand{\trnc}[1]{B_{#1}}	
\newcommand{\cmpn}[1]{\mathscr{C}_{#1}}
\newcommand{\cad}{\Dmc}
\newcommand{\Pol}{\Zmc}						
\newcommand{\borel}{\Bmc}					
\newcommand{\fset}[1]{\Lambda_{#1}}
\newcommand{\ic}[1]{\langle #1\rangle}
\newcommand{\gs}{\Gms_*} 
\newcommand{\gsonenew}{{\Gms}_{*,1}} 
\newcommand{\gsone}{\alt{\Gms}_{*,1}}
\newcommand{\wgsone}{\wh{\Gms}_{*,1}}
\newcommand{\vmksp}{\Kmc}
\newcommand{\emksp}{\ov{\Kmc}}
\newcommand{\etra}{\odot}
\newcommand{\et}[1]{#1_{\etra}}
\renewcommand{\sp}[1]{[#1]}	
\newcommand{\clos}{\mathrm{clo}}
\newcommand{\mps}{\Wms}
\newcommand{\mpsmp}{\beta}
\newcommand{\Range}{\wh{\mathcal R}}
\newcommand{\Space}{{\mathcal R}}
\newcommand{\locrate}{\bar{r}}
\newcommand{\poiss}{\mathbf{N}}
\newcommand{\x}{\xi}		
\newcommand{\rate}{r}
\newcommand{\poissv}[1]{_{#1}}		
\newcommand{\stpara}[1]{_{#1}}					
\newcommand{\gvpara}[2]{^{#1,#2}}	
\newcommand{\defeq}{:=}
\newcommand{\skipLine}{\vspace{12pt}}
\newcommand{\jmp}[2]{{\rm Disc}_{#2}\left(#1\right)} 
\newcommand{\jmps}{\Jmc}
\newcommand{\ov}{\overline}
\newcommand{\te}{\text}
\newcommand{\indic}[1]{\mb{I}_{\left\{#1\right\}}}
\newcommand{\indi}[1]{\mb{I}_{#1}}	
\newcommand{\ex}[1]{\mb{E}\left[#1\right]}		
\newcommand{\deq}{\overset{\text{(d)}}{=}}			
\newcommand{\wh}[1]{\widehat{#1}}
\newcommand{\rpp}{P}
\newcommand{\delt}{\Delta}
\newcommand{\gemp}{\pi}	
\newcommand{\mb}{\mathbb}
\newcommand{\va}{R}
\newcommand{\Oo}{\mathcal{S}}
\newcommand{\Cc}{\mathcal{A}}
\newcommand{\minset}{\Mmc^{\Psi}}
\newcommand{\alt}[1]{\widetilde{#1}}
\newcommand{\law}{\mathcal{L}}
\newcommand{\ind}{\hspace{24pt}}
\newcommand{\Map}{\Theta}
\newcommand{\Fmb}{{\mathbb{F}}}
\newcommand{\Hmb}{{\mathbb{H}}}
\newcommand{\Nmb}{{\mathbb{N}}}
\newcommand{\Pmb}{{\mathbb{P}}}
\newcommand{\Vmb}{{\mathbb{V}}}
\newcommand{\Bmc}{{\mathcal{B}}}
\newcommand{\Dmc}{{\mathcal{D}}}
\newcommand{\Emc}{{\mathcal{E}}}
\newcommand{\Hmc}{{\mathcal{H}}}
\newcommand{\Imc}{{\mathcal{I}}}
\newcommand{\Jmc}{{\mathcal{J}}}
\newcommand{\Kmc}{{\mathcal{K}}}
\newcommand{\Mmc}{{\mathcal{M}}}
\newcommand{\Nmc}{{\mathcal{N}}}
\newcommand{\Pmc}{{\mathcal{P}}}
\newcommand{\Smc}{{\mathcal{S}}}
\newcommand{\Tmc}{{\mathcal{T}}}
\newcommand{\Zmc}{{\mathcal{Z}}}
\newcommand{\Gms}{{\mathscr{G}}}
\newcommand{\Nms}{{\mathcal M}_{\Nb}}
\newcommand{\Wms}{{\mathscr{W}}}
\title{Hydrodynamic Limits of non-Markovian Interacting Particle Systems on Sparse Graphs}
\author{Ankan Ganguly* and Kavita Ramanan**}
\begin{document}

\maketitle

\abstract{Consider an interacting particle system indexed by the vertices of a (possibly random) locally finite graph whose vertices and edges are equipped with weights or marks that represent parameters of the model, such as the environment and initial conditions. Each particle takes values in a countable state space and evolves according to a pure jump process whose jump rates depend only on its own state (or history) and marks, and states (or histories) and marks of particles and edges in its neighborhood. Under mild conditions on the jump rates, it is shown that if the sequence of (marked) interaction graphs converges in probability in the local weak sense to a limit (marked) graph that satisfies a certain finite dissociability property, then the corresponding sequence of empirical measures of the particle trajectories converges weakly to the law of the marginal dynamics at the root vertex of the limit graph. The proof of this hydrodynamic limit relies on several auxiliary results of potentially independent interest. First, such interacting particle systems are shown to be well-posed on (almost surely) finitely dissociable graphs, which include graphs with uniformly bounded maximum degrees and any Galton-Watson tree whose offspring distribution has a finite first moment. A counterexample is also provided to show that well-posedness can fail for dynamics on graphs outside this class. Next, given any sequence of graphs that converges in the local weak sense to a finitely dissociable graph, it is shown that the corresponding sequence of jump processes also converges in the same sense to a jump process on the limit graph. Finally, the dynamics are also shown to exhibit an (annealed) asymptotic correlation decay property. These results complement recent work on hydrodynamic limits of locally interacting probabilistic cellular automata and diffusions on sparse random graphs. However, the analysis of jump processes requires very different techniques, including percolation arguments and notions such as consistent spatial localization and causal chains.}

\blfootnote{\emph{2000 Mathematics Subject Classification.} Primary: 60K35, 60J74, 60J80; Secondary: 60K25; 60F15}
\blfootnote{\emph{Key words and phrases.} Interacting particle systems; jump processes; Poisson random measures; random graphs; Galton-Watson process; empirical measure convergence; SIR model; hydrodynamic limits; correlation decay; local weak limits}
\blfootnote{Both authors were supported by the United States Army Research Office under grant number W911NF2010133, and the second author was also supported 
by the Office of Naval Research under
the Vannevar Bush Faculty Fellowship N0014-21-1-2887.}
\blfootnote{*Department of Mathematics and Statistics, Boston University. Email: \href{mailto:ankang@bu.edu}{ankang@bu.edu}}
\blfootnote{**Division of Applied Mathematics, Brown University. Email: \href{mailto:kavita\_ramanan@brown.edu}{kavita\_ramanan@brown.edu}}

\setcounter{tocdepth}{1}
\tableofcontents

\section{Introduction} 
\label{intro} 
\subsection{Motivation and Description of Results} 
We establish hydrodynamic limits, namely limits of empirical measures, of a general class of interacting particle systems (IPS) on large sparse graphs. The IPS we consider are pure jump processes that take values in a countable state space and describe the evolution of the states of particles indexed by the vertices of a (possibly random) locally finite graph $G$ that encodes the interaction structure between particles. Specifically, the jump rates of each particle at any given time depend only on its own state (or history), the states (or histories) of neighboring particles in the graph $G$, and possibly a random environment governed by vertex and edge weights in the particle neighborhood. Such IPS describe phenomena in a wide variety of fields including statistical physics \cite{MosSly13}, epidemiology \cite{JanLucWin14, Pem92, Bhaetal21,CocRam23}, neuroscience \cite{Tru16}, social science \cite{Lig85, CarTorMig16,HuoDur19}, engineering, and operations research \cite{AghRam19,BudMukWu19,Ram22a}. 

As a concrete example, consider the SIR model, which is an idealized stochastic model of the spread of disease through a population that serves as the basis for several more complex epidemiological models. In this model, each particle lies in one of three states, S, I, or R, that indicate that the particle is healthy but susceptible to the disease (S), infected (I), or recovered from and immune to the disease (R). Without loss of generality, we identify S, I, and R with the integers $0, 1$, and $2$, respectively. Given a finite (deterministic) graph $G = (V,E)$ that represents the social contact network of a population and measurable functions $\lambda$ and $\rho$ that represent (possibly time-varying) infection and recovery rates, the SIR model on $G$ is a Feller process $X^G$ where for each $v \in V$, the coordinate process $X^G_v$ has state space ${\mathcal X} = \{0,1,2\}$, and jumps of only size $1$, whose rates at time $t$ given by 
\begin{equation} 
\label{intro:contrates} 
\begin{array}{ll} 0\to 1 & \te{ at rate } \quad \lambda(t)\sum_{u:\{u,v\} \in E} \indic{X^G_u(t)= 1}, \\ 
1 \to 2 & \te{ at rate } \quad \rho(t). 
\end{array} 
\end{equation} 
Note that the rate at which a particle gets infected depends on the states of its neighbors, but the rate at which it recovers does not. In order to capture heterogeneity in the interactions, the graph could additionally carry vertex marks $\kappa_v\in \Rb_+, v \in V,$ and edge marks $\bar{\kappa}_e\in \Rb_+, e \in E$. In the context of the SIR model, for example, the vertex marks could capture an individual's susceptibility to infection, and the edge weights could reflect the frequency of interaction between pairs of individuals in a social contact network. The jump rates of the particle $v$ would then depend on these weights and could, for instance, take the form: 
\begin{equation} 
\label{intro:contrates2} 
\begin{array}{ll} 0\to 1 & \te{ at rate } \quad \lambda (t)\sum_{u:\{u,v\} \in E} \bar{\kappa}_{uv} \indic{X^G_u(t)= 1}, \\ 
1 \to 2 & \te{ at rate } \quad \rho(t) \kappa_v. 
\end{array} 
\end{equation} 

By broadening the above framework further to allow the jump rates of a particle to depend on its past evolution, it is also possible to describe a non-Markovian SIR process with general, non-exponential recovery times (see Example \ref{ex:NMcont}). In any of the above models, the definition of the process can be extended to any (almost surely) finite marked random graph $G$ in a natural way by evolving the dynamics according to \eqref{intro:contrates} for each realization of the (marked) random graph.

The basic SIR model has been well studied when $G_n$ is the complete graph on $n$ vertices. The infection rate is constant and scaled as $\lambda_n(t) = \lambda(t)/n$ to keep the net influence of all particles on any fixed particle of order one. In this setting, for any vertex $v_n$ chosen uniformly at random from $G_n$, the asymptotic dynamics of $X^{G_n}_{v_n}$, in the limit as $n\to\infty$, can be described by classical mean-field theory \cite[Theorem 2]{Oel84}. In particular, under mild (exchangeability) conditions on the initial states of the IPS, neighboring particles become asymptotically independent (a phenomenon referred to as propagation of chaos), and the sequence of random empirical measures of the trajectories converges to a deterministic limit as $n \rightarrow \infty$, 
\begin{equation} 
\label{intro:MFhydro} 
\gemp^{G_n,X^{G_n}}\defeq \frac{1}{n}\sum_{v\in V_{G_n}} \delta_{X^{G_n}_v} \to \law(X^*) \te{ in probability}, 
\end{equation} 
where $\delta_y$ represents the Dirac delta measure at the element $y$, $X^*$ is a nonlinear jump Markov process, referred to as the mean-field limit, that describes the limiting evolution of a typical particle in the graph, and $\law(X^*)$ denotes its law on the space of c\`{a}dl\`{a}g functions. At time $t$, $X^*$ transitions from $0$ to $1$ at rate $\lambda(t) \PP(X^*(t) = 1)$, and from $1$ to $2$ at rate $\rho(t)$, and thus the evolution of $\law(X^*)$ can be described by a coupled system of nonlinear ordinary differential equations (ODEs). In the time-homogeneous case, the latter ODE coincides with the deterministic SIR model introduced by Kermack and McKendrick in 1927 to describe the macroscopic evolution of diseases in a population \cite{KerMcK27}. Recent work has established corresponding propagation of chaos and convergence results to (adjusted) mean field limits for a class of IPS, including the time-homogeneous SIR model on sufficiently dense graph sequences $\{G_n\}$ (see \cite{BayWu21,DunCai21}).

The focus of this article is on the complementary case when the underlying graph is truly sparse (i.e., with uniformly bounded average degree), which is often a more realistic model of real-world networks. In this regime, the rates are not scaled (and the net influence of all particles on any one particle still remains of order one), so neighboring particles continue to exert a strong influence on each other and do not become independent even in the limit as the graph size goes to infinity. Hence, the limit depends on the topology of the interaction graph, and the techniques used to establish mean-field limits are no longer applicable. Specifically, one cannot expect the empirical measure process to converge just by sending the number of particles to infinity. Instead, the notion of local convergence of sparse rooted graphs\footnote{a root is a distinguished vertex of the graph} introduced by Benjamini and Schramm \cite{BenSch01} serves as a natural alternative mode of convergence that respects the graph topology; see Section \ref{nota:LWC} for a precise definition.

We now summarize our main results, which apply to a general class of (possibly non-Markovian) IPS on countable state spaces evolving in a random environment. Let $(G,\ems,\vms,X^G(0))$ denote the graph $G$ marked respectively with (possibly random) edge and vertex weights and initial conditions. Suppose we are given a collection of (possibly history-dependent) jump rates, and a sequence $\{(G_n,\ems^n,\vms^n,X^{G_n}(0))\}_{n \in \Nb}$ of finite marked graphs that converge locally weakly (see Definition \ref{def:LWC}) to $(G,\ems,\vms,X^G(0))$. Then, we establish the following: 

\begin{description} 
\item[Result 1: ] If the jump rates are predictable, satisfy a basic consistency property (see \eqref{eq:standing}) and a mild boundedness condition (see Assumption \ref{mod:assu}), and the graph $G$ is almost surely \emph{finitely dissociable}, then the IPS $X^G$ on $G$ with those jump rates is well defined (Theorem \ref{WP:WP}). We also provide a counterexample (see Appendix \ref{illpf}) to demonstrate that when the graph $G$ is not finitely dissociable, there may be multiple solutions to the IPS associated with the same jump rates. 

Finite dissociability is a percolation condition on the graph, which we show is satisfied by Galton Watson (GW) trees whose offspring distributions have finite mean, \emph{unimodular Galton-Watson} (UGW) trees whose offspring distributions have finite variance, and graphs with uniformly bounded maximum degree (see Proposition \ref{find:GWtree}, Corollary \ref{find:UGWpf}, and Proposition \ref{find:finite}). UGW trees are of particular interest since they arise naturally as local limits of many random graph sequences such as Erd\"os-R\'enyi graphs and configuration models (see Examples \ref{ex:ER}-\ref{ex:RR} and \cite[Theorems 3.12 and 3.15]{Bor16}). 

\item[Result 2: ] If the jump rates satisfy an additional mild continuity condition (Assumption \ref{mod:cont}), then the sequence $\{(G_n, X^{G_n})\}_{n \in \Nb}$ of graphs marked with the trajectories of the IPS converges locally weakly to $(G,X^G)$, the limit graph $G$ marked with the trajectories $X^G$ of the IPS on $G$ (see Theorem \ref{LWC:LWC}). 

It is worth pointing out that Assumption \ref{mod:cont} is trivially satisfied if the IPS is Markov. 

\item[Result 3: ] If $\{(G_n,\ems^n,\vms^n,X^{G_n}(0))\}_{n \in \Nb}$ converges to $(G,\ems,\vms,X^G(0))$ in a slightly stronger sense, namely locally in probability (see Definition \ref{def-inprob}), then the sequence of empirical measures of neighborhoods of vertices, marked with the corresponding IPS trajectories, converges to $\law (X_{\root \cup N_\root}^G)$, the law of the marginal of the IPS on the root $\root$ of the limit graph $G$ and its neighborhood $N_\root$. This implies the following hydrodynamic limit for the root marginal (see Theorem \ref{GEM:GEMconv} and Corollary \ref{GEM:empconv}): 
\begin{equation} 
\label{intro:EMPConv} 
\gemp^{G_n,X^{G_n}} \defeq \frac{1}{|V_{G_n}|}\sum_{v \in V_{G_n}} \delta_{X^{G_n}_v} \to \law(X^G_{\root})\te{ in probability}. 
\end{equation} 
Identification of the more general root neighborhood convergence allows one to capture the limiting dependence structure between neighboring vertices. 

Furthermore, when $G$ is a regular tree, an autonomous description of $\law(X^G_{\root \cup N_\root})$ is obtained in \cite[Chapter 6]{Gan22}; see also \cite{GanRam-LETDet24}. Descriptions of marginal dynamics on more general random trees are provided in forthcoming work. As in the mean-field case, the marginal dynamics are described by a nonlinear process (whose evolution depends on its law), but unlike in the mean-field case, it is in general a {\em non-Markovian} nonlinear process even when the dynamics on the full tree $G$ are Markov. However, for a class of IPS on UGW trees that includes the SIR models introduced above, it is shown in \cite{CocRam23} (see also references therein) that the root neighborhood marginal process evolves according to a {\em Markovian} nonlinear process, whose law is characterized by a system of coupled nonlinear ODEs, which differ from the mean-field or Kermack-McKendrick ODE. A comparison of these two different ODEs and an analysis of the former to characterize the dependence of the outbreak size on the topology of the graph can be found in \cite[Theorem 3.1]{CocRam23}. 
\end{description}

In addition to the SIR models introduced above and variants such as SEIR models, the class of IPS for which Results 1-3 hold includes commonly studied IPS such as the contact process, the voter model and its variants, Glauber dynamics for the Ising and Potts models as well as their non-Markovian analogs (see Section \ref{mod:examples}). Our framework can also be extended to cover models with directed interactions such as (non-Markovian) neuronal Hawkes models (see Remark \ref{LWC:directed}), and some of our intermediate results hold in even greater generality than the main results. Our results, however, do not cover IPS, such as the exclusion process, in which multiple particles jump simultaneously. This is addressed in forthcoming work \cite{RamYas24}, which also establishes large deviation principles for such IPS.

\subsection{Comments on the Proofs and Comparison with Prior Work} 
We start by discussing the proof of well-posedness (Result 1). Although several recent works studying IPS on random graphs provide intuitive descriptions of IPS on random graphs \cite{NamNguSly22,Bhaetal21,JanLucWin14,PemSta01,HuaDur20}, there appears to be no general result that rigorously establishes well-posedness of even Markovian IPS on a general class of random graphs. While well-posedness of IPS on finite graphs is standard under our assumptions, on infinite graphs, the issue is more subtle and, as illustrated by the simple example in Appendix \ref{illpf}, well-posedness can, in fact, fail to hold for even Markovian IPS. Previous well-posedness results for IPS on infinite graphs have almost exclusively focused on graphs with uniformly bounded maximum degrees. For example, on lattices, an analytical proof of well-posedness of a large class of Feller IPS via examination of their semigroups can be found in the seminal paper of Liggett \cite{Lig72} (see also \cite{Lig85}), and a probabilistic proof of well-posedness of IPS with nearest-neighbor interactions using percolation arguments can be found in the classical work of Harris \cite{Har72}. The latter argument can be extended to locally interacting IPS on any translation invariant graph but crucially relies on the graph having a uniformly bounded maximum degree. Another approach to well-posedness involves a standard Picard iteration argument applied to the (jump) stochastic differential equation (SDE) representation of the IPS dynamics; see \eqref{mod:infpart}. This approach is effective when the jump rates of any individual particle satisfy a strong Lipschitz continuity property, that is, when they are uniformly Lipschitz with respect to the state (or trajectory in the non-Markovian setting) of each of the neighboring particles, with the (single-neighbor) Lipschitz coefficient being \emph{inversely proportional} to the degree of the vertex (see also \cite{DelFouHof16} for a slightly weaker averaged version of this Lipschitz condition). However, for even standard Markovian IPS such as the abovementioned SIR process, the Lipschitz constants of the jump rates with respect to the states (or trajectories and marks) of each neighboring particle do not decrease with the degree of the vertex of the particle, but remain of the same order. In particular, the strong Lipschitz continuity property of jump rates does not hold on infinite (random) graphs that have unbounded maximum degree, such as GW trees with Poisson offspring distributions. Nevertheless, we are able to establish strong well-posedness under a mild boundedness condition on the jump rates.

Our proof of strong well-posedness of the jump stochastic differential equation (SDE) associated with the IPS (see \eqref{mod:infpart} and Definition \ref{mod:WP}) consists of three main ingredients. First, we introduce the notion of {\em spatial localization} of the IPS dynamics (see Definition \ref{WP:locunif} and Figure \ref{fig-Spatloc}). Roughly speaking, a graph $G$ is said to spatially localize an IPS with given jump rates if given any $T < \infty$ and a finite subset $\sO$ of the vertices, there exists a (possibly random) almost surely finite set $U$ containing $\sO$ such that on the interval $[0,T]$ the marginal evolution of the IPS on $\sO$ is not influenced by the evolution of the IPS outside the larger set $U$. Invoking strong well-posedness of the IPS on (almost surely) finite random graphs, we then conclude strong well-posedness of the IPS on any graph $G$ that spatially localizes the IPS SDE (see Proposition \ref{WP:WP2}). Second, under a mild boundedness condition on the jump rates (Assumption \ref{mod:assu}), we show that the IPS SDE is spatially localized by any finitely dissociable graph (see Proposition \ref{WP:locality2}). This proof entails the analysis of so-called causal chains that capture the propagation of influence of the IPS dynamics from a vertex (see Section \ref{perc:pfwp}). Finally, we introduce and analyze a certain (inhomogeneous) site percolation to show that GW trees and graphs of bounded maximal degree are almost surely finitely dissociable (see Section \ref{perc:findis}). To the best of our knowledge, the only other work that proves well-posedness of a (jump) IPS on a graph with unbounded maximal degree appears to be the recent work of Gantert and Schmidt \cite{GanSch20}, which establishes well-posedness of the simple exclusion process on a GW tree whose offspring distribution has finite mean by crucially exploiting the special structure of the exclusion process to reduce the problem to the study of a standard bond percolation problem. Our result does not subsume that of \cite{GanSch20} but is applicable to a wide class of possibly non-Markovian models and does not rely on specific features of the IPS. 

Our next result (Result 2) on local weak convergence of the dynamics is proved via coupling arguments that entail establishing a certain consistent spatial localization property of the sequence of interaction graphs (see Definition \ref{WP:loccons}), which requires a more careful analysis of causal chains and their behavior under isomorphisms of the graph. Lastly, our proof of Result 3 involves establishing an asymptotic spatial decay of correlations of the trajectories of the IPS that is annealed (or averaged over the randomness of the graph). Specifically, in Theorem \ref{GEM:deccor}, we show that although neighboring vertices remain strongly correlated for sparse graph sequences (in contrast to dense graph sequences), finite neighborhoods of two independent randomly chosen vertices become asymptotically independent as the number of particles goes to infinity. The proof of this asymptotic correlation decay property involves suitable coupling arguments and also exploits the local convergence result of Result 2. Along the way, in several of the proofs, to avoid working with more cumbersome isomorphism classes of graphs (in terms of which local weak convergence is defined), we also introduce (in Appendix \ref{msbl}) an "equivalent" space of measurable representative graphs equipped with a topology that is compatible with local weak convergence (see Appendix \ref{msbl}). The latter result may be of independent interest. 

The present article complements recent work by Oliveira et al. \cite{OliReiSto20}, which establishes local convergence of interacting diffusions with (possibly random) pairwise interactions on locally convergent sequences of finite graphs, and the works of Lacker et al. \cite{LacRamWu23,LacRamWuLE23,Ram22b} which establish hydrodynamic limits for homogeneously interacting cellular automata and diffusions with general (not necessarily pairwise) symmetric interactions. The hydrodynamic limit in \cite{LacRamWu23} is also shown by first establishing local weak convergence and then asymptotic correlation decay, but the proofs of these results rely crucially on the previously mentioned strong Lipschitz continuity conditions on the drift and diffusion coefficients, which, though reasonable for interacting diffusions, exclude many interesting classes of IPS. Our weaker assumptions change the nature of the correlation decay established in comparison with the diffusion setting (see the discussion in Section \ref{LWC:CPLWS} for an elaboration of this point).

\subsection{Organization of the Rest of the Paper} 
The article is organized as follows. In Section \ref{nota}, we introduce common notation that is used throughout the paper and also provide examples of locally converging graphs in Section 2.4.2. In Section \ref{mod}, we introduce the class of IPS we consider and its SDE formulation, state the basic assumptions on the jump rates, and properly define notions of strong and weak solutions for IPS on random graphs. Examples of IPS that lie within our framework are presented in Section \ref{mod:examples}. The main results are stated in Section \ref{Res}, and the ramifications of our results for our running example of the SIR model are discussed in Section \ref{contact-motivating}. The rest of the article is devoted to proofs of the main results: Section \ref{perc} introduces the notions of (consistent) spatial localization, causal chains, and finite dissociability and contains the proof of well-posedness (Result 1); local weak convergence of IPS (Result 2) is proved in Section \ref{LWCpf}; asymptotic correlation decay is established in Section \ref{GEM}, and the hydrodynamic limit (Result 3) is deduced from it in Section \ref{GEM:lim}. Appendix \ref{illpf} contains an example of a simple IPS that fails to be well-posed. Appendix \ref{msbl} contains auxiliary technical results related to canonical measurable representatives of random (marked) graph isomorphism classes. Appendix \ref{aver} presents a few useful technical results: Appendix \ref{saver} presents a generalization of our main well-posedness result (Theorem \ref{WP:WP}) to heterogeneous IPS, Appendix \ref{cond} includes the proof of a technical result (Lemma \ref{mod:condwp}) describing how to establish the well-posedness of IPS whose initial conditions are random, and the examples of Section \ref{mod:examples} are verified in Appendix \ref{varex}. In Appendix \ref{WPfin}, we include (for completeness) a simple proof of the strong well-posedness of IPS on finite graphs.

\section{Preliminaries and Notation} 
\label{nota} 
Let $\mathbb{R}$ denote the reals, let $\mathbb{Z}$ denote the integers, and let $\mathbb{N}_0 := \{0, 1, \ldots\}$ denote the nonnegative integers. For any set $A$, let $|A|$ denote its cardinality.

\subsection{Graph Notation} 
\label{nota:grph}
Given an undirected graph $G = (V,E)$ with vertex set $V$ and edge set $E$, for $v \in V$, let $\neigh{v} = \gneigh{v}{G}\defeq\{u \in V: \{u,v\} \in E\}$ denote the neighbors of $v$ in $G$ and let $\cl{v} = \gcl{v}{G} \defeq \{v\}\cup\neigh{v}$. For any $U \subseteq V$, set $\neigh{U} \defeq \cup_{v\in U} (\neigh{v}\setminus U)$ and $\cl{U} \defeq U\cup \neigh{U}$. For clarity, we may write $\cl{U}(G)$ to emphasize that the closure is taken with respect to edges in $G$. We define $\fset{G} \defeq \{U\subseteq V:|U| <\infty\}$ to be the set of finite subsets of the vertices in $G$. Recall that the degree of a vertex $v$ is equal to $|\neigh{v}|$. The graph $G$ is said to be locally finite if each of its vertices has a finite degree. We always assume graphs are simple (i.e., they do not have self-loops or multi-edges) and locally finite.

A graph $G = (V,E)$ equipped with a distinguished vertex $\root \in V$, denoted the root, is called a rooted graph and denoted by $(G,\root)\defeq (V,E,\root)$. When the root is clear from context, we simply write $G$ instead of $(G,\root)$. For $U \subseteq V$, we denote by $G\subg{U}$ the induced subgraph of $G$ on $U$, that is, $G\subg{U} = (U,E\subg{U})$ where $E\subg{U} = E\cap \{ \{x,y\}: x,y \in U \} $. For $u, v \in V$, a path between $u$ and $v$ in $G$ is defined to be a sequence of vertices $\Gamma = (u = v_0, v_1,\dots,v_{n-1},v_n = v)$ such that for all $i \in\{1,\dots,n\}$, $\{v_{i-1},v_i\} \in E$ and $v_i \neq v_j$ whenever $i \neq j$ except possibly when $ \{i,j\} = \{n,0\}$, in which case the path is said to be a cycle. A graph is said to be acyclic if it has no cycles. The length of the path, denoted $|\Gamma|$, is the number of edges in the path. We let $d_G(u,v)$ denote the usual graph distance, which is the length of the shortest path between $u$ and $v$ in $G$. When $G$ is a finite rooted graph, its radius is the maximum distance from any vertex to the root. Let $\gsone$ denote the set of rooted graphs of radius 1. \subsection{Configurations and Path Space Notation} \label{nota:skor} Given a Polish space $\Pol$ and $U \subseteq V$, we define the configuration space 
\begin{equation} 
\label{skor:prod} 
\Pol^U = \{(z_v)_{v \in U}: z_v \in \Pol\te{ for all } v \in U\} 
\end{equation} 
and equip it with the product topology. For any $z \in \Pol^V$, we write $z_U = (z_v)_{v\in U} \in \Pol^U$ to mean the restriction of $z$ to $\Pol^U$. Given two vertex sets $V_1$ and $V_2$, a map $\varphi:V_1 \to V_2$, a subset $U \subseteq V_1$, and configurations $x \in \Pol^{V_1}$, $y \in \Pol^{V_2}$, we write $x_U = y_{\varphi(U)}$ to mean $x_v = y_{\varphi(v)}$ for all $v \in U$. Vertex set indices are assumed to be ordered.

Let $\Xc$ denote the countable state space of the IPS, which we identify with a subset of $\mb{Z}$ and equip with the discrete topology. For any $U \subseteq V$ and $t \in (0,\infty]$, let $\cad^U_t\defeq \cad([0,t]; \Xc^U)$ (respectively, $\cad^U_{t-}\defeq \cad([0,t); \Xc^U)$) be the space of c\`adl\`ag functions from $[0,t]$ (respectively, $[0,t)$) to $\Xc^U$, equipped with the product J1 topology, which makes it a Polish space \cite[Section 11.5]{Whi02}. Also, let $\cad^U \defeq \cad^U_{\infty-}$ denote the space of c\`adl\`ag functions from $[0,\infty)$ to $\Xc^U$, equipped with the topology such that $x^n$ converges to $x$ in $\cad^U$ if and only if for each $t \in [0,\infty)$, the restriction of $x^n$ to $[0,t]$ converges to the restriction of $x$ to $[0,t]$ in $\cad_t^U$. When $|U| = 1$, we denote $\cad^U_t$ or $\cad^U$ simply by $\cad_t$ or $\cad$, respectively. If $x \in \cad^U$ and $v\in U$, then $x_v(t)$ denotes the value of the $v$th component of $x$ at time $t$. The restrictions of $x$ to $[0,t]$ and $[0,t)$ are denoted by $x[t] \in \cad^U_t$ and $x[t)\in \cad^U_{t-}$, respectively. For $0 \leq s\leq t\leq \infty$, $W\subseteq U\subseteq V$, and $x \in \cad^U_t$, let 
\begin{equation} 
\label{skor:disc} 
\begin{array}{rcll} \jmp{x_W}{s} & \defeq & \{s' \in [0,s]: x_W(s') \neq x_W(s'-)\} 
\end{array} 
\end{equation} 
denote the set of discontinuities of $x_W$. 

\subsection{Measure Notation and Point Processes} 
\label{nota:msr}
Given a Polish space $\Pol$, let $\borel(\Pol)$ be the Borel $\sigma$-algebra on $\Pol$, and let $\Pmc(\Pol)$ be the space of probability measures on $(\Pol,\borel(\Pol))$ equipped with the topology of weak convergence, that is, $\mu_n$ converges to $\mu$ weakly if and only if $\lim_{n\to\infty} \int_{\Pol} f d\mu_n = \int_{\Pol} f d \mu$ for every bounded, continuous function $f$ on $\Pol$. Given any $\gm\in \Pmc(\Pol)$ and $\Pol$-valued random elements $Z$ and $Y$, $\law(Z)$ is used to denote the distribution (equivalently, law) of $Z$, the notation $Z \sim \gm$ means $\law(Z) = \gm$, and $Y\deq Z$ denotes $\law(Y) = \law (Z)$. We additionally define $\Nms(\Pol)$ to be the space of locally finite, non-negative integer-valued measures on $(\Pol,\borel(\Pol))$. We equip $\Nms(\Pol)$ with the weak topology. As is well known, $\Pmc(\Pol)$ and $\Nms(\Pol)$ are Polish spaces (see \cite[Theorem 6.8]{Bil99} and \cite[Proposition 9.1.IV (iii)]{DalVer08}, respectively, as well as \cite{Mor18}). For any interval $I \subseteq \Rb$, probability measure $\gm \in \Pc(I)$, and $[a, b) \subset I$ or $(a,b) \subset I$, we write $\gm[a,b)$ and $\gm(a,b)$ for $\gm([a,b))$ and $\gm((a,b))$, respectively.

A random element $\rpp$ taking values in $\Nms(\Pol)$ is called a point process on $\Pol$. Given any point process $P$ on $\Pol$, for every compact set $K \subseteq \Pol$, there exists an almost surely finite set of points $\{z_i\}_{i=1}^N \subseteq K$, referred to as \emph{events}, such that $\rpp(\{z_i\}) > 0$ for all $i=1,\dots, N$, and $\rpp(K \setminus \{z_i\}_{i=1}^N) = 0$. In this paper, we assume all point processes are \emph{simple}, that is, $\sup_{z \in \Pol} \rpp(\{z\}) \in \{0,1\}$. Given any measure $\gm$ on $\Pol$ that is finite on each compact set $K \in\borel(\Pol)$, a \emph{Poisson point process} on $\Pol$ with \emph{intensity measure} $\gm$ is a point process $\rpp$ such that for any disjoint sets $A, B\in \borel(\Pol)$, $\rpp(A)$ and $\rpp(B)$ are independent and $\ex{\rpp(A)} = \gm(A)$.

We work with point processes equipped with a time component. Let $\hat{\Pol} \defeq I\times \Pol$, where $I \subseteq \Rb$ is an interval and $\Pol$ is a Polish space. We refer to a point process $\rpp$ on $\hat{\Pol}$ as a \emph{marked} point process on $I$ with marks in $\Pol$. If $\rpp$ has events $\{(t_i,\kappa_i)\}_{i=1}^N$, then we call $\{\kappa_i\}_{i=1}^N$ the \emph{marks} of $\rpp$. We say a marked point process $\rpp$ on $I$ defined on the filtered probability space $(\Omega, {\mathcal H}, \hfilt = \{\hfiltm_t\}_{t \in I}, \mathbb{P})$ is $\hfilt$-adapted if for every $t\in I$ and $A \in \borel([0,t] \cap I \times \Pol)$, $\rpp(A)$ is $\hfiltm_t$-measurable. Furthermore, an $\hfilt$-adapted marked Poisson point process $\rpp$ on $I$ with marks in $\Pol$ is said to be an $\hfilt$-Poisson marked point process if for every $t\in I$ and $A \in \borel\left((t,\infty)\cap I \times \Pol\right)$, $\rpp(A)$ is independent of $\hfiltm_t$. Such point processes are used to describe the noise driving the IPS.

\subsection{Local Convergence} 
\label{nota:LWC}
\subsubsection{Definitions} 
\label{nota:LWCdef} 
Since we represent our IPS as marked graphs we briefly review the notions of local convergence of graphs and marked graphs, which were introduced in \cite{BenSch01}. Let $G_i= (V_i,E_i), i=1,2,$ be (unrooted) graphs. A mapping $\varphi: V_1 \to V_2$ is said to be an \emph{isomorphism} from $G_1$ to $G_2$ if it is a bijection and $e = \{u,v\} \in E_1$ if and only if $\varphi(e)\defeq \{\varphi(u),\varphi(v)\} \in E_2$. Given roots $\root_i \in V_i,i=1,2,$ $\varphi$ is an isomorphism from the rooted graph $(G_1,\root_1)$ to the rooted graph $(G_2,\root_2)$ if, in addition, $\varphi(\root_1) = \root_2$. Recall that when denoting the rooted graph, we often omit the explicit dependence on the root. Given rooted graphs $G_1$ and $G_2$, let $I(G_1,G_2)$ denote the collection of isomorphisms from $G_1$ to $G_2$. If $I(G_1,G_2)$ is non-empty, then $G_1$ and $G_2$ are said to be isomorphic, which is denoted $G_1 \cong G_2$. Let $\gs$ be the space of isomorphism classes of connected, locally finite, rooted graphs. Then, for any connected, locally finite rooted graph $G$, we let $\ic{G} \in \gs$ denote the isomorphism class of $G$, namely $\ic{G}$ is the collection of connected locally finite rooted graphs isomorphic to $G$. Conversely, we refer to $G$ as a \emph{representative graph} of $\ic{G}$. Clearly, if $H\cong G$ then $H \in \ic{G}$. For each $m \in \Nb$, let $\trnc{m}(G)$ be the induced subgraph of $G$ consisting of all vertices within (graph) distance $m$ of the root. We equip $\gs$ with the topology of \emph{local convergence} in which $\{\ic{G_n}\}_{n \in \Nb} \subset \gs$ is said to converge locally to $\ic{G}\in \gs$ if for every $m \in \Nb$, there exists $n_m < \infty$ such that $\trnc{m}(G_n) \cong \trnc{m}(G)$ for all $n \geq n_m$, $G_n\in \ic{G_n}$, and $G\in\ic{G}$.

Next, fix any two Polish spaces $\emksp$ and $\vmksp$ that represent the edge and vertex mark spaces, respectively, and consider a (not necessarily connected) marked rooted graph $G = (V,E,\root,\ems,\vms)$, where $(V,E,\root)$ is a rooted graph, $\ems \in \emksp^E$, and $\vms \in \vmksp^V$. Then $G$ is a \emph{$\sp{\emksp,\vmksp}$-graph}. Unless explicitly mentioned otherwise, a $\sp{\emksp,\vmksp}$-graph is assumed to be rooted. Also, let $\nm{G_*} = (V,E,\root)$ denote the $\sp{\emksp,\vmksp}$-graph $G$ with its marks removed, and let $\nm{G}$ denote $G$ with its root and marks removed. For $m \in \Nb$, let $\trnc{m}(G)$ be the induced marked rooted subgraph of $G$ consisting of all vertices within (graph) distance $m$ of the root, equipped with the same marks and root. We slightly abuse notation at times by allowing $\trnc{m}(G)$ to also denote the set of vertices within graph distance $m$ of the root. We say the marked graphs $G \defeq (V,E,\root,\ems,\vms)$ and $G^\prime \defeq (V^\prime,E^\prime,\root^\prime,\ems^\prime,\vms^\prime)$ with edge and vertex marks in $\emksp$ and $\vmksp$, respectively, are isomorphic, and write $G \cong G^\prime$ if there exists an isomorphism $\varphi \in I(\nm{G_*},\nm{G^\prime_*})$ such that $\vms_v = \vms^\prime_{\varphi(v)}$ and $\ems_{e} = \ems^\prime_{\varphi(e)}$ for all $v \in V$ and $e \in E$. We let $I(G,G')$ denote the set of isomorphisms between $G$ and $G'$, and let $\gs\sp{\emksp,\vmksp}$ denote the collection of isomorphism classes of graphs with edge and vertex marks in $\emksp$ and $\vmksp$, respectively. Once again, for any such marked graph $G$, $\ic{G} \in \gs\sp{\emksp,\vmksp}$ denotes the isomorphism class of $G$. Likewise, for any $\ic{H} \in \gs\sp{\emksp,\vmksp}$, the marked graph $H$ (with edge and vertex marks in $\emksp$ and $\vmksp$, respectively) denotes an arbitrary representative of $\ic{H}$. Also, given a (possibly marked or rooted) graph $G$, we let $V_G$ and $E_G$, respectively, denote the vertex and edge sets of $G$. We also occasionally abuse notation by letting $G$ denote its own vertex set.

We equip $\gs\sp{\emksp,\vmksp}$ with the topology of local convergence, defined as follows:

\begin{definition}[Local convergence] 
\label{def-locconvnoiso} 
\sloppy The sequence $\ic{G_n} \in \gs\sp{\emksp,\vmksp}$, $n \in \Nb$, is said to converge locally to $\ic{G} \in \gs\sp{\emksp,\vmksp}$ if for every sequence of representatives $G_n = (V_n,E_n,\root_n,\ems^n,\vms^n)$, $n \in \Nb$, and $G = (V,E,\root,\ems,\vms)$, and for every $m \in \Nb$ there exists $n_m < \infty$ and a sequence $\varphi_{n,m} \in I(\trnc{m}(\nm{G_{*}}), \trnc{m}(\nm{G_{n,*}})), n > n_m, n \in \Nb,$ such that for every $v \in V_{\trnc{m}(\nm{G_{*}})}$ and $e \in E_{\trnc{m}(\nm{G_{*}})}$, $\vms^n_{\varphi_{n,m}(v)} \to \vms_v$ and $\ems^{n}_{\varphi_{n,m}(e)} \to \ems_e$ as $n \rightarrow \infty$. 
\end{definition}

We also let $\gsonenew\sp{\emksp,\vmksp} \subset \gs\sp{\emksp,\vmksp}$ denote the (closed) space of isomorphism classes of graphs in $\gsone$ with edge and vertex marks in $\emksp$ and $\vmksp$, respectively, equipped with the topology induced by $\gs\sp{\emksp,\vmksp}$. Note that one can also view $\gs = \gs\sp{\{1\},\{1\}}$ as a space of (isomorphism class of) marked graphs with trivial marks, that is, when both mark spaces are equal to the trivial Polish space $\{1\}$, in which case the local convergence defined above coincides with the notion defined earlier on $\gs$. It is well known that $\gs\sp{\vmksp,\emksp}$ and hence, $\gsonenew\sp{\emksp,\vmksp}$ and $\gs$ (equipped with the topology of local convergence), are Polish spaces (see \cite[Lemma 3.4]{Bor16}). 

\begin{definition}[Local weak convergence] 
\label{def:LWC} 
If $\{\gm_n\}_n \subset \Pc(\gs\sp{\emksp,\vmksp})$ converges in distribution to $\gm$, then it is said that $\gm_n$ converges to $\gm$ \emph{in distribution in the local weak sense}, denoted $\gm_n \Rightarrow \gm$. For notational conciseness, we often refer to this convergence as \emph{local weak convergence}, or alternatively, say that $\gm_n$ converges to $\gm$ \emph{locally weakly}. Furthermore, if $G_n \sim \gm_n$ for every $n \in \Nb$ and $G \sim \gm$, we also write $G_n$ $\Rightarrow$ $G$ to denote that $G_n$ converges to $G$ locally weakly. Lastly, we refer to the topology on $\Pc(\gs\sp{\emksp,\vmksp})$ induced by local weak convergence as the \emph{local weak topology}. 
\end{definition}

We refer the reader to \cite{Bor16}, \cite{Hof24}, and \cite[Appendix A]{LacRamWu23} for general results on local weak convergence and Section \ref{grph:ex} below for examples.

\begin{remark}[IPS on Directed graphs] 
\label{LWC:directed} 
While the above discussion focused on the local convergence of undirected graphs, there exist several frameworks for working with the local convergence of directed graphs. One possible approach is to define a space of isomorphism classes of marked directed graphs in which all isomorphisms additionally respect edge orientation (in the spirit of \cite[Exercise 2.17]{Hof24}). Using an argument similar to that used in the proof of \cite[Lemma 3.4]{Bor16}), it is possible to then show that this space is Polish. Furthermore, one can construct a random map that ``lifts'' marked directed graphs from this new space to marked undirected graphs in a suitably continuous way. This construction can be combined with the results of this paper to obtain convergence and hydrodynamic limit results for a large class of IPS on directed sparse graphs, including neuronal Hawkes processes \cite{Tru16}. A fully rigorous justification is somewhat technical, and hence omitted from this work. 
\end{remark}

\subsubsection{Examples of graphs that converge locally} 
\label{grph:ex}
We provide a few examples of random graphs that converge locally weakly. Many of these random graph models converge to a unimodular Galton-Watson (UGW) tree, which we now define.

\begin{definition}[GW and UGW trees] 
\label{LWC:GWdef} 
Given a probability measure $\rho \in \Pc(\Nb_0)$, a random rooted tree $(\tree, \root)$ is a GW tree if the numbers of offspring of all vertices in $\tree$ are i.i.d. $\rho$-distributed random variables. If $\rho$ has a finite first moment, then the UGW tree (or size-biased GW tree), denoted UGW$(\rho)$, is defined in the same way, except that the offspring distributions of all non-root vertices are now given by $\wh{\rho}\in\Pc(\Nb_0)$, where 
\begin{equation} 
\label{def:hatrho} 
\widehat{\rho}(\{k\}) \defeq \frac{(k+1)\rho(\{k+1\})}{\sum_{n=0}^\infty n \rho(\{n\})}, \quad k \in \Nb_0. 
\end{equation}
\end{definition}

We now present several examples of standard sequences of random graphs that converge locally weakly. 

\begin{example}[Erd\"os-R\'enyi Graphs] 
\label{ex:ER} 
Suppose that $G_n$ is a sequence of Erd\"os-R\'enyi graphs with $n$ vertices and edge probability $p_n$ such that $np_n \to \theta \in (0,\infty)$ as $n \to \infty$. If each $G_n$ is equipped with a uniform random root vertex, and $\te{Poiss}(\theta)$ represents the Poisson distribution with parameter $\theta$, then $\ic{G_n}$ converges locally weakly to the $\te{UGW}(\te{Poiss}(\theta))$ tree. See \cite[Theorem 3.12]{Bor16} or \cite[Theorem 2.18]{Hof24} for proofs. 
\end{example}

\begin{example}[Configuration Models] 
\label{ex:CM} 
For each $n \in \Nb$, let $\mathbf{d}^n\defeq (d^n_1,\dots,d^n_n) \in \{0,\dots,n-1\}^n$ be a vector of non-negative integers such that $\sum_{k=1}^n d^n_k$ is even. Let $G_n$ be a uniformly chosen random multigraph with self-loops on $n$ edges with degree sequence $\mathbf{d}^n$ and a random root chosen uniformly at random from $G_n$. This can be encoded in our framework by equipping each edge of $G_n$ with a mark indicating the multiplicity of the edge and equipping each vertex with a mark that represents the number of self-loops at that vertex. Then $G_n$ is said to be a \emph{configuration model} with degree distribution $\mathbf{d}^n$. If the degree distributions $\rho^n\defeq \frac{1}{n}\sum_{k=1}^n \delta_{d^n_k}$ converge weakly to some probability measure $\rho \in \PP(\Nb_0)$ with a finite first moment and if the first moments of $\rho^n$ converge to the first moment of $\rho$, then $\ic{G_n}$ converges locally weakly to the $\te{UGW}(\rho)$ tree. See \cite[Theorem 3.15]{Bor16} or \cite[Theorem 4.1]{Hof24} for proofs. 
\end{example}

\begin{example}[Random Regular Graphs] 
\label{ex:RR} 
Fix any $d \in \Nb$. Let $G_n$ be uniformly distributed on the set of \emph{d-regular} graphs with uniform degree $d$ and a fixed vertex set of size $n$ (where we assume $nd$ is even for $G_n$ to be well defined). Let $G_n$ be equipped with a root chosen uniformly at random from its vertex set. Then $\ic{G_n}$ converges locally weakly to the infinite regular $d$-tree, or equivalently, the $\te{UGW}(\delta_d)$ tree. See (\cite[Theorem 2.17]{Hof24} for a proof. 
\end{example}

There are many other examples of random graph models that converge to random trees, such as preferential attachment graphs that converge to the P\'olya point tree \cite{Beretal14}.

\begin{example}[Graphs with I.I.D. Marks] 
\label{ex:markedconv} 
Suppose $\ic{G_n}\Rightarrow \ic{G}$ locally weakly (respectively, in probability) and $(G_n,\ems^n,\vms^n)$ and $(G,\ems,\vms)$ are equipped with i.i.d. edge and vertex marks such that for any $n \in \Nb$, $v \in V_{G_n}$, $v' \in V_G$, $e \in E_{G_n}$, and $e' \in E_G$, 
\[\vms^n_v \deq \vms_{v'}\te{ and } \ems^n_e \deq \ems_{e'}.\] 
Then it is easily shown that $\ic{(G_n,\ems^n,\vms^n)} \Rightarrow \ic{(G,\ems,\vms)}$ locally weakly (respectively, in probability). This is proven in the case in which edge marks are trivial in \cite[Corollary 2.16]{LacRamWu23}, but the inclusion of edge marks does not significantly alter the proof. 
\end{example}

\section{Model Description} 
\label{mod} 
In Section \ref{mod:dyn}, we introduce the IPS model and assumptions on the jump rates, and in Section \ref{mod:examples}, we provide several examples that satisfy our assumptions. In Section \ref{mod:wpdef}, we introduce the SDE, associated with the jump rates, that describes the IPS dynamics on any given graph, and in Section \ref{mod:WPsec}, we introduce notions of well-posedness of the SDE.

\subsection{A Standing Assumption on the Jump Rates} 
\label{mod:dyn} 
We consider IPS in which each particle takes values in a countable state space $\Xc$, which we identify with a subset of $\Zb$ and equip with the discrete topology. We let $\jmps \subseteq \{i - j: i,j \in \Xc, i \neq j\}$ denote the set of transitions or jump sizes of any particle that are allowable. Note that $\jmps$ may be a strict subset of $\{i-j:i,j\in\Xc,i\neq j\}$. For instance, in the SIR model from \eqref{intro:contrates}-\eqref{intro:contrates2}, $\Xc = \{0,1,2\}$ and $\jmps = \{1\}\subsetneq \{-2,-1,1,2\}$. In addition, let $\emksp$ and $\vmksp$ be two Polish spaces that serve as state spaces for respective edge and vertex marks that specify static parameters of the model such as random environments such as in \eqref{intro:contrates2} (see also Example \ref{ex:RE}), histories before time zero for non-Markovian processes (see Example \ref{ex:NMcont}), or heterogeneities and other graph attributes. To describe the model on any interaction graph, we need to specify the rate at which each particle makes a jump of any given size. At any time, this depends only on the particle's own state or history, the states or histories of neighboring particles in the interaction graph, and the environment marks on that vertex and the vertices and edges in its neighborhood. In particular, this jump rate depends on the vertex $v$ associated with the particle only via its neighborhood structure. This structure can vary from vertex to vertex but is always captured by a graph in the space $\gsone$ of finite rooted graphs $H = (V_H, E_H, \root_{H})$ of radius 1 (introduced in Section \ref{nota:grph}). Thus, the IPS model is completely specified by a family of {\em local jump rates} 
\[{\bf \locrate} := \{\locrate^{H}_j: \Rb_+ \times \cad^{V_H} \times \emksp^{E_H} \times \vmksp^{V_H} \rightarrow \Rb_+, H = (V_H, E_H, \root_H) \in \gsone, j \in \jmps\},\] 
where recall from Section \ref{nota:skor} that $\cad$ is the space of c\`{a}dl\`{a}g functions taking values in $\Xc$. Then given any marked $[\emksp,\vmksp]$-graph $G$, the dynamics of that IPS model on $G$ is governed by the jump rates 
\[{\bf \rate^G} := \{\rate\gvpara{G}{v}\stpara{j}: \Rb_+ \times \cad^{V_G} \to [0,\infty); v\in V_G, j \in \jmps\},\] 
with the rate $\rate\gvpara{G}{v}\stpara{j}(t,x)$ of a particle $v$ having a jump of size $j$, at a time $t$ when the configuration of particle trajectories is $x$, given by 
\begin{equation} 
\label{eq:standing} 
\rate\gvpara{G}{v}\stpara{j} (t,x) = \locrate^{H_v}_j(t,x_{H_v}, (\ems_{e})_{e \in E_{H_v}},(\vms_{u})_{u \in V_{H_v}}), \quad \mbox{ for every } (t,x) \in \Rb_+ \times \cad^V, 
\end{equation} 
where $H_v \defeq (G\subg{\cl{v}}, v)$ is the induced subgraph of $G$ on the closure $\cl{v}$ of $v$, with $v$ as its root. 

Throughout, we require that the local jump rates satisfy some mild regularity conditions specified in Definition \ref{def-regular} below. The first is a symmetry condition that essentially says that the rates are invariant to labelings of the neighboring vertices and are thus symmetric functions of the neighborhood states and marks. This ensures that solutions to the IPS possess well defined isomorphism classes (see Remark \ref{rem-classfn}). The second condition ensures that the jump rates only depend on the past histories of the particles. This condition is trivially satisfied for Markovian IPS. Examples of IPS models with regular local jump rates are given in Section \ref{mod:examples}.

\begin{definition}[Regularity of local jump rates] 
\label{def-regular} 
Given $j \in \jmps$, the family of functions $\locrate^{H}_j: \Rb_+ \times \cad^{V_H} \times \emksp^{E_H} \times \vmksp^{V_H} \rightarrow \Rb_+$, $H = (V_H, E_H, \root_H) \in \gsone$, is said to be regular if $\locrate^{H}_j$ is Borel measurable for each $H \in \gsone$ and additionally satisfies the following two properties: 
\begin{enumerate} 
\item (Symmetry): for every $t > 0$, $H \mapsto \locrate^H_{j}(t, \cdot, \cdot, \cdot)$ is a class function in the sense that for any $(x, \ems, \vms) \in \cad^{V_H} \times \emksp^{E_H} \times \vmksp^{V_H}$, $\widehat{H} \in \gsone$ with $\widehat{H} \cong H$, and isomorphism $\varphi \in I(\widehat{H},H)$, 
\[\locrate^H_{j} (t, (x_v)_{v \in V_H}, (\ems_e)_{e \in E_H}, (\vms_v)_{v \in V_H}) = \locrate^{\widehat{H}}_{j} (t, (x_{\varphi(v)})_{v \in V_{\wh{H}}}, (\ems_{\varphi(e)})_{e \in E_{\wh{H}}}, (\vms_{\varphi(v)})_{v \in V_{\wh{H}}});\] 
\item (Predictability): for every $H \in \gsone$, $(\ems, \vms) \in \emksp^{E_H} \times \vmksp^{V_H}$ $(t,x) \mapsto \locrate^H_{j} (t, x, \ems, \vms)$ is predictable in the sense that for every $t > 0$ and $x, y \in \cad^{V_H}$, 
\[x(s) = y(s) \quad \forall s \in [0,t) \quad \Rightarrow \quad \locrate^H_{j} (t, x, \ems, \vms) = \locrate^H_{j} (t, y, \ems, \vms).\] 
\end{enumerate} 
\end{definition} 

We now state our standing assumption on the IPS models we consider. 

\begin{sassu} 
\label{mod:consist} 
Given any $\sp{\emksp,\vmksp}$-graph $G = (V,E, \root, \ems,\vms)$, the jump rates ${\bf \rate^G}$ for the IPS dynamics on the graph $G$ satisfy \eqref{eq:standing} for local jump rates ${\bf \locrate} = \{\locrate^{H}_j: \Rb_+ \times \cad^{V_H} \times \emksp^{E_H} \times \vmksp^{V_H} \rightarrow \Rb_+\}_{H = (V_H, E_H, \root_H) \in \gsone}, j \in \jmps,$ that are regular in the sense of Definition \ref{def-regular}.
\end{sassu}

\begin{remark}[Isomorphism invariance of jump rates] 
\label{rem-classfn} 
As a consequence of the class function property of the IPS model or local jump rates ${\bf \locrate}$ specified in property 1 of Definition \ref{def-regular}, the associated jump rates for the IPS dynamics on any marked graph $G$ given by \eqref{eq:standing} satisfy the following analogous class property. Given any $\sp{\emksp,\vmksp}$-graphs $G,G'$ with $G \cong G'$ and isomorphism $\varphi \in I(G',G)$, for any $v \in V_G$, $t \in \Rb_+$, and $x \in \cad^{G}$, 
\begin{equation} 
\label{mod:rsym0} 
\rate\gvpara{G}{v}\stpara{j}(t,(x_{u})_{u \in V_G}) = \rate\gvpara{G'}{\varphi^{-1}(v)}\stpara{j}(t, (x_{\varphi(u)})_{u \in V_{G'}}). 
\end{equation} 
\end{remark}

\subsection{Examples of Interacting Particle Systems} 
\label{mod:examples} 
We now provide several examples of IPS models, or equivalently local jump rates $\bf{\locrate}$, that satisfy the regularity conditions imposed in our standing assumption. 

\begin{example}[Countable-state Markovian IPS] 
\label{mod:Markov} 
Suppose the mark spaces $\emksp$ and $\vmksp$ are trivial, and suppose that for every $j \in \jmps$, there exist functions $\alt{\rate}_j^H: [0,\infty) \times \Xc^{V_H}\to \Rb_+, H \in \gsone,$ such that the local jump rates $\locrate_j^H, H = (V_H,E_H,\root_H) \in \gsone,$ satisfy 
\begin{equation} 
\label{rate-mkov} 
\locrate_j^H (t,x) = \alt{\rate}^H_j(t, x(t-)) \quad \forall (t,x) \in \Rb_+ \times \cad^{V_H}. 
\end{equation}

We present the form of $\alt{\rate}^H_j, j \in \jmps, H \in \gsone,$ for several classical IPS models below, in all cases denoting $\root_H$ simply by $\root$ for notational simplicity. 
\begin{enumerate} 
\item {\bf The contact process} with measurable infectivity and recovery rates $\lambda, \rho: [0,\infty) \mapsto (0,\infty)$; the time-homogeneous version (with constant $\lambda, \rho$) was introduced in \cite{Har74}, see also \cite{Lig99} and, for more recent studies on (locally) tree-like graphs, see \cite{Pem92}, \cite{Sta01}, \cite{Bhaetal21}, and references therein. This model has $\Xc = \{0,1\}$, $\jmps = \{-1,1\}$, and 
\[\alt{\rate}^H_1(t,z) = \indic{z_{\root} = 0}\lambda(t-)\sum_{u \in \gneigh{\root}{H}} z_u, \qquad \alt{\rate}^H_{-1}(t,z) = \rho(t-) \indic{z_{\root} = 1}.\] 
\item {\bf Branching random walk} with branching rate $\lambda$; see \cite[page 1564]{PemSta01}. This model has $\Xc = \Nb_0$, $\jmps = \{-1,1\}$, and homogeneous rates $\alt{\rate}^H_1(z) = \alt{\rate}^H_1(t,z)$ given by 
\[\alt{\rate}^H_1(z) = \lambda \sum_{u \in \gneigh{\root}{H}} z_u, \qquad \alt{\rate}^H_{-1}(z) = z_{\root}.\] 
\item {\bf Voter models} which model opinion dynamics and has homogeneous rates $\alt{\rate}^H_j(z) = \alt{\rate}^H_j(t,z)$; see \cite{Lig99} and \cite[page 2]{CarTorMig16}. The voter model has many variations; for example, in the linear voter model, each particle updates its opinion at times dictated by a unit Poisson process by copying the opinion of one of its neighbors chosen uniformly at random. In this model. $\Xc = \{0,1\}$, $\jmps = \{-1,1\}$, and 
\[\alt{\rate}^H_1(z) = \frac{\indic{z_{\root} = 0}}{|\gneigh{\root}{H}|}\sum_{u \in \gneigh{\root}{H}} z_u, \quad \alt{\rate}^H_{-1}(z) = \frac{\indic{z_{\root} = 1}}{|\gneigh{\root}{H}|}\sum_{u \in \gneigh{\root}{H}} (1 - z_u),\] 
In the {\em majority process} \cite[page 443]{AmiBalBei23}, each particle instead updates its opinion by adopting the majority opinion of its neighbors (resolving ties randomly). In this model, $\Xc = \{-1,1\}$, $\jmps = \{-2,2\}$, and
\[\alt{\rate}^H_{j}(z) = \indic{z_{\root} = -\frac{j}{2}}\left(\indic{j \sum_{u \in \gneigh{\root}{H}} z_u > 0} + \frac{1}{2}\indic{\sum_{u \in \gneigh{\root}{H}} z_u = 0}\right), \quad j \in \{-2,2\}.\] 
\item {\bf Glauber dynamics for the Ising model} with inverse temperature $\beta \in \Rb$ (see, e.g., \cite[Definition 2]{MosSly13}) has $\Xc = \{-1,1\}$, $\jmps = \{-2,2\}$, and $\alt{\rate}^H_j(z) = \alt{\rate}^H_j(t,z)$ with 
\[\alt{\rate}^H_{j}(z) = \indic{z_o = -j/2} \frac{\exp\left(-\beta z_{\root}\sum_{u \in \neigh{\root}} z_u\right)}{\exp\left(-\beta z_{\root}\sum_{u \in \neigh{\root}} z_u\right) + \exp\left(\beta z_{\root}\sum_{u \in \neigh{\root}} z_u\right)}, \quad j \in \{-2,2\}.\] 
\end{enumerate} 
\end{example}

\begin{example}[SIR model in a heterogeneous environment] 
\label{ex:RE} 
The SIR model introduced in \eqref{intro:contrates2} with measurable infectivity and recovery rates $\lambda, \rho: [0,\infty) \mapsto (0,\infty)$ has $\Xc = \{0,1,2\}$, $\jmps = \{1\}$, and for $H = (V_H, E_H, \root_H) \in \gsone$, $\locrate^H_1 (t,x, \ems, \vms) = \alt{r}^H_1(t, x(t-), \ems, \vms),$ where $\alt{r}^H_1: \Rb_+ \times \Xc^{V_H} \times \emksp^{E_H} \times \vmksp^{V_H} \to \Rb$ is given by (once again writing $\root$ for $\root_H$) 
\[\alt{r}^H_1(t,z, \ems, \vms) = \left(\lambda (t-) \sum_{u \in \neigh{\root}(H)} \ems_{\{u,\root\}} z_u\right)\indic{z_{\root} = 0} + \rho(t-) \vms_{\root} \indic{z_{\root} = 1}\] 
See \cite{JanLucWin14} and \cite{CocRam23} for recent work on SIR models on locally tree-like graphs, with the latter also considering heterogeneous environments. Clearly, heterogeneous versions of the other examples can also be described in an analogous fashion. 
\end{example}

\begin{example}[Non-Markovian SIR model] 
\label{ex:NMcont} 
We now introduce a class of non-Markovian SIR models that fall within our framework. As before, we have $\Xc = \{0,1,2\}$ and $\jmps = \{1\}$. Let $\beta: \Rb_+\to \Rb_+$ and $\gamma: \Rb_+\to \Rb_+$ be functions for which $\beta(s)$ and $\gamma(s)$ are the rates at which a particle that has been infected for $s$ units of time infects a fixed neighboring particle or recovers, respectively. A natural choice for $\beta$ and $\gamma$ would be the respective hazard rate functions of the infection and recovery distributions. To define the local jump rates, consider the functional $\tau: \Rb_+\times \cad \times \vmksp\to \Rb_+$ given by 
\[\tau(t,x,\vms) \defeq 
\begin{cases} 
t - \sup\{s \in [0,t): x(s) = 0\} & \te{ if } \{s \in [0,t): x(s) = 0\} \neq \emptyset, \\ 
t + \kappa & \te{ if } \{s \in [0,t): x(s) = 0\} = \emptyset. 
\end{cases} 
\] 
In other words, $\tau (t,x,\vms)$ describes the amount of time a particle with trajectory $x$ has been infected at time $t$, assuming it had already been infected for $\vms$ units of time at time $0$. Naturally, if $x_v(0) \neq 1$, then it is assumed that $\vms_v = 0$. Then the local jump rate $\locrate^{H}_{1}: \Rb_+\times \cad^{V_H} \times\vmksp^{V_H} \to \Rb_+$ is given by 
\begin{align*} 
\locrate^{H}_1(t,x,\vms) \defeq \left(\sum_{\substack{u \in \gneigh{\root}{H}:\\x_u(t-) = 1}} \beta(\tau(t,x_u,\vms_u))\right)\indic{x_\root(t-) = 0} + \gamma(\tau(t,x_\root,\vms_{\root}))\indic{x_\root(t-) = 1}. 
\end{align*} 
\end{example}

\subsection{Dynamics and Notions of Solutions} 
\label{mod:wpdef} 
While an IPS model can be intuitively specified through its jump rates, we need to more rigorously describe the associated dynamics to verify whether it is well defined. Fix an IPS model with regular local jump rates $\bf{r}$ in the sense of Definition \ref{def-regular}. In addition, fix a random, possibly unrooted, $\sp{\emksp,\vmksp\times \Xc}$-graph $(G,\x)$, henceforth referred to as the \emph{initial data}, that encodes both the random $\sp{\emksp,\vmksp}$-graph describing the interaction structure of the IPS as well as the $\Xc$-valued initial conditions encoded by $\x$. Also, equip $\jmps$ with a finite measure $\Sm \in \Pc(\jmps)$ that assigns strictly positive mass to all elements of $\jmps$ and satisfies $\sum_{j \in \jmps} |j|\Sm(j) < \infty$, where we write $\Sm(j)$ for $\Sm(\{j\})$. When $\jmps$ is finite, $\Sm$ is usually taken to be the counting measure on $\jmps$; when $\jmps$ is countable, $\Sm$ specifies the probabilities with which different jump sizes are considered. Then, the dynamics of the associated IPS are described by the following jump SDE: 
\begin{equation} 
\label{mod:infpart} 
X^{G,\x}_v(t) = \x_v + \int_{(0,t]\times \Rb_+\times \jmps} j\indic{r\leq \rate\gvpara{G}{v}\stpara{j}(s,X^{G,\x})}\,\poiss^G\poissv{v}(ds,dr,dj),\quad v \in V,t \in [0,\infty), 
\end{equation} 
where $\poiss^G$ is the so-called driving noise, comprised of a collection of i.i.d Poisson processes described in Definition \ref{mod:dnoise} below. Here, for each $v \in V$ and $j \in \jmps$, an event $(s,r)$ of the Poisson process $\poiss^G\poissv{v}(\cdot, \cdot, j)$ on $[0,\infty)^2$ represents a potential jump of size $j$ of the particle at $v$ at time $s$, where the particle actually realizes that jump at that time if and only if the parameter $r$ lies below the jump rate of the particle at that time. 

\begin{remark}[Generic Driving Noise Construction] 
\label{rem:drivingnoise} 
When $(G,\x)$ is deterministic, the driving noise $\poiss^G = \{\poiss^G_v\}_{v \in V_G}$ is simply a collection of i.i.d. adapted Poisson processes on $[0,\infty)^2 \times \jmps$, and \eqref{mod:infpart} reduces to a standard Poisson-driven SDE. When $G$ is random but nevertheless $V_G \subseteq \Vmb$ for some countable, deterministic set $\Vmb$ (as is often the case), then the driving noise can once again be easily specified as $\poiss^G\defeq \{\poiss_v\}_{v \in V_G}$, where $\{\poiss_v\}_{v \in\Vmb}$ is a collection of i.i.d. adapted Poisson processes on $[0,\infty)^2 \times \jmps$. When $(G,\x)$ is a general random graph, more care has to be taken in clarifying notions of solutions and associated measurability conditions.
\end{remark}

For general random graphs $(G,\x)$, with no assumptions imposed on the vertex set (e.g., if the initial data $(G,\x)$ is defined to be a general measurable representative graph of the isomorphism class $\ic{(G,\x)}$), more care has to be taken in clarifying notions of solutions, and associated measurability conditions. In this case, the driving Poisson processes are best represented as vertex marks on the graphs (analogous to how initial conditions are treated), though now taking values in the measure space $\Nms(\Rb_+^2\times\jmps)$. However, since the noises also have a temporal component, when the vertex set of the graph is an arbitrary random set, the measurability and adaptedness properties of the driving noises have to be described with more care. This is spelled out in Definition \ref{mod:dnoise}. 

\begin{definition}[Driving Noise for General Random Graphs] 
\label{mod:dnoise} 
Given a complete, filtered probability space $\fpspace$ such that $\filt$ satisfies the usual conditions and a (possibly random) $\filtm_0$-measurable $\sp{\emksp,\vmksp\times\Xc}$-graph $(G,\x)$, referred to as the initial data, an $\filt$-driving noise (compatible with $G$) is a $\sp{\emksp,\vmksp\times\Nms(\Rb_+^2\times\jmps)}$-random graph $(G,\poiss^G)$ that satisfies the following properties: 
\begin{enumerate} 
\item conditioned on $\filtm_0$, $\poiss^G = \{\poiss^G_v\}_{v \in V_G}$ is a collection of i.i.d. Poisson processes on $\Rb_+^2\times\jmps$ with intensity measure $\leb^2\otimes\Sm$, indexed by the vertex set $V_G$ of $G$; 
\item for any $t > 0$ and $A \in \borel((t,\infty)\times \Rb_+\times \jmps)$, if $\poiss^G(A) \defeq \{\poiss^G_v(A)\}_{v \in V_G}$, then the $\sp{\emksp,\vmksp\times \Nb_0}$-random graph $(G,\poiss^G(A))$ is conditionally independent of $\filtm_t$ given $\filtm_0$; 
\item for any $\filtm_0$-measurable $v \in V_G$, $\poiss^G_v$ is an $\filt$-adapted point process in the sense described in Section \ref{nota:msr}. 
\end{enumerate} 
With a slight abuse of notation, we often denote the driving noise $(G,\poiss^G)$ just by $\poiss^G$. 
\end{definition}

When $(G,\x)$ is random, analogous to what is done with initial conditions and the driving noise, it is natural to also encode the trajectories of the IPS, or equivalently any solution to \eqref{mod:infpart}, as additional vertex marks on the random graph. This leads to the following definitions of weak and strong solutions to the SDE \eqref{mod:infpart}. Given a filtered probability space $\fpspace$ that supports a $\sp{\emksp,\vmksp\times \Xc\times\Nms(\Rb_+^2\times\jmps)}$-random graph $(G,\x,\poiss^G)$, define $\mathcal H_t \defeq \sigma ((G,\x,\poiss^G (A)): A \in \borel([0,t]\times \Rb_+\times\jmps))$ for $t \geq 0$, and define $\filt^{G,\x,\poiss^G}$ to be the augmentation of the filtration $\mathbb{H} = \{\mathcal H_t\}_{t\in\Rb_+}$, that is, $\filt^{G,\x,\poiss^G}$ is the smallest complete, right-continuous filtration such that $\filtm^{G,\x,\poiss^G}_t \supseteq {\mathcal H}_t$ for every $t \geq 0$, and $\filtm^{G,\x,\poiss^G}_0$ contains all sets $N \subset A \in \filtm$ with $\PP(A) = 0$. When $(G,\x)$ is deterministic, we denote $\filt^{G,\x,\poiss^G}$ simply by $\filt^{\poiss^G}$. 

\begin{definition}[Weak and Strong Solutions] 
\label{mod:sol} 
\sloppy Given (possibly random) initial data $(G,\x)$, a weak solution to \eqref{mod:infpart} is a tuple $((G,X^{G,\x},\poiss^G),\fpspace)$ such that 
\begin{enumerate} 
\item $\fpspace$ is a complete, filtered probability space such that $\filt$ satisfies the usual conditions; 
\item $(G,\poiss^G)$ is an $\filt$-driving noise compatible with $G$ in the sense of Definition \ref{mod:dnoise}; 
\item $(G,X^{G,\x})$ is a random $\sp{\emksp,\vmksp\times \cad}$-graph with $\filt$-adapted vertex marks (i.e., for any $\filtm_0$-measurable $v \in V_G$, $X^{G,\x}_v$ is an $\filt$-adapted process) such that $X^{G,\x}$ satisfies \eqref{mod:infpart} $\PP$-a.s.. 
\end{enumerate} 
Given a complete probability space $\pspace$ and an $\filtm^{G,\x,\poiss^G}$-driving noise $(G,\poiss^G)$ compatible with $G$ in the sense of Definition \ref{mod:dnoise}, an $\poiss^G$-strong solution $(G,X^{G,\x})$ is a $\sp{\emksp,\vmksp\times\cad}$-random graph such that $((G,X^{G,\x},\poiss^G),(\Omega,\filtm,\filt^{G,\x,\poiss^G},\PP))$ is a weak solution to \eqref{mod:infpart}. 
\end{definition}

\begin{remark}[Filtration-Poisson process pairs] 
\label{mod:concise} 
For conciseness, we often omit mention of the whole probability space $\fpspace$ and simply refer to $(\filt,\poiss^G)$ as a \fpp, and say $(G,X^{G,\x})$ is an $(\filt,\poiss^G)$-weak solution to \eqref{mod:infpart} if $((G,X^{G,\x},$ $\poiss^G),\fpspace)$ is a weak solution to \eqref{mod:infpart}. We also say that $(G,X^{G,\x})$ is a weak solution to \eqref{mod:infpart} if there exists a \fpp\ $(\filt,\poiss^G)$ such that $(G,X^{G,\x})$ is an $(\filt,\poiss^G)$-weak solution to \eqref{mod:infpart}. We would also like to emphasize that if $X$ and $X'$ are both $(\filt,\poiss^G)$-weak solutions, then they are implicitly defined on the same filtered probability space with the same driving noise. 
\end{remark}

\subsection{Notions of Well-Posedness} 
\label{mod:WPsec} 
Given the definitions of weak and strong solutions in the previous section, we now introduce the definitions of uniqueness and well-posedness, which are suitably modified versions of the parallel concepts for SDEs on fixed deterministic vertex sets with a deterministic interaction structure.

\begin{definition}[Uniqueness notions] 
\label{mod:unique} 
The SDE \eqref{mod:infpart} is said to be \emph{unique in law} for the initial data $(G,\x)$ if for any $(G',\x')$, possibly defined on a different probability space, with $(G',\x') \deq (G,\x)$, we have $(G,X) \deq (G',X')$ for all weak solutions $(G,X)$ and $(G',X')$ to the SDE for the respective initial data $(G,\x)$ and $(G',\x')$. The SDE \eqref{mod:infpart} is said to be \emph{pathwise unique} for $(G,\x)$ if for any \fpp\ $(\filt,\poiss^G)$ and any two $(\filt,\poiss^G)$-weak solutions $(G,X^1)$ and $(G,X^2)$ to \eqref{mod:infpart}, $(G,X^1)=(G,X^2)$ a.s.. 
\end{definition} 

\begin{definition}[Well-posedness] 
\label{mod:WP} 
We say that the SDE \eqref{mod:infpart} is \emph{well-posed} for the initial data $(G,\x)$ if there exists at least one weak solution to \eqref{mod:infpart} and the SDE \eqref{mod:infpart} is unique in law for $(G,\x)$. We say the SDE \eqref{mod:infpart} is \emph{strongly well-posed} for the initial data $(G,\x)$ if there exists at least one weak solution to \eqref{mod:infpart} and the SDE \eqref{mod:infpart} is pathwise unique for $(G,\x)$. 
\end{definition}

The next lemma, which we prove in Appendix \ref{cond}, establishes an intuitive sufficient condition for strong well-posedness of \eqref{mod:infpart} for random initial data.

\begin{lemma}[Strong well-posedness for random initial data] 
\label{mod:condwp} 
The SDE \eqref{mod:infpart} is strongly well-posed for the random initial data $(G,\x)$ if it is strongly well-posed for a.s. every realization of $(G,\x)$.
\end{lemma}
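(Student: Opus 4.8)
The plan is to prove the two ingredients of strong well-posedness for $(G,\x)$ --- existence of a weak solution to \eqref{mod:infpart} and pathwise uniqueness --- separately, in each case reducing to the corresponding deterministic statement by disintegrating the ambient probability measure over the law $\mu\defeq\law(G,\x)$ of the random (possibly unrooted) $\sp{\emksp,\vmksp\times\X}$-graph. The disintegration is legitimate with respect to the filtration because in every relevant definition $(G,\x)$ is $\filtm_0$-measurable, so conditioning on $\{(G,\x)=(g,x)\}$ does not disturb the $\filtm_0$-conditional structure of the driving noise; and the class-function property of the rates (Remark \ref{rem-classfn}) ensures the right-hand side of \eqref{mod:infpart} descends to isomorphism classes, so it is harmless to work on the Polish space of marked-graph isomorphism classes and to pass to representatives. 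Throughout, I let $\{\PP_{(g,x)}\}$ denote a regular conditional probability for $\PP$ given $\sigma(G,\x)$, so that $\PP=\int\PP_{(g,x)}\,\mu(dg,dx)$ and, for $\mu$-a.e.\ $(g,x)$, $\PP_{(g,x)}$ is concentrated on $\{(G,\x)=(g,x)\}$.

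\emph{Pathwise uniqueness.} Let $(\filt,\poiss^G)$ be a \fpp\ and let $(G,X^1)$, $(G,X^2)$ be two $(\filt,\poiss^G)$-weak solutions to \eqref{mod:infpart} for $(G,\x)$ (recall these are driven by the same noise on the same space). I would check that, for $\mu$-a.e.\ $(g,x)$, after replacing $\filt$ by its $\PP_{(g,x)}$-augmentation, the tuple $((g,X^1,\poiss^g),(\Omega,\filtm,\filt,\PP_{(g,x)}))$ is a weak solution to \eqref{mod:infpart} for the \emph{deterministic} data $(g,x)$, and likewise for $X^2$. The substantive checks are: (i) $(g,\poiss^g)$ is still an $\filt$-driving noise compatible with $g$ under $\PP_{(g,x)}$ --- because the $\filtm_0$-conditional law of $\poiss^G$ under $\PP$ is the fixed law of a family of i.i.d.\ $\leb^2\otimes\Sm$-Poisson processes indexed by the ($\sigma(G)$-measurable) vertex set, and since $\sigma(G,\x)\subseteq\filtm_0$, further conditioning on $\{(G,\x)=(g,x)\}$ leaves both this $\filtm_0$-conditional law and the required $\filtm_t$-independence property intact; (ii) $X^i$ remains $\filt$-adapted (a pathwise property) and, on the event $\{(G,\x)=(g,x)\}$, the SDE \eqref{mod:infpart} for $(G,\x)$ coincides verbatim with the SDE for $(g,x)$, so $X^i$ solves the latter $\PP_{(g,x)}$-a.s. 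Deterministic pathwise uniqueness then gives $X^1=X^2$ $\PP_{(g,x)}$-a.s.\ for $\mu$-a.e.\ $(g,x)$, and integrating against $\mu$ yields $(G,X^1)=(G,X^2)$ $\PP$-a.s.

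\emph{Weak existence, and the main obstacle.} By hypothesis, for $\mu$-a.e.\ realization $(g,x)$ the SDE \eqref{mod:infpart} with deterministic data $(g,x)$ has a weak solution, which --- being also pathwise unique --- is unique in law; let $Q_{(g,x)}$ be the common law of $(g,x,\poiss^g,X^{g,x})$, viewed as a probability measure on $\Oo$, the space of isomorphism classes of marked graphs with edge marks in $\emksp$ and vertex marks in $\vmksp\times\X\times\Nms(\R_+^2\times\jmps)\times\cad$. The hard part is to show that $(g,x)\mapsto Q_{(g,x)}$ is Borel measurable; equivalently, that the strong-solution functional $F_{(g,x)}$ with $X^{g,x}=F_{(g,x)}(\poiss^g)$ supplied by deterministic strong well-posedness can be chosen jointly measurably in the parameter $(g,x)$. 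I would obtain this from a measurable-selection argument of Yamada--Watanabe type --- precisely the alternative characterization of strong well-posedness established in Appendix \ref{cond} --- with the isomorphism-invariance noted above allowing the selection to be carried out on the Polish space of isomorphism classes. Granting measurability, set $Q\defeq\int Q_{(g,x)}\,\mu(dg,dx)$ on $\Oo$ and take the canonical tuple on $(\Oo,\borel(\Oo),Q)$ --- equipped with the augmentation of the filtration generated by the graph, the initial condition, and the noise marks restricted to $[0,t]$ --- to play the role of $(G,X^{G,\x},\poiss^G)$. Reversing the argument in (i), this tuple has $(G,\x)\sim\mu$, has $\poiss^G$ an $\filt$-driving noise compatible with $G$, and satisfies \eqref{mod:infpart} $Q$-a.s., hence is a weak solution for the random initial data $(G,\x)$; combined with the pathwise uniqueness above, this gives strong well-posedness for $(G,\x)$. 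The disintegration bookkeeping in the uniqueness step is routine; the genuine difficulty --- and the reason the proof is routed through Appendix \ref{cond} --- is the measurable dependence of $Q_{(g,x)}$ (equivalently $F_{(g,x)}$) on the realization of the random marked graph.
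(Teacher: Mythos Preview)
Your pathwise-uniqueness argument is essentially the paper's: disintegrate over the initial data, check that under the conditional measure the triple $(\poiss^G,X^1,X^2)$ is a pair of weak solutions driven by the same noise for the now-deterministic data, invoke deterministic pathwise uniqueness, and integrate back. One cosmetic difference: you condition on $\sigma(G,\x)$, whereas the paper conditions on the larger $\sigma$-algebra $\filtm_0$. The paper's choice fits the definitions more directly, since the defining properties of an $\filt$-driving noise in Definition~\ref{mod:dnoise} are already $\filtm_0$-conditional statements; conditioning on $\filtm_0$ makes the verification that the conditional noise is again a driving noise almost immediate. Your route needs the extra (correct) observation that the coarser conditioning on $\sigma(G,\x)\subseteq\filtm_0$ preserves those $\filtm_0$-conditional properties.

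On existence, there is a circularity. You invoke ``the alternative characterization of strong well-posedness established in Appendix~\ref{cond}'' to obtain joint measurability of $(g,x)\mapsto Q_{(g,x)}$, but Appendix~\ref{cond} \emph{is} the proof of this very lemma and contains nothing else; the Yamada--Watanabe statement you presumably have in mind is Lemma~\ref{mod:stunique}, which is proved by a black-box appeal to Kurtz and does not supply measurable dependence of the solution functional on a graph-valued parameter. In fact the paper's own proof treats only pathwise uniqueness and never constructs a weak solution for the random data; the concluding phrase ``which proves strong well-posedness'' is, read literally, incomplete. This does no damage in the paper's applications because wherever the lemma is invoked (e.g.\ Proposition~\ref{WP:WP2}) existence is obtained by an explicit, manifestly measurable construction that works verbatim for random $(G,\x)$, so the lemma is effectively used only to transfer pathwise uniqueness. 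Your instinct that measurability of the solution map is the genuine obstacle to a self-contained existence proof is correct, but you cannot discharge it by pointing to Appendix~\ref{cond}.
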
 

In order to discuss local convergence of solutions, it will be convenient to work with isomorphism classes of initial data and solutions. To this end, note that (strong) well-posedness of (3.3) depends only on the isomorphism class of the initial data. More precisely, given $\ic{(G,\x)}\in \gs\sp{\emksp,\vmksp\times\Xc}$, let $(G_i,\x_i),i=1,2,$ be two different representatives of $\ic{(G,\x)}$ and fix some isomorphism $\varphi\in I((G_2,\x_2),(G_1,\x_1))$. Let $(\filt,\poiss^{G_1})$ be a \fpp\ compatible with $G_1$ and let $X^{G_1,\x_1}$ be a $(\filt,\poiss^{G_1})$-weak solution to \eqref{mod:infpart} for the initial data $(G_1,\x_1)$. Define, \[\poiss^{G_2}_v \defeq \poiss^{G_1}_{\varphi(v)} \quad \te{and} \quad X^{G_2,\x_2}_v\defeq X^{G_1,\x_1}_{\varphi(v)},\quad \te{for all }v \in V_{G_2}.\] Then $(\filt, \poiss^{G_2})$ is a \fpp\ compatible with $G_2$ and by Remark \ref{rem-classfn} and the form of the SDE \eqref{mod:infpart}, it follows that $X^{G_2,\x_2}$ is a $(\filt,\poiss^{G_2})$-weak solution to \eqref{mod:infpart}. Thus, \eqref{mod:infpart} is (strongly) well-posed for $(G_1,\x_1)$ if and only if it is (strongly) well-posed for $(G_2,\x_2)$, showing that (strong) well-posedness is a class property.

\begin{definition}[Strong well-posedness for isomorphism classes] 
\label{mod:sols} 
We say the SDE \eqref{mod:infpart} is (strongly) well-posed for the (possibly random) initial data $\ic{(G,\x)}$ taking values in $\gs\sp{\emksp, \vmksp\times\Xc}$ if there exists a (possibly random) $\sp{\emksp,\vmksp\times\Xc}$-graph $(G,\x)$ with $(G,\x) \in \ic{(G,\x)}$ a.s. such that \eqref{mod:infpart} is (strongly) well-posed for $(G,\x)$. Furthermore, we say that $\ic{(G,X^{G,\x})}$ is a strong (resp. weak) solution to \eqref{mod:infpart} for $\ic{(G,\x)}$ if there exists a (random) representative $(G,\x)$ that lies in $\ic{(G,\x)}$ a.s. and a strong (resp. weak) solution $(G,X^{G,\x})$ to \eqref{mod:infpart} for $(G,\x)$ such that $(G,X^{G,\x}) \in \ic{(G,X^{G,\x})}$ a.s.. 
\end{definition}

\begin{remark}[Implications of Definition \ref{mod:sols}]
\label{isotograph} 
It is worthwhile observing that if the SDE \eqref{mod:infpart} is (strongly) well-posed for initial data given in terms of an isomorphism class $\ic{(G,\x)}$, as in Definition \ref{mod:sols}, it is also (strongly) well-posed, in the sense of Definition \ref{mod:WP}, for any $\filtm_0$-measurable representative graph $(G,\x)$ of $\ic{(G,\x)}$. Indeed, fix $(G,\x)$ and any {\fpp } $(\filt,\poiss^G)$. Consider any $\filtm_0$-measurable random element $(G',\x')$ that is a.s. isomorphic to $(G,\x).$ Then there exists an $\filtm_0$-measurable isomorphism $\varphi\in I((G,\x),(G',\x'))$. Define the {\fpp } $(\filt,\poiss^{G'})$ by setting $\poiss^{G'}_{\varphi(v)} = \poiss^G_v$ a.s. for all $v \in V_G$. Then, building on the isomorphism invariance of jump rates mentioned in Remark \ref{rem-classfn}, there exists a one-to-one correspondence between $(\filt,\poiss^G)$ weak solutions to \eqref{mod:infpart} for the initial data $(G,\x)$ and $(\filt,\poiss^{G'})$ weak solutions to \eqref{mod:infpart} for the initial data $(G', \x)$. 
\end{remark}

We conclude this section with a Yamada-Watanabe type result.

\begin{lemma}[Yamada-Watanabe type result] 
\label{mod:stunique} 
For the initial data $\ic{(G,\x)}$, if the SDE \eqref{mod:infpart} is strongly well-posed, then it is also well-posed. Furthermore, \eqref{mod:infpart} is strongly well-posed for $\ic{(G,\x)}$ if and only if the set of weak solutions to \eqref{mod:infpart} for $\ic{(G,\x)}$ is non-empty and coincides with the set of strong solutions to \eqref{mod:infpart}. \end{lemma}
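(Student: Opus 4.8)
The plan is to run the classical Yamada--Watanabe argument in the present setting of (isomorphism classes of) marked random graphs. Since, as noted in the paragraph preceding Definition~\ref{mod:sols}, every assertion in the statement depends only on the isomorphism class, I would fix once and for all a canonical measurable representative $(G,\x)$ of $\ic{(G,\x)}$ (using the results of Appendix~\ref{msbl}) and argue for it; conditioning on $(G,\x)$ and invoking Lemma~\ref{mod:condwp} further reduces matters to deterministic initial data, for which $\poiss^G$ is just a family of i.i.d.\ Poisson processes. To see that strong well-posedness implies well-posedness, i.e.\ that pathwise uniqueness plus existence of a weak solution yields uniqueness in law, take a weak solution $(G,X^1)$ and another weak solution $(G',X^2)$ whose initial data satisfies $(G',\x')\deq(G,\x)$; passing to canonical representatives we may assume both are built over the same $(G,\x)$. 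By Property~1 of Definition~\ref{mod:dnoise} the conditional law of $\poiss^G$ given the initial data is prescribed, so the triple $(G,\x,\poiss^G)$ has a common law $\nu$ for both solutions; disintegrating $\law(G,\x,\poiss^G,X^i)$ over $\nu$ yields kernels $Q^i$, and I would form the synchronous coupling $\bar\mu\defeq\nu\otimes Q^1\otimes Q^2$ carrying $(G,\x,\poiss^G)$ together with processes $X^1,X^2$ that are conditionally independent given $(G,\x,\poiss^G)$. The step I expect to be the main obstacle is verifying that, with $\bar\filt$ the augmentation of the filtration generated by $(G,\x)$, the increments $\poiss^G(A)$ with $A\subseteq[0,t]\times\R_+\times\jmps$, and $X^1[t],X^2[t]$, the pair $(\bar\filt,\poiss^G)$ is again a \fpp\ in the sense of Definition~\ref{mod:dnoise}: here the conditional independence of $X^1$ and $X^2$ given $(G,\x,\poiss^G)$ is precisely what preserves the "future of $\poiss^G$ independent of $\bar\filtm_t$ given $\bar\filtm_0$" property. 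Once this is in place, $X^1$ and $X^2$ are both $(\bar\filt,\poiss^G)$-weak solutions, pathwise uniqueness forces $X^1=X^2$ $\bar\mu$-a.s., hence $Q^1=Q^2$, and so $(G,X^1)\deq(G',X^2)$.

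For the forward direction of the equivalence I would reuse this coupling with $X^1,X^2$ taken to be conditionally i.i.d.\ given $(G,\x,\poiss^G)$ copies of a single weak solution: pathwise uniqueness gives $X^1=X^2$ a.s., which forces the conditional law of the solution given $(G,\x,\poiss^G)$ to be a Dirac mass $\delta_{F(G,\x,\poiss^G)}$ for some measurable $F$. Using the predictability property in Definition~\ref{def-regular} together with the non-anticipating form of the SDE~\eqref{mod:infpart}, one checks that $F$ can be chosen so that the resulting process is adapted to $\filt^{G,\x,\poiss^G}$; hence every weak solution is a strong solution. Combined with the trivial inclusion that strong solutions are weak solutions and with weak existence, this shows that strong well-posedness implies that the set of weak solutions is non-empty and coincides with the set of strong solutions.

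For the converse I would show that weak existence together with the coincidence of the two solution sets already forces pathwise uniqueness. Given two $(\filt,\poiss^G)$-weak solutions $X^1,X^2$ on a common filtered space, each is by hypothesis a strong solution, hence adapted to $\filt^{G,\x,\poiss^G}$ and therefore a.s.\ equal to a measurable (non-anticipating) function of $(G,\x,\poiss^G)$. Enlarging the space by an independent fair coin $B$ placed in $\bar\filtm_0$, the randomized process $Y\defeq\indic{B=0}X^1+\indic{B=1}X^2$ is again a $(\bar\filt,\poiss^G)$-weak solution (the driving-noise properties survive because $B$ is independent of everything), hence a strong solution, hence $Y=F(G,\x,\poiss^G)$ a.s.; conditioning on $\{B=0\}$ and $\{B=1\}$ and using $B\perp(G,\x,\poiss^G)$ identifies both $X^1$ and $X^2$ with $F(G,\x,\poiss^G)$ a.s., so $X^1=X^2$ a.s. Together with weak existence this is exactly strong well-posedness, completing the equivalence.
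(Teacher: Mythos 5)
Your proof is essentially correct, but it takes a genuinely different route from the paper. After the common first step — reducing to deterministic initial data via Lemma~\ref{mod:condwp} and a canonical measurable representative — the paper simply observes that its notions of weak/strong solution and pathwise uniqueness match those in Kurtz's abstract framework, and then cites \cite[Theorem 3.14]{Kur07} for the Yamada--Watanabe equivalence. You instead re-derive the Yamada--Watanabe argument from scratch in the present setting: a synchronous coupling built from the disintegration $\nu\otimes Q^1\otimes Q^2$ to prove ``pathwise uniqueness $\Rightarrow$ uniqueness in law,'' conditionally i.i.d.\ copies to show the conditional law is Dirac and hence every weak solution is strong, and the fair-coin randomization $Y=\indic{B=0}X^1+\indic{B=1}X^2$ for the converse. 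Your approach is self-contained and makes visible exactly where the graph-indexed driving-noise structure matters, whereas the paper's is shorter but asks the reader to check the definitional match with Kurtz. The trade-off is that the step you correctly flag as the main obstacle --- showing that $(\bar\filt,\poiss^G)$ remains a filtration--Poisson process pair in the coupled space, and that the resulting $F(G,\x,\poiss^G)$ is actually $\filt^{G,\x,\poiss^G}$-adapted and not merely $\sigma(G,\x,\poiss^G)$-measurable --- is precisely the technical content bundled into Kurtz's ``compatibility'' hypothesis; working through it in full would amount to reproducing a nontrivial slice of \cite{Kur07}. Your sketch of that step is plausible but would need more than an appeal to the predictability in Definition~\ref{def-regular}: one typically has to carry out the disintegration at each time $t$ (or verify the compatibility condition directly) to get time-by-time measurability rather than only measurability with respect to the full noise trajectory. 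The fair-coin argument for the converse is clean and correct as stated.
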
 
\begin{proof} 
\sloppy Fix the $\sigma(\ic{(G,\x)})$-measurable $\sp{\emksp,\vmksp\times\Xc}$-random graph $(G,\x) $ such that $(G,\x) \in \ic{(G,\x)}$ a.s.. By Definition \ref{mod:sols} and Lemma \ref{mod:condwp}, it suffices to prove the lemma for any deterministic $\sp{\emksp,\vmksp\times \Xc}$-graph initial data $(G,\x)$ instead of $\ic{(G,\x)}$. The lemma can be deduced by showing that pathwise uniqueness as defined in Definition \ref{mod:unique} matches the definition of pathwise uniqueness in \cite{Kur07}. Then by \cite[Theorem 3.14]{Kur07}, strong well-posedness is equivalent to all weak solutions being strong as desired. The same theorem also shows that strong well-posedness implies well-posedness. 
\end{proof} 

\begin{remark}[Consequences of strong well-posedness] 
\label{mod:coupling} 
Definition \ref{mod:sol} and Lemma \ref{mod:stunique} imply that under strong well-posedness of the SDE, $(G,X^{G,\x})$ is characterized by the initial data $(G,\x)$ and a compatible driving noise $(G,\poiss^G)$. Hence, a coupling of the initial data and driving noises of a sequence of IPS immediately yields a coupling of the respective solutions to \eqref{mod:infpart} (see Sections \ref{LWCpf} and \ref{GEM}). It is worth emphasizing that \emph{strong} well-posedness is key to facilitating the construction of such couplings. 
\end{remark}

\section{Statements of Main Results} 
\label{Res}
We assume throughout that the IPS model satisfies the standing assumption from Section \ref{mod:dyn}. We first state our results on the strong well-posedness of the SDE \eqref{mod:infpart} governing the IPS dynamics in Section \ref{Res:WP}. We then state our local convergence and hydrodynamic limit results in Section \ref{Res:LWC} and Section \ref{Res:GEM}, respectively. These results hold under some mild additional conditions (Assumption \ref{mod:assu} and Assumption \ref{mod:cont}) on the jump rates that are shown in Appendix \ref{varex} to be satisfied by all the examples of Section \ref{mod:examples}. 

\subsection{Well-Posedness Results} 
\label{Res:WP}
We start by imposing a fairly mild degree-dependent boundedness condition on the jump rates. 

\begin{assumptio}[Bounds on the jump rates] 
\label{mod:assu} 
There exists a family of constants 
\[\mathbf{C} := \{C_{k,T}\}_{k \in \Nb, T \in \Rb_+ } \subset (0,\infty)\] 
with $(k,T) \mapsto C_{k,T}$ being componentwise non-decreasing such that for any $\sp{\emksp,\vmksp}$-graph $G$ and $T \in \Rb_+$, the jump rates ${\bf r}^G$ satisfy 
\begin{equation} 
\label{eq-assubd} 
\rate\gvpara{G}{v}\stpara{j}(t,x) \leq C_{|\gcl{v}{G}|,T}, \qquad v \in V_G, j\in \jmps, t \in [0,T], x \in \cad^G.
\end{equation} 
\end{assumptio}

\begin{remark}[Local jump rate formulation of Assumption \ref{mod:assu}] 
\label{rem:modassu} 
Assumption \ref{mod:assu} can equivalently be phrased as a condition on the local jump rates ${\bf \locrate}$. Indeed, in view of the relation \eqref{eq:standing} between ${\bf \rate^G}$ and ${\bf \locrate}$, it is clear that Assumption \ref{mod:assu} holds with the family of constants ${\bf C}$ if and only if for every $H \in \gsone$ and $T \in \Rb_+,$ 
\[\locrate^H (t,x,\ems, \vms) \leq C_{|V_H|,T}, \quad \mbox{ for all } t \in [0,T], x \in \cad^{V_H}, \ems \in \emksp^{E_H}, \vms \in \vmksp^{V_H}.\] 
We say that the local jump rates ${\bf \bar{\rate}}$ satisfy Assumption \ref{mod:assu} for an associated family of constants ${\bf C}$ when the above inequality holds. 
\end{remark} 

We recall the notion of strong well-posedness introduced in Definition \ref{mod:sols}. In the case of \emph{finite initial data}, that is, when the initial data consists of a finite (possibly unrooted) graph, Assumption \ref{mod:assu} implies the rates are uniformly bounded and strong well-posedness is easily established via a simple recursive construction (see Appendix \ref{WPfin}). For subsequent reference, we state this as a proposition. 

\begin{proposition}[Strong well-posedness for finite initial data] 
\label{WP:fin} 
Suppose Assumption \ref{mod:assu} holds. Then, the SDE \eqref{mod:infpart} is strongly well-posed for all finite initial data. 
\end{proposition}

We now turn to the main case of infinite graphs with possibly unbounded maximal degrees. In this case, the well-posedness of even Markovian SDEs of the form (3.3) is subtle and may fail to hold. Indeed, in Appendix \ref{illpf}, we construct a simple Markovian IPS with uniformly bounded jump rates that admits multiple strong solutions (with different laws) on certain graphs with super-exponential growth. Establishing well-posedness is further complicated because the jump rates of many commonly studied IPS (on graphs of unbounded maximal degree) are unbounded, as demonstrated by the examples in Section \ref{mod:examples}. Nevertheless, the following main result of this section shows that strong well-posedness does hold for IPS on a large subset of (random) graphs that satisfy an (almost sure) \emph{finite dissociability} condition. This condition is an (inhomogeneous) site percolation condition, governed by the jump rates and driving noises on the graph and expressed in terms of the family of constants ${\bf C}$ of Assumption \ref{mod:assu}. Roughly speaking, it ensures that there exists a time interval such that the graph can be a.s. decomposed into a collection of finite (random) subgraphs for which the driving noises on the boundary exhibit no jumps on that interval. We show that this implies that on this time interval, the dynamics of the IPS on each subgraph are not influenced by the dynamics outside of it. Subsequently, in Section \ref{perc:pfwp}, we also show that finite dissociability implies a more general \emph{spatial localization} property that holds on all finite time intervals. The precise definition of a finitely dissociable graph is deferred to Section \ref{perc:findis}, and the notion of spatial localization is made precise in Definition \ref{WP:locunif}. 

\begin{theorem}[Strong well-posedness on finitely dissociable graphs] 
\label{WP:WP} 
Suppose Assumption \ref{mod:assu} holds with the family of constants ${\bf C}$. Then the SDE \eqref{mod:infpart} is strongly well-posed for any a.s. finitely dissociable $\gs\sp{\emksp,\vmksp \times \Xc}$-random element $\ic{(G,\x)}$ with respect to {$\bf C$} (in the sense of Definition \ref{findis:decom} and Remark \ref{findis:WD}). 
\end{theorem}

Theorem \ref{WP:WP} follows from two auxiliary results, Proposition \ref{WP:locality2} and Proposition \ref{WP:WP2}. Proposition \ref{WP:locality2} shows that the almost sure finite dissociability of a graph implies the spatial localization property of the dynamics mentioned above. Proposition \ref{WP:WP2} shows that when the IPS is well-posed for all finite initial data (as is guaranteed under Assumption \ref{mod:assu} by Proposition \ref{WP:fin}), spatial localization on a graph $G$ implies well-posedness of the IPS on that graph. The latter result is of independent interest and can be used to establish well-posedness in some situations where Assumption \ref{mod:assu} does not hold. 

\begin{remark}[Strong well-posedness of Section \ref{mod:examples} examples] 
\label{rem-SIRwellposed} 
When applied to the IPS models listed in Section \ref{mod:examples}, Theorem \ref{WP:WP} in conjunction with Appendix \ref{varex} (which contains verification of Assumption \ref{mod:assu} for some constants $\mathbf{C}$ for these models) rigorously justifies that each of these models is well defined on any finitely dissociable graph (with respect to $\mathbf{C}$). In Section \ref{perc:findex}, we show that such graphs include GW and UGW trees whose respective offspring distributions have finite first and second moments and all graphs of bounded maximal degree (which includes all finite graphs). 
\end{remark}

\begin{remark}[Strong well-posedness without regular local jump rates] 
\label{rem-semiregularity} 
The standing assumption, in particular the symmetry assumptions in Definition \ref{def-regular}, may make it appear that our framework is restricted to spatially homogeneous dynamics. However, we show in Theorem \ref{saver:all} that by adding marks to the graph to make the group of graph automorphisms trivial, Theorem \ref{WP:WP}, in fact, implies well-posedness of a large class of IPS that are highly spatially heterogeneous. This more general well-posedness result is used in \cite{GanRam-MRF22} to establish certain Markov random field properties of the IPS. 
\end{remark}

\subsection{Local Weak Convergence of the Dynamics} 
\label{Res:LWC}
We now address the local weak convergence of processes. Given well-posedness, this is equivalent to establishing continuity (in the local weak topology) of the law of the isomorphism class $\ic{(G,X^{G,\x})}$ of the graph marked with the trajectory of the unique strong solution to the SDE \eqref{mod:infpart} with respect to the $\gs\sp{\emksp,\vmksp\times \Xc}$-valued initial data $\ic{(G,\x)}$. This requires the following additional mild continuity assumption on the local jump rates with respect to the ``environment'' marks. It holds trivially when the mark spaces $\emksp$ and $\vmksp$ are discrete. 

\begin{assumptio}[Weak continuity of jump rates with respect to marks] 
\label{mod:cont} 
\sloppy The initial data $\ic{(G,\x)}$ and jump rate functions $\mathbf{r}^G$ are such that if $(G,X^{G,\x}) = (V,E,\root,\ems,\vms,X^{G,\x})$ is any representative of a strong solution $\ic{(G,X^{G,\x})}$ to \eqref{mod:infpart} for the initial data $\ic{(G,\x)}$, then a.s. for every $(j,v) \in \jmps\times V_G$ and Lebesgue-a.e. $t \in \Rb_+$, $(\ems,\vms)$ is a continuity point of the map: 
\[\emksp^{E_G}\times \vmksp^{V_G} \ni (\ov{\dvms},\dvms) \mapsto \rate^{(V_G,E_G,\root,\ov{\dvms},\dvms),v}_j(t,X^{G,\x}).\] 
\end{assumptio}

\begin{remark}[Why $\ic{(G,X^{G,\x})}$ is well-defined in Assumption \ref{mod:cont}] 
\label{rm-A2welldef} 
Throughout the article, we only apply Assumption \ref{mod:cont} under conditions that also imply strong well-posedness of \eqref{mod:infpart}, in which case we always denote the unique strong solution to \eqref{mod:infpart} by $\ic{(G,X^{G,\x})}$. Thus, the reader may assume that $\ic{(G,X^{G,\x})}$ in Assumption \ref{mod:cont} has a well defined law.
\end{remark} 

\begin{theorem}[Local weak convergence of IPS] 
\label{LWC:LWC} 
Suppose Assumption \ref{mod:assu} holds, and $\ic{(G,\x)}$, $\ic{(G_n,\x^n)}, n \in \Nb,$ are a.s. finitely dissociable $\gs\sp{\emksp,\vmksp\times \Xc}$-random elements (in the sense of Definition \ref{findis:decom}) such that $\ic{(G,\x)}$ satisfies Assumption \ref{mod:cont}. If $\ic{(G_n,\x^n)} \Rightarrow\ic{(G,\x)}$ in $\gs\sp{\emksp,\vmksp \times \Xc}$, then $\ic{(G_n,X^{G_n,\x^n})} \Rightarrow \ic{(G,X^{G,\x})}$ in $\gs\sp{\emksp,\vmksp \times \cad}$. 
\end{theorem}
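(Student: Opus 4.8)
The plan is to establish convergence of the laws $\law(\ic{(G_n, X^{G_n,\x^n})})$ to $\law(\ic{(G, X^{G,\x})})$ in $\Pmc(\gs\sp{\emksp,\vmksp\times\cad})$ by a coupling argument. The essential idea is that the isomorphism class $\ic{(G_n, X^{G_n,\x^n})}$ is, by strong well-posedness (Theorem \ref{WP:WP}, which applies since the $\ic{(G_n,\x^n)}$ are a.s.\ finitely dissociable and Assumption \ref{mod:assu} holds), a measurable function of $\ic{(G_n,\x^n)}$ together with a driving noise $\poiss^{G_n}$; and crucially, the value of the marked trajectory in a ball $\trnc{m}(G_n)$ around the root depends — up to a random but a.s.\ finite ``spatial horizon'' — only on the initial data and driving noise within some larger finite ball. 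This is precisely the spatial localization property, and I would invoke the consistent spatial localization property of the sequence $\{G_n\}$ (Definition \ref{WP:loccons}), which I expect is proved in Section \ref{LWCpf} as the technical heart of the argument: there is a single a.s.\ finite random radius, uniform in $n$ in an appropriate stochastic-domination sense, governing how far out in the graph one must look to determine the dynamics near the root on a fixed time horizon $[0,T]$.

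The key steps, in order, are as follows. First, by the Skorokhod representation theorem, realize the local weak convergence $\ic{(G_n,\x^n)} \Rightarrow \ic{(G,\x)}$ on a common probability space so that, almost surely, for each $m \in \N$ there is $n_m < \infty$ and isomorphisms $\varphi_{n,m}$ matching $\trnc{m}(\nm{G_*})$ with $\trnc{m}(\nm{G_{n,*}})$ and converging marks, as in Definition \ref{def-locconvnoiso}. Second, on this common space construct a single driving noise $\poiss$ on the limit graph $G$ and transport it via $\varphi_{n,m}$ to get coupled driving noises $\poiss^{G_n}$ on the truncations $\trnc{m}(G_n)$ that agree (after relabeling) with $\poiss$ restricted to $\trnc{m}(G)$; this uses that the driving Poisson processes are i.i.d.\ vertex marks, so matching finite subgraphs means we can couple the noise exactly on them. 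Third, run the IPS on $G_n$ and on $G$ with these coupled data and, using the spatial localization property, show that on the event that the random spatial horizon for the root on $[0,T]$ is contained in $\trnc{m}(G)$ (and hence, by the isomorphism, in $\trnc{m}(G_n)$ for $n \geq n_m$), the marked trajectories near the root agree after relabeling: $\trnc{k}(G_n, X^{G_n,\x^n})$ is isomorphic to $\trnc{k}(G, X^{G,\x})$ for the relevant $k$. Fourth, let the horizon event's probability tend to $1$ as $m \to \infty$ (a.s.\ finiteness of the spatial horizon), send $T \to \infty$ along a sequence, and conclude that $\ic{(G_n, X^{G_n,\x^n})} \to \ic{(G, X^{G,\x})}$ in probability in $\gs\sp{\emksp,\vmksp\times\cad}$, hence in distribution.

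Two subtleties require care. The marks $\ems^n \to \ems$ only converge (they need not be exactly equal), so after coupling the driving noise on matched finite subgraphs the dynamics on $G_n$ and $G$ are driven by the same Poisson points but with slightly different rate functions $\rate^{(\cdot, \ems^n, \vms^n), v}_j$ versus $\rate^{(\cdot,\ems,\vms),v}_j$; here Assumption \ref{mod:cont} enters, ensuring that along Lebesgue-a.e.\ time the rates converge, so that a Gronwall-type or ``first disagreement time'' argument (counting the first Poisson atom whose acceptance indicator differs) shows the trajectories coincide in the limit on the finite subgraph. The main obstacle, and the step I expect to be the most delicate, is establishing the uniform-in-$n$ control of the spatial horizon: one must argue that the causal-chain / inhomogeneous-percolation construction used to prove spatial localization for a single finitely dissociable graph (Proposition \ref{WP:locality2}) can be run simultaneously and consistently along the sequence $G_n$, with the percolation clusters on $G_n$ stochastically dominated by a tight family. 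This is exactly what Definition \ref{WP:loccons} is meant to encapsulate, and verifying it for locally convergent sequences of a.s.\ finitely dissociable graphs — reconciling the finite-dissociability decompositions of the $G_n$ with that of the limit $G$ under the local isomorphisms — is where the real work lies. Once that consistent localization is in hand, the coupling and passage to the limit are comparatively routine.
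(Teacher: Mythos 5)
Your proposal is correct and follows essentially the same route as the paper: Skorokhod representation, coupling the driving noise through the local isomorphisms of matched truncations (formalized in the paper as a ``consistent rep-con extension,'' Lemma \ref{MG:repextexist}), consistent spatial localization to reduce to finite balls (Proposition \ref{WP:locality2}), and a ``first disagreement time'' argument under Assumption \ref{mod:cont} to handle the merely-convergent marks (Lemma \ref{LWCpf:nograph}); these are then combined in Proposition \ref{LWCpf:LWC}. One small clarification of emphasis: the uniformity across $n$ does not come from a stochastic-domination bound but from exact transportability — when the Poisson marks on matched truncations are coupled to coincide under the isomorphism, the causal chains (and hence the localizing sets $\Cc_T$) coincide exactly, so the a.s.\ finite horizon on the limit graph $G$ automatically controls all $G_n$ with $n$ large enough that the matched radius $M_n$ exceeds it.
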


Theorem \ref{LWC:LWC} follows from a more general almost sure version of this statement proved in Proposition \ref{LWCpf:LWC} under weaker assumptions that only require Assumption \ref{mod:assu2}, a consistent spatial localization condition introduced in Section \ref{subs-pfwp}, and a weaker finite convergence condition, Assumption \ref{LWCpf:finconv}, in place of Assumption \ref{mod:cont}. The weaker Assumption \ref{LWCpf:finconv} is useful for some applications. 

\subsection{Hydrodynamic Limit and Correlation Decay} 
\label{Res:GEM}
Given a Polish space $\Pol$ and a finite, unrooted $\sp{\emksp,\vmksp}$-graph $G$, define the (global) empirical measure of the finite, unrooted $\sp{\emksp,\vmksp\times \Pol}$-graph $(G,z)$ by 
\[\gemp^{G,z} (\cdot) := \frac{1}{|G|}\sum_{v \in V_G} \delta_{z_v}(\cdot),\]
where $\delta_{z_v}(\cdot)$ is the Dirac delta measure concentrated at $z_v$. Also consider the more general \emph{neighborhood empirical measure} given by 
\begin{equation} 
\label{def:nemp} 
\ov{\gemp}^{G,z}_{\ov{\root}} (\cdot) \defeq \frac{1}{|G|}\sum_{v \in V_G} \delta_{\ic{\trnc{1}(\cmpn{v}(G,z))}}(\cdot), 
\end{equation} 
where $\cmpn{v}(G,z)$ denotes the rooted $\sp{\emksp,\vmksp\times\Pol}$-graph obtained by restricting $(G,z)$ to the connected component of $v$ in $\nm{G}$, equipped with $v$ as its root. Note that $\gemp^{G,z} = \gemp^{G',z'}$ and $\ov{\gemp}^{G,z}_{\ov{\root}} = \ov{\gemp}^{G',z }_{\ov{\root}}$ whenever $(G',z')\cong (G,z)$. In other words, both empirical measures are determined only by the isomorphism class $\ic{(G,z)}$ of $(G,z)$. In particular, $\ov{\gemp}^{G,z}_{\ov{\root}}$ is a ${\mathcal P}(\gsonenew[\emksp,\vmksp \times \Pol])$-valued random element that describes the empirical measure of the isomorphism class of a uniformly distributed root in $(G,\ems,\vms,z)$ and its neighborhood, and the ${\mathcal P}(\Pol)$-valued random element $\gemp^{G,z}$ is the $z$-root mark marginal of $\ov{\gemp}^{G,z}_{\ov{\root}}$.

Since $\gemp^{G,z}$ and $\ov{\gemp}^{G,z}_{\ov{\root}}$ are global quantities, their asymptotic behavior cannot be deduced from the local convergence result established in Theorem \ref{LWC:LWC}. Moreover, as discussed in the introduction, unlike in the case of IPS on dense graphs, states of neighboring vertices of IPS on sequences of converging sparse graphs remain strongly correlated and do not become asymptotically independent; that is, propagation of chaos typically fails. Hence, the analysis of the convergence of $\gemp^{G,z}$ is more subtle for IPS on sparse (as opposed to dense) graph sequences, and due to this strong dependence between neighboring vertices, more complex empirical quantities such as $\ov{\gemp}^{G,z}_{\ov{\root}}$ are also of interest. Nevertheless, we show that under a slightly stronger convergence condition on the initial data than that imposed in Theorem \ref{LWC:LWC}, these empirical quantities do have a deterministic limit; see Theorem \ref{GEM:GEMconv} and Corollary \ref{GEM:empconv}, which are presented in Section \ref{GEM:lim}. A key ingredient of the proof is a certain asymptotic correlation decay property, which is first stated in Section \ref{LWC:CPLWS} (see Theorem \ref{GEM:deccor}). 

\subsubsection{An Annealed Correlation Decay Property}
\label{LWC:CPLWS}
We start by introducing a slightly stronger notion of local convergence that applies to graphs that are not necessarily connected and which, in a sense, has a more global flavor. 

\begin{definition}[Local convergence in probability] 
\label{def-inprob} 
Consider a sequence $\{G_n\}_{n \in \Nb}$ of finite, unrooted $\sp{\emksp,\vmksp}$-random graphs. Then $G_n$ converges to a $\gs\sp{\emksp,\vmksp}$-random element $\ic{G}$ in probability in the local weak sense (abbreviated to locally in probability) if for every bounded, $\law(\ic{G})$-a.s. continuous mapping $f: \gs\sp{\emksp,\vmksp} \to \Rb$, 
\begin{equation} 
\label{CPLWS:convdef} 
\frac{1}{|G_n|} \sum_{v \in V_{G_n}} f(\ic{\cmpn{v}(G_n)}) \to \ex{f(\ic{G})} \te{ in probability,} 
\end{equation} 
as $n\to\infty$. 
\end{definition}

Most definitions of convergence in probability only require that \eqref{CPLWS:convdef} hold for bounded and everywhere continuous functions $f$ (e.g., \cite[Definition 2.6]{LacRamWu23}). However, it can be shown that these definitions are equivalent. The version of the definition given above is more convenient for direct application to certain examples where one has to use the fact that \eqref{CPLWS:convdef} holds for the larger class of bounded, a.s. continuous functions $f$, such as Examples A and C of Section \ref{contact-motivating}. 

\begin{remark}[Examples of graphs converging locally in probability] 
\label{rem:cplwsexamples} 
All the examples of sequences of finite random graphs provided in Section \ref{grph:ex}, equipped with roots chosen uniformly at random, turn out to converge in this stronger sense (and not just locally in distribution). 
\end{remark}

We now state an asymptotic correlation decay property.  

\begin{theorem}[Asymptotic correlation decay] 
\label{GEM:deccor} 
Suppose Assumption \ref{mod:assu} holds, and let $\ic{(G,\x)}$ be a $\sp{\emksp,\vmksp\times \Xc}$-random element that satisfies Assumption \ref{mod:cont}, and is a.s. finitely dissociable in the sense of Definition \ref{findis:decom}. Suppose there exists a countable deterministic set $\Smc$ and a sequence $\{(G_n,\x^n)\}_{n \in \Nb}$ of finite, unrooted $\sp{\emksp,\vmksp\times \Xc}$-random graphs, each of whose vertex sets a.s. lie in $\Smc$. Also, for each $n\in \Nb$, let $(G_n,X^{G_n,\x^n})$ denote the strong solution to \eqref{mod:infpart} for the initial data $(G_n,\x^n)$.  Suppose that $|G_n|\to\infty$ in probability, and $(G_n,\x^n)$ converges locally in probability to $\ic{(G,\x)}$. If $o^i_n$, $i=1,2,$ are independent, uniformly distributed vertices of $G_n$ for all $n \in \Nb$, then for any bounded continuous functions $f_i: \gs\sp{\emksp,\vmksp\times\cad}\to \Rb,$ $i=1, 2,$ 
\begin{equation} 
\label{GEM:corlim} 
\lim_{n\to \infty} \te{Cov}\left(f_1\left(\ic{\cmpn{o_n^1}(G_n,X^{G_n,\x^n})}\right),f_2\left(\ic{\cmpn{o_n^2}(G_n,X^{G_n,\x^n})}\right)\right) = 0, 
\end{equation} 
where $\te{Cov}$ represents the covariance functional. 
\end{theorem}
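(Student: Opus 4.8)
The plan is to reduce the asymptotic correlation decay in \eqref{GEM:corlim} to a combination of (i) the local weak convergence of the marked solution graphs and (ii) a spatial decoupling estimate that uses the spatial localization of the SDE established in Section \ref{perc}. First I would observe that by boundedness of $f_1,f_2$ and a standard approximation argument, it suffices to prove the claim for functions $f_i$ that depend only on $\trnc{m}(\cmpn{o^i_n}(G_n,X^{G_n,\x^n}))$ for some fixed (but arbitrary) radius $m$; indeed, bounded continuous functions on $\gs\sp{\emksp,\vmksp\times\cad}$ can be uniformly approximated (in the relevant sense after integration) by such ``finite-radius'' functions, and the error is controlled uniformly in $n$. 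Similarly, since solution trajectories live in the non-locally-compact space $\cad$, I would truncate to trajectories on a compact time interval $[0,T]$ and use Assumption \ref{mod:assu} (uniform rate bounds on $[0,T]$ on bounded-degree neighborhoods) together with the a.s. finite dissociability to control the tails, so that WLOG $f_i$ depends only on the restriction of the solution to $[0,T]$ in a radius-$m$ neighborhood of $o^i_n$.

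Next I would set up the key coupling. Write $\te{Cov}(f_1,f_2) = \ex{f_1 f_2} - \ex{f_1}\ex{f_2}$. For the product term, condition on the graph $G_n$ and the two roots $o^1_n, o^2_n$, and consider the event $A_n$ that the radius-$(2m+1)$ (say) neighborhoods of $o^1_n$ and $o^2_n$ in $G_n$ are disjoint \emph{and} that the spatial localization radii of both roots — the random finite sets provided by Definition \ref{WP:locunif} / Proposition \ref{WP:locality2}, which contain the neighborhood and on whose induced subgraph the marginal dynamics agree with the full dynamics — are disjoint. By the spatial localization property, on $A_n$ the marks $X^{G_n,\x^n}_v$ for $v$ in $\trnc{m}(\cmpn{o^1_n})$ depend only on the initial data and driving noise restricted to one finite localizing set, and those for $v \in \trnc{m}(\cmpn{o^2_n})$ depend only on the (disjoint) localizing set of $o^2_n$; since the driving Poisson processes are i.i.d.\ across vertices and the initial conditions on disjoint vertex sets are (conditionally) independent once we also know the two components are distinct, the two quantities $f_1(\cdots)$ and $f_2(\cdots)$ are conditionally independent on $A_n$. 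Thus $|\ex{f_1 f_2 \indic{A_n}} - \ex{f_1}\ex{f_2}|$ is bounded by $\|f_1\|_\infty\|f_2\|_\infty$ times a sum of two probabilities: $\PP(A_n^c)$ (which I control below) plus the correction terms coming from restricting $\ex{f_i}$ itself to the disjointness event, each of which is again $O(\PP(A_n^c))$ by the same argument applied to a single root. So everything comes down to showing $\PP(A_n^c) \to 0$.

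The estimate $\PP(A_n^c)\to 0$ is where I expect the real work to be, and it is exactly what the hypothesis ``$(G_n,\x^n)$ converges in probability in the local weak sense'' and ``$|G_n|\to\infty$ in probability'' are for. Since $o^1_n,o^2_n$ are independent uniform vertices, $\PP(\text{radius-}k\text{ neighborhoods of }o^1_n,o^2_n\text{ intersect}\mid G_n)$ equals $|G_n|^{-2}\sum_{v}|\{u : d_{G_n}(u,v)\le 2k\}|$, i.e.\ the average size of a $2k$-ball in $G_n$ divided by $|G_n|$. Local-weak convergence in probability gives that the empirical distribution of the $2k$-ball around a uniform root converges to the law of $\trnc{2k}(\ic{G})$, which is a.s.\ finite (finite dissociability / local finiteness of the limit), so by a uniform-integrability-type argument the average ball size is $o_P(|G_n|)$; combined with $|G_n|\to\infty$, the intersection probability tends to $0$. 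For the localizing-set part of $A_n^c$, I would use that the localization radius is an a.s.\ finite random variable whose distribution (under the uniform-root measure) converges along the sequence — this is precisely the content of the consistent spatial localization / percolation estimates of Section \ref{perc} (causal chains have a.s.\ finite range because the limit is finitely dissociable), so the localizing sets are, with probability $\to 1$, contained in balls of some deterministic-modulo-$\epsilon$ radius $R$, and then the previous ball-intersection bound applies with $k=R$. The main obstacle is making this localization-radius tail estimate uniform along the sequence $\{G_n\}$ rather than just for the limit $G$ — this requires invoking the consistent localization framework (Definition \ref{WP:loccons}) rather than just Proposition \ref{WP:locality2}, and carefully transferring the percolation/causal-chain bounds from $G$ to $G_n$ using local convergence, together with a diagonal argument over the approximation radii $m$, the time horizon $T$, and the localization radius $R$.
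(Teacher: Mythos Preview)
Your approach is genuinely different from the paper's, and it contains a real gap. The claim that ``the initial conditions on disjoint vertex sets are (conditionally) independent once we also know the two components are distinct'' is false in general: the hypothesis is only that $(G_n,\x^n)$ converges \emph{in probability} in the local weak sense, which is an asymptotic second-moment condition, not a conditional-independence statement for fixed $n$. For instance, take $\x^n \equiv 0$ or $\x^n \equiv 1$ each with probability $1/2$; then the initial marks on any two disjoint vertex sets are perfectly correlated, and on your event $A_n$ the quantities $f_1$ and $f_2$ are \emph{not} conditionally independent. What is true is that, conditional on the \emph{full} initial data $(G_n,\x^n,o^1_n,o^2_n)$, the only remaining randomness is the i.i.d.\ Poisson noise, so on $A_n$ you get $\Emb[f_1 f_2\mid \text{data}] = \Emb[f_1\mid\text{data}]\,\Emb[f_2\mid\text{data}]$; but you then still need the two \emph{conditional expectations} (which are functions of the initial data in the two localizing sets) to be asymptotically uncorrelated, and that is precisely an asymptotic-independence statement for the initial data that must be extracted from the ``in probability'' mode of local weak convergence.

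The paper avoids this issue by a different route. It first shows (Lemma~\ref{EMP:dLWC}) that the \emph{pair} of rooted initial components $(\ic{\cmpn{o_n^1}(G_n,\x^n)},\ic{\cmpn{o_n^2}(G_n,\x^n)})$ converges in law to two independent copies of $\ic{(G,\x)}$; this is exactly where convergence in probability in the local weak sense is used. Then, via Skorokhod representation, it constructs (Lemma~\ref{EMP:inspce}) a single driving noise $\poiss^{G_n}$ by splicing together two consistent rep-con extensions, one for each root, using driving maps that agree with the $i$-th extension on a ball of radius $\ov{M}_n < \tfrac12 d_{G_n}(o^1_n,o^2_n)$ around $o^i_n$; along subsequences where $d_{G_n}(o^1_n,o^2_n)\to\infty$ a.s., this yields a bona fide consistent rep-con extension for each $i$ separately, so Proposition~\ref{LWCpf:LWC} applies and gives convergence in probability of the solution pair to the independent limit pair. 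No ``disjoint localizing sets with high probability'' event is ever isolated, and no uniform-in-$n$ tail bound on the localization radius is needed---which is the other soft spot in your sketch: the paper never proves such a uniform estimate, and transferring the finite-dissociability percolation bound from the limit $G$ to the sequence $\{G_n\}$ via local convergence would itself require a nontrivial argument.
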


The proof of Theorem \ref{GEM:deccor} is given in Section \ref{GEM}. The assumption in the statement of the theorem that the vertex sets of each $G_n$ a.s. lie in a countable deterministic set is not restrictive because it is satisfied by most common random graph sequences of interest including Erd\"{o}s-R\'{e}nyi graphs, configuration models, and the Barab\`{a}si-Albert model. It is imposed merely for technical convenience. It enables the driving noises on the graph sequence $\{G_n\}$ to be coupled in a measurable way, thereby allowing the application of the results of Section \ref{subs-coupling} in the proof (in particular, of the intermediate result stated in Lemma \ref{EMP:inspce}). 

When the jump rates of the SDE \eqref{mod:infpart} are strongly Lipschitz continuous in the sense described in the introduction, then arguments similar to those used for diffusions in \cite[Lemma 5.2]{LacRamWu23} can be applied to obtain stronger quantitative quenched (i.e., conditioned on the graph) bounds on the decay of correlation of IPS that are uniform with respect to graphs, and only depend on the cardinality of the sets of particles being compared and the graph distance between the sets. Under Assumption \ref{mod:assu}, such a strong Lipschitz condition holds for Markov IPS on graphs with bounded maximal degree but fails to hold for many interesting IPS on graphs with unbounded maximal degree. In such situations, one does not expect there to be a similar quenched correlation bound that is uniform over all graphs (or even all finitely dissociable graphs), and thus, the arguments we use to prove Theorem \ref{GEM:deccor} are crucially different from those used in \cite{LacRamWu23}. Specifically, to establish the \emph{annealed asymptotic} correlation decay property \eqref{GEM:corlim}, which is averaged over the randomness of the initial data, we first use the fact that the initial data satisfies an analogous asymptotic correlation decay property (due to the assumed local convergence in probability), then carefully construct an appropriate coupling and leverage Proposition \ref{LWCpf:LWC}, which establishes a stronger almost sure version of the local convergence result from Theorem \ref{LWC:LWC}, to extend the correlation decay property to the solution process.

\subsubsection{Hydrodynamic Limits} 
\label{GEM:lim}
The existence of the hydrodynamic limit follows as a simple consequence of well-posedness of the limit, local convergence, and asymptotic correlation decay.

\begin{theorem}[Local convergence in probability of IPS] 
\label{GEM:GEMconv} 
Suppose Assumption \ref{mod:assu} holds and the sequence of finite, unrooted $\sp{\emksp,\vmksp\times \Xc}$-random graphs $\{(G_n,\x^n)\}_{n \in \Nb}$ and the $\gs\sp{\emksp,\vmksp\times \Xc}$-random element $\ic{(G,\x)}$ satisfy the conditions of Theorem \ref{GEM:deccor}. Let $(G_n,X^{G_n,\x_n})$, $n\in\Nb$, and $(G,X^{G,\x})$ be weak solutions to \eqref{mod:infpart} for $(G_n, \x_n)$, $n\in\Nb$, and $(G,\x)$, respectively defined on a common probability space. Then the sequence $\{(G,X^{G_n,\x^n})\}_{n \in \Nb}$ converges locally in probability to $\ic{(G,X^{G,\x})}$. 
\end{theorem}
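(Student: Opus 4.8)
The plan is to verify the defining condition of Definition \ref{def-inprob} directly. Fix a bounded continuous $f\colon \gs\sp{\emksp,\vmksp\times\cad}\to\R$ and set
\[ Z_n \defeq \frac{1}{|G_n|}\sum_{v\in V_{G_n}} f\bigl(\ic{\cmpn{v}(G_n,X^{G_n,\x^n})}\bigr), \]
so that the goal becomes $Z_n\to \ex{f(\ic{(G,X^{G,\x})})}$ in probability. Since $f$ is bounded we have $|Z_n|\le\|f\|_\infty$, so it suffices to prove the two convergences $\ex{Z_n}\to\ex{f(\ic{(G,X^{G,\x})})}$ and $\mathrm{Var}(Z_n)\to 0$; together these give $Z_n\to\ex{f(\ic{(G,X^{G,\x})})}$ in $L^2$, hence in probability. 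Note that the hypotheses (inherited from Theorem \ref{GEM:deccor}) make the initial data a.s.\ finitely dissociable, so by Theorem \ref{WP:WP} the SDE \eqref{mod:infpart} is strongly well-posed for both $(G_n,\x^n)$ and $\ic{(G,\x)}$; in particular $\ic{(G_n,X^{G_n,\x^n})}$ and $\ic{(G,X^{G,\x})}$ have well-defined laws, so the statement does not depend on the choice of weak solutions.

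For the variance I would introduce, as in Theorem \ref{GEM:deccor}, two conditionally independent uniformly distributed vertices $o^1_n,o^2_n$ of $G_n$ (the measurability of such a selection being handled by the countable set $\Smc$, cf.\ the remark following Theorem \ref{GEM:deccor} and the canonical representatives of Appendix \ref{msbl}). Conditioning on the solution data $(G_n,X^{G_n,\x^n})$ and using conditional independence of $o^1_n$ and $o^2_n$ yields
\[ \ex{f\bigl(\ic{\cmpn{o^1_n}(G_n,X^{G_n,\x^n})}\bigr)\,f\bigl(\ic{\cmpn{o^2_n}(G_n,X^{G_n,\x^n})}\bigr)} = \ex{Z_n^2}, \qquad \ex{f\bigl(\ic{\cmpn{o^1_n}(G_n,X^{G_n,\x^n})}\bigr)} = \ex{Z_n}, \]
so that $\mathrm{Var}(Z_n)$ equals the covariance in \eqref{GEM:corlim} with $f_1=f_2=f$. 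Since the hypotheses of the present theorem are exactly those of Theorem \ref{GEM:deccor}, that theorem gives $\mathrm{Var}(Z_n)\to 0$.

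For the mean I would argue as follows. Because the jump rate at a vertex $v$ depends only on the histories and marks within $\gcl{v}{G_n}$, distinct connected components of $\nm{G_n}$ do not interact, and pathwise uniqueness on finite graphs (Proposition \ref{WP:fin}) forces the restriction of $X^{G_n,\x^n}$ to the connected component of any vertex to coincide with the strong solution of \eqref{mod:infpart} on that finite component; hence $\ic{\cmpn{o_n}(G_n,X^{G_n,\x^n})}$ is precisely the marked-trajectory isomorphism class associated with the finite initial data $\ic{\cmpn{o_n}(G_n,\x^n)}$. Next, conditioning on $(G_n,\x^n)$ gives $\ex{f(\ic{\cmpn{o_n}(G_n,\x^n)})} = \ex{\frac{1}{|G_n|}\sum_{v} f(\ic{\cmpn{v}(G_n,\x^n)})}$, and the right-hand side converges to $\ex{f(\ic{(G,\x)})}$ by the assumed convergence in probability in the local weak sense of $(G_n,\x^n)$ together with uniform boundedness; since this holds for every bounded continuous $f$, we obtain $\ic{\cmpn{o_n}(G_n,\x^n)}\Rightarrow\ic{(G,\x)}$ in $\gs\sp{\emksp,\vmksp\times\X}$. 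Finite rooted graphs, having bounded maximal degree, are a.s.\ finitely dissociable (Proposition \ref{find:finite}), and $\ic{(G,\x)}$ is a.s.\ finitely dissociable and satisfies Assumption \ref{mod:cont}, so Theorem \ref{LWC:LWC} applies and gives $\ic{\cmpn{o_n}(G_n,X^{G_n,\x^n})}\Rightarrow\ic{(G,X^{G,\x})}$ in $\gs\sp{\emksp,\vmksp\times\cad}$; boundedness and continuity of $f$ then yield $\ex{Z_n}=\ex{f(\ic{\cmpn{o_n}(G_n,X^{G_n,\x^n})})}\to\ex{f(\ic{(G,X^{G,\x})})}$, completing the argument.

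The conceptual content is entirely carried by the two main inputs — local weak convergence of the dynamics (Theorem \ref{LWC:LWC}) for the mean and annealed correlation decay (Theorem \ref{GEM:deccor}) for the variance — so I do not expect a serious obstacle. The only points that require care are technical: making precise the measurability of the uniform-root selection and of the isomorphism-class-valued maps (handled via the $\Smc$-valued-vertices hypothesis and Appendix \ref{msbl}), and cleanly justifying that the dynamics restricted to a connected component of $G_n$ is again governed by \eqref{mod:infpart} on that component — a locality property that, while intuitive, should be stated explicitly and follows from the structure of the rates in the Standing Assumption together with pathwise uniqueness.
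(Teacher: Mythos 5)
Your proof is correct and follows essentially the same strategy as the paper: annealed correlation decay (Theorem \ref{GEM:deccor}) kills the variance of the empirical average, while local weak convergence of the dynamics (Theorem \ref{LWC:LWC}) gives convergence of the mean, and these combine to yield convergence in probability. The paper packages the final step as an appeal to \cite[Lemma 2.12]{LacRamWuLWC21}, whereas you unpack the mean/variance decomposition and argue $L^2$ convergence directly — including spelling out the intermediate convergence $\ic{\cmpn{o_n}(G_n,\x^n)}\Rightarrow\ic{(G,\x)}$ and the locality argument that lets one apply Theorem \ref{LWC:LWC} componentwise — but the mathematical content and the key inputs are the same.
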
 
\begin{proof} 
First note that, given any driving noise, $\ic{(G,X^{G,\x})}$ and $(G_n,X^{G_n,\x^n})$, $n \in \Nb$, are well defined unique strong solutions to \eqref{mod:infpart} by Theorem \ref{WP:WP} and Lemma \ref{mod:stunique}. For each $n \in \Nb$, let $(o^1_n,o^2_n)$ be two i.i.d. uniformly distributed vertices in $G_n$. Then, by Theorem \ref{GEM:deccor} and Theorem \ref{LWC:LWC}, for any bounded, continuous functions $f_1,f_2: \gs\sp{\emksp,\vmksp\times\cad} \to \Rb$, 
\begin{align*} 
\lim_{n\to\infty}\ex{\prod_{i=1}^2f_i(\ic{\cmpn{o^i_n}(G_n,X^{G_n,\x^n})})} & = \lim_{n\to\infty}\prod_{i=1}^2\ex{f_i(\ic{\cmpn{o^i_n}(G_n,X^{G_n,\x^n})})} \\ 
&= \prod_{i=1}^2\ex{f_i(\ic{(G,X^{G,\x})})}. 
\end{align*} 
That this implies the desired result follows from \cite[Lemma 2.8]{LacRamWu23}. 
\end{proof}

We now show that Theorem \ref{GEM:GEMconv} implies the convergence in probability of both the empirical measure and the empirical neighborhood measure to corresponding deterministic limits. 

\begin{corollary}[Hydrodynamic limit] 
\label{GEM:empconv} 
Suppose Assumption \ref{mod:assu} holds and the sequence of finite, unrooted $\sp{\emksp,\vmksp\times \Xc}$-random graphs $\{(G_n,\x^n)\}_{n \in \Nb}$ and the $\gs\sp{\emksp,\vmksp\times \Xc}$-random element $\ic{(G,\x)}$ satisfy the conditions of Theorem \ref{GEM:deccor}. Also, let $(G_n,X^{G_n,\x_n})$ and $(G,X^{G,\x})$ be weak solutions to \eqref{mod:infpart} for $(G_n, \x_n)$ and $(G,\x)$, respectively. Then the $\Pc(\cad)$-valued random empirical measure sequence $\{\gemp^{G_n,X^{G_n,\x^n}}\}_{n\in \Nb}$ converges in probability to $\law(X^{G,\x}_{\root}) \in \Pc(\cad)$, and the $\Pc(\gsonenew\sp{\emksp,\vmksp\times \cad})$-valued random empirical neighborhood measure sequence $\{\ov{\gemp}^{G_n,X^{G_n,\x^n}}_{\ov{\root}}\}_{n \in \Nb}$ converges in probability to $\law(\ic{\trnc{1}(G,X^{G,\x})})\in\Pc(\gsonenew\sp{\emksp,\vmksp\times \cad})$.
\end{corollary} 
\begin{proof} 
By Theorem \ref{GEM:GEMconv}, the sequence $\{(G_n,X^{G_n,\x^n})\}_{n\in\Nb}$ converges locally in probability to $\ic{(G,X^{G,\x})}$. By \cite[Lemma 2.7]{LacRamWu23} (or rather its immediate extension to the setting with edge marks), this directly implies that the sequence $\{\gemp^{G_n,X^{G_n,\x^n}}\}_{n \in \Nb}$ of $\Pc(\cad)$-valued random measures satisfies 
\begin{equation} 
\label{add:eqn} 
\gemp^{G_n,X^{G_n,\x^n}} \mbox{ converges in probability to } \law(X^{G,\x}_{\root}). 
\end{equation} 
This proves the first assertion of the corollary.

Note that the map $\gs\sp{\emksp,\vmksp\times \cad}\ni \ic{(G,x)} \mapsto \ic{\trnc{1}(G,x)} \in \gs\sp{\emksp,\vmksp\times \cad}$ is continuous. Thus, for any bounded, continuous function $f: \gsonenew\sp{\emksp,\vmksp\times \cad} \to \Rb$, the function $g: \gs\sp{\emksp,\vmksp\times \cad} \to \Rb$ given by $g(\ic{(G,x)}) \defeq f(\ic{\trnc{1}(G,x)})$ is also bounded and continuous. Hence, recalling the definition of $\ov{\gemp}^{G_n,X^{G_n,\x^n}}_{\ov{\root}}$ from \eqref{def:nemp}, we have as $n\to\infty$, 
\begin{eqnarray*} 
\int_{\gsonenew\sp{\emksp,\vmksp\times\cad}} f(\ic{(G,x)})\,\ov{\gemp}^{G_n,X^{G_n,\x^n}}_{\ov{\root}}(d\ic{(G,x)}) & = & \frac{1}{|G_n|}\sum_{v \in V_{G_n}} f(\ic{B_1(\cmpn{v}((G_n, X^{G_n,\x^n})}) \\ 
& = & \frac{1}{|G_n|}\sum_{v \in V_{G_n}} g(\ic{\cmpn{v}(G_n,X^{G_n,\x^n})}) \\ 
& \to & \ex{g(\ic{(G,X^{G,\x})})} \\ 
& = & \ex{f(\ic{\trnc{1}(G,X^{G,\x})})},
\end{eqnarray*} 
where the convergence $\to$ in the penultimate line is in probability and justified by \eqref{add:eqn} and \eqref{CPLWS:convdef}. Thus, $\ov{\gemp}^{G_n,X^{G_n,\x^n}}_{\ov{\root}}\to \law(\ic{\trnc{1}(G,X^{G,\x})})$ in probability in $\Pc(\gsonenew\sp{\emksp,\vmksp\times\cad})$. This proves the second assertion of the corollary.  
\end{proof}

As was already observed in \cite[Theorem 6.4 and Proposition 7.7]{LacRamWu23} in the context of interacting diffusions, the stronger local convergence in probability is, in general, necessary to obtain deterministic hydrodynamic limits that coincide with $\law(X_{\root}^{G,\x})$ because if one only has local weak convergence of the initial data as in Theorem \ref{LWC:LWC}, the hydrodynamic limit can fail to be deterministic or fail to coincide with $\law(X_{\root}^{G,\x})$ even when deterministic.

\subsection{Illustrative Example: Ramifications for the SIR Model} 
\label{contact-motivating}
In this section, we use our running example of the heterogeneous SIR model from \eqref{intro:contrates2} to highlight some direct implications of Theorems \ref{WP:WP}, \ref{LWC:LWC}, \ref{GEM:GEMconv}, and Corollary \ref{GEM:empconv}. Similar conclusions can be drawn for the other examples presented in Section \ref{mod:examples} and other IPS models that satisfy our assumptions.

Recall that $\Xc = \{0,1,2\}$ is the state space of the model, $\vmksp = \Rb_+$ is the vertex mark space representing the recovery rates of different individuals, and $\emksp = \Rb_+$ is the edge mark space containing marks that capture the rate at which a vertex infects its neighbor. Let $\nu_v$ and $\nu_e$ denote two probability distributions with bounded support on $\Rb_+$. Recall that the (possibly time-varying) functions $\lambda,\rho: \Rb_+\to\Rb_+$ represent the respective rates at which an infected particle infects its neighboring particles and at which a particle recovers from infection. Assume that the two functions are bounded on compact sets: $\sup_{t \in [0,T]} \max\{\lambda(t),\rho(t)\} < \infty$ for all $T < \infty$. Let $G_n$ be a sequence of Erd\"{o}s-R\'{e}nyi graphs on $n$ vertices with edge probability $c/n\wedge 1$ for some $c \in (0,\infty)$, and let $\root_n$ be uniformly distributed in $V_{G_n}$. Also, let $G$ be the UGW({\rm Poiss}($c$)) tree, with $\root$ denoting its root. In addition, let $\ems, \ems^n, n \in \Nb,$ be i.i.d distributed according to $\nu_e$ and let $\vms, \vms^n, n \in \Nb,$ be i.i.d. distributed according to $\nu_v$. Fix $p \in (0,1)$ and let $\x^n$, $n \in \Nb$, and $\x$ be vectors of i.i.d. $\{0,1\}$ Bernoulli ($p$) random variables. This model is precisely Example \ref{ex:RE}, so it follows from Appendix \ref{varex} that Assumptions \ref{mod:assu} and \ref{mod:cont} hold for this model, and from Examples \ref{ex:ER} and \ref{ex:markedconv} that $(G_n, \x_n)$ converges locally (and in probability) to $(G,\x)$. Then by Theorem \ref{WP:WP}, $\ic{(G,X)}$ is well defined. And by Theorem \ref{LWC:LWC} (respectively, Theorem \ref{GEM:GEMconv}) $\ic{(G_n,\root_n,X^n)}$ converges locally weakly (respectively, in probability) to $\ic{(G, X)}$. 

We present examples of several macroscopic quantities of interest whose convergence can be deduced from Corollary \ref{GEM:empconv}. When $G$ is a UGW tree, a tractable characterization of the limit has recently been obtained in \cite{CocRam23}. 

\begin{description} 
\item[A. Proportion of Infected Particles:] We start with a macroscopic quantity that depends only on time marginals. Fix $t \in \Rb_+$, and let $P_n(t)$ be the proportion of infected particles at time $t$ for the SIR model on $G_n$. Then we claim that 

\[P_n(t) = \frac{1}{|V_{G_n}|}\sum_{v \in V_{G_n}} \indic{X^n_v(t) = 1} \to \PP(X_{\root}(t) = 1) \te{ in probability}.\]

{\em Justification}: Since discontinuities of the function $f_1: \cad \to \Xc$ defined by $f_1(x) = \indic{x(t) = 1}$ lie in the $\law(X_\root)$-null set $\{x: t \in \jmp{x}{\infty}\}$, $f_1$ is a.s. continuous with respect to $\law(X_{\root})$. Hence, applying Corollary \ref{GEM:empconv}, it follows that in probability, 

\[\int f_1(x)\,\gemp^{G_n,X^n}(dx) = \frac{1}{|V_{G_n}|}\sum_{v \in V_{G_n}} \indic{X^n_v(t) = 1} \to \int f_1(x)\,\mu[\{\root\}](dx) = \PP(X_{\root}(t) = 1). \]

\item[B. The Average Duration of Infection of Particles:] We now look at a macroscopic quantity that depends on the trajectories of particles. Fix $0\leq t_1 < t_2 < \infty$, and let $T_n(t)$ denote the average duration of the infection time of a particle in the interval $[t_1,t_2]$ for the SIR model on $G_n$. Then we claim that 
\[T_n(t) = \frac{1}{|V_{G_n}|} \sum_{v \in V_{G_n}} \int_{t_1}^{t_2} \indic{X^n_v(t) = 1}\,dt \to \int_{t_1}^{t_2} \PP(X_\root(t) = 1) \,dt \quad \te{ in probability.}\] 
{\em Justification:} Note that the above display is equivalent to the statement that $\int f_2(x)\,\gemp^{G_n,X^n}(dx) \to \int f_2(x)\, {\rm Law} (X_\root)(dx)$ in probability, where $f$ is the bounded, continuous function $f_2: \cad \to \Xc$ defined as $f_2 (x) := \int_{t_1}^{t_2}\indic{x(t)=1}\,dt.$ The claim then follows from Corollary \ref{GEM:empconv}, which ensures that $\gemp^{G_n,X^n}$ converges to ${\rm Law} (X_\root)$ in probability. 

\item[C. The Average Proportion of Infected Neighbors of a Susceptible Particle:] We now look at a quantity that depends on the neighborhood graph structure. Fix \\ $t \in \Rb_+$, and let $A_n(t)$ denote the average proportion of neighbors of susceptible particles in the graph $G_n$ that are infected at time $t$ (interpreted as equal to zero if no particle is susceptible at time $t$). Then we claim that 
\[A_n(t) = \frac{1}{|\{v \in V_{G_n}: X^n_v(t) = 0\}|}\sum_{v: X^n_v(t)=0} \frac{1}{|\gneigh{v}{G_n}|}\sum_{u \in \gneigh{v}{G_n}} \indic{X^n_u(t)=1}\] 
converges in probability to 
\[A(t) := \ex{\frac{1}{|\gneigh{\root}{G}|}\sum_{u \in \gneigh{\root}{G}} \indic{X_u(t)=1}\middle|X_{\root}(t) = 0}\] 
where $0/0$ is assigned the value $0$ ($A(t)$ is well defined by the remark below). In other words, the limit is the expected proportion of neighbors of the root $\root$ for the SIR model on $G$ that are infected at time $t$, conditioned on the event that $X_{\root}$ is susceptible at time $t.$

{\em Justification:} Recall that $\gsonenew\sp{\{1\},\cad}\subset \gs\sp{\{1\},\cad}$ is the space of isomorphism classes of graphs of radius $1$ with vertex marks in $\cad$. Fix $t > 0$, and let $f_3,f_4: \gsonenew\sp{\{1\},\cad} \to [0,1]$ be defined by 
\[f_3(\ic{(H,x)}) := \frac{\indic{x_{\root_H}(t) = 0}}{|V_H|-1}\sum_{u \in V_H\setminus \{\root_H\}} \indic{x_u(t)=1} \quad \te{and} \quad f_4(\ic{(H,x)}) := \indic{x_{\root_H}(t)=0}.\] 
Also, recall from \eqref{def:nemp} that 
\[\ov{\gemp}^{G_n,X^n}_{\ov{\root}} = \frac{1}{|G_n|} \sum_{v \in V_{G_n}} \delta_{\ic{\trnc{1}(\cmpn{v}(G,z))}},\] 
and set $\ov{\mu} \defeq \law(\ic{\trnc{1}{(G,X)}}).$ Then observe that 
\[A_n(t) = \frac{\int f_3(\ic{(H,x)})\,\ov{\gemp}^{G_n,X^n}_{\ov{\root}}(d\ic{(H,x)})}{\int f_4(\ic{(H,x)})\,\ov{\gemp}^{G_n,X^n}_{\ov{\root}}(d\ic{(H,x)})}\] 
and 
\[A(t) = \frac{\int f_3(\ic{(H,x)})\,\ov{\mu}[\cl{\root}](d\ic{(H,x)})}{\int f_4(\ic{(H,x)})\,\ov{\mu}[\cl{\root}](d\ic{(H,x)})} = \frac{\ex{\frac{\indic{X_{\root}(t)=0}}{|\gneigh{\root}{G}|}\sum_{u\in\gneigh{\root}{G}}\indic{X_u(t)=1}}}{\PP(X_\root(t) = 0)},\] 
once more assigning the value of $0$ to $0/0$. Since the dynamics are driven by i.i.d. Poisson processes, it is not hard to see that the discontinuities of both $f_3$ and $f_4$ lie in the $\ov{\mu}[\cl{\root}]$-null set $\{\ic{(H,x)}\in \gsonenew\sp{\emksp,\vmksp\times\cad}: t \in \jmp{x}{\infty}\}$, so both functions are bounded and a.s. continuous with respect to $\ov{\mu}$. Hence, by Corollary \ref{GEM:empconv} and the continuous mapping theorem, $\langle f_j, \ov{\gemp}^{G_n,X^n}_{\ov{\root}} \rangle \to \langle f_j, \ov{\mu}[\cl{\root}]\rangle$ in probability for $j = 3, 4.$ Below, we show that the denominator of the limit is a.s. positive, so the claim follows. 

{\bf Remark.} We claim that $A(t)$ is well defined because $\PP(X_\root(t) = 0) > 0$. To see why this holds, note that because $|\gneigh{\root}{G}|$ is an a.s. finite, non-negative (integer) random variable, there must exist $k \in \Nb_0$ and $q \in (0,1]$ such that $\PP(|\gneigh{\root}{G}| = k)= q$. Moreover, $X_{\root}(0) = 0$ with probability $p$, which was assumed to lie in $(0,1).$ Lastly, by assumption the distribution $\nu_e$ has bounded support, and so there exists a constant $\ems^* < \infty$ such that $\max\{\ems_e,\ems^n_{e'}\} \leq \ems^*$ a.s. for all $n \in \Nb$, $e \in E_G$, and $e' \in E_{G_n}$. Finally, the maximal rate at which $X_{\root}(0)$ transitions to $1$ is bounded from above at each time $s \in [0,t]$ by $|\gneigh{\root}{G}|\ems^* \lambda(s)$. It therefore follows that 
\[\PP(X_{\root}(t) = 0) \geq pq\exp\left(-k\ems^*\int_0^t \lambda(s)\,ds\right) \geq pq\exp\left(-kt \ems^*\left(\sup_{s \in [0,t)} \lambda(s)\right)\right) > 0,\] 
thus proving the claim.
\end{description}

\section{Spatial Localization and the Proof of Well-Posedness} 
\label{perc}
Throughout this section, we assume, sometimes without explicit mention, that we are given a family of regular local jump rates ${\bf \bar{\rate}}$ and that for any graph $G$, the jump rates ${\bf \rate}^G$ satisfy the standing assumption, that is, are given in terms of the local jump rates ${\bar{\rate}}$ via \eqref{eq:standing}. In Section \ref{subs-pfwp}, we introduce the notions of spatial localization and consistent spatial localization (see Definitions \ref{WP:locunif} and \ref{WP:loccons}) and in Section \ref{subs-wellposed1} show that the SDE \eqref{mod:infpart} is strongly well-posed for any marked graph that ``spatially localizes'' the IPS. In Section \ref{perc:findis}, we define the class of finitely dissociable graphs and provide examples of graphs in this class. In Section \ref{perc:pfwp}, we show that sequences of such graphs consistently spatially localize the SDE \eqref{mod:infpart}.

\subsection{Spatial Localization and Consistent Spatial Localization} 
\label{subs-pfwp}
Throughout this section, we assume the local jump rates ${\bf \locrate}$ satisfy the following mild assumption, which is clearly weaker than Assumption \ref{mod:assu} in light of Proposition \ref{WP:fin}.

\begin{assumptionp}{\ref{mod:assu}$'$}[Strong well-posedness for finite initial data] 
\label{mod:assu2} 
The family of local jump rates ${\bf \locrate }$ is such that for any finite initial data $(G,\xi)$, the SDE \eqref{mod:infpart} with jump rates ${\bf \rate^G}$ defined in terms of ${\bf \locrate}$ via \eqref{eq:standing} is strongly well-posed. 
\end{assumptionp}

We now introduce the notion of spatial localization for a given rooted graph $G$ (and local jump rates ${\bf \bar{\rate}}$). Intuitively, we would like to say that a deterministic $\sp{\emksp,\vmksp}$-graph $G$ spatially localizes the SDE \eqref{mod:infpart} if for any finite $T > 0$ and $\sO \subseteq V_G$ there exists an (in general random) {\em finite} vertex subset $U \subseteq V_G$ that contains $\sO$ with the property that on the time interval $[0,T]$, the $\sO$-marginal of any weak solution to the IPS on $G$ coincides with the $\sO$-marginal of the IPS dynamics on the subgraph $G\subg{U}$. However, for such a characterization to make sense, the IPS dynamics $X^{G\subg{U}, \x_{U}}$ on $G\subg{U}$ needs to be well defined, that is, the IPS needs to be well-posed for the initial data $(G\subg{U},\x_{U})$. Unfortunately, the subtlety is that even though $U$ is (almost surely) finite, Assumption \ref{mod:assu2} does not automatically guarantee this well-posedness. This is because if $X^{G,\x}$ is an $(\filt,\poiss^G)$-weak solution (in the sense of Remark \ref{mod:concise}) to the SDE \eqref{mod:infpart} for the initial data $(G,\x)$, then for any finite $\sO$, the corresponding random graph $G\subg{U}$ will generally fail to be $\filtm_0$-measurable. Thus $(\filt,\poiss^{G\subg{U}}) \defeq (\filt,\poiss^G_U)$ need not be a valid driving noise compatible with the IPS on $G\subg{U}$ (in the sense of Definition \ref{mod:dnoise}). 

However, this technical difficulty can be overcome by instead considering the sequence of deterministic graphs $H_{\ell} \defeq \trnc{\ell}(G)$, $\ell \in \Nmb,$ on each of which the IPS (with the corresponding initial condition) is well-posed by Assumption \ref{mod:assu2}. One can then equivalently define spatial localization for $G$ via the condition that almost surely the $\sO$-marginal dynamics of any weak solution $X^{G,\x}$ to the IPS on $G$ coincides with the $\sO$-marginal dynamics of the IPS on $H_\ell$ for some $\ell \in \Nb_+,$ or in other words, $\bigcup_{\ell\in\Nb}\{X^{G,\x}_{\sO}[T] = X^{H_{\ell},\x_{V_{H_{\ell}}}}_{\sO}[T]\}$.

We now state the precise formulation. Recall from Section \ref{nota:grph} that $$\fset{G} := \{U \subset V_G: |U|< \infty\}.$$ In what follows, we deal with {\em deterministic} initial data $(G,\x)$, recall the notion of a \fpp\ $(\filt,\poiss^G)$ from Remark \ref{mod:concise}, and recall its associated augmented filtration $\filt^{\poiss^G} = \{\filtm_t^{\poiss^G}\}_{t \geq 0}$ introduced prior to Definition \ref{mod:sol}.

\begin{definition}[Spatial localization] 
\label{WP:locunif} 
A deterministic $\sp{\emksp,\vmksp}$-graph $G$ is said to spatially localize the SDE \eqref{mod:infpart} with local jump rates ${\bf \bar{\rate}}$ if for any \fpp\ $(\filt,\poiss^G)$ defined on $\fpspace$ and $T \in \Rb_+$, there exists a (random) mapping $\Oo_T(\cdot;G,\poiss^G): \fset{G} \times \Omega \to 2^{V_G}$ such that for every $\sO, U \in \fset{G}$, 
\begin{equation} 
\label{smap-meas} 
\{\Oo_T(\sO;G,\poiss^G) \subseteq U\} := \{\omega \in \Omega: \Oo_T(\sO;G,\poiss^G)(\omega) \subseteq U \} \in \filtm_T^{\poiss^G}, 
\end{equation} 
\begin{equation} 
\label{smap-finite} 
\PP ( \Oo_T(\sO;G,\poiss^G) \in \fset{G} ) = 1, 
\end{equation} 
and the following properties hold: 
\begin{enumerate} 
\item for each $\sO \in \fset{G}$, $\Oo_T(\sO;G,\poiss^G)(\omega) \supseteq \sO$ for every $\omega \in \Omega$; 
\item given any $\ell \in \Nb,\x \in \Xc^G$, and $\sO,U\subset V$ such that $\sO \subseteq \trnc{\ell}(G)\subseteq U$, every $(\filt,\poiss^G)$-weak solution $X^{G\subg{U},\x_U}$ to the SDE \eqref{mod:infpart} (with local jump rates $\mathbf{\locrate}$) for the initial data $(G[U],\xi_U)$ satisfies 
\begin{equation} 
\label{eq-spatloc} 
X^{G\subg{U},\x_U}_{\sO}[T] = X^{\trnc{\ell}(G),\x_{\trnc{\ell}(G)}}_{\sO}[T] \quad \te{ a.s. on the event } \{\Oo_T(\sO;G,\poiss^G) \subseteq \trnc{\ell}(G)\}, 
\end{equation} 
\end{enumerate} 
where $X^{\trnc{\ell}(G),\x_{\trnc{\ell}(G)}}$ is the unique $\poiss^G_{\trnc{\ell}(G)}$-strong solution to the SDE \eqref{mod:infpart} (with local jump rates ${\bf \bar{\rate}}$) for the initial data $(\trnc{\ell}(G),\x_{\trnc{\ell}(G)})$. In this case $\Oo_T(\cdot;G,\poiss^G)$ is said to be a localizing map for the SDE \eqref{mod:infpart} with local jump rates ${\bf \bar{\rate}}$ on $(G,\poiss^G)$. 
\end{definition}

\begin{remark}[Definition \ref{WP:locunif} notation] 
\label{WP:locshort} 
In Definition \ref{WP:locunif}, we may omit one or both of the last two arguments of $\Oo_T$ when the graph and/or Poisson processes are clear from the context. We will also typically omit explicit mention of the local jump rates when clear from context. 
\end{remark}

Condition \eqref{smap-meas} ensures that the localizing set $\Oo_T(\sO;G,\poiss^G)$ is measurable (with respect to $\filtm_T^{\poiss^G}$), and condition \eqref{smap-finite} encodes the first key property that it must be almost surely finite. Property 1 simply states the obvious fact that the set of particles, equivalently driving Poisson processes, that affect the dynamics on any set $\sO$ must clearly include those associated with the set $\sO$ itself, and property 2 captures the key intuitive property that the marginal dynamics on the set $\sO$ of the IPS on the graph $G$ and the localizing set (almost surely) coincide on the interval $[0,T]$, or stated more precisely, requires the marginal dynamics on $\sO$ over the interval $[0,T]$ of the IPS on the graph $G$ and on the $\ell$-neighborhood of the root to be (almost surely) identical on $[0,T]$ whenever the $\ell$-neighborhood is large enough to include the spatially localizing set.

\begin{remark}[Spatial localization non-uniqueness] 
\label{rem-locdef} 
It should be clear from the above discussion that when $G$ spatially localizes the SDE \eqref{mod:infpart}, there may be many choices for the localizing map $\Oo_T(\sO;G,\poiss^G)$. Indeed, any almost surely finite, measurable set that contains a localizing set is also a localizing set. One natural choice is to let $\Oo_T(\sO;G,\poiss^G)$ be minimal, that is, define it to be precisely the set of particles in $G$ that influence the IPS dynamics of the particles in $\sO$ during the time interval $[0,T]$, as illustrated in Figure \ref{fig-Spatloc}. Two alternative constructions are used in the proof of Proposition \ref{WP:locality2} in Section \ref{perc:pfwp}. 
\end{remark}

\begin{figure}[h]
\centering
\begin{subfigure}{.5\textwidth}
\centering
\tikzstyle{level 1}=[sibling angle=120]
\tikzstyle{level 2}=[sibling angle=80]
\tikzstyle{level 3}=[sibling angle=60]
\tikzstyle{every node}=[fill,scale=0.5]
\tikzstyle{edge from parent}=[segment length=1mm, segment angle=10,draw]
\begin{tikzpicture}[grow cyclic,shape=circle,level distance=6mm, cap=round]
\node [label = {[label distance = 0.25 cm]-75:\huge $\sO$}] (A) at (0,0) {} 
	child [color=black] {node (B) {} 
		child [color = black] {node {} 
			child [color = black] {node {} 
				    child [color = black] {node {}}
				    child [color = black] {node {}}}
			child [color = black] {node {} 
				    child [color = black] {node {}}
				    child [color = black] {node {}}}}
		child [color = black] {node {} 
			child [color = black] {node {}}
			child [color = black] {node {}
				    child [color = black] {node {}}}}}
	child [color=black] {node (C) {} 
		child [color = black] {node {} 
			child [color = black] {node {} 
				    child [color = black] {node {}}
				    child [color = black] {node {}}}
			child [color = black] {node {}
				    child [color = black] {node {}}}}
		child [color = black] {node {} 
			child [color = black] {node {}
				    child [color = black] {node {}}}
			child [color = black] {node {}}}}
	child [color=black] {node {} 
		child [color = black] {node {} 
			child [color = black] {node {} 
				    child [color = black] {node {}}
				    child [color = black] {node {}}}
			child [color = black] {node {}}}
		child [color = black] {node {} 
			child [color = black] {node {}
				    child [color = black] {node {}}}
			child [color = black] {node {} 
				    child [color = black] {node {}}
				    child [color = black] {node {}}}}};
\node [rotate = 30][fill=none, draw,dashed,inner sep =10pt, ellipse, yscale = 0.9, fit = (A)(B)(C)] {};
\end{tikzpicture}
\caption{$\mathbb{T}_3[U]$}
\label{GW}
\end{subfigure} \\
\begin{subfigure}{.48\textwidth}
\centering
\tikzstyle{level 1}=[sibling angle=120]
\tikzstyle{level 2}=[sibling angle=80]
\tikzstyle{level 3}=[sibling angle=60]
\tikzstyle{every node}=[fill,scale=0.5]
\tikzstyle{edge from parent}=[segment length=1mm, segment angle=10,draw]
\begin{tikzpicture}[grow cyclic,shape=circle,level distance=6mm, cap=round]
\node [label = {[label distance = 0.25 cm]-75:\huge $\sO$}] (A) at (0,0) {}
	child [color=black] {node (B) {} 
		child [color = black] {node {} 
			child [color = black] {node {}}
			child [color = black] {node [color=red] {}}}
		child [color = black] {node [color=red] {} 
			child [color = black] {node [color=red] {}}
			child [color = black] {node {}}}}
	child [color=black] {node (C) {} 
		child [color = black] {node {} 
			child [color = black] {node {}}
			child [color = black] {node [color=red] {}}}
		child [color = black] {node {} 
			child [color = black] {node {}}
			child [color = black] {node {}}}}
	child [color=black] {node [color=red] {} 
		child [color = black] {node [color=red] {} 
			child [color = black] {node [color=red] {}}
			child [color = black] {node [color=red] {}}}
		child [color = black] {node [color=red] {} 
			child [color = black] {node [color=red] {}}
			child [color = black] {node [color=red] {}}}};
\node [rotate = 30][fill=none, draw,dashed,inner sep =10pt, ellipse, yscale = 0.9, fit = (A)(B)(C)] {};
\end{tikzpicture}
\caption{$\trnc{3}(\mathbb{T}_3)$}
\label{B3G}
\end{subfigure} ~
\begin{subfigure}{.48\textwidth}
\centering
\tikzstyle{level 1}=[sibling angle=120]
\tikzstyle{level 2}=[sibling angle=80]
\tikzstyle{level 3}=[sibling angle=60]
\tikzstyle{every node}=[fill,scale=0.5]
\tikzstyle{edge from parent}=[segment length=1mm, segment angle=10,draw]
\begin{tikzpicture}[grow cyclic,shape=circle,level distance=6mm, cap=round]
\node [label = {[label distance = 0.25 cm]-75:\huge $\sO$}] (A) at (0,0) {}
	child [color=black] {node (B) {} 
		child [color = black] {node {}}
		child [color = black] {node [color = red] {}}}
	child [color=black] {node [color = red] (C) {} 
		child [color = black] {node [color = red] {}}
		child [color = black] {node [color = red] {}}}
	child [color=black] {node [color = red] {} 
		child [color = black] {node [color = red] {}}
		child [color = black] {node [color = red] {}}};
\node [rotate = 30][fill=none, draw,dashed,inner sep =10pt, ellipse, yscale = 0.9, fit = (A)(B)(C)] {};
\end{tikzpicture}
\caption{$\trnc{2}(\mathbb{T}_3)$}
\label{B2G}
\end{subfigure}
\caption{$\mathbb{T}_3$ is the infinite regular tree, $\sO,U$ are deterministic subsets of its vertices, with 
  $U$ containing $S := \Oo_T(\sO;\mathbb{T}_3,\poiss^{\mathbb{T}_3})(\omega)$, a realization of  the (minimal) spatially localizing set for $\sO$, and  $\mathbb{T}_3[U]$ is the subgraph of  $\mathbb{T}_3$ induced by $U$;    
Figures (a), (b), and (c) describe the (realization $\omega$ of the) dynamics of the IPS on $\mathbb{T}_3[U]$, $B_3[\mathbb{T}_3]$, and $B_2[\mathbb{T}_3]$, respectively, with particles colored black if their dynamics on $[0,T]$ coincide with the IPS on $\mathbb{T}_3[U]$,  and colored red if their dynamics differ.   The fact that all particles in $\sO$ are colored black in (b) but not in (c) reflects the fact that 
$S \subseteq B_3[\mathbb{T}_3]$, but  
$S \not\subseteq B_2[\mathbb{T}_3]$, as is consistent with property \eqref{eq-spatloc}.}
\label{fig-Spatloc} 
\end{figure}

\sloppy Given the fixed IPS model (equivalently, a family of local jump rates $\mathbf{\bar{r}}$ satisfying Assumption \ref{mod:assu2}) we now introduce the concept of consistent spatial localization of a sequence of graphs, which will be used in the proofs of local convergence and the hydrodynamic limit in Sections \ref{LWCpf} and \ref{GEM}, respectively. 

\begin{definition}[Consistent Spatial Localization] 
\label{WP:loccons} 
A sequence of $\sp{\emksp,\vmksp}$-graphs $\{G_n\}_{n \in \Nb}$ defined on some common probability space $(\Omega, \filtm, \PP)$ is said to consistently spatially localize the SDE \eqref{mod:infpart} with local jump rates ${\bf \bar{\rate}}$ if for every $n \in \Nb$ and corresponding \fpp\ $(\filt^n,\poiss^{G_n})$ on $(\Omega, \filtm, \PP)$, there exists a mapping $\Oo_T(\cdot;G_n,\poiss^{G_n}): \fset{G_n} \times \Omega \to 2^{V_{G_n}}$ that satisfies the following properties: 
\begin{enumerate} 
\item for each $n \in \Nb$, $\Oo_T(\cdot;G_n,\poiss^{G_n})$ is a localizing map for the SDE \eqref{mod:infpart} with local jump rates $\mathbf{\bar{r}}$ on $(G_n, \poiss^{G_n})$ in the sense of Definition \ref{WP:locunif}; 
\item for every $n,n',\ell\in\Nb$ such that there exists an isomorphism $\varphi\in I(\trnc{\ell}(\nm{G_{n',*}},\trnc{\ell}(\nm{G_{n,*}}))$, the following property holds: for each pair of sets $\sO \subseteq \trnc{\ell}(G_n)$ and $\sO' \defeq \varphi^{-1}(\sO) \subseteq \trnc{\ell}(G_{n'})$, 
\[\Oo_T(\sO;G_n,\poiss^{G_n}) = \varphi(\Oo_T(\sO';G_{n'},\poiss^{G_{n'}}))\] 
a.s. on the event $\{\Oo_T(\sO;G_n,\poiss^{G_n})\subseteq \trnc{\ell}(G_n)\} \cap \Imc_{\varphi}$, where $\Imc_\varphi$ is the $\filtm_T^{\poiss^{G_{n'}}} \vee \filtm_T^{\poiss^{G_n}}$ measurable event given by 
\[\Imc_\varphi:= \{\varphi \in I(\trnc{\ell}(\nm{G_{n',*}},\poiss^{G_{n'}}),\trnc{\ell}(\nm{G_{n,*}},\poiss^{G_n}))\}.\] 
\end{enumerate} 
In this case $\{\Oo_T(\cdot;G_n,\poiss^{G_n})\}_{n \in \Nb}$ is said to be a consistent sequence of localizing maps for the SDE \eqref{mod:infpart} with local jump rates $\mathbf{\bar{r}}$ on $\{(G_n,\poiss^{G_n})\}_{n \in \Nb}$. 
\end{definition}

In Proposition \ref{WP:locality2}, we show that any sequence of graphs that is a subset of the class of so-called finitely dissociative graphs (see Definition \ref{findis:decom}) consistently spatially localizes the SDE \eqref{mod:infpart}.

It is not hard to show that (consistent) spatial localization is a property of isomorphism classes. We omit the proof as it follows easily from the definition.

\begin{definition}[Spatial localization by isomorphism classes] 
\label{pfwp:consspatloc} 
An isomorphism class $\ic{G}\in \gs\sp{\emksp,\vmksp}$ is said to spatially localize the SDE \eqref{mod:infpart} with local jump rates $\mathbf{\bar{r}}$ whenever each representative $\sp{\emksp,\vmksp}$-graph $G$ does the same. A sequence of isomorphism classes $\{\ic{G_n}\}_{n \in \Nb}$ in $\gs\sp{\emksp,\vmksp}$ is said to consistently spatially localize the SDE \eqref{mod:infpart} with local jump rates $\mathbf{\bar{r}}$ if every sequence of representative graphs $G_n \in \ic{G_n}, n \in \Nb$, does the same. 
\end{definition}

Note that while spatial localization is a class property, the notion of a localizing map given in Definition \ref{WP:locunif} applies to a given graph and not its isomorphism class.

\begin{remark}[Spatial localization for more general IPS] 
\label{WP:modeltype} 
Definitions \ref{WP:locunif} and \ref{WP:loccons} are abstract properties that easily extend to a more general class of graph-indexed jump processes $X^{G,\x}$ that satisfy a different Poisson-driven SDE from \eqref{mod:infpart}, as long as Assumption \ref{mod:assu2} still holds for that SDE. For instance, they could apply to IPS such as the exclusion process in which particles may experience simultaneous jumps, where it would be more natural to index $\poiss^{G}$ by the edges, rather than the vertices, of $G$. 
\end{remark}

\subsection{Well-posedness on spatially localizing graphs} 
\label{subs-wellposed1}
We now show that strong well-posedness holds on spatially localizing graphs under the rather mild Assumption \ref{mod:assu2}. 

\begin{proposition}[Well-posedness for IPS with spatially localizing initial data] 
\label{WP:WP2} 
Suppose the local jump rates ${\bf \bar{\rate}}$ satisfy Assumption \ref{mod:assu2} and that $\ic{(G,\x)}$ is a $\gs\sp{\emksp,\vmksp\times\Xc}$-valued random element such that $\ic{G}$ a.s. spatially localizes the SDE \eqref{mod:infpart} with local jump rates ${\bf \bar{\rate}}$ in the sense of Definition \ref{pfwp:consspatloc}. Then the SDE \eqref{mod:infpart} is strongly well-posed for the initial data $\ic{(G,\x)}$.
\end{proposition} 
\begin{proof} 
By Lemma \ref{msbl:rep}, there exists a $\sigma(\ic{(G,\x)})$-measurable $\sp{\emksp,\vmksp\times\Xc}$-random graph $(G,\x)$ such that $(G,\x) \in \ic{(G,\x)}$ a.s.. By Definition \ref{mod:sols} and Lemma \ref{mod:condwp}, it suffices to prove that \eqref{mod:infpart} is strongly well-posed for a.s. every realization of $(G,\x)$. Therefore, for the remainder of the proof, we assume without loss of generality that $(G,\x)$ is a deterministic $\sp{\emksp,\vmksp\times \Xc}$-graph where $G$ spatially localizes the SDE \eqref{mod:infpart}.

We now explicitly construct a strong solution to the SDE \eqref{mod:infpart} for $(G,\x)$. Let $\poiss^G = \{\poiss^G_v\}_{v \in V_G}$ be a collection of i.i.d. Poisson processes on $\Rb_+^2\times\jmps$ with intensity $\leb^2\otimes \Sm$. Let $\filt \defeq \{\filtm_t\} \defeq \filt^{\poiss^G}$ be the associated filtration (as defined prior to Definition \ref{mod:sol}). It is clear from Definition \ref{mod:dnoise} that $(G,\poiss^G)$ is an $\filt$-driving noise. So by Remark \ref{mod:concise}, $(\filt,\poiss^G)$ is a \fpp. Fix $T\in \Rb_+$, and let $\Oo_T(\cdot) \defeq \Oo_T(\cdot;G,\poiss^G)$ be a corresponding localizing map, which exists due to our assumption that $G$ spatially localizes the SDE. For notational conciseness, let $B_m \defeq \trnc{m}(G)$ for each $m \in \Nb$, and additionally fix $\xi$ and omit the dependence on $\xi$ from the superscript, with the understanding that the mark is always the restriction of $\xi$ to the corresponding graph in the superscript. Furthermore, for any $\ell \in \Nb$, recalling that by Assumption \ref{mod:assu2} the SDE \eqref{mod:infpart} is strongly well-posed for the finite data $(B_\ell,\x_{B_\ell})$, we let $X^{B_\ell}$ denote the corresponding $\poiss^G_{B_{\ell}}$-strong solution to \eqref{mod:infpart}. By Definition \ref{WP:locunif}, for each $m \in \Nb$, there exists a random finite set $U_{m} \defeq \Oo_T(B_m)$ such that for any $\ell, \ell' \in\Nb$, $\ell' \geq \ell\geq m$, by \eqref{smap-meas} and applying \eqref{eq-spatloc} with $U = B_{\ell'}$ and $\sO = B_m$, 
\[X^{B_{\ell}}_{B_m}[T] = X^{B_{\ell'}}_{B_m}[T] \quad \mbox{ on the event } \{U_{m}\subseteq B_{\ell}\} \in \filtm_T.\] 

For $m \in \Nb$, we define the $\cad_T^{B_m}$-valued random element $X^m$ and the random integer $M_m$ as follows: 
\begin{equation} 
\label{def-Mm} 
X^{m}[T] \defeq \lim_{\ell \to \infty} X^{B_{\ell}}_{B_m}[T] \quad \mbox{ and } \quad M_{m} \defeq \inf\{\ell \geq m: U_{m}\subseteq B_{\ell}\}. 
\end{equation}

Since $\bigcup_{\ell \in \Nb} B_{\ell} = V_G$ and $U_m$ is a.s. finite, $X^m$ is well defined on a set of full measure and $M_m$ is a.s. finite. Moreover, since $X^{B_{\ell}}$ is $\filt$-adapted for every $\ell \in \Nb$ and $\filt$ is complete, it follows that $X^{m}[T]$ is also $\filt$-adapted. The last two displays together then imply that a.s., 
\begin{equation} 
\label{WP2:2} 
X^{m}[T] = \lim_{\ell \to \infty} X^{B_{\ell}}_{B_m}[T] = X^{B_{M_m}}_{B_m}[T]. 
\end{equation} 
Furthermore, clearly the sequence $(X^m[T])_{m \in \Nb}$ is consistent: for any $m' > m$, a.s., 
\begin{equation} 
\label{WP2:3} 
X^{m'}_{B_m}[T] = \lim_{\ell \to \infty} (X^{B_{\ell}}_{B_{m'}})_{B_m}[T] = \lim_{\ell \to \infty} X^{B_{\ell}}_{B_m}[T] = X^m[T] = X^m_{B_m}[T], 
\end{equation} 
where the first and third equalities invoke \eqref{WP2:2} and the remaining equalities hold trivially. Now, for every $v \in V_G$, there exists an integer ${m_v} \in \Nb$ such that $v \in B_{m_v}$, and the last display shows that $X^{m_v}_v[T] = X^{m'}_v[T]$ a.s. when $m' \geq {m_v}$. Because $V_G$ is countable and $\filt$ is complete, we can define the $\filt$-adapted $\cad^G_T$-random element $X[T]$ by setting 
\begin{equation} 
\label{WP2:4} 
X_v[T] := \lim_{m\to\infty} X^{m}_v[T] = X^{m_v}_v[T], \quad v \in V_G, 
\end{equation} 
on the set of measure one where the latter limits exist, and setting $X_v[T] \equiv \x_v, v \in V_G,$ on the complement.

To show that the $X[T]$ thus constructed is an $\poiss^G$-strong solution to the SDE \eqref{mod:infpart} on $[0,T]$, fix $v \in V_G$ and define $\bar{m}_v := \max \{m_u: u \in \cl{v}\}$. Then $\cl{v}\subset B_{\bar{m}_v}$ and from \eqref{WP2:2}--\eqref{WP2:4}, it follows that for $\ell \in \Nb$, 
\begin{equation} 
\label{WP2:5} 
X_{\cl{v}}[T] = X^{\bar{m}_v}_{\cl{v}}[T] = X^{B_\ell}_{\cl{v}}[T] \quad \mbox{ on } \quad A_\ell(v) := \{ \ell \geq M_{\bar{m}_v}\}. 
\end{equation} 
Since $X^{B_{\ell}}$ is a $\poiss^G_{B_{\ell}}$-strong solution to the SDE \eqref{mod:infpart}, we obtain a.s., for $t \in [0,T],$ 
\begin{align*} 
X_v(t)\mathbb{I}_{A_\ell(v)} &= X^{B_{\ell}}_v(t) \mathbb{I}_{A_\ell(v)}\\ 
&= \left(\x_v + \int_{(0,t]\times \Rb_+\times\jmps} j\indic{r \leq \rate\gvpara{G\subg{B_\ell}}{v}\stpara{j}(s,X^{B_{\ell}})} \,\poiss^{G}\poissv{v}(ds,dr,dj)\right)\mathbb{I}_{A_\ell(v)}. 
\end{align*} 
By \eqref{eq:standing} and \eqref{WP2:5}, it follows that with $H := G[\cl{v}]$, 
\begin{align*} 
\rate\gvpara{G[B_\ell]}{v}\stpara{j}(s,X^{B_{\ell}}) &= \bar{r}^{H}_j\left(s,X^{B_{\ell}}_{\cl{v}}, (\ems_{e})_{e \in E_H}, (\vms_{u})_{u \in \cl{v}}\right) \\ 
&= \bar{r}^{H}_j\left(s,X_{\cl{v}},(\ems_{e})_{e \in E_H}, (\vms_{u})_{u \in \cl{v}}\right)\\ 
&= \rate\gvpara{G}{v}\stpara{j}(s,X), 
\end{align*} 
for every $j \in \jmps$ and $s \in \Rb_+$ on the event $A_\ell(v)$. Hence, we have 
\begin{align*} 
X_v(t)\mathbb{I}_{A_\ell(v)} & = \left(\x_v + \int_{(0,t]\times \Rb_+\times\jmps} j\indic{r \leq \rate\gvpara{G}{v}\stpara{j}(s,X)} \,\poiss^{G}\poissv{v}(ds,dr,dj)\right) \mathbb{I}_{A_\ell(v)}. 
\end{align*} 
Then, a.s., since $M_{\bar{m}_v}$ is finite, taking the limit as $\ell\to \infty$ shows that 
\[X_v(t) = \x_v + \int_{(0,t]\times \Rb_+\times\jmps} j\indic{r \leq \rate\gvpara{G}{v}\stpara{j}(s,X)}\,\poiss^{G}\poissv{v}(ds,dr,dj).\] 
Thus, we have proved the existence of an $\poiss^G$-strong solution to \eqref{mod:infpart} on any interval $[0,T]$.

We now turn to the proof of pathwise uniqueness. Suppose that $\alt{X}$ and $\alt{X}'$ are any two $(\filt,\poiss^G)$-weak solutions to the SDE \eqref{mod:infpart} for $(G,\x)$. Since $G$ spatially localizes the SDE, for any $\ell > m \in \Nb$ and $M_m$ defined as in \eqref{def-Mm}, invoking \eqref{smap-meas} and applying \eqref{eq-spatloc}, with $U = V$ and $\sO = B_m$, to both weak solutions $\alt{X}$ and $\alt{X}'$, we obtain a.s. on the event $\{\ell \geq M_m\} = \{\Oo_T(\sO;G,\poiss^G) \subseteq \trnc{\ell}(G)\} \in \filtm^{\poiss^G}_T$, 
\[\alt{X}_{B_m}[T] = X^{B_{\ell}(G)}_{B_m}[T] = \alt{X}'_{B_m}[T],\] 
where recall from Remark \ref{WP:locunif} that $X^{B_{\ell}(G)}$ is the unique $\poiss^G_{\trnc{\ell}(G)}$-strong solution to \eqref{mod:infpart} for the initial data $\trnc{\ell}(G,\x)$. Taking the limit as $\ell \to \infty$ and using the almost sure finiteness of $M_m$, it follows that $\widetilde{X}_{B_m}[T] = X_{B_m}[T] = \widetilde{X}'_{B_m}[T]$ a.s. for every $m \in \Nb$, which in turn shows that $X[T] = \widetilde{X}[T] = \widetilde{X}'[T]$ a.s..

Since $T$ is arbitrary for both existence and pathwise uniqueness, $X[T]$, $T > 0,$ are consistent. So there exists a.s. a unique pathwise extension $X$ of the strong solution to all of $[0,\infty)$. This concludes the proof. 
\end{proof}

\begin{remark}[Well-posedness for more general IPS] 
\label{WP:modeltype2} 
Most of the proof of Proposition \ref{WP:WP2} also extends to more general IPS. For instance, if an IPS has simultaneous jumps, then the proof will hold given Assumption \ref{mod:assu2} and spatial localization (see Remark \ref{WP:modeltype}) except for the verification that $X \defeq \lim_{m\to\infty} X^m$ solves \eqref{mod:infpart}. Instead of the latter, one would have to prove that if $X^{B_\ell}$ satisfies the SDE defining the new model on the finite graph $B_\ell(G)$, and $X^m$ is as defined in \eqref{WP2:2}, then the limit $X = \lim_{m \rightarrow \infty} X^m$ also satisfies that SDE on the infinite graph $G$. 
\end{remark}

\subsection{Finitely Dissociable Graphs} 
\label{perc:findis}
\subsubsection{Definition of Finitely Dissociable Graphs} 
\label{perc:finddef} 
We now introduce the class of finitely dissociable graphs, which are defined in terms of an inhomogeneous site percolation on the graph. Recall the intuitive description of finite dissociability given prior to Theorem \ref{WP:WP}, and also recall the definition of the measure space $\Nms(0,T]$ from Section \ref{nota:msr}.

\begin{definition}[Percolation] 
\label{findis:pdef} 
For any $0 < T < \infty$, let $(G,\ppmk)$ be a $\sp{\{1\},\Nms(0,T]}$-random graph. Fix $0 \leq t_1 < t_2 \leq T$, and set $\va_v = \va_v(t_1,t_2) \defeq \indic{\ppmk_v(t_1,t_2] > 0}$ for $v \in V_G$. Then the percolated graph $\perc{t_1,t_2}{G,\ppmk}$ is defined to be the (possibly disconnected and random) subgraph of $G$ induced by the vertex set $\{v \in V_G: R_v = 1\}$. When $t_1 = 0$, we write $\perc{t_2}{G,\ppmk} \defeq \perc{0,t_2}{G,\ppmk}$. 
\end{definition}

In the percolation, we refer to vertices $v \in V_G$ with $R_v =1$ as active and those with $R_v = 0$ as inactive. In our application of Definition \ref{findis:pdef}, the vertex marks $\ppmk$ of the graph $G$ are realizations of (Poisson) point processes. 

\begin{definition}[$\delt$-dissociation] 
\label{findis:perc} 
Given $0 < \delt \leq T < \infty$, we say a $\sp{\{1\},\Nms(0,T]}$-graph $(G,\ppmk)$ $\delt$-\emph{dissociates} if all connected components of $\perc{\delt}{G,\ppmk}$ are finite. 
\end{definition}

For any graph $G$, let $(G,\poiss^G)$ be a driving noise in the sense of Definition \ref{mod:dnoise}. Also, suppose we are given a family of constants ${\bf C} := \{C_{k,T}\}_{k \in \Nb, T \in \Rb_+}$. Then for $T \in (0,\infty)$, let $\poiss^{G,T} = (\poiss_v^{G,T})_{v \in V_G}$ be the collection of point processes on $[0,T]$ defined by 
\begin{equation} 
\label{findis:poiss} 
\poiss^{G,T}_v(t_1,t_2] \defeq \poiss^{G}\poissv{v}\left((t_1,t_2]\times (0,C_{|\cl{v}|,T}]\times\jmps\right), \quad \mbox{ for } 0 < t_1< t_2\leq T < \infty, v \in V_G, 
\end{equation} 
Note that the union of the events of $\poiss^{G,T}\poissv{v}, v \in V_G,$ almost surely contains the set of discontinuities of any weak solution $X^{G,\x}[T]$ of the SDE \eqref{mod:infpart} with jump rates satisfying Assumption \ref{mod:assu} with the family of constants {\bf $C$}. 

\begin{definition}[Finite dissociability] 
\label{findis:decom} 
A deterministic graph $G$ is said to be \emph{finitely dissociable} with respect to the family of constants ${\bf C} := \{C_{k,T}\}_{k \in \Nb, T \in \Rb_+} \subset (0,\infty)$ if for any $T \in (0,\infty)$, there exists $\delt = \delt_T > 0$ such that the associated collection of point processes $\poiss^{G,T}$ defined in \eqref{findis:poiss} is such that $(G,\poiss^{G,T})$ $\delt$-dissociates a.s.. If $G$ is a marked graph, we say $G$ is finitely dissociable (with respect to ${\bf C}$) if and only if the corresponding unmarked graph $\nm{G}$ is finitely dissociable. 
\end{definition}

\begin{remark}[Vertex removal probabilities] 
\label{rem-perc} 
In the percolated graph $\perc{\Delta}{G,\poiss^{G,T}}$ described in Definition \ref{findis:decom}, setting 
\begin{equation} 
\label{eq-barC} 
\bar{C}_{k,T} \defeq \Sm(\jmps) C_{k,T}, \quad \forall k\in \Nb, T\in \Rb_+, 
\end{equation} 
each vertex $v$ is removed from $G$ independently with a probability $\exp\left(-\bar{C}_{|\cl{v}|,T}\Delta \right)$, which is decreasing in the degree of the vertex $v$. 
\end{remark}

\begin{remark}[Finite dissociability of isomorphism classes] 
\label{findis:WD} 
Finite dissociability with respect to any family of constants $\bm{C}$ is a ``class property'' in that it depends only on the isomorphism class $\ic{G}\in \gs$ of the graph $G$, and not on the particular representative or choice of the driving noise. Indeed, if $G_1 \cong G_2$, then for any fixed $T \in (0,\infty)$ and Poisson processes $\poiss^{G_i,T}$, $i = 1, 2$, constructed as in \eqref{findis:poiss}, it is easy to construct a coupling $(\alt{\poiss}^{G_1,T},\alt{\poiss}^{G_2,T})$ of $(\poiss^{G_1,T},\poiss^{G_2,T})$ in which $\alt{\poiss}^{G_i,T} \deq \poiss^{G_i,T}$ for $i=1,2,$ such that $(G_1,\alt{\poiss}^{G_1,T}) \cong (G_2,\alt{\poiss}^{G_2,T})$ a.s.. Hence, for $\Delta \in [0, T]$, \[\perc{\Delta}{G_1,\poiss^{G_1,T}} \deq \perc{\Delta}{G_1,\alt{\poiss}^{G_1,T}} \cong \perc{\Delta}{G_2,\alt{\poiss}^{G_2,T}} \deq \perc{\Delta}{G_2,\poiss^{G_2,T}}, \] with the equivalence holding a.s.. Since the finite dissociability of $G_i$ only depends on the probability that $\perc{\Delta}{G_i,\poiss^{G,T}}$ has an infinite component for some $\Delta > 0$, and the existence of an infinite component is isomorphism invariant, this shows that the finite dissociability property is also invariant with respect to graph isomorphisms. Thus, the statement ``$\ic{G} \in \gs$ is (or is not) finitely dissociable with respect to the family of constants $\bm{C}$'' is well defined .
\end{remark}

\subsubsection{Examples of Finitely Dissociable Graphs} 
\label{perc:findex}
We now show that the class of (almost surely) finitely dissociable graphs encompasses several interesting classes of graphs of interest in applications, including lattices and regular, GW and UGW trees. 

We start by recalling that a rooted tree is a connected, rooted acyclic graph $\tree = (V,E,\root)$. Any pair of vertices $u,v \in V$ has a unique path connecting them. A vertex $v \in V$ is said to be in the $\ell$th generation of $\tree$ if $d_\tree(\root,v) = \ell$. A finite tree $\tree$ is said to be an $\ell$-generation tree if $\max_{v \in V_{\tree}} d_{\tree}(\root,v) = \ell$. For $v \in V\setminus\{\root\}$, the parent of $v$, denoted $\pare{v}{\tree}$, is the unique neighbor $u$ of $v$ such that $d_\tree(\root,u) < d_\tree(\root,v)$. Also, $\chil{v}{\tree} \defeq \gneigh{v}{\tree}\setminus\pare{v}{\tree}$ is said to be the children of $v$. For any vertex set $U\subseteq V$, let $\chil{U}{\tree} = \bigcup_{u\in U}\chil{u}{\tree}\setminus U.$ In addition, given $v, w \in V$, $w$ is said to be a descendant of $v$ if there exists $n \in \Nb$ and a path $(v=u_0, u_1, \ldots, u_n=w)$ in $\tree$ such that for every $i=1,\ldots, n$, $u_i = \chil{u_{i-1}}{\tree}$. 

Fix $\rho \in \Pc(\Nb_0)$. In view of Definition \ref{LWC:GWdef} and the notation above, a random tree $\tree$ is a GW$(\rho)$ tree if for all $\ell \in \Nb$, $\{|\chil{v}{\tree}|\}_{\{v: d_{\tree}(\root,v)=\ell\}}$ is a collection of conditionally i.i.d. $\rho$-distributed random variables given $\trnc{\ell}(\tree)$ (the conditional independence arises because the set of vertices $\{v: d_{\tree}(\root,v)=\ell\}$ is itself random and $\sigma(\trnc{\ell}(\tree))$-measurable).

\begin{proposition}[Finite dissociability of GW trees] 
\label{find:GWtree} 
If $\rho \in \Pc(\Nb_0)$ has a finite first moment, that is, $\sum_{k \in \Nb} k \rho_{k} < \infty$, then the GW tree $\tree\defeq GW(\rho)$ is a.s. finitely dissociable with respect to any family of constants ${\bf C}$. 
\end{proposition}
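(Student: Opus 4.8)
The plan is to fix an arbitrary $T\in(0,\infty)$ and produce a dissociation number $\Delta=\Delta(T)>0$ by a direct first--moment analysis of the site percolation $\perc{\Delta}{\tree,\poiss^{\tree,T}}$. Writing $\bar C_{k,T}=\Sm(\jmps)C_{k,T}$ as in Remark~\ref{rem-perc} and recalling \eqref{findis:poiss}, conditionally on $\tree$ each vertex $v$ is \emph{active} (retained) independently of the others with probability $1-e^{-\bar C_{|\cl{v}|,T}\Delta}$, and $|\cl{v}|=|\chil{v}{\tree}|+2$ for every non-root $v$ while $|\cl{\root}|=|\chil{\root}{\tree}|+1$. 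Since $\bar C_{\cdot,T}$ is non-decreasing (Assumption~\ref{mod:assu}), a monotone coupling shows that $\perc{\Delta}{\tree,\poiss^{\tree,T}}$ is dominated by the percolation $\Pi_\Delta$ in which \emph{every} vertex $u$ with $k:=|\chil{u}{\tree}|$ children is retained independently with probability $\pi(k):=1-e^{-\bar C_{k+2,T}\Delta}$; so it suffices to show that for small $\Delta$ the percolation $\Pi_\Delta$ a.s.\ has only finite connected components (henceforth ``active'' refers to $\Pi_\Delta$).

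The second step reduces this to the finiteness of a single ``downward'' cluster. Because $\tree$ is a locally finite tree, any infinite connected subgraph $C\subseteq\tree$ has a unique vertex $r$ of minimal depth, and for each $w\in C$ the unique $\tree$--path from $r$ to $w$ lies in $C$ and, by minimality of the depth of $r$, is directed away from the root; hence every vertex of $C$ is a descendant of $r$ joined to $r$ by a path of active vertices. Thus $\Pi_\Delta$ has an infinite component only if, for some vertex $r$, the size $S_r$ of the \emph{active downward cluster} of $r$ --- the set consisting of $r$ and of those descendants $w$ of $r$ for which every vertex on the $r$--$w$ path is active --- is infinite. By the branching property of $GW(\rho)$ the subtree rooted at any vertex $v$, conditioned on $v\in V_\tree$, is again a $GW(\rho)$ tree and the activation rule of $\Pi_\Delta$ restricts to it verbatim (every vertex has the same children in the subtree as in $\tree$), so, conditionally on $v\in V_\tree$, $S_v$ has the same law as $S:=S_\root$. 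Enumerating the countably many potential vertices by Ulam--Harris labels and using a union bound, it is then enough to find $\Delta>0$ with $\PP(S=\infty)=0$.

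For the last step I would bound $\mathbb E[S]$ by a recursion. Let $S^{(n)}$ be the number of active vertices of the downward cluster of $\root$ within graph distance $n$ of $\root$, so $S^{(n)}\uparrow S$. By the branching property, conditionally on $\root$ being active and having $\eta:=|\chil{\root}{\tree}|$ children, $S^{(n+1)}\deq 1+\sum_{i=1}^{\eta}S^{(n)}_i$ with the $S^{(n)}_i$ i.i.d.\ copies of $S^{(n)}$, while $S^{(n+1)}=0$ if $\root$ is inactive; since $\root$ is active with probability $\pi(\eta)$ independently of the child subtrees, writing $s_n:=\mathbb E[S^{(n)}]$ gives $s_{n+1}=a(\Delta)+b(\Delta)s_n$ with $a(\Delta):=\mathbb E[\pi(\eta)]\in[0,1]$, $b(\Delta):=\mathbb E[\eta\,\pi(\eta)]$ and $\eta\sim\rho$. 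Since $\pi(k)\le 1$ and $\pi(k)\to 0$ as $\Delta\downarrow0$ for each fixed $k$, dominated convergence --- with dominating function $k\mapsto k$, which is $\rho$--integrable precisely because $\rho$ has finite first moment --- yields $b(\Delta)\to0$ as $\Delta\downarrow0$. Fix $\Delta=\Delta(T)\in(0,T]$ with $b(\Delta)<1$; then $s_0=a(\Delta)$ and the recursion give $s_n=a(\Delta)\sum_{j=0}^{n}b(\Delta)^j\uparrow a(\Delta)/(1-b(\Delta))<\infty$, so $\mathbb E[S]<\infty$ by monotone convergence and hence $S<\infty$ a.s. Combined with the two reductions above, this shows $(\tree,\poiss^{\tree,T})$ $\Delta$--dissociates a.s.; since $T$ was arbitrary, $\tree$ is a.s.\ finitely dissociable.

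The crux --- and the only point at which the finite--first--moment hypothesis is genuinely used --- is the estimate $b(\Delta)=\mathbb E[\eta\,\pi(\eta)]\to0$: because $\bar C_{k,T}$ grows with $k$, high--degree vertices are retained with probability close to $1$, so one needs $\eta\,\pi(\eta)$ to be uniformly $\rho$--integrable as $\Delta\downarrow0$, which is exactly what $\sum_k k\rho_k<\infty$ supplies (and the statement does fail for sufficiently heavy--tailed $\rho$). The remaining ingredients --- the monotone coupling, the branching property, the minimal--depth--vertex observation for infinite subtrees, and the elementary linear recursion --- are routine; the only bookkeeping to watch is that the union bound runs over the countably many Ulam--Harris labels, each contributing a null event, and that $\Delta$ may always be taken $\le T$ so that Definition~\ref{findis:perc} applies.
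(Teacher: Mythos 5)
Your proof is correct, and it takes a route that differs from the paper's in an interesting way. The paper's argument begins from the original degree-dependent percolation and, because the retention probability of a vertex is correlated with its number of children, introduces a \emph{half-percolation} in which every even-generation vertex is retained and only odd-generation vertices are subject to removal; it then tracks the expected population at even generations via a grandchild recursion, showing $\ex{Z_{\Delta,\root}}$ can be driven below $1$ by dominated convergence. You instead first dominate by a uniformized percolation $\Pi_\Delta$ whose retention probability at every vertex (root included) depends only on the number of offspring via the ``$+2$'' rule; this restores exact self-similarity along the descendant subtrees, and you then run a one-step linear recursion directly on the expected downward-cluster size, with rate $b(\Delta)=\ex{\eta\,\pi(\eta)}$. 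Both arguments turn on the same two pivots --- condition on the offspring count so that retention and branching decouple, then use dominated convergence, with $\eta$ as the dominating function, to push the one- or two-generation growth rate below $1$ for small $\Delta$ --- and both use the finite first moment of $\rho$ at exactly this step. Your version avoids the two-generation bookkeeping entirely and is arguably a little cleaner: in particular, passing to $\Pi_\Delta$ tidies up a small asymmetry (that $|\cl{\root}|$ is one less than $|\cl{v}|$ for a non-root $v$ with the same number of children), which the paper's statement ``$\law(\tree,(R_w)_{w\in V})=\law(\tree_v,(R_w)_{w\in V_{\tree_v}})$'' somewhat glosses over. The only cosmetic flaw worth fixing is that your stated definition of the downward cluster (``the set consisting of $r$ and of those descendants\ldots'') always contains $r$, whereas the recursion you actually use sets $S^{(n+1)}=0$ when $\root$ is inactive; either make the cluster empty when $r$ is inactive, or take $s_0=1$ and $s_{n+1}=1+b(\Delta)s_n$ --- both conventions give the same finiteness conclusion.
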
 
\begin{proof} 
Fix $T < \infty$, $\delt \in (0,T)$, and let $\tree^{\delt} := \perc{\delt}{\tree,\poiss^{\tree,T}}$ and $\va_v = \va_v^\Delta := \indic{\poiss^{\tree,T}_v(0,\delt]>0}$, $v \in V,$ be as in Definition \ref{findis:pdef}. Recall that $\tree^\delt$ is precisely the subgraph of $\tree$ induced by active vertices in $V$. Also, recall that for $v \in V_{\tree^\Delta}$, $\cmpn{v}(\tree^{\delt})$ denotes the connected component of $\tree^\delt$ containing $v$, with $v$ as its root. With a small abuse of notation, we extend the definition of $\cmpn{v}(\tree^\delt)$ to all $v \in V$ by setting $\cmpn{v}(\tree^{\delt})$ to be the $1$-vertex graph $\{v\}$ for $v \in V \setminus V_{\tree^\Delta}$. In addition, for $v \in V$, let $\tree_v$ and $\tree^{\delt}_v$ be the restrictions of $\tree$ and $\tree^{\delt}$, respectively, to the set containing $v$ and its descendants in $\tree$. By the self-similarity of the GW tree, for each $v \in V$, $\law(\tree,(R_w)_{w \in V}) = \law(\tree_v,(R_w)_{w \in V_{\tree_v}})$, and hence, $\law(\tree^\delt_{\root}|R_{\root} = 1)= \law(\tree^\delt_v|R_v = 1)$. For $v \in V$, $R_v = 0$ implies $\cmpn{v}(\tree^\delt_v)$ consists of a single isolated vertex. Hence, $|\cmpn{\root}(\tree^{\delt})|<\infty$ a.s. if and only if $|\cmpn{v}(\tree^\delt_v)| < \infty$ a.s. for all $v\in V$, in which case all connected components of $\tree^\delt$ must be a.s. finite. Thus, it suffices to prove that $|\cmpn{\root}(\tree^\delt)|<\infty$ a.s..

Since the percolation probability at a site or vertex depends on its degree via the dependence on $C_{k,T}$ in \eqref{findis:poiss}, to bound the size of $\cmpn{\root}(\tree^\delt)$, we couple $\cmpn{\root}(\tree^\delt)$ with a larger set obtained from a simpler percolation that only removes vertices from the odd generations of $\tree$. To this end, for any rooted tree $\tree$ and $n \in \Nb$, let $L_n(\tree) \defeq \{v \in V: d_{\tree}(\root,v) = n\}$ denote the set of vertices in the $n$th generation. Define the half-percolated forest $\wh{\tree}^\delt \defeq \te{hperc}_{\delt}(\tree,\poiss^{\tree,T})$ to be the subgraph of $\tree$ induced by the vertex set $\{v \in V: R_v = 1 \te{ or } d_\tree(\root,v) \te{ is even}\}$. Then let $\wh{\tree}^\delt_{\root} := \cmpn{\root}(\wh{\tree}^\delt)$ be the subtree of $\wh{\tree}^\delt$ that contains the root (note that the root always belongs to $\wh{\tree}^\delt$). Clearly, $|\cmpn{\root}(\tree^\delt)| \leq |\wh{\tree}^\delt_{\root}|$. Thus, to prove the proposition, it suffices to show that for all sufficiently small $\delt> 0$, 
\begin{equation} 
\label{findis-toshow} 
\lim_{n\to\infty} \ex{\left|L_{2n}(\wh{\tree}^\delt_{\root})\right|} = 0, 
\end{equation} 
since then
\[\PP(|\wh{\tree}^\delt_{\root}| = \infty) = \PP\left(\left|L_{2n}(\wh{\tree}^\delt_{\root})\right| > 0\te{ for all }n \in \Nb\right) \leq \inf_{n \in \Nb}\PP\left(\left|L_{2n}(\wh{\tree}^\delt_{\root})\right| > 0\right) = 0.\]

To prove \eqref{findis-toshow}, choose $n \in \Nb_0$ with $L_{2n}(\wh{\tree}^\delt_{\root}) \neq \emptyset$, and fix $v \in L_{2n}(\wh{\tree}^\delt_{\root})$. Recall that for any tree $\tree$ and $v \in V$, $\chil{v}{\tree}$ denotes the collection of children of $v$ in $\tree$, and $\pare{v}{\tree}$ is the parent of $v$ in $\tree$ whenever $v \neq \root$. Moreover, since $\chil{v}{\wh{\tree}^\delt_{\root}} \subseteq L_{2n+1}(\tree)$, it follows that $\chil{v}{\wh{\tree}^\delt_{\root}} = \chil{v}{\wh{\tree}^\delt} = \{u \in \chil{v}{\tree}: \va_u = 1\}$. Also, observe that 
\begin{align} 
\label{findis:ccv} 
\chil{\chil{v}{\wh{\tree}^{\delt}_{\root}}}{\wh{\tree}^{\delt}_{\root}} := \bigcup_{u \in \chil{v}{\wh{\tree}^\delt_{\root}}} \chil{u}{\wh{\tree}^\delt_{\root}} = \{w \in \chil{u}{\tree}: u \in \chil{v}{\tree} \te{ and } \va_u = 1\}, 
\end{align} 
where the equality uses the fact that the half-percolation does not remove any vertices from even generations of the tree. For each $w \in L_{2n}(\tree)$, note that $w \in \wh{\tree}^{\delt}$, but it is possible that $w \notin \wh{\tree}^{\delt}_{\root}$. Define $Z_{\Delta,w}$ to be the number of grandchildren of $w$ in $\wh{\tree}^\delt$. Since $v \in L_{2n}(\wh{\tree}^\delt_{\root})$, $Z_{\Delta,v}$ is also the number of grandchildren of $v$ in $\wh{\tree}^\delt_{\root}$ and hence, by \eqref{findis:ccv}, 
\begin{align} 
\label{findis:Zdefmsbl} 
Z_{\Delta,v}\defeq \left|\chil{\chil{v}{\wh{\tree}^\delt_{\root}}}{\wh{\tree}^\delt_{\root}}\right| = \left|\chil{\chil{v}{\wh{\tree}^\delt_{\root}}}{\tree}\right| &\te{ is } \sigma\left(\{\chil{u}{\tree}\}_{u \in \{v\}\cup\chil{v}{\tree}}, \{\va_u\}_{u \in \chil{v}{\tree}}\right)\te{-measurable.} 
\end{align} 
\sloppy Furthermore, $L_{2n}(\tree)$ and $L_{2n}(\wh{\tree}^\delt_{\root})$ are both measurable with respect to $\Hmc_n \defeq \sigma(\trnc{2n}(\tree),\{R_w\}_{w \in \trnc{2n-1}(\tree)})$. Let $A \in \Hmc_n$ be an atomic event, that is, $A = \{\trnc{2n}(\tree) = \alt{\tree},\{R_w\}_{w \in \trnc{2n-1}(\tree)} = \{r_w\}_{w \in \trnc{2n-1}(\tree)}\}$ for some $\ell$-generation tree $\alt{\tree}$, with $\ell \leq 2n$ and some $(r_w)_{w \in \trnc{2n-1}(\tree)} \in \{0,1\}^{\trnc{2n-1}(\tree)}$. Then, conditioned on $A$, the collection of random variables 
\[\left(\{\chil{u}{\tree}\}_{u \in \{v\}\cup\chil{v}{\tree}},\{R_u\}_{u \in \chil{v}{\tree}}\right)_{v \in L_{2n}(\tree)}\] 
is equal in distribution to $|L_{2n}(\tree)|$ independent copies of $\left(\{\chil{u}{\tree}\}_{u \in \{\root\}\cup\chil{\root}{\tree}},\{R_u\}_{u \in \chil{\root}{\tree}}\right)$. Together with \eqref{findis:ccv}, this implies that $\gamma_{\delt} = \gamma_{\delt,v} \defeq \law(Z_{\delt,v}|A)$ does not depend on $A$ or the specific choice of $v$ in $L_{2n}(\tree)$. Moreover, conditioned on $A$, \eqref{findis:ccv} implies 
\[Z_{\delt,v} = \sum_{u \in \chil{v}{\tree}}R_u |\chil{u}{\tree}| \deq \sum_{u \in \chil{\root}{\tree}} R_u |\chil{u}{\tree}| = Z_{\delt,\root}.\] 
Thus, conditioned on $\Hmc_n$, $\{Z_{\delt,v}\}_{v \in L_{2n}(\tree)}$ is equal in distribution to $L_{2n}(\tree)$ i.i.d. copies of $Z_{\delt,\root}$. Also, for $v \in L_{2n}(\tree)$, by the assumption that $\rho$ has finite mean
\[\ex{Z_{\delt,v}|\Hmc_n} = \ex{Z_{\delt,\root}} \leq \ex{\sum_{u \in \chil{\root}{\tree}}|\chil{u}{\tree}|} = \left(\sum_{k=0}^\infty k\rho_k\right)^2 < \infty.\] 
Since $L_{2n}(\wh{\tree}^\delt_{\root})$ is $\Hmc_n$-measurable and $\gamma_{\delt,v} = \gamma_{\delt}$ for all $v \in L_{2n}(\wh{\tree}^\delt_{\root})$, a recursive calculation shows 
\begin{align*} 
\ex{\left|L_{2n+2}(\wh{\tree}^\delt_{\root})\right|} &= \ex{\sum_{v \in L_{2n}(\wh{\tree}^\delt_{\root})}\ex{ Z_{\Delta,v}|\Hmc_n}} = \ex{|L_{2n}(\wh{\tree}^\delt_{\root})|}\ex{Z_{\Delta,\root}} = \ex{Z_{\Delta,\root}}^{n+1}. 
\end{align*} 
Thus, to show \eqref{findis-toshow}, it suffices to prove that for sufficiently small $\delt > 0$, $\ex{Z_{\Delta,\root}} < 1$. However, note that for all $\Delta > 0$, $\ex{Z_{\delt,\root}} < \infty$. Furthermore, the definition of $\delt$-dissociation clearly implies $\lim_{\delt \to 0} Z_{\delt,\root} = 0$ a.s., so the Lebesgue dominated convergence theorem implies $\lim_{\delt \to 0}\ex{Z_{\delt,\root}} = 0$, which concludes the proof. 
\end{proof}

In light of Definition \ref{LWC:GWdef}, this immediately implies the corresponding result for UGW trees.

\begin{corollary}[Finite dissociability of UGW trees] 
\label{find:UGWpf} 
If $\tree \sim \te{UGW}(\rho)$, where $\rho \in {\mathcal P}(\Nb_0)$ has a finite second moment, then $\tree$ is a.s. finitely dissociable with respect to any family of constants ${\bf C}$. 
\end{corollary}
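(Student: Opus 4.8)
The plan is to reduce Corollary~\ref{find:UGWpf} directly to Proposition~\ref{find:GWtree} by exploiting the self-similar structure of a UGW tree. Recall that if $\tree \sim \te{UGW}(\rho)$, then by Definition~\ref{lex:GWdef} the root has offspring distribution $\rho$, and every vertex in generation $k \geq 1$ has offspring distribution $\widehat{\rho}$, where $\widehat{\rho}$ is the size-biased distribution defined in \eqref{def:hatrho}. The key observation is that for any child $u$ of the root, the subtree $\tree_u$ consisting of $u$ and its descendants (rooted at $u$) is precisely a GW tree with offspring distribution $\widehat{\rho}$; that is, $\tree_u \deq GW(\widehat{\rho})$. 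More generally, $\tree$ itself can be viewed as a root vertex with $|\chil{\root}{\tree}| \sim \rho$ children, to each of which is attached an independent copy of $GW(\widehat{\rho})$.

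First I would check that $\widehat{\rho}$ has a finite first moment whenever $\rho$ has a finite second moment. Writing $\mu \defeq \sum_{n \geq 0} n\rho_n \in (0,\infty)$ for the mean of $\rho$, we compute
\[
\sum_{k \in \N_0} k\,\widehat{\rho}_k = \sum_{k \in \N_0} k \cdot \frac{k\rho_k}{\mu} = \frac{1}{\mu}\sum_{k \in \N_0} k^2 \rho_k,
\]
which is finite precisely because $\rho$ has a finite second moment. (If $\mu = 0$, i.e. $\rho = \delta_0$, then the UGW tree is a.s. a single vertex and finite dissociability is trivial, so we may assume $\mu > 0$.) By Proposition~\ref{find:GWtree}, it follows that $GW(\widehat{\rho})$ is a.s. finitely dissociable.

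Next I would assemble $\tree$ from these pieces and argue that adding a single root with finitely many children preserves a.s. finite dissociability. Fix $T \in (0,\infty)$. Since $GW(\widehat{\rho})$ is a.s. finitely dissociable, applying Proposition~\ref{find:GWtree} (and using that finite dissociability is a class property, Remark~\ref{findis:WD}) we may choose $\delt > 0$ such that $(GW(\widehat{\rho}), \poiss^{GW(\widehat{\rho}),T})$ $\delt$-dissociates a.s.; moreover, inspecting the proof of Proposition~\ref{find:GWtree}, the threshold $\delt$ can be chosen to work for all offspring distributions with first moment bounded by a fixed constant, but in any case we only need it for the single distribution $\widehat{\rho}$. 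Now consider $\tree \sim \te{UGW}(\rho)$ with driving noise $\poiss^{\tree,T}$. In the percolation $\perc{\delt}{\tree,\poiss^{\tree,T}}$, the root $\root$ has $N \defeq |\chil{\root}{\tree}| \sim \rho$ children; conditioned on $N$, the $N$ pendant subtrees rooted at these children, together with their restricted driving noises, are i.i.d.\ copies of the corresponding percolated object for $GW(\widehat{\rho})$ — here one must be slightly careful that the percolation parameter attached to a vertex depends on its degree via $C_{|\cl{v}|,T}$, so the child $u$ of $\root$ has degree $|\chil{u}{\tree}|+1$ in $\tree$, exactly as it would in $GW(\widehat{\rho})$ where its parent is the root of that tree; thus the percolation law on each pendant subtree genuinely matches that of $\perc{\delt}{GW(\widehat{\rho}), \poiss^{GW(\widehat{\rho}),T}}$. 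Since $N < \infty$ a.s.\ and each pendant subtree has all connected components finite a.s., the connected component of $\root$ in $\perc{\delt}{\tree,\poiss^{\tree,T}}$ is contained in $\{\root\}$ together with finitely many finite components (at most one from each of the $N$ pendant subtrees), hence is a.s.\ finite; and every other connected component of $\perc{\delt}{\tree,\poiss^{\tree,T}}$ lies entirely within one pendant subtree and is therefore a.s.\ finite as well. Thus $(\tree, \poiss^{\tree,T})$ $\delt$-dissociates a.s., and since $T$ was arbitrary, $\tree$ is a.s.\ finitely dissociable.

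The main obstacle I anticipate is purely bookkeeping rather than conceptual: one must verify that the degree-dependent percolation probability $\exp(-\bar{C}_{|\cl{v}|,T}\delt)$ attached to each vertex in the UGW tree matches the corresponding probability in the appropriate GW tree, which requires correctly accounting for the edge to the parent when computing $|\cl{v}|$. Because $\widehat{\rho}$ is exactly the offspring distribution of every non-root vertex of $\tree$, and the root of $GW(\widehat{\rho})$ likewise has $\widehat{\rho}$-many children and degree equal to that count, the subtree $\tree_u$ (for $u$ a child of $\root$) has precisely the law of $GW(\widehat{\rho})$ \emph{as a rooted tree}, and the degrees agree on all of $\tree_u$; the only vertex whose degree in $\tree$ differs from its degree in the pendant subtree viewed in isolation is $u$ itself (degree $|\chil{u}{\tree}|+1$ in both $\tree$ and in $GW(\widehat{\rho})$, since in the latter $u$ is the root but we have just re-attached it to $\root$) — so in fact the degrees match throughout once we include the root edge, and no discrepancy arises. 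Making this matching fully rigorous is the one place that warrants care, but it is routine given the self-similarity already exploited in the proof of Proposition~\ref{find:GWtree}.
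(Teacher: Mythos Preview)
Your approach is essentially the same as the paper's: observe that $\widehat{\rho}$ has finite first moment, note that the descendant subtree of every non-root vertex is a $GW(\widehat{\rho})$ tree, invoke Proposition~\ref{find:GWtree}, and assemble. The paper's proof is four lines and makes exactly this reduction.

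One correction to your final paragraph: the degrees do \emph{not} match at $u$. In the standalone $GW(\widehat{\rho})$ tree rooted at $u$, the root has degree $|\chil{u}{\tree}|$, whereas in $\tree$ it has degree $|\chil{u}{\tree}|+1$; your parenthetical claiming both equal $|\chil{u}{\tree}|+1$ is incorrect. This discrepancy is harmless, however, because the proof of Proposition~\ref{find:GWtree} bounds the root component via the half-percolation $\widehat{\tree}^\delt_\root$, which forces the root to be active regardless of its degree --- so the activation probability at the root of each pendant subtree is irrelevant to the finiteness conclusion. The paper's own proof glosses over this same point.
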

\begin{proof}
The fact that $\rho$ has a finite second moment implies that $\wh{\rho}$ in Definition \ref{LWC:GWdef} has a finite first moment. Let $\tree_v$ denote the descendant tree of $T$ rooted at $v$. Then for each $v \neq \root$, $\tree_v$ is a GW$(\wh{\rho})$-tree, so by Theorem \ref{find:GWtree}, for sufficiently small $\delt > 0$, $\cmpn{v}(\perc{\delt}{\tree_v,\poiss^{\tree_v,T}})$ is a.s. finite. Since the $\Delta$-percolated descendant tree of every non-root vertex is finite, it follows that $\tree$ a.s. $\delt$-dissociates. 
\end{proof}

For completeness, we also show that graphs with bounded maximum degree (such as infinite lattices) are also finitely dissociable.

\begin{proposition}[Finite dissociability of bounded degree graphs] 
\label{find:finite} 
Let $G=(V,E)$ be a graph with finite maximum degree: $d^* \defeq \sup_{v \in V} |\neigh{v}| <\infty$. Then $G$ is finitely dissociable with respect to any family of constants $\bf{C}$. 
\end{proposition}
\begin{proof}
Fix $0 < T <\infty$ and $\Delta \in (0,T]$. Then by Remark \ref{rem-perc}, each vertex in $G$ is inactive independently with probability 
\[p_v = \exp\left(-\delt \bar{C}_{|\cl{v}|,T}\right) \geq \exp\left(-\delt \bar{C}_{d^*+1,T} \right) := p_{\delt,T}, \]
where the inequality invokes \eqref{eq-barC}, the monotonicity of $C_{k,T}$ in both its subscripts, and the definition of $d^*$. Therefore, the probability that $(G,\poiss^{G,T})$ $\delt$-dissociates is greater than or equal to the probability that $G$ fails to percolate with respect to a standard site percolation in which each vertex is independently removed with probability $p_{\delt,T}$. For any $T < \infty$, $\lim_{\delt \to 0} p_{\delt,T} = 1$, but as is well known, the critical probability (i.e., the probability $p_c$ such that $G$ fails to percolate a.s. when vertices are independently removed with probability $p > p_c$) is strictly less than 1 (see \cite[Equation (0.3)]{GriSta98}). Thus, for all $T$, there exists a sufficiently small $\delt$ such that $(G,\poiss^{G,T})$ $\delt$-dissociates a.s. By Definition \ref{findis:decom}, this shows that $G$ is finitely dissociable. 
\end{proof}

As demonstrated in Appendix \ref{illpf}, even for Markovian IPS with very regular jump rate functions, finite dissociability, and well-posedness can fail on some graphs.

\subsection{Consistent Spatial Localization on Finitely Dissociative Graph Sequences} 
\label{perc:pfwp} 
The main result of this section is the following: 

\begin{proposition}[Consistent spatial localization by finitely dissociable graphs] 
\label{WP:locality2} 
Suppose ${\bf \bar{\rate}}$ is a family of regular local jump rates that satisfies Assumption \ref{mod:assu} (in the sense of Remark \ref{rem:modassu}) with an associated family of constants ${\bf C}$. If $G$ is a deterministic graph that is finitely dissociable with respect to ${\bf C},$ then $G$ spatially localizes the SDE \eqref{mod:infpart} with local jump rate ${\bf \bar{\rate}}$ in the sense of Definition \ref{WP:locunif}. Moreover, any sequence of deterministic finitely dissociable $\sp{\emksp,\vmksp}$-graphs $\{G_n\}_{n \in \Nb}$ that are all finitely dissociable with respect to the constants ${\bf C}$ consistently spatially localizes the SDE \eqref{mod:infpart} with local jump rates $\bf \bar{\rate}$ in the sense of Definition \ref{WP:loccons}. 
\end{proposition}

We first show why the proposition directly implies Theorem \ref{WP:WP} and subsequently present its proof. 

\begin{proof}[Proof of Theorem \ref{WP:WP} given Proposition \ref{WP:locality2}:] 
Given Assumption \ref{mod:assu}, Proposition \ref{WP:locality2}, and Definition \ref{pfwp:consspatloc} show that $\ic{G}$ a.s. spatially localizes \eqref{mod:infpart}, and Proposition \ref{WP:fin} shows that Assumption \ref{mod:assu2} holds. Therefore, the theorem follows from Proposition \ref{WP:WP2}. 
\end{proof}

A key challenge in establishing Proposition \ref{WP:locality2} is to find an explicit and consistent representation of the localizing map $\Oo_T(\cdot;G_n,\poiss^{G_n})$ on a sequence of finitely dissociative graphs $G_n$. To this end, given the family of constants ${\bf C}$, for a fixed graph $G$ and $T \in \Rb_+$, we consider the associated processes $\poiss^{G,T} = (\poiss^{G,T}_v)_{v \in V_G}$ specified in \eqref{findis:poiss}, and introduce the notion of a \emph{causal chain} associated with $(G, \poiss^{G,T})$. First, define the event set of $\poiss^{G,T}_v$ as follows: 
\begin{equation} 
\label{pfwp:EmcvT} 
\Emc_{v,T} \defeq \{t \in (0,T]: \poiss^{G,T}\poissv{v}(\{t\}) = 1\}. 
\end{equation}

\begin{definition}[Causal chains] 
\label{pfwp:causal} 
Given $T \in (0,\infty)$, an interval $I \defeq [t_1,t_2]\subseteq [0,T]$ and vertices $u, v \in V_G$, a $(G,\poiss^{G,T})$-causal chain from $v$ to $u$ during $I$ is either the singleton $(u)$ when $v = u$, or for some $n \in \Nb$, a path $\Gamma \defeq (v=u_0,u_1,\dots,u_n=u)$ in $G$ such that there exists an increasing sequence $t_1 < s_1 < \cdots < s_n \leq t_2$ for which $s_i \in \Emc_{u_i,T}$, $i = 1,\dots,n$. We write $v \rightsquigarrow_{t_1,t_2} u$ if there exists a $(G,\poiss^{G,T})$-causal chain from $v$ to $u$ during $I = [t_1,t_2]$, and for any $U \subset V_G$, we write $v \rightsquigarrow_{t_1,t_2} U$ if $v \rightsquigarrow_{t_1,t_2} u$ for some $u \in U$. 
\end{definition}

Intuitively, causal chains describe long-range interactions over the graph that can develop over an interval $I$, even though the instantaneous evolution of the state of a vertex is only influenced by the states of neighboring vertices. Specifically, given a graph $G$, processes $\poiss^{G,T}, T \in \Rb_+$, $\sO \in\fset{G}$, and $0 \leq t_1 <t_2 < \infty,$ define 
\begin{equation} 
\label{loc-AT} 
\Cc_{t_1,t_2}^G(\sO) \defeq \{v \in V_G: v \rightsquigarrow_{t_1,t_2} \sO\} \quad \mbox{ and } \quad \Cc_{t_1}^G(\sO) \defeq \Cc_{0,t_1}^{G}(\sO), 
\end{equation} 
where $\rightsquigarrow_{t_1,t_2}$ indicates the existence of a $(G,\poiss^{G,T})$-causal chain during $[t_1,t_2].$ Then $\Cc_{t_1,t_2}^G (\sO)$ represents the set of vertices in $G$ that are ``seen'' by vertices in $\sO$ through a causal chain during some time interval $[t_1,t_2]$. The proof of Proposition \ref{WP:locality2} proceeds by showing that the (random) map that takes $\sO \in \fset{G}$ to $\Cc_{T}^G(\sO)$ for finitely dissociable graphs $G$ defines a localizing map, and moreover, that given a sequence of finitely dissociable graphs $G_n$, $n \in \Nb$, the (random) maps that take $\sO \in \fset{G_n}$ to the closure $\Cc_{T}^{G_n}(\sO)$ for all $n \in \Nb$ define a consistent family of localizing maps.

\begin{proof}[Proof of Proposition \ref{WP:locality2}:] 
We start with the proof of the first statement of the proposition. Fix a deterministic $\sp{\emksp,\vmksp}$-graph $G$ and let $(\filt,\poiss^G)$ be a \fpp. For each (not necessarily finite) $W\subseteq V_G$, let $X^{G\subg{W},\x_W}$ be an arbitrary $(\filt,\poiss^G_W)$-weak solution to \eqref{mod:infpart} for $(G\subg{W},\x_W)$ assuming one exists, which is always the case when $W$ is finite by Proposition \ref{WP:fin} and the fact that Assumption \ref{mod:assu} holds. Fix the family of constants ${\bf C}$ specified in the proposition, and consider the associated processes $(\poiss^{G,T}_v)_{v \in V_G}, T \in \Rb_+$, as in \eqref{findis:poiss}. Let $\sO \in \fset{G}$, and for $T \in \Rb_+$, set $\Oo_T (\sO; G,\poiss^G) := \Cc_{T} (\sO) := \Cc_T^G(\sO)$, with the latter defined as in \eqref{loc-AT} in terms of $(G, \poiss^{G,T})$ causal chains. By Definition \ref{pfwp:causal}, for any $U' \in \fset{G}$, $\{\Cc_{T}(\sO) \subset U'\}$ lies in $\filtm_T^{\poiss^G}$, and hence $\Oo_T (\cdot; G,\poiss^G) = \Cc_T$ satisfies condition \eqref{smap-meas} of Definition \ref{WP:locunif}. Moreover, since $u \rightsquigarrow_{0,T} u$ for all $u \in \sO$, $\Cc_{T}(\sO) \supseteq \sO,$ thus verifying that $\Oo_T (\cdot; G,\poiss^G)$ satisfies property 1 of Definition \ref{WP:locunif}. To prove that any finitely dissociable graph (with respect to ${\bf C}$) spatially localizes \eqref{mod:infpart}, it only remains to verify that $\Cc_{T}(\sO)$ also satisfies condition \eqref{smap-finite} and property 2 of Definition \ref{WP:locunif}. We argue below that it suffices to establish the following claims for every $T \in (0,\infty)$: 
\begin{description} 
\item[\emph{Claim 1: }] If $(G,\poiss^{G,T})$ $T$-dissociates in the sense of Definition \ref{findis:perc}, then $|\Cc_T(\sO)| < \infty$ a.s. for every $\sO \in \fset{G}$. 
\item[\emph{Claim 2: }] If $G$ is finitely dissociable, then $|\Cc_T(\sO)|<\infty$ a.s. for every $\sO \in \fset{G}$. 
\item[\emph{Claim 3: }] Fix any $\ell \in \Nb$ and (not necessarily finite) $U\subseteq V_G$ such that $\trnc{\ell}(G)\subseteq U$ and a weak solution $X^{G\subg{U},\x_U}$ to the SDE \eqref{mod:infpart} with local jump rates ${\bf \bar{\rate}}$ exists. Then for each $\sO \in \fset{G\subg{U}}$, 
\begin{equation} 
\label{eq-claim3} 
X^{\trnc{\ell}(G),\x_{\trnc{\ell}(G)}}_{\sO}[T] = X^{G\subg{U},\x_U}_{\sO}[T] \quad \mbox{ a.s. on } \{\Cc_T(\sO) \subseteq \trnc{\ell}(G)\}. 
\end{equation} 
\end{description}

Claim 2 shows that $\Oo_T (\cdot; G,\poiss^G)$ satisfies \eqref{smap-finite} when $G$ is finitely dissociable. Suppose Claims 1-3 hold, and let $U\supseteq \trnc{\ell}(G)$ be any (not necessarily finite) vertex set for which there exists a weak solution $X^{G\subg{U},\x_{U}}$ as stated in Claim 3. Then clearly $\trnc{\ell}(G[U]) = \trnc{\ell}(G)$, so for $\sO \in \Lambda_{G[U]}$ and on the event $\{\Cc_T(\sO) \subseteq \trnc{\ell}(G)\} = \{\Cc_T(\sO) \subseteq B_{\ell}(G\subg{U})\}$, the $(G\subg{U},\poiss^{G,T}_{U})$-causal chains ending at $\sO$ at time $T$ are the same as the $(G,\poiss^{G,T})$-causal chains ending at $\sO$ at time $T$, that is, $ \Cc_T^{G[U]}(\sO) = \Cc_T^G(\sO) = \Cc_T(\sO)$. Thus, by Claim 3, \eqref{eq-claim3} holds, which verifies property 2 of Definition \ref{WP:locunif}. Since we have verified all properties of Definition \ref{WP:locunif} when $G$ is dissociable (with respect to ${\bf C}$), the first assertion of the proposition follows.

We now turn to the proofs of the claims.\\

\noindent \textbf{\emph{Proof of Claim 1.} } Fix $\sO \in \fset{G}$. Define $\wh{G}\defeq \perc{T}{G,\poiss^{G,T}}$ as in Definition \ref{findis:pdef}. If $(G,\poiss^{G,T})$ $T$-dissociates, then each of the connected components of $\wh{G}$ is a.s. finite, and $\poiss^{G,T}\poissv{w}\left(0,T\right] = 0$ for all $w \in \neigh{\wh{\sO}}$, where $\wh{\sO} := \sO \cup \bigcup_{v \in \sO} \cmpn{v}(\wh{G})$. Since $\sO$ is finite, $\wh{\sO}$ is a.s. finite. Now suppose $u \in \sO$ and $v\in \Cc_T(u)$. Let $(v=u_0,u_1,\dots,u_n=u)$ be a $(G,\poiss^{G,T})$-causal chain with respect to $[0,T]$. Then, for any $i =1,\dots,n$, $u_i$ must be active, that is, $\poiss^{G,T}\poissv{u_i}\left(0,T\right] > 0.$ Thus, for each $i = 1,\dots,n$, $u_i \in \wh{\sO}$, and so $v \in \gcl{\wh{\sO}}{G}$. Hence, $\Cc_T(\sO)= \bigcup_{u \in \sO}\Cc_T(u) \subseteq \gcl{\wh{\sO}}{G}$, which is a.s. finite since $\wh{\sO}$ is a.s. finite, and $G$ is locally finite. \\

\noindent \textbf{\emph{Proof of Claim 2. }} Fix $u \in \sO$ and $v \in \Cc_T(\sO)$. Because $G$ is finitely dissociable, there must exist $\delt > 0$ such that $(G,\poiss^{G,T})$ $\delt$-dissociates. If $\Delta \geq T$, the result follows from Claim 1. If $\Delta \in (0,T)$, then Claim 1 implies that a.s., 
\begin{equation} 
\label{eq-loc1} 
|\Cc_\Delta(\sO)| < \infty \quad \mbox{ for every } \sO \in \fset{G}. 
\end{equation} 
To complete the proof, for every $t \in [0,T-\delt]$, we show that if $|\Cc_t(\sO)| < \infty$ a.s. for every $\sO\in \fset{G}$, then we also have $|\Cc_{t+\delt}(\sO)| < \infty$ a.s. for every $\sO \in \fset{G}$.

To this end, fix $t \in [0,T-\delt]$, and suppose $|\Cc_{t}(\sO)| < \infty$ a.s. for all $\sO \in \fset{G}$. Fix $\sO \in \fset{G}$, $u \in \sO$, and $ v \in \Cc_{t+\delt}(u)$. Then there exists a $(G,\poiss^{G,T})$-causal chain $\Gamma = (v=u_0,u_1,\dots,u_n=u)$ during the interval $[0,t+\delt]$, with the corresponding sequence of times $0 = s_0 < s_1 < \cdots < s_n \leq t+\delt$. Let $i_*$ be the largest integer $i \in \{0, \ldots, n\}$ such that $s_i \leq \Delta$. Then $v \in \Cc_{\Delta}(u_{i_*})$. Furthermore, by considering the path $(u'_0 = u_{i_*}, \ldots, u'_i = u_{i+i_*}, \ldots, u'_{n-i^*} = u)$ with times $s'_0 = \delt$ and $s'_{i} = s_{i+i^*}$, $i=1, \ldots, n-i_*$, it follows that $u_{i_*} \rightsquigarrow_{\delt,t+\delt} u$. Thus, $v \in \Cc_{\delt}(\Cc_{\delt,t+\delt}(u))$, which implies $\Cc_{t+\delt}(\sO)\subseteq \Cc_{\delt}(U)$, where $U := \Cc_{\delt,t+\delt}(\sO)$. In view of \eqref{eq-loc1}, it suffices to show that $U$ is a.s. finite, but this holds because by time homogeneity of Poisson processes, $U \deq \Cc_t(\sO)$, which is a.s. finite by assumption. \\

\noindent \textbf{\emph{Proof of Claim 3. }} For notational conciseness, let $H \defeq G\subg{U}$, where we recall that $U$ is a deterministic vertex set such that $\trnc{\ell}(G)\subseteq U$ and a weak solution $X^{H, \x_U} = X^{G\subg{U},\x_{U}}$ exists. Additionally, let $H_{\ell} \defeq \trnc{\ell}(G)$, and note that $H_{\ell} = \trnc{\ell}(H)$. Fix $\sO \in \fset{H}$ with $|\Cc_T^H(\sO)|< \infty$ and note that $\Cc_T^H(\sO) = \Cc_T^G(\sO)= \Cc_T(\sO)$ on the event $\{H_{\ell}\supseteq \Cc_T^H(\sO)\}$. To prove Claim 3, we need to show that 
\begin{equation} 
\label{pfwp:TS} 
X_{\sO}^{H,\xi_H}[T] = X_{\sO}^{H_{\ell},\xi_{H_{\ell}}}[T] \te{ a.s. on }\{\Cc_T(\sO) \subseteq V_{H_{\ell}} \}. 
\end{equation}

Assume $\poiss^{G,T}_U(\{T\}) = 0$, which holds a.s. because $\poiss^{G,T}_U$ is a countable collection of homogeneous Poisson processes. We prove this claim in a recursive fashion by iterating over events in the driving noise $\poiss^G$, and relating them to the dynamics of the SDE \eqref{mod:infpart}. For each $v \in U$, let $\{t_i^v, i\in \Nb\}$ be an enumeration of the (a.s. finite) set $\Emc_{v,T}$ from \eqref{pfwp:EmcvT} of events of $\poiss^{G,T}_v$ in $[0,T]$, arranged in increasing order. Below, we use the conventions that $\max\emptyset = 0$, $\inf\emptyset = \infty$, and $[0)= \{0\}$. Define $U_0 := \sO$, $\tau_0 := T$, choose an arbitrary vertex $v_0 \in \sO$ and for $k \in \Nb$, recursively define 
\begin{align} 
\label{eq-loctk} \tau_k & \defeq 
\begin{cases} 
\max\{t_i^v: v \in U_{k-1}, t_i^v < \tau_{k-1} \} &\te{ if } \tau_{k-1} > 0,\\ 
0 &\te{ if } \tau_{k-1} = 0, 
\end{cases}\\ 
\label{eq-locvk} v_k & \defeq 
\begin{cases} 
v\in U_{k-1} \te{ s.t. }\poiss^{G,T}\poissv{v}(\{\tau_k\}) = 1 &\te{ if } \tau_k > 0,\\ 
v_{k-1} &\te{ if } \tau_k = 0, 
\end{cases}\\ 
\label{eq-locUk} U_k & \defeq 
\begin{cases} 
U_{k-1}\cup \cl{v_{k}} &\te{ if } \tau_k > 0,\\ 
U_{k-1} &\te{ if } \tau_k = 0, 
\end{cases} 
\end{align} 
where $v_k$ in \eqref{eq-locvk} is a.s. well defined because the sets $\Emc_{v,T}$, $v \in U_{k-1}$, are a.s. distinct. Also, set 
\begin{equation} 
\label{def-K} 
K := \inf \{k \in \Nb: \tau_k = 0\}. 
\end{equation} 
Note that the above construction is well defined even if $K = \infty$, and furthermore, the sequence $\{\tau_k\}_{k \in \Nb}$ is strictly decreasing and the sequence $\{U_k\}_{k \in \Nb}$ is non-decreasing but with possible repetitions (for instance, if $v_k$ lies in the interior of $U_{k-1}$). 

As shown below, the set $U_K$ is specifically constructed so that $U_K\subseteq \Cc_T(\sO)$ and so that $X^{H,\x_H}_{\sO}[T] = X^{H_{\ell},\x_{H_\ell}}_{\sO}[T]$ on the event $\{V_{H_{\ell}}\supseteq U_K\}$, which immediately implies \eqref{pfwp:TS} and hence, Claim 3. In fact, we will show that the recursive construction \eqref{eq-loctk}-\eqref{eq-locUk} is such that the following two claims are true. \\ 

\noindent \emph{Claim 3A:} For every $k \in \Nb_0$, $U_k \subseteq \Cc_T(\sO)$. 

\noindent \emph{Claim 3B:} For every $k \in \Nb_0$, 
\begin{equation} 
\label{eq-loc4b} 
X^{H,\x_{H}}_{U_k}[\tau_k) = X^{H_{\ell},\x_{H_{\ell}}}_{U_k}[\tau_k) \Rightarrow X^{H,\x_{H}}_{\sO}[T] = X^{H_{\ell},\x_{H_{\ell}}}_{\sO}[T] \mbox{ a.s. on } \{ U_k \subseteq V_{H_{\ell}}\}. 
\end{equation}

We first show how Claim 3 follows from the auxiliary claims. Since $\{\tau^v_i\}$ are events of $\poiss^{G,T}_v$, \eqref{eq-loctk}-\eqref{eq-locUk} imply that the set $\{\tau_k\}_{k \in \Nb}\setminus \{0\}$ is contained in the events of $\poiss^{G,T}_{\cup_{k \in \Nb_0} U_k}$. In view of Claim 3A, on the event $\{\Cc_T(\sO) \subseteq V_{H_{\ell}}\}$, the set of events of $\poiss^{G,T}_{H_\ell}$ contains that of $\poiss^{G,T}_{\cup_{k \in \Nb_0} U_k}$ and hence contains $\{\tau_k\}_{k \in \Nb}\setminus \{0\}$. By the definition \eqref{def-K} of $K$, since $H_{\ell}$ is finite, this implies $K\leq 1 + \sum_{v \in V_{H_{\ell}}} \poiss^{G,T}_v(0,T] <\infty$ a.s.. Also, by \eqref{def-K}, \eqref{eq-loctk}, and \eqref{eq-locUk} this implies that a.s. on the event $\{\Cc_T(\sO) \subseteq V_{H_{\ell}}\}$, $\tau_K = 0$, $U_K = \bigcup_{k \in \Nb_0} U_k$, and (applying Claim 3A with $k=K$) $U_K \subseteq\Cc_T(\sO)\subseteq V_{H_{\ell}}\subseteq U$. Together, these properties imply, $X^{H,\x_{H}}_{U_K}[\tau_K) = \x_{U_K} = X^{H_{\ell},\x_{H_{\ell}}}_{U_K}[\tau_K)$. Invoking Claim 3B with $k = K$, we see that a.s. on $\{U_K \subseteq V_{H_{\ell}}\} \supseteq \{\Cc_T(\sO) \subseteq V_{H_{\ell}}\}$, we have $X^{H,\x_{H}}_{\sO}[T] = X^{H_{\ell},\x_{H_{\ell}}}_{\sO}[T]$. This proves \eqref{pfwp:TS} and hence, Claim 3 follows.

We first provide a rough idea of the proof of the auxiliary claims. When $k = 0$, $U_k = \sO$ and it is easy to see that claims 3A and 3B hold trivially (due to the stipulation that $\sO \subseteq \Cc_T(\sO)$ and the assumption that there is no jump at $T$). If there is any jump in the driving processes $\poiss_{\sO}^{G,T}$ in $[0,T)$, then by the construction \eqref{eq-loctk}-\eqref{eq-locUk}, the most recent event before $\tau_0 = T$ that could have influenced the value of the process $X^{H,\x_{H}}$ at $\tau_0$ occurs at $\tau_1$ corresponding to a transition at the vertex $v_1$. The local nature of the dynamics implies that this transition is influenced by the particles in $\cl{v_1}$ so that the trajectory of $X^{H,\x_{H}}$ up to time $\tau_1$ is influenced by the trajectories of the particles in $\sO \cup \cl{v_1} = U_0\cup\cl{v_1} = U_1$ before time $\tau_1$. Any vertex $u$ in $U_1\setminus \sO$ belongs to $\neigh{v_1}$ and so $(u,v_1)$ forms a $(G,\poiss^{G,T})$ causal chain on the interval $[\tau_1, T]$, showing that $U_1 \subset \Cc_T(\sO)$. Claims 3A and 3B follow by proceeding inductively in this manner, tracing backward in time the $(G, \poiss^{G,T})$-causal chains that end in $\sO$. We now provide fully rigorous proofs of the auxiliary claims.\\

\noindent \emph{Proof of Claim 3A:} We prove the following assertion using induction: For any $k \in \Nb_0$ and $v \in U_k$, $v \rightsquigarrow_{\tau_{k+1},T} \sO$. This is obviously true in the case $k = 0$, in which case $U_0 = \sO$ and for any $v \in \sO$, trivially $v \rightsquigarrow_{\tau_1,T} v\in \sO$ by Definition \ref{pfwp:causal} of a causal chain. Now suppose that for some $k \in \Nb_0$, we have $v \rightsquigarrow_{\tau_{k},T} \sO$ for all $v \in U_{k-1}$. Then because $\tau_{k+1} \leq \tau_{k}$, it follows that $v \rightsquigarrow_{\tau_{k+1},T} \sO$. There are now two cases to consider. In the first case, $U_{k} = U_{k-1}$ in which case the conclusion $v \rightsquigarrow_{\tau_{k+1},T} \sO$ for all $v \in U_{k}$ holds trivially. In the second case, by \eqref{eq-locvk} and \eqref{eq-locUk}, $U_k\setminus U_{k-1} \subset \neigh{v_k}$, where $v_k \in U_{k-1}$, and furthermore, $\tau_k > 0$ and $\poiss^{G,T}_{v_k}(\{\tau_k\}) = 1$. Since $v_k \in U_{k-1}$, the induction assumption implies there exists a $(G,\poiss^{G,T})$-causal chain $\Gamma = (v_k\defeq u_0,u_1,\dots,u_n \in \sO)$ from $v_k$ to $\sO$ contained in the time interval $[\tau_k,T]$ with corresponding times $\tau_k \defeq \theta_0 < \theta _1<\cdots< \theta_n\leq T$. Then because $\poiss^{G,T}_{v_k}(\{\tau_k\}) = 1$, it follows that $(\tau_{k+1} = \theta < \theta_0 < \cdots < \theta_n)$ is a sequence of times such that for all $i = 0,\dots,n$, $\poiss^{G,T}_{u_i}(\{\theta_i\}) = 1$, and for any $w \in U_k\setminus U_{k-1}$, $(w,v_k,u_1,\dots,u_n)$ is a $(G,\poiss^{G,T})$-causal chain ending in $\sO$. Thus, $w \rightsquigarrow_{\tau_{k+1},T} \sO$ for all $w \in U_k$. By induction, we may conclude that for any $k \in \Nb_0$ and $v \in U_k$, $v\rightsquigarrow_{\tau_{k+1},T} \sO$, which proves the assertion. Now, by Definition \ref{pfwp:causal}, the assertion in turn implies $v\rightsquigarrow_{0,T} \sO$. By \eqref{loc-AT}, this implies that $U_k\subseteq \Cc_T(\sO)$ for all $k\in \Nb_0$, concluding the proof of claim 3A.\\

\noindent \emph{Proof of Claim 3B:} We will again use an argument by induction. First, note that the base case $k=0$ in \eqref{eq-loc4b} is true because $U_0 = \sO$, $\tau_0=T$, and both $X^{H,\x_H}_{\sO}$ and $X^{H_{\ell},\x_{H_{\ell}}}_{\sO}$ a.s. do not have a jump at $T$ because $\poiss^{G,T}_{\sO}(\{T\}) = 0$ a.s.. Now, suppose \eqref{eq-loc4b} holds for $k = m-1$, for some $m \in \Nb$. To show it holds for $k=m$, we first argue that it suffices to show that 
\begin{equation} 
\label{eq-loc4b2} 
X^{H,\x_H}_{U_{m}}[\tau_{m}) = X^{H_{\ell},\x_{H_{\ell}}}_{U_{m}}[\tau_{m}) \quad \Rightarrow \quad X^{H,\x_H}_{U_{m-1}}[\tau_{m-1}) = X^{H_{\ell},\x_{H_{\ell}}}_{U_{m-1}}[\tau_{m-1}) \te{ a.s. on }\{U_m \subseteq V_{H_{\ell}}\}. 
\end{equation} 
Indeed, $U_{m-1} \subseteq U_m$ implies $U_{m-1} \subseteq V_{H_{\ell}}$ on the event $\{U_{m} \subseteq V_{H_{\ell}}\}$, and so \eqref{eq-loc4b2} and \eqref{eq-loc4b}, the latter with $k=m-1$, shows that \eqref{eq-loc4b} holds for $k =m$. Claim 3B then follows by induction. 

To establish \eqref{eq-loc4b2}, assume $U_m \subseteq V_{H_{\ell}}$ and $X^{H,\x_H}_{U_{m}}[\tau_{m}) = X^{H_{\ell},\x_{H_{\ell}}}_{U_{m}}[\tau_{m}).$ Since $U_{m-1} \subset U_m$, this implies $X^{H,\x_H}_{U_{m-1}}[\tau_{m}) = X^{H_{\ell},\x_{H_{\ell}}}_{U_{m-1}}[\tau_{m})$. Moreover, from \eqref{eq-loctk} note that $\tau_m$ is the largest time prior to $\tau_{m-1}$ that there is an event for any of the Poisson processes in $U_{m-1}$ and hence, $\sum_{v \in U_{m-1}} \poiss^{G,T}\poissv{v}(\tau_m,\tau_{m-1}) = 0$. The form of the SDE \eqref{mod:infpart} then implies that both $X^{H,\x_H}_{U_{m-1}}$ and $X^{H_{\ell},\x_{H_{\ell}}}_{U_{m-1}}$ are constant on $(\tau_m,\tau_{m-1})$. Thus, to establish \eqref{eq-loc4b2}, it suffices to show that $X^{H,\x_H}_{U_{m-1}}(\tau_m) = X^{H_{\ell},\x_{H_{\ell}}}_{U_{m-1}}(\tau_m)$ a.s.. Now, by \eqref{eq-loctk}-\eqref{eq-locvk}, $v_m$ is the only vertex in $U_{m-1}$ such that $\poiss^{G,T}_{v_m}(\{\tau_m\}) = 1$. Thus, 
\begin{equation} 
\label{eq-loc4b3} 
X^{H,\x_H}_{U_{m-1}\setminus\{v_m\}}(\tau_m) = X^{H,\x_H}_{U_{m-1}\setminus\{v_m\}}(\tau_m-) = X^{H_{\ell},\x_{H_{\ell}}}_{U_{m-1}\setminus\{v_m\}}(\tau_m-) = X^{H_{\ell},\x_{H_{\ell}}}_{U_{m-1}\setminus\{v_m\}}(\tau_m),
\end{equation} 
and so it only remains to show that $X^{H,\x_H}_{v_{m}}(\tau_{m}) = X^{H_{\ell},\x_{H_{\ell}}}_{v_{m}}(\tau_{m})$ a.s.. Since for $m \in \Nb$, $\cl{v_m}(G) \subset U_m$ by \eqref{eq-locUk}, and $U_m \subseteq V_{H_{\ell}}\subseteq U$ by assumption, $\cl{v_m}(H_{\ell}) = \cl{v_m}(G) =: \cl{v_m}$. Then the assumption $X^{H,\x_H}_{U_{m}}[\tau_{m}) = X^{H_{\ell},\x_{H_{\ell}}}_{U_{m}}[\tau_{m})$ implies $X^{H,\x_H}_{\cl{v_m}}[\tau_m) = X^{H_{\ell},\x_{H_{\ell}}}_{\cl{v_m}}[\tau_m)$ a.s. The locality and predictability (see Definition \ref{def-regular}) of the jump rates stated in the standing assumption then imply that for $j \in \jmps$, denoting by $(\ems,\vms)$ the marks of $G\subg{\cl{v_m}}$ we have 
\[\rate\gvpara{H}{v_m}\stpara{j}(s,X^{H,\x_H}) = \locrate^{H\subg{\cl{v_m}}}_j(s,X^{H,\x_H}_{\cl{v_m}}, \ems, \vms) = \rate\gvpara{H_{\ell}}{v_m}\stpara{j}(s,X^{H_{\ell},\x_{H_{\ell}}}), \quad s \in [0,\tau_m].\] 
Due to the form of the SDE \eqref{mod:infpart}, this shows $X^{H,\x_H}_{v_{m}}(\tau_{m}) = X^{H_{\ell},\x_{H_{\ell}}}_{v_{m}}(\tau_{m})$, as desired. This completes the proof of Claim 3B, and therefore of the first assertion of the proposition.\\

We now turn to the proof of the second assertion of the proposition. Let $\{G_n\}$ be a sequence of deterministic finitely dissociable $\sp{\emksp,\vmksp}$-graphs, and for each $n \in \Nb$, let $\poiss^{G_n}$ be a driving noise compatible with $G_n$. Then define $\Cc^n_{t_1,t_2}$ analogously to $\Cc_{t_1,t_2}$, but with $G$ and $\poiss^{G}$ replaced by $G_n$ and $\poiss^{G_n}$, respectively, but using the same family of constants ${\bf C}$ to define $\poiss^{G,T}$ and $\poiss^{G_n,T}$, $T \in \Rb_+$. By the first assertion of the proposition just established above, for each $n$, $\Cc^n_T$ is a localizing map of the SDE \eqref{mod:infpart} on $(G_n,\poiss^{G_n})$. It follows (e.g., by Remark \ref{rem-locdef}) that $\Oo_T(\cdot;G_n,\poiss^{G_n}) \defeq \gcl{\Cc^n_T(\cdot)}{G_n}$ is likewise a localizing map of the SDE \eqref{mod:infpart} on $(G_n,\poiss^{G_n})$. Note that $\Oo_T(\cdot;G_n,\poiss^{G_n})$ satisfies \eqref{smap-meas} as a consequence of the fact that $\{\Cc^n_T(\sO) \subset U\}$ lies in $\filtm_T^{\poiss^{G_n}}$ for every $\sO, U \in \fset{G_n}$. With this choice of localizing maps $\Oo_T(\cdot;G_n, \poiss^{G_n})$, clearly property 1 of Definition \ref{WP:loccons} holds.

It only remains to show that property 2 of Definition \ref{WP:loccons} is also satisfied. Fix $n ,n',\ell \in \Nb$ such that there exists an isomorphism $\varphi\in I(\trnc{\ell}(\nm{G_{n',*}}), \trnc{\ell}(\nm{G_{n,*}}))$. Define the $\filtm_0$-measurable event 
\[\Imc_{\varphi} \defeq \{\varphi\in I(\trnc{\ell}(\nm{G_{n',*}},\poiss^{G_{n'}}), \trnc{\ell}(\nm{G_{n,*}},\poiss^{G_{n}}))\},\] 
and for notational conciseness, set $U := \trnc{\ell-1}(G_n)$ and $U' := \trnc{\ell-1}(G_{n'})$. Let $\sO_n\subset V_{G_n}$ and $\sO_{n'}\subset V_{G_{n'}}$ be any pair of sets such that $\varphi(\sO_{n'}) = \sO_n$. Then it suffices to prove that on the event $\Imc_{\varphi}\cap \{U\supseteq \Cc^n_T(\sO_n)\} = \Imc_{\varphi}\cap\{\trnc{\ell}(G_n)\supseteq \Oo_T(\sO_n;G_n,\poiss^{G_n})\}$, $\Cc^n_T(\sO_n) = \varphi(\Cc^{n'}_T(\sO_{n'}))$ a.s..

For any $m\in \Nb$, and any $(G_m,\poiss^{G_m,T})$-causal chain $\Gamma$ up to time $T$ ending in $\sO_m$, we refer to $\Gamma$ as a $G_m$-causal chain. We now claim that $\varphi$ induces a bijection between $G_n$-causal chains and $G_{n'}$-causal chains. By the definition of $\Cc^n_T(\sO_n)$, it follows that $\Gamma \subseteq U$ on the event $\Imc_{\varphi}\cap \{U\supseteq \Cc^n_T(\sO_n)\}$. Moreover, on the event $\Imc_{\varphi}$, $\varphi^{-1}|_U \in I((\nm{G_{n,*}\subg{U}},\poiss^{G_n}_U),(\nm{G_{n',*}\subg{U'}},\poiss^{G_{n'}}_{U'}))$ so $\varphi^{-1}(\Gamma)$ is a $G_{n'}$-causal chain (noting that on this event, $\poiss^{G_{n'},T}_{U'} = (\poiss^{G_n,T}_{\varphi(v)})_{v \in U'}$ as a consequence of the fact that $\poiss^{G_{n'},T}$ and $\poiss^{G_n,T}$ are constructed using the same family of constants $\mathbf{C}$). In the other direction, suppose $\Gamma' \defeq (v' = u'_0,u'_1,\dots,u'_m \in \sO_{n'})$ is a $G_{n'}$-causal chain. If $\Gamma' \subseteq U'$, then by the same argument, $\varphi(\Gamma')$ is a $G_n$-causal chain. On the other hand, suppose there exists $k \in \{0,1,\dots,m\}$ such that $u'_k \notin U'$. Note that $u'_m \in \sO_{n'} \subseteq U'$, so $k < m$. Choose the maximal $k$ such that $u'_k \notin U'$ and note that then $\{u'_{k+1},\dots,u'_m\}\subseteq U'$. It immediately follows that $\ov{\Gamma}' \defeq (u'_k,\dots,u'_m)$ is also a $G_{n'}$-causal chain. Furthermore, $u'_k \in \gneigh{U'}{G_{n'}}$, so on the event $\Imc_{\varphi}$, $\varphi(\ov{\Gamma}')$ is a $G_n$-causal chain and $\varphi(\ov{\Gamma}') \nsubseteq U$, which contradicts our assumption that $\Cc^n_T(\sO_n) \subseteq U$. Thus, for any $G_{n'}$-causal chain $\Gamma'$ ending in $\sO_{n'}$, $\varphi(\Gamma')$ is a $G_n$-causal chain ending in $\sO_n$. This proves the claim.

Because $\varphi$ a.s. induces a bijection between causal chains in $G_n$ and $G_{n'}$ that end in $\sO_n$ and $\sO_{n'}$ respectively, it follows that $\Cc_T^n(\sO_n) = \varphi(\Cc_T^{n'}(\sO_{n'}))$ on the event $\Imc_{\varphi}\cap\{\Cc_T^n(\sO_n) \subseteq U\}$. Equivalently, $\Oo_T(\sO_n;G_n,\poiss^{G_n}) = \varphi(\Oo_T(\sO_{n'};G_{n'},\poiss^{G_{n'}}))$ on the event $\{ \Oo_T(\sO_n;G_n,\poiss^{G_n})\subseteq \trnc{\ell}(G_n)\}\cap \Imc_{\varphi}$. It follows that property 2 of Definition \ref{WP:loccons} also holds for the sequence $\{G_n\}_{n \in \Nb}$. 

This proves the second assertion and, hence, concludes the proof of the proposition. 
\end{proof}

\section{Local Convergence of the IPS} 
\label{LWCpf}
The main goal of this section is to prove the local convergence result of Theorem \ref{LWC:LWC}. First, consider the case when the initial data $\ic{(G_n,\ems^n,\vms^n,\x^n)}, n \in \Nb$, and $\ic{(G,\ems,\vms,\x)}$ are finite and have deterministic and equal unmarked representatives, and thus differ only in their vertex and edge marks and initial states. Suppose also that the corresponding SDEs in \eqref{mod:infpart} are coupled so that they are driven by the same Poisson processes for different $n$. Then Theorem \ref{LWC:LWC} follows from pathwise continuity of the dynamics of the SDE \eqref{mod:infpart} with respect to the vertex and edge marks of the initial data, which is essentially a consequence of the continuity condition in Assumption \ref{mod:cont}. In the more general case when the unmarked graphs need not coincide but each IPS in the sequence and the limit IPS, all still have finite initial data, the proof entails carefully constructed couplings involving graph isomorphism (see Assumption \ref{LWCpf:finconv} and Lemma \ref{LWCpf:nograph} below). In the fully general setting, when the limit graph could be infinite, the proof proceeds by first using consistent spatial localization to reduce the analysis to the finite initial data setting and then invoking the finite initial data results. While the localizing maps associated with consistent spatial localization (see Definition \ref{WP:loccons}) only make sense on graphs rather than their equivalence classes, local convergence results are defined in terms of equivalence classes. To bridge this gap, we need to carefully select suitable representatives of equivalence classes, as well as establish correspondences between statements about the convergence of equivalence classes and statements about representative graphs. This necessitates some technicalities related to representative graphs and associated driving noises, which we introduce in Section \ref{subs-coupling}. But it is not hard to see from our arguments that if one specializes to the case of convergent graphs whose vertex sets are subsets of a fixed countable set (see the space $\wh{\gs}\sp{\ov{\Pol},\Pol}$ defined in \eqref{LWC:whgs} below), then we can do away with many of these technicalities. 

Section \ref{subs-coupling} defines canonical representatives with a view to constructing suitable couplings. These are then used in Section \ref{subs-pflocconv} to establish a more general almost sure local convergence result in Proposition \ref{LWCpf:LWC}. The latter result is used in Section \ref{subs-finalpfLWC} to prove  the local convergence result (Theorem \ref{LWC:LWC}), and also in Section \ref{GEM} in the proof of the hydrodynamic limit (Theorems \ref{GEM:GEMconv} and \ref{GEM:empconv}). 

\subsection{Canonical Representative Graphs and Consistent Extensions} 
\label{subs-coupling}
Let $\ov{\Pol},\Pol$ be Polish spaces. We begin by defining a space of $\sp{\ov{\Pol},\Pol}$-graphs: 
\begin{equation} 
\label{LWC:whgs} 
\wh{\gs}\sp{\ov{\Pol},\Pol} = \{\sp{\ov{\Pol},\Pol}\te{-graphs }G \defeq (V,E,\root,\ov{\dvms},\dvms)\te{ s.t. } V \subseteq \Nb\}. 
\end{equation} 
Properties of this space are elucidated in Appendix \ref{msbl}. In particular, it follows from Lemmas \ref{msbl:Pol}, \ref{msbl:alphcont}, and \ref{msbl:rep} that $\wh{\gs}\sp{\ov{\Pol},\Pol}$ can be equipped with a Polish topology that is compatible with the topology of $\gs\sp{\ov{\Pol},\Pol}$, and that for any $\gs\sp{\ov{\Pol},\Pol}$-random element $\ic{G}$, there exists a $\sigma(\ic{G})$-measurable $\wh{\gs}\sp{\ov{\Pol},\Pol}$-random element $G$ such that $G \in \ic{G}$ almost surely. In the latter case, $G$ is referred to as a $\wh{\gs}\sp{\emksp,\vmksp}$-random representative of $\ic{G}$, and thus, $\wh{\gs}\sp{\ov{\Pol},\Pol}$ can be viewed as a canonical space of measurable representative graphs compatible with the local topology. We begin with the definition of a representative convergent sequence. 

\begin{definition}[Representative convergent sequences] 
\label{MG:rcs} 
\sloppy Let $\ic{(G_n,\x^n)}$ be a random sequence converging a.s. to $\ic{(G,\x)}$ in $\gs\sp{\emksp,\vmksp\times \Xc}$ on some complete probability space. Then a \emph{representative convergent sequence} (henceforth abbreviated to \emph{rep-con sequence}) of $(\{\ic{(G_n,\x^n)}\}_{n \in \Nb}, \ic{(G,\x)})$ is a $\sigma(\{\ic{(G_n,\x^n)}\}_{n\in\Nb},\ic{(G,\x)})$-measurable tuple $\left(\{(G_n,\x^n),M_n\}_{n\in \Nb}, (G,\x),\{\varphi_{n,m}\}_{n \in \Nb,m\leq M_n}\right)$, defined on the same probability space, that satisfies the following properties: 
\begin{enumerate}[1.] 
\item for each $n \in \Nb$, $(G_n,\x^n) = (V_n,E_n,\root_n,\ems^n,\vms^n, \x^n)$ is a $\wh{\gs}\sp{\emksp,\vmksp\times \Xc}$-random representative of $\ic{(G_n,\x^n)}$; 
\item $(G,\x) = (V,E,\root,\ems,\vms,\x)$ is a $\wh{\gs}\sp{\emksp,\vmksp\times \Xc}$-random representative of $\ic{(G,\x)}$; 
\item $\lim_{n\to\infty} M_n = \infty$ a.s. and $\trnc{m}(\nm{G_*},\x) \cong \trnc{m}(\nm{G_{n,*}},\x^n)$ on the event $\{m \leq M_n\}$; 
\item for each $n\in\Nb, m\leq m' \leq M_n$, $\varphi_{n,m}$ lies in $I(\trnc{m}(\nm{G_{*}},\x),\trnc{m}(\nm{G_{n,*}},\x^n))$, and $\varphi_{n,m'}|_{\trnc{m}(G)} = \varphi_{n,m}$; 
\item for every $n', m \in \Nb$ such that $m \leq M_{n'}$ and $e \in E_{\trnc{m}(G)}$, $\lim_{n \to \infty,n > n'} \ems^n_{\varphi_{n,m}(e)} = \ems_e$; 
\item for every $n', m \in \Nb$ such that $m \leq M_{n'}$ and $v \in \trnc{m}(G)$, $\lim_{n \to \infty,n > n'} \vms^n_{\varphi_{n,m}(v)} = \vms_v$. 
\end{enumerate} 
\end{definition}

In Definition \ref{MG:rcs}, $M_n$ represents the size of a random neighborhood of the root for which the unmarked versions of $G_n$ and $G$ are isomorphic (in the notation of Definition \ref{def-locconvnoiso} of local convergence, $M_n$ is any random integer bounded from above by $\sup\{m: n \geq n_m\}$). Likewise $\varphi_{n,m}$ is a random isomorphism of the $m$-neighborhoods of the roots of $G_n$ and $G$ (analogous to the deterministic isomorphisms $\varphi_{n,m}$ introduced in Definition \ref{def-locconvnoiso}), and property 4 imposes a consistency property on the sequence of random isomorphisms. 

The next lemma shows that there is a canonical way to identify rep-con sequences. A constructive proof of the lemma, which leverages the existence of suitably measurable representative graph sequences, isomorphisms, and driving maps, is deferred to Appendix \ref{subs-Cmain}.

\begin{lemma}[Existence of rep-con sequences] 
\label{MG:iso} 
Fix a complete probability space $\pzspace$ that supports the random sequence $\ic{(G_n,\x^n)}$ converging a.s. to $\ic{(G,\x)}$ in $\gs\sp{\emksp,\vmksp\times \Xc}$. Then there exists an $\wh{\filtm}$-measurable sequence of $\wh{\gs}\sp{\emksp,\vmksp\times\Xc}$-random elements $\{(\ov{G}_n,\ov{\x}^n)\}_{n \in\Nb},(\ov{G},\ov{\x})$ satisfying Properties 1 and 2 of Definition \ref{MG:rcs}. In addition, given any sequence $\{(G_n,\x^n)\}_{n \in\Nb},(G,\x)$ satisfying Properties 1 and 2 of Definition \ref{MG:rcs}, the probability space $\pzspace$ also supports a rep-con sequence $\left(\{(G_n,\x^n),M_n\}, (G,\x),\{\varphi_{n,m}\}\right)$. 
\end{lemma}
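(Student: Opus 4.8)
The plan is to handle the two assertions in turn, the first being essentially a citation and the second carrying all the content. For the first assertion I would invoke the canonical--representative statement recalled just before Definition~\ref{MG:rcs} (the consequence of Lemmas~\ref{msbl:Pol}, \ref{msbl:alphcont} and~\ref{msbl:rep}), applied with edge-mark space $\emksp$ and vertex-mark space $\vmksp\times\X$, the latter being Polish since $\X$ is countable with the discrete topology. This yields, for each $n\in\N$, a $\sigma(\ic{(G_n,\x^n)})$-measurable $\wh{\gs}\sp{\emksp,\vmksp\times\X}$-random element $(\ov G_n,\ov\x^n)$ with $(\ov G_n,\ov\x^n)\in\ic{(G_n,\x^n)}$ a.s., and a $\sigma(\ic{(G,\x)})$-measurable representative $(\ov G,\ov\x)$ of $\ic{(G,\x)}$; since $\sigma(\ic{(G_n,\x^n)})$ and $\sigma(\ic{(G,\x)})$ are sub-$\sigma$-algebras of $\wh{\filtm}$, Properties~1 and~2 of Definition~\ref{MG:rcs} hold for this sequence.

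For the second assertion I would fix the given representatives $\{(G_n,\x^n)\}_{n},(G,\x)$ and build $\{M_n\}$ and $\{\varphi_{n,m}\}$ as explicit measurable functionals of this data, setting everything trivially (say $M_n\equiv 0$ and $\varphi_{n,m}$ the empty map) on the $\wh\PP$-null event where $\ic{(G_n,\x^n)}\to\ic{(G,\x)}$ fails. On the complementary almost sure event, since $\X$ is discrete, Definition~\ref{def-locconvnoiso} furnishes, for each $k\in\N$, a finite $\widetilde N_k$ such that for every $n\geq\widetilde N_k$ the balls $\trnc k(\nm{G_*},\x)$ and $\trnc k(\nm{G_{n,*}},\x^n)$ are isomorphic as $\X$-marked rooted graphs via at least one isomorphism along which every $\emksp$- and $\vmksp$-mark lies within $1/k$ (in fixed compatible metrics $d_{\emksp},d_{\vmksp}$) of the corresponding limit mark. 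I would then monotonize, setting $N_m\defeq\max\{m,\widetilde N_1,\dots,\widetilde N_m\}$ so that $(N_m)_m$ is non-decreasing with $N_m\geq m$, and define $M_n\defeq\sup\{m\in\N:N_m\leq n\}$ with $\sup\emptyset\defeq 0$; then each $M_n$ is finite, $(M_n)_n$ is non-decreasing in $n$, $M_n\to\infty$, and $N_{M_n}\leq n$ whenever $M_n\geq 1$. The crucial step is the selection of isomorphisms: whenever $M_n\geq 1$, the chain $n\geq N_{M_n}\geq\widetilde N_{M_n}$ guarantees that a "$1/M_n$-good" isomorphism of $\trnc{M_n}(\nm{G_*},\x)$ onto $\trnc{M_n}(\nm{G_{n,*}},\x^n)$ exists; since the representatives have vertex sets in $\N$ only countably many candidate maps arise and membership in the relevant (finite-combinatorial together with open-in-the-marks) family is a Borel event, so I would take $\varphi_n$ to be the first such map in a fixed enumeration, a measurable choice, and set $\varphi_{n,m}\defeq\varphi_n|_{\trnc m(G)}$ for $1\leq m\leq M_n$.

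It then remains to verify Properties~3--6 of Definition~\ref{MG:rcs} for the tuple $(\{(G_n,\x^n),M_n\}_n,(G,\x),\{\varphi_{n,m}\})$. Property~3 follows from $M_n\to\infty$ and from $\varphi_n|_{\trnc m(G)}$ exhibiting the required isomorphism on $\{m\leq M_n\}$. Property~4 holds because restricting an isomorphism of $\X$-marked rooted graphs to a radius-$m$ ball again gives such an isomorphism, and the consistency identity $\varphi_{n,m'}|_{\trnc m(G)}=(\varphi_n|_{\trnc{m'}(G)})|_{\trnc m(G)}=\varphi_n|_{\trnc m(G)}=\varphi_{n,m}$ is automatic. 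For Properties~5 and~6, given $m$ and $n'$ with $m\leq M_{n'}$, monotonicity of $(M_n)_n$ ensures $\varphi_{n,m}$ is defined for every $n\geq n'$, and since it is the restriction of the $1/M_n$-good $\varphi_n$ to the sub-ball $\trnc m(G)\subseteq\trnc{M_n}(G)$, one has $d_{\emksp}(\ems^n_{\varphi_{n,m}(e)},\ems_e)<1/M_n\to 0$ and likewise for the $\vmksp$-marks, giving the claimed convergence. Finally, $\widetilde N_k$, $N_m$, $M_n$ and $\varphi_n$ are all measurable functions of $\{(G_n,\x^n)\}_{n}$ and $(G,\x)$, which by Properties~1--2 are $\sigma(\{\ic{(G_n,\x^n)}\}_n,\ic{(G,\x)})$-measurable, so the constructed tuple has the measurability demanded of a rep-con sequence.

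I expect the main obstacle to be arranging, in a jointly measurable way, both the across-scale consistency of Property~4 and the scale-uniform mark convergence of Properties~5--6. The device that resolves it is to commit, for each $n$, to a single isomorphism on the largest available ball $\trnc{M_n}(G)$ that is "$1/M_n$-good", and to take all $\varphi_{n,m}$ as its restrictions: then consistency is built in and the mark accuracy improves automatically because $M_n\to\infty$. The remaining measurability bookkeeping is routine precisely because every graph involved has vertex set in $\N$, so each selection reduces to choosing the first valid option among countably many explicitly described Borel alternatives.
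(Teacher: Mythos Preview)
Your proposal is correct and follows essentially the same architecture as the paper's proof: define $M_n$ via a monotonized threshold capturing how large a radius admits a ``good'' isomorphism, select one such isomorphism $\varphi_n$ at the maximal scale $M_n$, and obtain all $\varphi_{n,m}$ by restriction so that Property~4 is automatic and Properties~5--6 follow from the $1/M_n$ accuracy and $M_n\to\infty$.

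The one noteworthy difference is in the measurable selection step. The paper forms, for each $(n,m)$, the argmin set $\minset_{n,m}$ of the total mark discrepancy $\Psi_{n,m}$ over all isomorphisms, verifies it is a (weakly measurable) random closed subset of the map space $\mps$, and then invokes Lemma~\ref{msbl:iso} (hence the Kuratowski--Ryll-Nardzewski theorem) to extract a measurable selector $\ov\varphi_{n,m}$, finally setting $\varphi_{n,m}=\ov\varphi_{n,M_n}|_{\trnc m(G)}$. Your ``first valid map in a fixed enumeration'' argument is more elementary and works here precisely because all candidate isomorphisms have finite domain and codomain contained in $\N$, so there are only countably many, and for each fixed candidate the event of being $1/M_n$-good is Borel; this sidesteps the random-closed-set machinery entirely. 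The paper's route is more general-purpose (it would survive in settings where the candidate set is not countable), while yours is shorter for the problem at hand.
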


To extend the notion of rep-con sequences from initial data consisting of $\wh{\gs}\sp{\emksp,\vmksp\times \Xc}$-random elements to the corresponding IPS characterized by $\wh{\gs}\sp{\emksp,\vmksp\times \cad}$-random elements, we will need a common probability space on which we can define both the driving noise and a rep-con sequence. As mentioned in Remark \ref{mod:coupling}, this driving noise is a key element of the coupling we later construct to prove local convergence of the IPS.

\begin{definition}[Consistent representative convergent extensions] 
\label{MG:extdef} 
Given a complete probability space $\pzspace$ that supports a sequence $\ic{(G_n,\x^n)}$ converging a.s. to $\ic{(G,\x)}$ in $\gs\sp{\emksp,\vmksp\times \Xc}$, a \emph{consistent representative convergent extension} (henceforth abbreviated to consistent rep-con extension) of $(\pzspace,\{\ic{(G_n,\x^n)}\}_{n\in\Nb},\ic{(G,\x)})$ is a 4-tuple $(\fpspace,\{(G_n,\x^n,\poiss^{G_n}), M_n\}_{n \in \Nb}$, $(G,\x,\poiss^G),\{\varphi_{n,m}\}_{n \in \Nb,m\leq M_n})$ such that 
\begin{enumerate}[1.] 
\item $(\{(G_n,\x^n),M_n\},(G,\x),\{\varphi_{n,m}\})$ is a rep-con sequence of $(\{\ic{(G_n,\x^n)}\}_{n \in \Nb},\ic{(G,\x)})$; 
\item $\fpspace$ is a complete extension of the probability space $\pzspace$ such that $\filt$ satisfies the usual conditions and $(G_n,\poiss^{G_n})$, $n \in \Nb,$ and $(G,\poiss^G)$, respectively, are $\filt$-driving noises that are compatible with $G_n$, $n \in \Nb,$ and $G$, as defined in Definition \ref{mod:dnoise}; 
\item for each $n \in \Nb$ and $v \in \trnc{M_n}(G)$, $\poiss^{G_n}_{\varphi_{n,M_n}(v)} = \poiss^G_v$ a.s.. 
\end{enumerate} 
\end{definition}

The construction of consistent rep-con extensions is facilitated by the use of so-called \emph{driving maps} defined below. Let $\mps$ be the space of maps from subspaces $W\subseteq \Nb$ to $\Nb$, which can be equipped with a Polish topology by Remark \ref{msbl:mpspace}.

\begin{definition}[Driving maps] 
\label{MG:driving} 
\sloppy Given a $\wh{\gs}\sp{\emksp,\vmksp\times \Xc}$-random element $(G,\x) \defeq (V,E,\root,\ems,\vms,\x)$, a \emph{driving map} is a random injective map $\psi: V \to \Nb$, that is, $\psi$ is a $\sigma(G,\x)$-measurable random element taking values in $\mps$. Suppose $\fpspace$ is a filtered probability space supporting a collection of i.i.d. $\filt$-Poisson processes $\{\poiss_k\}_{k \in \Nb}$ on $\Rb_+^2\times\jmps$ with intensity $\leb^2\otimes\Sm$. Suppose also that $\filt$ satisfies the usual conditions and $(G,\x)$ is $\filtm_0$-measurable. Then the $\sp{\emksp,\vmksp\times\Nms(\Rb_+^2\times\jmps)}$-random graph $(G,\poiss^G)$ defined by $\poiss^G_v = \poiss_{\psi(v)}$ is said to be an $\filt$-driving noise generated by $\psi$. 
\end{definition}

\begin{remark}[Driving maps generate driving noise] 
\label{MG:dpoiss}
We now justify our reference to $(G,\poiss^G)$ as a $\filt$-driving noise in Definition \ref{MG:driving}. It is easy to see that on any complete, filtered probability space $\fpspace$ supporting the $\filtm_0$-measurable initial data $(G,\x)$ and $\filt$-Poisson processes $\{\poiss_k\}_{k \in \Nb}$, the $\sigma(G,\x)$-measurability (and thus $\filtm_0$-measurability) of driving maps ensures that for any $\filtm_0$-measurable $v \in V_G$, $\poiss^G_v$ is $\filt$-adapted and therefore supported on $\fpspace$, and condition 3 of Definition \ref{mod:dnoise} is satisfied. Furthermore, conditioned on $\filtm_0$, $\poiss^G$ is a collection of i.i.d. Poisson processes indexed by $V_G$. Thus, condition 1 of Definition \ref{mod:dnoise} is satisfied. Lastly, because $\{\poiss_k\}_{k \in \Nb}$ are $\filt$-Poisson processes, for any $t > 0$ and $A \in \borel((t,\infty)\times\Rb_+\times\jmps)$, $(G,\poiss^G(A))$ is conditionally independent of $\filtm_t$ given $\filtm_0$ so condition 2 of Definition \ref{mod:dnoise} is satisfied. Assuming $\filt$ satisfies the usual conditions, all three conditions of Definition \ref{mod:dnoise} are satisfied.
\end{remark}

The following lemma shows that there always exists a consistent rep-con extension of any convergent sequence of random elements in $\gs\sp{\emksp,\vmksp\times\Xc}$. We prove this at the end of Appendix \ref{msbl}.

\begin{lemma}[Existence of consistent rep-con extensions] 
\label{MG:repextexist} 
\sloppy Let $\{\ic{(G_n,\x^n)}\}_{n \in \Nb}$ be a random sequence on $\pzspace$ that converges a.s. to $\ic{(G,\x)}$ in $\gs\sp{\emksp,\vmksp}$. Then there exists a consistent rep-con extension $(\fpspace,\{(G_n,\x^n,\poiss^{G_n}),M_n\}_{n\in\Nb}, (G,\x,\poiss^G),\{\varphi_{n,m}\}_{n\in\Nb,m\leq M_n})$ of $(\pzspace,\{\ic{(G_n,\x^n)}\}_{n \in \Nb},\ic{(G,\x)})$ such that $\fpspace$ supports a collection of i.i.d. $\filt$-Poisson processes $\poiss \defeq \{\poiss_k\}_{k \in \Nb}$ and driving maps $\psi_n,$ $n \in \Nb$, and $\psi$ that generate the respective $\filt$-driving maps $\poiss^{G_n}$, $n \in \Nb$, and $\poiss^G$. 
\end{lemma}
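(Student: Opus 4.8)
The plan is to start from the rep-con sequence furnished by Lemma \ref{MG:iso}, enlarge the underlying probability space by an independent countable family of i.i.d.\ driving Poisson processes, and then build the driving maps $\psi_n$ and $\psi$ so that they are \emph{threaded through} the partial isomorphisms $\varphi_{n,M_n}$. Concretely, $\psi$ will just be the inclusion $V\hookrightarrow\N$, and each $\psi_n$ will be \emph{forced} to equal $\psi\circ\varphi_{n,M_n}^{-1}$ on the matched ball $\trnc{M_n}(G_n)$, with an arbitrary but canonical and measurable injective completion on the remaining vertices; this is exactly what Condition 3 of Definition \ref{MG:extdef} demands, and once it holds the identification $\poiss^{G_n}_{\varphi_{n,M_n}(v)}=\poiss^G_v$ is literal rather than merely almost sure.

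First I would apply Lemma \ref{MG:iso} (together with the measurable-representative statement of Lemma \ref{msbl:rep}) on $\pzspace$ to obtain a rep-con sequence $\big(\{(G_n,\x^n),M_n\}_{n\in\N},(G,\x),\{\varphi_{n,m}\}_{n\in\N,m\le M_n}\big)$ in the sense of Definition \ref{MG:rcs}; in particular each $(G_n,\x^n)$ and $(G,\x)$ is a $\wh{\gs}\sp{\emksp,\vmksp\times\X}$-random representative with vertex set contained in $\N$, and $\varphi_{n,m'}|_{\trnc{m}(G)}=\varphi_{n,m}$ whenever $m\le m'\le M_n$. Next I would pass to the product extension $\fpspace$ of $\pzspace$ carrying an independent collection $\poiss=\{\poiss_k\}_{k\in\N}$ of i.i.d.\ Poisson processes on $\R_+^2\times\jmps$ with intensity $\leb^2\otimes\Sm$, and take $\filt$ to be the augmentation of the filtration generated by $\wh{\F}$ and the processes $\poiss$, so that $\filt$ satisfies the usual conditions, the rep-con data is $\filtm_0$-measurable, and $\poiss$ is a family of $\filt$-Poisson processes independent of $\filtm_0$.

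Then I would define the driving maps. Set $\psi\defeq\iota_V$, the inclusion of $V=V_G\subseteq\N$ into $\N$, which is injective and $\sigma(G)$-measurable. For each $n$, on the set $\trnc{M_n}(G_n)=\varphi_{n,M_n}(\trnc{M_n}(G))$ put $\psi_n\defeq\psi\circ\varphi_{n,M_n}^{-1}=\varphi_{n,M_n}^{-1}$ (viewed as a map into $\N$); since $G$ is locally finite, $\trnc{M_n}(G)$ is finite when $M_n<\infty$, so $\psi(\trnc{M_n}(G))$ is finite and $\N\setminus\psi(\trnc{M_n}(G))$ is infinite, while $V_n\setminus\trnc{M_n}(G_n)$ is at most countable, and when $M_n=\infty$ this remaining set is empty. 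On the remaining vertices I would extend $\psi_n$ by any canonically chosen injection into $\N\setminus\psi(\trnc{M_n}(G))$ (for instance the order-preserving matching of the two sets). This yields an injection $\psi_n:V_n\to\N$ that is measurable as a random element of $\mps$ with respect to $\sigma\big(G_n,\varphi_{n,M_n}\big)\subseteq\filtm_0$ by Remark \ref{msbl:mpspace}. Finally put $\poiss^{G_n}_v\defeq\poiss_{\psi_n(v)}$ for $v\in V_n$ and $\poiss^G_v\defeq\poiss_{\psi(v)}=\poiss_v$ for $v\in V$; by Remark \ref{MG:dpoiss} these are $\filt$-driving noises compatible with $G_n$ and $G$ in the sense of Definition \ref{mod:dnoise}, generated by $\psi_n$ and $\psi$ respectively.

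It then remains to check the three conditions of Definition \ref{MG:extdef}. Condition 1 is precisely that $\big(\{(G_n,\x^n),M_n\},(G,\x),\{\varphi_{n,m}\}\big)$ is a rep-con sequence, which holds by Lemma \ref{MG:iso}. Condition 2 holds since $\fpspace$ is a complete extension with $\filt$ satisfying the usual conditions and the $\poiss^{G_n}$, $\poiss^G$ are $\filt$-driving noises by the previous step. Condition 3 is the heart of the matter, and by construction it is immediate: for $v\in\trnc{M_n}(G)$ one has $\varphi_{n,M_n}(v)\in\trnc{M_n}(G_n)$, hence $\psi_n(\varphi_{n,M_n}(v))=\varphi_{n,M_n}^{-1}(\varphi_{n,M_n}(v))=v=\psi(v)$, so $\poiss^{G_n}_{\varphi_{n,M_n}(v)}=\poiss_{\psi(v)}=\poiss^G_v$ identically. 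I expect the only real subtlety to be the measurable-selection bookkeeping: one must verify that the ``fill-in'' of $\psi_n$ on $V_n\setminus\trnc{M_n}(G_n)$ can be carried out as a jointly measurable $\mps$-valued map depending on $(G_n,\varphi_{n,M_n})$, and that the resulting family $\{\poiss^{G_n}_v\}_{v\in V_n}$ is, conditionally on $\filtm_0$, still i.i.d.\ (so Condition 1 of Definition \ref{mod:dnoise} holds) — both of which follow from the injectivity and $\filtm_0$-measurability of $\psi_n$, the countability of all vertex sets, and Remark \ref{MG:dpoiss}, but deserve to be spelled out.
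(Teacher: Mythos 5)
Your proposal is correct and follows essentially the same route as the paper's proof: get a rep-con sequence from Lemma \ref{MG:iso}, adjoin an independent family of i.i.d.\ Poisson processes, fix a driving map $\psi$ for $G$, and then build each $\psi_n$ to agree with $\psi\circ\varphi_{n,M_n}^{-1}$ on $\trnc{M_n}(G_n)$, extended injectively elsewhere. The one genuine difference is how the extension outside the matched ball is handled. The paper deliberately chooses $\psi(k)=2k$ so that $\N\setminus\psi(V_G)$ is infinite, and then invokes the measurable-selection Lemma \ref{msbl:dmap} (with $\psi_1=\psi$, $G_1=(\nm{G_*},\x)$, $G_2=(\nm{G_{n,*}},\x^n)$, $M=M_n$), which packages the measurability of the fill-in and additionally gives $\psi(V_G)\cap\psi_n(V_n\setminus\trnc{M_n}(G_n))=\emptyset$. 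You instead take $\psi=\iota_V$ and do the fill-in by hand into $\N\setminus\psi(\trnc{M_n}(G))$, observing correctly that only this finite ball needs to be avoided. That is fine for this lemma (Definition \ref{MG:extdef} only constrains the driving noises on the matched ball), but note that your choice of $\psi$ would not satisfy the hypothesis of Lemma \ref{msbl:dmap} when $V_G=\N$ (since $\N\setminus\psi(V_G)=\emptyset$), so you cannot simply cite that lemma and must instead spell out the order-preserving matching directly — which you do, and it yields a $\wh{\filtm}$-measurable injection by the same argument as in Lemma \ref{msbl:dmap}. One small imprecision: $\psi_n$ as you build it is $\sigma(G_n,G,M_n,\varphi_{n,M_n})$-measurable (it uses the ball $\trnc{M_n}(G)$), not merely $\sigma(G_n,\varphi_{n,M_n})$-measurable; but this is still $\wh{\filtm}\subseteq\filtm_0$-measurable, which is all that Remark \ref{MG:dpoiss} needs.
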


Consistent rep-con extensions are useful because, as shown in Proposition \ref{LWCpf:LWC} of the next section, under the conditions of Theorem \ref{LWC:LWC}, a sequence of IPS will converge almost surely if it is generated by a consistent rep-con extension in the following sense. 

\begin{definition}[IPS sequences generated by consistent rep-con sequences] 
\label{MG:genX} 
A sequence of IPS $\{(G_n,X^{G_n,\x^n})\}_{n \in \Nb},(G,X^{G,\x})$ is said to be generated by a consistent rep-con extension $(\fpspace,\{(G_n,\x^n,\poiss^{G_n}),M_n\}_{n \in \Nb},(G,\x,\poiss^G),\{\varphi_{n,m}\}_{n \in \Nb,m\leq M_n})$ if for each $n \in \Nb$, $(G,X^{G_n, \xi^n})$ is an $\poiss^{G_n}$-strong solution to \eqref{mod:infpart} for $(G_n, \x^n)$ and, likewise, $(G,X^{G,\xi})$ is an $\poiss^G$-strong solution to \eqref{mod:infpart} for $(G,\xi)$. 
\end{definition}

\begin{remark}[Reduction from random to deterministic initial data] 
\label{MG:cond} 
If $\{(G_n,X^{G_n,\x^n})\}_{n \in \Nb}$, $(G,X^{G,\x})$, are IPS generated by a consistent rep-con extension of $(\pzspace,\{\ic{(G_n,\x^n)}\}_{n \in \Nb}, \ic{(G,\x)})$, then for almost every $\omega \in \Omega$, $\law(\{(G_n,X^{G_n,\x^n})\}_{n \in \Nb}, $ $(G,X^{G,\x})|\filtm_0)(\omega)$ describes the law of a sequence of IPS generated by a consistent rep-con extension of a tuple that contains the terms $\{\ic{(G_n,\x^n)}(\omega)\}_{n \in \Nb}$ and $\ic{(G,\x)}(\omega)$. A rigorous justification of this would follow along the same lines as Lemma \ref{mod:condwp} and is thus omitted. 
\end{remark}

\subsection{Proof of Local Convergence in the almost sure setting} 
\label{subs-pflocconv} 
As outlined at the beginning of Section \ref{LWCpf}, we start by proving convergence for sequences and limits with finite initial data (or, to be more precise, initial data whose underlying graphs have a uniformly bounded radius). 

\begin{lemma}[The convergence of IPS on graphs of uniformly bounded size] 
\label{LWCpf:nograph} 
Suppose the family of local jump rates $\mathbf{\locrate}$ satisfies Assumption \ref{mod:assu}, and $\ic{(G,\x)}$ satisfies Assumption \ref{mod:cont}. If $\ic{(G_n,\x^n)} \to \ic{(G,\x)}$ a.s., then the tuple $(\pzspace,\{\ic{(G_n, \x^n)}\}_{n\in\Nb},\ic{(G, \x)})$ satisfies the following:

\noindent\textbf{Finite Convergence Property:} given any consistent rep-con extension $(\fpspace, \{(G_n,\x^n,\poiss^{G_n}),M_n\}_{n \in \Nb}, (G,\x,\poiss^G),\{\varphi_{n,m}\}_{n \in \Nb,m\leq M_n})$ of $(\pzspace, \{\ic{(G_n, \x^n)}\}_{n\in\Nb},\ic{(G, \x)})$, $T \in \Rb_+$, and $m \in \Nb$, there exists an a.s. finite, $\filtm_T$-measurable random variable $\ov{N}_m \defeq \ov{N}_{m,T}$ such that for every $n \in \Nb$, 
\begin{equation} 
\label{LWCpf:Gcong} 
\left(\trnc{m}(\nm{G_{n,*}}),X^{m,n}[T]\right) \cong \left(\trnc{m}(\nm{G_{*}}),X^{m,\infty}[T]\right) \te{ a.s. on the event }\{n \geq \ov{N}_m\}\cap \{m \leq M_n\}, 
\end{equation} 
where $X^{m,n}$ and $X^{m,\infty}$ are the respective $\poiss^{G_n}_{\trnc{m}(G_n)}$- and $\poiss^G_{\trnc{m}(G)}$-strong solutions to \eqref{mod:infpart} for the initial data $\trnc{m}(G_n,\x^n)$ and $\trnc{m}(G,\x)$. 
\end{lemma}

We defer the proof of Lemma \ref{LWCpf:nograph} to the end of the section, and instead first establish local convergence for IPS on graphs with general (possibly infinite) initial data given the conclusion of Lemma \ref{LWCpf:nograph}. To do so, the only consequence of Assumption \ref{mod:cont} that we need is the finite convergence property. With that in mind, we encode the latter property in the following weaker assumption. 

\begin{assumptionp}{\ref{mod:cont}$'$}[Generalization of Assumption \ref{mod:cont}] 
\label{LWCpf:finconv} 
The tuple $(\pzspace,\{\ic{(G_n, \x^n)}\}_{n\in\Nb},$ $\ic{(G, \x)})$ is such that the finite convergence property of Lemma \ref{LWCpf:nograph} holds.
\end{assumptionp}

\begin{remark}[Reduction from random to deterministic initial data] 
\label{LWCpf:cond} 
\sloppy Given Assumption \ref{mod:assu2}, as a consequence of Remark \ref{MG:cond}, if Assumption \ref{LWCpf:finconv} holds for a tuple $(\pzspace,\{\ic{(G_n,\x^n)}\},\ic{(G,\x)})$ then it holds for a.s. every realization of the isomorphism classes $\{\ic{(G_n,\x^n)}\},\ic{(G,\x)}$ when they are random. 
\end{remark}

We now state our general almost sure local convergence result, which holds under the weaker conditions of Assumptions \ref{mod:assu2} and \ref{LWCpf:finconv} (which are implied by Assumptions \ref{mod:assu} and \ref{mod:cont}, respectively), so as to make it applicable in situations where the latter may fail but the former still hold. 

\begin{proposition}[Almost sure local convergence] 
\label{LWCpf:LWC} 
Suppose that the local jump rates ${\bf \bar{\rate}}$ satisfy Assumption \ref{mod:assu2}, the tuple $(\pzspace, \{\ic{(G_n, \x^n)}\}_{n\in\Nb}$, $\ic{(G, \x)})$ satisfies Assumption \ref{LWCpf:finconv} and $\{\ic{G_n}\}_{n\in\Nb},\ic{G}$ a.s. consistently spatially localizes the SDE \eqref{mod:infpart} with local jump rates ${\bf \bar{\rate}}$. Also, let $\{(G_n,X^{G_n,\x^n})\}_{n\in\Nb}, (G,X^{G,\x})$ be a collection of IPS generated by a consistent rep-con extension of the tuple in the sense of Definiton \ref{MG:genX}. If $\ic{(G_n,\x^n)} \to \ic{(G,\x)}$ a.s. in $\gs\sp{\emksp,\vmksp\times \Xc}$, then $\ic{(G_n,X^{G_n,\x^n})} \to \ic{(G,X^{G,\x})}$ a.s. in $\gs\sp{\emksp,\vmksp\times \cad}$. 
\end{proposition}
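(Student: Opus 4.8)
The goal is to upgrade the finite convergence of Assumption~\ref{LWCpf:finconv} to the convergence of the full (infinite) marked trajectories in the local topology on $\gs\sp{\emksp,\vmksp\times\cad}$. The plan is to fix $T\in\R_+$ and $m\in\N$ and show $\trnc{m}(\nm{G_{n,*}},X^{G_n,\x^n}[T])\cong\trnc{m}(\nm{G_*},X^{G,\x}[T])$ for all sufficiently large $n$, via the isomorphisms $\varphi_{n,m}$ coming from the rep-con sequence; since $m$ and $T$ are arbitrary, the definition of the local topology on $\cad$-marked graphs (which is the graph-isomorphism analogue of the topology recalled in Section~\ref{nota:skor}, requiring convergence of every finite-radius, finite-time truncation) then yields the claim. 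The point is that $\varphi_{n,m}$ is already a graph isomorphism of the truncations by Property~3--4 of Definition~\ref{MG:rcs}, and by Property~3 of Definition~\ref{MG:extdef} it intertwines the driving noises on $\trnc{M_n}(G)$; so the only thing left to verify is that it also carries the solution trajectories over, on a set of probability one, for $n$ large.

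First I would fix an arbitrary $m\in\N$ and $T\in\R_+$ and work on the a.s. event where $M_n\to\infty$, so eventually $m\le M_n$ and $m\le M_{n'}$ for $n,n'$ large. The key geometric input is consistent spatial localization of $\{\ic{G_n}\}$, $\ic{G}$: choose a consistent sequence of localizing maps $\Oo_T(\cdot;G_n,\poiss^{G_n})$ and $\Oo_T(\cdot;G,\poiss^G)$ as in Definition~\ref{WP:loccons}. Apply the localizing property (Definition~\ref{WP:locunif}, property~2) to the finite set $U=\trnc{m}(G)$: there is a random $\ell\ge m$ with $\Oo_T(\trnc{m}(G);G,\poiss^G)\subseteq\trnc{\ell}(G)$, and on that event the marginal of the infinite-graph solution $X^{G,\x}$ on $\trnc{m}(G)$ over $[0,T]$ equals the marginal of the finite-graph solution $X^{\ell,\infty}$ (the $\poiss^G_{\trnc{\ell}(G)}$-strong solution for initial data $\trnc{\ell}(G,\x)$). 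The same is done on each $G_n$ with the \emph{same} $\ell$, using the consistency clause (property~2 of Definition~\ref{WP:loccons}) together with the fact that $\varphi_{n,M_n}$ intertwines the driving noises: on the event $\{\ell\le M_n\}\cap\Imc_{\varphi_{n,\ell}}$ we get $\Oo_T(\varphi_{n,\ell}^{-1}(\trnc{m}(G));G_n,\poiss^{G_n})=\varphi_{n,\ell}^{-1}(\Oo_T(\trnc{m}(G);G,\poiss^G))\subseteq\trnc{\ell}(G_n)$, so the marginal of $X^{G_n,\x^n}$ on $\varphi_{n,\ell}^{-1}(\trnc{m}(G))$ over $[0,T]$ equals that of the finite solution $X^{\ell,n}$. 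Thus both infinite-graph trajectories have been reduced, on a common high-probability event, to trajectories of solutions on $\trnc{\ell}$-truncations.

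Next I would invoke Assumption~\ref{LWCpf:finconv} with $m$ replaced by $\ell$: there is an a.s. finite $\filtm_T$-measurable $\ov N_\ell$ such that for $n\ge\ov N_\ell$ (and $\ell\le M_n$), $(\trnc{\ell}(\nm{G_{n,*}}),X^{\ell,n}[T])\cong(\trnc{\ell}(\nm{G_*}),X^{\ell,\infty}[T])$, the isomorphism being $\varphi_{n,\ell}$ (by the class property of the rate functions, Remark~\ref{rem-classfn}, together with the driving-noise intertwining, one checks the rep-con isomorphism is the witness, or at least that some witness can be taken to restrict to it on $\trnc{m}$). Restricting this isomorphism to $\trnc{m}(G)$ and combining with the two localization reductions of the previous paragraph gives $(\trnc{m}(\nm{G_{n,*}}),X^{G_n,\x^n}[T])\cong(\trnc{m}(\nm{G_*}),X^{G,\x}[T])$ a.s. on an event that, as $n\to\infty$, has probability tending to one — but since all the ``bad'' pieces ($\{\ell>M_n\}$, $\{n<\ov N_\ell\}$, the null sets where localization or rep-con properties fail, and the random choice of $\ell$) are, for fixed $m,T$, eventually-null as $n\to\infty$ along the a.s. event, one in fact gets the isomorphism for all $n$ large enough, a.s. Then taking a countable intersection over $m,T\in\N$ (using that the $\cad^U$ topology is determined by truncations at integer times, per Section~\ref{nota:skor}) yields $\ic{(G_n,X^{G_n,\x^n})}\to\ic{(G,X^{G,\x})}$ a.s.

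\textbf{Main obstacle.} The delicate point is the bookkeeping of measurability and of ``which isomorphism'' is being used: one must ensure the localizing maps on $G_n$ and $G$ are genuinely consistent under the \emph{same} $\varphi_{n,\ell}$ that realizes the rep-con isomorphism and that intertwines the driving noises, so that the three identifications (localization on $G$, localization on $G_n$, finite convergence on the $\trnc{\ell}$-truncation) can be chained along a single isomorphism rather than three a priori unrelated ones. This is exactly what Definition~\ref{WP:loccons}, property~2, and Property~3 of Definition~\ref{MG:extdef} are designed to provide, but marrying them — together with the fact that the random radius $\ell$ is itself $\filtm_T$-measurable and finite so that everything happens on a set of full measure as $n\to\infty$ — is where the real work lies; the rest is the local-topology definition chase and a standard ``eventually almost surely'' argument.
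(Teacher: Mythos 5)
Your proposal follows the paper's route almost exactly: reduce to finite truncations via the consistent localizing maps of Definition~\ref{WP:loccons}, invoke Assumption~\ref{LWCpf:finconv} to identify the two finite-graph solutions, chain the three identifications along the single isomorphism $\varphi_{n,\ell}$ (which is where the intertwining of driving noises from Property~3 of Definition~\ref{MG:extdef} and property~2 of Definition~\ref{WP:loccons} does the work), and then take a countable intersection over $m$ and $T$. Your ``main obstacle'' paragraph correctly pins down the delicate consistency requirements and the need to control the random radius $\ell$ by noting its a.s.\ finiteness and $\filtm_T$-measurability, which is the same observation the paper uses (the paper applies the chained identity for each fixed $M$ between $m$ and $M_{n'}$ and then evaluates at the random level $\ov M_m$).

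The one point you pass over silently is the reduction to deterministic initial data. Definition~\ref{WP:locunif} and (in practice) Definition~\ref{WP:loccons} are stated for \emph{deterministic} $\sp{\emksp,\vmksp}$-graphs, so the localizing maps you invoke are only literally defined once the initial data is fixed. The paper handles this by first proving the claim when $\{(G_n,\x^n)\}_{n\in\N_\infty}$ is deterministic and then conditioning on $\filtm_0$, using Remark~\ref{MG:cond} to see that the consistent rep-con extension property survives conditioning and Remark~\ref{LWCpf:cond} to see that Assumption~\ref{LWCpf:finconv} does too. Your argument would need this standard conditioning step to be complete; once it is added, the proof is the paper's proof in all essentials.

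A second, smaller remark: Assumption~\ref{LWCpf:finconv} as literally stated only promises \emph{some} isomorphism between the $\trnc m$-truncations equipped with the finite solutions, not that $\varphi_{n,m}$ itself is a witness. You flag this (``or at least that some witness can be taken to restrict to it on $\trnc m$''). In fact the paper uses the stronger pathwise identity $X^{\trnc M(G)}_U[T]=X^{\trnc M(G_{n'})}_{\varphi_{n',M}(U)}[T]$, which is justified because the only instance of Assumption~\ref{LWCpf:finconv} used downstream is the one produced by Lemma~\ref{LWCpf:nograph}, whose proof yields exactly this pathwise equality under $\varphi_{n,m}$. To make your argument self-contained at the level of generality of the proposition (i.e.\ under Assumption~\ref{LWCpf:finconv} alone rather than Lemma~\ref{LWCpf:nograph}), you would want to either strengthen the reading of \eqref{LWCpf:Gcong} to record the witnessing isomorphism, or argue as you sketch that the class-function property of the rates together with the driving-noise intertwining forces $\varphi_{n,\ell}$ to be one; either patch is routine, but it is a genuine detail to supply.
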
 
\begin{proof} 
\sloppy Let $\Nb_{\infty} \defeq \Nb\cup \{\infty\}$ and let $\ic{(G_{\infty},\x^{\infty})} \defeq \ic{(G,\x)}$, and $(G_{\infty},X^{G_{\infty},\x^{\infty}}) \defeq (G,X^{G,\x})$ respectively. We first prove the proposition under the additional assumption that $\{(G_n,\x^n)\}_{n \in \Nb_{\infty}}$ are deterministic. In this case, let $(\fpspace,\{(G_n,\x^n,\poiss^{G_n}),M_n\}_{n \in \Nb},(G,\x,\poiss^G),\{\varphi_{n,m}\}_{n \in \Nb, m \leq M_n})$ be a consistent rep-con extension of $(\pzspace,\{\ic{(G_n, \x^n)}\}_{n \in \Nb_{\infty}})$, where $(G_{\infty},\x^{\infty}) \defeq (G,\x)$. In addition, let $\poiss^{G_{\infty}} \defeq \poiss^G$. Due to Assumption \ref{mod:assu2} and the assumption of consistent spatial localization, there exists a consistent sequence of localizing maps $\{\Oo_T(\cdot;G_n,\poiss^{G_n})\}_{n \in \Nb_{\infty}}$ for the SDE \eqref{mod:infpart} on $\{(G_n, \poiss^{G_n})\}_{n \in \Nb_{\infty}}$, and by Proposition \ref{WP:WP2}, for each $n \in \Nb_{\infty}$, the SDE \eqref{mod:infpart} is strongly well-posed for $(G_n,\x^n)$. Hence, the $\poiss^{G_n}$-strong solution $X^{G_n,\x^n}$ to \eqref{mod:infpart} for the initial data $(G_n,\x^n)$ is well defined. 

Fix $n', m, M \in \Nb$ such that $m \leq M \leq M_{n'}$. By property 3 of Definition \ref{MG:extdef}, for every $n \in \Nb$, $m \leq M_n$, and $v\in \trnc{m}(G)$, $\poiss^{G_n}\poissv{\varphi_{n,m}(v)} = \poiss^{G}\poissv{v}$. We now apply property 2 of the consistent spatial localization property in Definition \ref{WP:loccons}, with $\sO = \trnc{m}(G)$, $\ell = M$, $n = \infty$, $n' = n'$, $\varphi = \varphi_{n',M}^{-1}$. Then, noting that $\Imc_{\varphi} = \{M \leq M_{n'}\} = \Omega$, we see that
\[\Oo_T(\sO;G,\poiss^G) = \varphi(\Oo_T(\varphi^{-1}(\sO);G_{n'},\poiss^{G_{n'}})),\] 
which implies that 
\[\Oo_T(\varphi_{n',M}(\sO);G_{n'},\poiss^{G_{n'}}) = \varphi_{n',M}(\Oo_T(\sO;G,\poiss^{G}))\] 
a.s. on the event $\{\Oo_T(\sO;G,\poiss^{G}) \subseteq \trnc{M}(G)\}$. Because $\Oo_T(\sO;G,\poiss^{G})$ is a.s. finite, there must exist an $\filtm_T$-measurable a.s. finite random variable $\ov{M}_m$ such that $\Oo_T(\sO;G,\poiss^{G})\subseteq \trnc{\ov{M}_m}(G)$ a.s.. Furthermore, by \eqref{eq-spatloc} this implies that 
\begin{align*} 
X^{G,\x}_{\sO}[T] &= X^{\trnc{M}(G),\x_{\trnc{M}(G)}}_{\sO}[T] \quad \te{and}\quad X^{G_{n'},\x^{n'}}_{\varphi_{n',M}(\sO)}[T] = X^{\trnc{M}(G_{n'}),\x^{n'}_{\trnc{M}(G_{n'})}}_{\varphi_{n',M}(\sO)}[T] 
\end{align*} 
a.s. on the event $\{M \geq \ov{M}_m\}$. By \eqref{LWCpf:Gcong} of Assumption \ref{LWCpf:finconv}, there exists an a.s. finite, $\filtm_T$-measurable random variable $N_M$ such that 
\begin{equation*} 
X^{\trnc{M}(G),\x_{\trnc{M}(G)}}_{\sO}[T] = X^{\trnc{M}(G_{n'}),\x^{n'}_{\trnc{M}(G_{n'})}}_{\varphi_{n',M}(\sO)}[T] \te{ a.s. on the event } \{n' \geq N_M\}. 
\end{equation*} 
The last two displays together show that a.s. on the event $\{M\geq \ov{M}_{m}\} \cap \{n' \geq N_M\}$, 
\begin{align*} 
X^{G,\x}_{\sO}[T] &= X^{\trnc{M}(G),\x_{\trnc{M}(G)}}_{\sO}[T] = X^{\trnc{M}(G_{n'}),\x^{n'}_{\trnc{M}(G_{n'})}}_{\varphi_{n',M}(\sO)}[T] = X^{G_{n'},\x^{n'}}_{\varphi_{n',M}(\sO)}[T]. 
\end{align*} 
Applying the last display for each $M$ satisfying $m \leq M \leq M_{n'}$ and noting by property 1 of Definition \ref{WP:locunif} that $\ov{M}_m \geq m$, it follows that \[X^{G,\x}_{\sO}[T] = X^{G_{n'},\x^{n'}}_{\varphi_{n',\ov{M}_m}(\sO)}[T] \te{ a.s. on the event } \{\ov{M}_m\leq M_{n'}\}\cap \{n' \geq N_{\ov{M}_m}\}.\] 
Sending $n' \to \infty$ and noting that then $M_{n'} \rightarrow \infty$ by property 3 of Definition \ref{MG:rcs}, and $\ov{M}_m$ is a.s. finite, it follows that 
\[\lim_{n'\to\infty} X^{G_{n'},\x^{n'}}_{\varphi_{n',\ov{M}_m}(\sO)}[T] = X^{G,\x}_{\sO}[T] \te{ a.s.. }\] 
This concludes the proof for deterministic sequences. 

The random case can be obtained by conditioning on the initial data. More precisely, suppose $\{\ic{(G_n, \x^n)}\}_{n\in\Nb}$, $\ic{(G, \x)}$ is random and let $\{(G_n,X^{G_n,\x^n})\}_{n \in \Nb}$, $(G,X^{G,\x})$ be the sequence of IPS generated by a consistent rep-con extension of the tuple $(\pzspace, \{\ic{(G_n, \x^n)}\}_{n\in\Nb},\ic{(G, \x)})$. Then by Remark \ref{MG:cond}, for almost every $\omega \in \Omega$, $\law(\{(G_n,X^{G_n,\x^n})\},(G,X^{G,\x})|\filtm_0)(\omega)$ describes the law of a sequence of IPS generated by a consistent rep-con extension of a tuple that contains the terms $\{\ic{(G_n,\x^n)}(\omega)\}_{n \in \Nb}, \ic{(G,\x)}(\omega)$. Moreover, by Remark \ref{LWCpf:cond}, Assumption \ref{LWCpf:finconv} holds a.s. conditioned on $\filtm_0$. Therefore, by the proof of the proposition for deterministic initial data, $\PP(\ic{(G_n,X^{G_n,\x^n})} \to \ic{(G,X^{G,\x})}|\filtm_0) = 1$ a.s.. Therefore, $\ic{(G_n,X^{G_n,\x^n})} \to \ic{(G,X^{G,\x})}$ a.s., as desired. 
\end{proof}

We finish the section with a proof of Lemma \ref{LWCpf:nograph}. 

\begin{proof}[Proof of Lemma \ref{LWCpf:nograph}:] 
Fix a consistent rep-con extension of the given tuple, write $G = (V,E,\root,\ems,\vms)$, $G_n = (V_n,E_n,\root_n,\ems^n,\vms^n)$, $n \in \Nb$, and set $M := \max_{v \in V}d_G(v,\root)$. We first consider the case when $\{G_n\}_{n \in \Nb}$ and $G$ are a.s. finite graphs and additionally assume that $\max_{v \in V_n} d_{G_n}(v,\root_n) \leq M$ for all $n \in \Nb$, and show that for any $T \in \Rb_+$, there exists an a.s. finite, $\filtm_T$-measurable random variable $\ov{N} \defeq \ov{N}_{M,T}$ such that, 
\begin{equation} 
\label{LWCpf:fGcong} 
\left(\nm{G_{n,*}},X^{G_n,\x^n}[T]\right) \cong \left(\nm{G_*},X^{G,\x}[T]\right) \te{ a.s. on the event } \{n \geq \ov{N}\}\cap \{M \leq M_n\}. 
\end{equation}

Note that on the event $A_n \defeq \{M\leq M_n\}$, $(\nm{G_{n,*}},\x^n) \cong (\nm{G_*},\x^n)$. To show \eqref{LWCpf:fGcong}, fix $T \in \Rb_+$ and for each $n \in \Nb$, let $\varphi_n: V \to V_n$ be the $\filtm_0$-measurable map given by $\varphi_n \defeq \varphi_{n,M}$ on the event $A_n$ (on $A_n^c$, we may define $\varphi_n$ to be any measurable function with the appropriate domain and range, for instance, the function that maps all vertices of $G$ to the root of $G_n$). For each $n \in \Nb$ and $v \in V_G$, recall from property 3 of Definition \ref{MG:extdef} that $\poiss^{G_n}\poissv{\varphi_n(v)} = \poiss^G\poissv{v}$ on the event $A_n$. In terms of the family of constants $\{C_{k,T}\}_{k \in \Nb, T > 0}$ from Assumption \ref{mod:assu}, define 
\[\ov{\Emc} = \ov{\Emc}_T \defeq \left\{(v,t,r,j) \in V \times [0,T] \times (0,C_{|\gcl{v}{G}|,T}] \times \jmps: \poiss^G\poissv{v}\left(\{(t,r,j)\}\right) = 1\right\}.\]

Since $|\ov{\Emc}| < \infty$ a.s., we can a.s. order the elements $\{(v_k,\tau_k,r_k,j_k)\}_{k=1}^{|\ov{\Emc}|}$ of $\ov{\Emc}$ such that $\{\tau_k\}_{k \in \Nb}$ is strictly increasing. Note that $\{\tau_k\}_{k \in \Nb}$ is the sequence of points in a time-homogeneous Poisson process and, therefore, a sequence of absolutely continuous random variables. Let $R \defeq \inf\left\{|r_k - \rate\gvpara{G}{v_k}\stpara{j_k}(\tau_k,X^{G,\x})|:k=1,\dots,|\ov{\Emc}|\right\}$. By the predictability of the jump rates $\rate^{G,v_k}_{j_k}$, the random variables $r_k$ and $\rate^{G,v_k}_{j_k}(\tau_k,X^{G,\x})$ are independent and therefore $r_k\neq \rate^{G,v_k}_{j_k}(\tau_k,X^{G,\x})$ a.s. which implies that $R> 0$ a.s.. On the event $A_n$, let 
\[R_n \defeq \sup\left\{\left|\rate\gvpara{G}{v_k}\stpara{j_k}(\tau_k,X^{G,\x}) - \rate\gvpara{G_n}{\varphi_n(v_k)}\stpara{j_k}(\tau_k,X^{G,\x}_{\varphi^{-1}_n(V_n)})\right|:k = 1,\dots,|\ov{\Emc}|\right\},\] 
\sloppy where $X^{G,\x}_{\varphi^{-1}_n(V_n)} = (X^{G,\x}_{\varphi^{-1}_n(v)})_{v \in V_n}$. By property 3 of Definition \ref{MG:rcs}, $M_n$ a.s. diverges to infinity, so $\{A_n\}_{n\in\Nb}$ is a sequence of events such that $\PP(\cap_{n' \geq n}A_{n'}) \to 1$ so that $\lim_{n\to\infty}\indic{A_n} = 1$ a.s.. By Properties 5 and 6 of Definition \ref{MG:rcs}, it follows that for each $v \in V$ and $e \in E$, 
\begin{equation} 
\label{LWCpf:markconv} 
\lim_{n\to\infty}\indi{A_n}(\emsn{n}_{\varphi_n(e)},\vmsn{n}_{\varphi_n(v)}) = (\ems_e,\vms_v) \quad \te{a.s..} 
\end{equation} 

Let $\emsn{n}_{\varphi_n(E)} = (\emsn{n}_{\varphi_n(e)})_{e \in E}$ and $\vmsn{n}_{\varphi_n(V)} = (\vmsn{n}_{\varphi_n(v)})_{v \in V}$. Then using the fact that the local jump rates are class functions (specifically, applying Remark \ref{rem-classfn} with $G_1 = (V,E,\root,\ems^n_{\varphi_n(E)},\vms^n_{\varphi_n(V)})$, $G_2 = G_n$, and $\varphi = \varphi_n^{-1}$) in the first equality below, and combining \eqref{LWCpf:markconv} with the absolute continuity of $\tau_k$ and the fact that $\ic{(G,\x)}$ and ${\bf \bar{\rate}}$ satisfy Assumption \ref{mod:cont}, and the jump rates satisfy \eqref{eq:standing} we have for every $k \in \Nb$, 
\begin{align*} 
\lim_{n\to\infty}\rate\gvpara{G_n}{\varphi_n(v_k)}\stpara{j_k}\left(\tau_k,X^{G,\x}_{\varphi_n^{-1}(V_n)}\right)\indi{A_n} &= \lim_{n\to\infty} \rate^{(V,E,\root,\ems^n_{\varphi_n(E)},\vms^n_{\varphi_n(V)}),v_k}_{j_k}\left(\tau_k,X^{G,\x}\right)\indi{A_n} \\ 
&= \rate^{G,v_k}_{j_k}\left(\tau_k,X^{G,\x}\right) 
\end{align*} 
a.s. on the event $\{k\leq |\ov{\Emc}|\}$. Because $|\ov{\Emc}|$ is a.s. finite, it follows that $\lim_{n\to\infty} R_n = 0$ a.s.. Moreover, the fact that $R > 0$ a.s. implies the existence of an a.s. finite random variable $\ov{N}$ such that, 
\begin{equation} 
\label{LWCpf:Rov2} 
R_n < \frac{R}{2} \te{ on the event }\{n \geq \ov{N}\}\cap A_n. 
\end{equation} 

We now argue that $X^{G,\x}[T] = X^{G_n,\x^n}[T]$ on the event $\{n \geq \ov{N}\}\cap A_n$ by making use of the fact that for each $n \in \Nb$ and $v \in V$, $X^{G,\x}_v$ and $X^{G_n,\x^n}_{\varphi_n(v)}$ are driven by the same Poisson processes on the event $A_n$. Fix $n \in \Nb$ and note that by the SDE \eqref{mod:infpart}, $X^{G,\x}$ and $X^{G_n,\x^n}$ are both a.s. continuous on the random set $\{t \in[0,\infty)\setminus \{\tau_k\}_{k\in \Nb}\}\cap A_n$. Furthermore, at time $\tau_k$, the processes $X^{G,\x}$ and $X^{G_n,\x^n}$ may either remain constant or experience a jump of size $j_k$ at the respective vertices $v_k$ and $\varphi_n(v_k)$. It follows from the SDE \eqref{mod:infpart} that the processes will either simultaneously jump or fail to jump if and only if 
\[\te{sgn}\left(r_k - \rate\gvpara{G}{v_k}\stpara{j_k}(\tau_k,X^{G,\x})\right) = \te{sgn}\left(r_k - \rate\gvpara{G_n}{\varphi_n(v_k)}\stpara{j_k}(\tau_k,X^{G_n,\x^n})\right),\] 
where $\te{sgn}: \Rb \to \Rb$ is the c\`adl\`ag function given by $\te{sgn}(a) = \indic{a \geq 0} - \indic{a < 0}$. Suppose that $X^{G,\x}[\tau_k) = X^{G_n,\x^n}_{\varphi_n(V)}[\tau_k)$. Then, using the predictability of the jump rates (by the standing assumption and Definition \ref{def-regular}) in the first equality below, we have 
\begin{align*} 
\te{sgn}&\left(r_k - \rate\gvpara{G_n}{\varphi_n(v_k)}\stpara{j_k}(\tau_k,X^{G_n,\x^n})\right) \\ 
& \ind = \te{sgn}\left(r_k - \rate\gvpara{G_n}{\varphi_n(v_k)}\stpara{j_k}(\tau_k,X^{G,\x}_{\varphi^{-1}_n(V_n)})\right)\\ 
& \ind= \te{sgn}\left(r_k - \rate\gvpara{G}{v_k}\stpara{j_k}(\tau_k,X^{G,\x}) + \rate\gvpara{G}{v_k}\stpara{j_k}(\tau_k,X^{G,\x})-\rate\gvpara{G_n}{\varphi_n(v_k)}\stpara{j_k}(\tau_k,X^{G,\x}_{\varphi_n^{-1}(V_n)})\right). 
\end{align*} 
However, the last line of the above display is a.s. equal to $\te{sgn}\left(r_k - \rate\gvpara{G}{v_k}\stpara{j_k}(\tau_k,X^{G,\x})\right)$ on the event $\{n \geq\ov{N}\}$ by \eqref{LWCpf:Rov2}. Thus, $X^{G,\x}[\tau_k] = X^{G_n,\x^n}_{\varphi_n(V)}[\tau_k]$ a.s. on the event $\{n \geq \ov{N}\}\cap A_n$. It follows that $X^{G,\x}[\tau_{k+1}) = X^{G_n,\x^n}_{\varphi_n(V)}[t_{k+1})$ a.s. if $|\ov{\Emc}| > k$ and $X^{G,\x}_V[T] = X^{G_n,\x^n}_{\varphi_n(V)}[T]$ a.s. if $|\ov{\Emc}| = k$. Applying induction, we see that $X^{G,\x}[T] = X^{G_n,\x^n}_{\varphi_n(V)}[T]$ a.s. on the event $\{n \geq \ov{N}\}\cap A_n$, and so \eqref{LWCpf:fGcong} follows.

To see why this implies Lemma \ref{LWCpf:nograph} in the general case of possibly infinite graphs, note that \eqref{LWCpf:fGcong} implies that for any $m \in\Nb$ and any sequence $\{\ic{G_n}\}_{n \in \Nb}, \ic{G}$ we may replace $(G_n,\x^n),n \in \Nb$ by $\trnc{m}(G_n,\x^n),n \in \Nb$, $(G,\x)$ by $\trnc{m}(G,\x)$, $\ov{N}_{m}$ by $\ov{N}$, and $M$ by $m$ in \eqref{LWCpf:fGcong} to get \eqref{LWCpf:Gcong}, $(\pzspace, \{\ic{(G_n, \x^n)}\}_{n\in\Nb},\ic{(G, \x)})$ satisfies the finite convergence property. This concludes the proof of the lemma. 
\end{proof}

\subsection{Proof of Theorem \ref{LWC:LWC}} 
\label{subs-finalpfLWC}
We now show how Theorem \ref{LWC:LWC} follows from Proposition \ref{LWCpf:LWC}. The proof uses a simple argument involving the Skorokhod representation theorem and Proposition \ref{WP:locality2}.

\begin{proof}[Proof of Theorem \ref{LWC:LWC}] 
Set $\ic{(G_\infty,\x^\infty)} := \ic{(G,\x)}$ and $\Nb_\infty := \Nb \cup \{\infty\}$. Since Assumption \ref{mod:assu} holds with associated constants ${\bf C}$, and each $\ic{G_n}, n \in \Nb_\infty,$ is a.s. finitely dissociable with respect to ${\bf C}$, the conditions of Proposition \ref{WP:locality2} are satisfied a.s.. Hence, the collection $\{\ic{G_n}\}_{n \in \Nb_{\infty}}$ a.s. consistently spatially localizes the SDE \eqref{mod:infpart}. Assumption \ref{mod:assu}, Proposition \ref{WP:fin}, and Proposition \ref{WP:WP2} then imply that the SDE \eqref{mod:infpart} is strongly well-posed for all initial data in $\{\ic{(G_n,\x^n)}\}_{n \in \Nb_\infty}$. Moreover, since $\ic{(G_n,\x^n)}\Rightarrow\ic{(G,\x)}$ in $\gs\sp{\emksp,\vmksp\times \Xc}$, by the Skorokhod representation theorem there exists a (complete) probability space $(\alt{\Omega}, \alt{\filtm}, \alt{\mathbb{P}})$ that supports random elements $\ic{(\alt{G}_n,\alt{\x}^n)} \deq \ic{(G_n,\x^n)}$, $n \in \Nb_\infty,$ such that $\ic{(\alt{G}_n,\alt{\x}^n)} \to \ic{(\alt{G},\alt{\x})}\defeq \ic{(\alt{G}_{\infty},\alt{\x}^{\infty})}$ $\alt{\mathbb{P}}$-a.s..

By Lemma \ref{MG:repextexist} there exists a consistent rep-con extension of $(\pzspace,\{\ic{(\alt{G}_n,\alt{\x}^n)}\}_{n \in \Nb_{\infty}})$ given by $(\fpspace,\{(\alt{G}_n,\alt{\x}^n,\poiss^{\alt{G}_n}),\alt{M}_n\}_{n \in \Nb},(\alt{G},\alt{\x},\poiss^{\alt{G}}),$ $\{\alt{\varphi}_{n,m}\}_{n \in \Nb, m \leq M_n})$. For each $n \in \Nb_{\infty}$, let $(G_n,X^{\alt{G_n},\alt{\x}^n})$ be the resulting $\poiss^{\alt{G}_n}$-strong solution to \eqref{mod:infpart} for $(\alt{G}_n,\alt{\x}^n)$, where $(\alt{G}_{\infty},\alt{\x}^{\infty}) = (\alt{G},\alt{\x})$ and let $\ic{(\alt{G}_n,X^{\alt{G}_n,\alt{\x}^n})}$ denote its isomorphism class. Such strong solutions are well defined by Lemma \ref{mod:stunique} (which establishes the existence of a pathwise unique strong solution for the initial data $\ic{(\alt{G}_n,\alt{\x}^n)}$, $n \in \Nb_{\infty}$). Because $\{\alt{G}_n\}_{n \in \Nb_{\infty}}$ spatially localizes the SDE \eqref{mod:infpart}, by Assumption \ref{mod:assu} and the fact that $\ic{(G,\x)}$ (and therefore $\ic{(\alt{G},\alt{\x})}$) satisfies Assumption \ref{mod:cont}, Lemma \ref{LWCpf:nograph} implies that the tuple $(\pzspace,\{\ic{(G_n, \x^n)}\}_{n\in\Nb},\ic{(G, \x)})$ satisfies Assumption \ref{LWCpf:finconv}. Proposition \ref{LWCpf:LWC} then implies 
\[\lim_{n\to\infty} \ic{(\alt{G}_n,X^{\alt{G}_n,\alt{\x}^n})} = \ic{(\alt{G}_{\infty},X^{\alt{G}_{\infty},\alt{\x}^{\infty}})} \quad \te{a.s..}\] 
By well-posedness of \eqref{mod:infpart}, $\ic{(\alt{G}_n,X^{\alt{G}_n,\alt{\x}^n})} \deq \ic{(G_n,X^{G_n,\x^n})}$ for every $n \in \Nb_{\infty}$. Thus, $\ic{(G_n,X^{G_n,\x^n})}$ $\deq \ic{(\alt{G}_n,X^{\alt{G}_n,\alt{\x}^n})}\Rightarrow \ic{(\alt{G},X^{\alt{G},\alt{\x}})} \deq \ic{(G,X^{G,\x})}$ in $\gs\sp{\emksp,\vmksp\times \cad}$. 
\end{proof}

\section{Proof of Asymptotic Correlation Decay} 
\label{GEM}
This section is devoted to the proof of Theorem \ref{GEM:deccor}. Recall that for an unrooted $\sp{\emksp,\vmksp}$-graph $G$ and a vertex $v \in V_G$, $\cmpn{v}(G)$ is the connected component of $G$ equipped with $v$ as its root. For the remainder of the section, we fix a sequence of finite, (possibly disconnected) unrooted $\sp{\emksp,\vmksp\times \Xc}$-random graphs $(G_n,\x^n)$, $n \in \Nb,$ and a $\gs\sp{\emksp,\vmksp\times \Xc}$-random element $\ic{\cmpn{\root}(G,\x)}$ (henceforth, denoted just $\ic{(G,\x)}$), all defined on a common complete probability space $\pzspace$. We additionally assume that, by extending the probability space if necessary, $\pzspace$ also supports an i.i.d. pair of vertices $(o^1_n, o^2_n)$, with each vertex uniformly distributed on $G_n$, for all $n \in \Nb$. The proof of Theorem \ref{GEM:deccor} is comprised of two steps. The first step is to establish an asymptotic independence property stated in Lemma \ref{EMP:dLWC} below.

\begin{lemma}[Asymptotic independence of initial data] 
\label{EMP:dLWC} 
Suppose $(G_n,\x^n)$ converges locally in probability to $\ic{(G,\x)}$. Then 
\begin{equation} 
\label{eq:dLWC} 
(\ic{\cmpn{o_n^1}(G_n,\x^n)},\ic{\cmpn{o_n^2}(G_n,\x^n)}) \Rightarrow (\ic{(G^{(1)},\x^{(1)})},\ic{(G^{(2)},\x^{(2)})}),
\end{equation} 
where $ \Rightarrow$ represents convergence in distribution in $(\gs\sp{\emksp,\vmksp\times \Xc})^2$ and $\ic{(G^{(i)},\x^{(i)})}, i=1,2,$ are two independent copies of $\ic{(G,\x)}$. 
\end{lemma}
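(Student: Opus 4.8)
The plan is to deduce the joint convergence \eqref{eq:dLWC} from the single-vertex convergence encoded in Definition \ref{def-inprob} by a standard second-moment (or "two independent samples") argument. First I would recall the key consequence of convergence in probability in the local weak sense: for every bounded continuous $f\colon \gs\sp{\emksp,\vmksp\times\X}\to\R$, the random averages $\frac{1}{|G_n|}\sum_{v\in V_{G_n}} f(\ic{\cmpn{v}(G_n,\x^n)})$ converge \emph{in probability} to the \emph{deterministic} constant $\ex{f(\ic{(G,\x)})}$. (Strictly, Definition \ref{def-inprob} is stated for unmarked graphs, but the same definition applies verbatim with the mark space $\vmksp$ replaced by $\vmksp\times\X$, and this is how it is invoked elsewhere, e.g.\ in Theorem \ref{GEM:deccor}; so I would simply use it in that marked form.)

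Next I would express the left-hand side of \eqref{eq:dLWC} through test functions. Fix bounded continuous $f_1,f_2$ on $\gs\sp{\emksp,\vmksp\times\X}$. Conditioning on $(G_n,\x^n)$ and using that $o_n^1,o_n^2$ are i.i.d.\ uniform on $V_{G_n}$,
\begin{equation}
\label{eq:myplan-condexp}
\ex{\prod_{i=1}^2 f_i\bigl(\ic{\cmpn{o_n^i}(G_n,\x^n)}\bigr)\,\Big|\,(G_n,\x^n)}
= \prod_{i=1}^2 \left(\frac{1}{|G_n|}\sum_{v\in V_{G_n}} f_i(\ic{\cmpn{v}(G_n,\x^n)})\right).
\end{equation}
By the convergence in probability recalled above, each factor on the right converges in probability to the constant $\ex{f_i(\ic{(G,\x)})}$, hence so does the product; since all quantities are uniformly bounded by $\|f_1\|_\infty\|f_2\|_\infty$, bounded convergence gives $\ex{\prod_{i=1}^2 f_i(\ic{\cmpn{o_n^i}(G_n,\x^n)})}\to \prod_{i=1}^2\ex{f_i(\ic{(G,\x)})}$. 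This shows convergence of mixed moments against product test functions. To upgrade this to full convergence in distribution on the product space $(\gs\sp{\emksp,\vmksp\times\X})^2$, I would invoke a standard fact: since $\gs\sp{\emksp,\vmksp\times\X}$ is Polish (as recalled after Definition \ref{def-locconvnoiso}), its product is Polish, and the algebra of finite linear combinations of product functions $\ic{(H_1,H_2)}\mapsto f_1(\ic{H_1})f_2(\ic{H_2})$ with $f_i$ bounded continuous is convergence-determining on the product space; alternatively, one first extracts tightness of the marginals from the assumed marginal convergence $\ic{\cmpn{o_n^1}(G_n,\x^n)}\Rightarrow\ic{(G,\x)}$ (which follows from \eqref{eq:myplan-condexp} with $f_2\equiv 1$), deduces tightness of the pairs, and then identifies every subsequential limit via the product-moment convergence, concluding that the limit is the product law $\law(\ic{(G,\x)})\otimes\law(\ic{(G,\x)})$, i.e.\ the law of two independent copies.

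The only genuinely delicate point is the passage from "convergence in probability of each empirical average to a deterministic constant" to "convergence of the product of the two empirical averages"; this is where it is essential that the limit $\ex{f_i(\ic{(G,\x)})}$ is a \emph{constant} and not merely a random variable, because products do not respect weak limits but do respect convergence in probability to constants. I expect this to be the main (though still routine) obstacle, together with being careful that the marked version of Definition \ref{def-inprob} is what is being used and that $|G_n|\to\infty$ in probability is not actually needed for this particular lemma (it only matters later, in Theorem \ref{GEM:deccor}, to handle the randomness of the graph when passing to the solution process). The rest of the argument is bookkeeping with Polish-space weak-convergence generalities and can be cited rather than reproved.
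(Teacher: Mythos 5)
Your argument is correct and is essentially the same as the paper's: you establish $\ex{f_1(\ic{\cmpn{o_n^1}(G_n,\x^n)})f_2(\ic{\cmpn{o_n^2}(G_n,\x^n)})}\to \ex{f_1(\ic{(G,\x)})}\ex{f_2(\ic{(G,\x)})}$ for bounded continuous $f_1,f_2$ and then upgrade to joint weak convergence via a convergence-determining class of products on the Polish product space. The only cosmetic difference is that you spell out the conditioning-and-bounded-convergence argument for the first step, whereas the paper cites \cite[Lemma 2.12]{LacRamWuLWC21} (whose content is precisely that calculation); your remark that $|G_n|\to\infty$ is not used here is also consistent with the paper's hypotheses for this lemma.
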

\begin{proof}
Let $f_1,f_2: \gs\sp{\emksp,\vmksp\times\Xc} \to \Rb$ be bounded, continuous functions. Then by \cite[Lemma 2.8]{LacRamWu23},
\begin{align}
\lim_{n\to\infty}\ex{f_1(\ic{\cmpn{o_n^1}(G_n,\x^n)})f_2(\ic{\cmpn{o_n^2}(G_n,\x^n)})} &= \ex{f_1(\ic{(G,\x)})}\ex{f_2(\ic{(G,\x)})}\nonumber\\
&= \ex{f_1(\ic{(G^{(1)},\x^{(1)})})f_2(\ic{(G^{(2)},\x^{(2)})})}.\label{EMP:initsep}
\end{align}
Since $\gs\sp{\emksp,\vmksp\times\Xc}$ is a metric space, the algebra of separable bounded functions on $(\gs\sp{\emksp,\vmksp\times\Xc})^2$ strongly separates points and hence, is convergence determining \cite[Theorem 3.4.5(b)]{EthKur86}. Thus, \eqref{EMP:initsep} implies \eqref{eq:dLWC}. 
\end{proof}

Invoking the Skorokhod representation theorem, the second and main step of the proof assumes joint local convergence of the initial data $\{\ic{\cmpn{o_n^1}(G_n,\x^n)},\ic{\cmpn{o_n^2}(G_n,\x^n)}\}_{n \in \Nb}$ to the i.i.d. pair $(\ic{(G^{(1)},\x^{(1)}},\ic{(G^{(2)},\x^{(2)})})$ and proves convergence \emph{in probability} of the corresponding pairs of strong solutions $\{\ic{\cmpn{o_n^1}(G_n,X^{G_n,\x^n})},\ic{\cmpn{o_n^2}(G_n,X^{G_n,\x^n})}\}_{n \in \Nb}$ to $(\ic{(G^{(1)},X^{G^{(1)},\x^{(1)}})},\ic{(G^{(2)},X^{G^{(2)},\x^{(2)}})})$. The coupling proof proceeds as follows. For each $i = 1, 2$, we first construct a sequence of driving noises $\{(G,\poiss^{n,i})\}_{n \in \Nb}$ such that the isomorphism class of the corresponding $\{\poiss^{n,i}\}$-strong solution $\{\ic{\cmpn{o_n^i}(G_n,X^{n,i})}\}$ for the initial data $\ic{\cmpn{o_n^i}(G_n,\x^n)}$ converges a.s. as $n \rightarrow \infty$ to $\ic{(G^{(i)},X^{G^{(i)},\x^{(i)}})}$. From this, we construct a single sequence of \emph{common} driving noises $\{(G_n,\poiss^{G_n})\}_{n \in \Nb}$ such that for each $i=1,2,$ the corresponding $\{\poiss^{G_n}\}_{n \in \Nb}$-strong solutions $\cmpn{o_n^i}(G_n,X^{G_n,\x^n}),$ $n \in \Nb,$ and $(G^{(i)},X^{G^{(i)},\x^{(i)}})$ satisfy 
\[\ic{\cmpn{o^i_n}(G_n,X^{G_n,\x^n})} \to \ic{(G^{(i)},X^{G^{(i)},\x^{(i)}})}\te{ in probability.}\] 
By the independence of $\{\ic{(G^{(i)},X^{G^{(i)},\x^{(i)}})}\}_{i=1,2}$, this would imply the desired correlation decay result. 

\begin{lemma}[Asymptotic independence of IPS on independently rooted components] 
\label{EMP:inspce} 
\sloppy Given $\{(G_n,\x^n)\}_{n \in \Nb}$, suppose there exists a countable set $\Smc$ such that for $\PP$-a.s. every $\omega \in \Omega$, $V_{G_n(\omega)} \subset \Smc$ for all $n \in \Nb$. Suppose also that Assumption \ref{mod:assu2} holds and that the collection of isomorphism classes $\{\ic{\cmpn{o^i_n}(G_n)},\ic{G^{(i)}}\}_{n \in \Nb,i=1,2}$ a.s. consistently spatially localizes the SDE \eqref{mod:infpart}. In addition, assume that for each $i = 1, 2,$, 
\begin{equation} 
\label{EMP:asconvassu} 
\ic{\cmpn{o_n^i}(G_n,\x^n)} \to \ic{(G^{(i)},\x^{(i)})} 
\end{equation} 
and the tuple $(\{\ic{\cmpn{o^i_n}(G_n,\x^n)}\}_{n\in\Nb},\ic{(G^{(i)},\x^{(i)})})$ satisfies Assumption \ref{LWCpf:finconv}. Then it is possible to define a filtered probability space $\fpspace$ supporting solutions $(G_n,X^{G_n,\x^n}), n \in \Nb,$ and $\ic{(G^{(i)},X^{G^{(i)},\x^{(i)}})}$, $i=1,2,$ to the SDE \eqref{mod:infpart} for the respective initial data $(G_n,\x^n), n \in \Nb,$ and $\ic{(G^{(i)},\x^{(i)})}$, $i=1,2,$ such that as $n\to\infty$, 
\begin{equation} 
\label{EMP:ipsolconv} 
\ic{\cmpn{o_n^i}(G_n,X^{G_n,\x^n})} \to \ic{(G^{(i)},X^{G^{(i)},\x^{(i)}})},\te{ }i=1,2, \te{ in probability.} 
\end{equation} 
\end{lemma}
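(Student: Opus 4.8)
The plan is to construct, for each $i=1,2$ separately, a driving noise that makes the corresponding sequence of solutions converge along a Skorokhod-type coupling, and then to ``merge'' the two into a single common driving noise on the graphs $G_n$ so that the two components remain (asymptotically) decoupled. The key point is that $o_n^1$ and $o_n^2$ are distinct with probability tending to $1$, and when they are distinct, the connected components $\cmpn{o_n^1}(G_n)$ and $\cmpn{o_n^2}(G_n)$ are either disjoint or equal; we will have to handle the rare event that they coincide, but since local convergence to the (a.s. connected) limit forces these components to be finite trees of fixed radius on an event of probability $\to 1$, and the probability that a uniform second vertex lands in the same component as $o_n^1$ tends to $0$ once $|G_n|\to\infty$, that event contributes nothing in the limit. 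Concretely, fix $i$. By \eqref{EMP:asconvassu} and the hypotheses (Assumption \ref{mod:assu2}, a.s.\ consistent spatial localization, and Assumption \ref{LWCpf:finconv}), Lemma \ref{MG:repextexist} gives a consistent rep-con extension of $(\pzspace,\{\ic{\cmpn{o^i_n}(G_n,\x^n)}\}_{n\in\N},\ic{(G^{(i)},\x^{(i)})})$, and Proposition \ref{LWCpf:LWC} applied to the IPS generated by this extension yields driving noises $(G,\poiss^{n,i})$ (more precisely, driving noises on the canonical representatives) such that the $\poiss^{n,i}$-strong solution $\cmpn{o^i_n}(G_n,X^{n,i})$ has isomorphism class converging a.s.\ to $\ic{(G^{(i)},X^{G^{(i)},\x^{(i)}})}$.

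The second, more delicate step is to realize both of these couplings simultaneously using a \emph{single} driving noise $\poiss^{G_n}$ on each $G_n$. Here is where the countability hypothesis on the vertex sets (all $V_{G_n}\subseteq \Smc$) is used: it lets us attach i.i.d.\ Poisson processes $\{\poiss_s\}_{s\in\Smc}$ measurably to the (deterministic) vertex label set, and then set $\poiss^{G_n}_v \defeq \poiss_v$ for $v\in V_{G_n}$, obtaining a genuine $\filt$-driving noise compatible with $G_n$ in the sense of Definition \ref{mod:dnoise} (cf.\ Remark \ref{MG:dpoiss}). Because strong well-posedness holds (Proposition \ref{WP:WP2} via consistent spatial localization, plus Proposition \ref{WP:fin}), the resulting solution $X^{G_n,\x^n}$ is characterized by $(G_n,\x^n)$ and $\poiss^{G_n}$ alone (Remark \ref{mod:coupling}). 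The task is then to match this common construction to the two single-index constructions from the previous paragraph. For each $i$, one shows that on the event where $o_n^i$ is a ``good'' vertex — i.e.\ its component in $G_n$ is finite and matches the limit out to the radius needed for the localizing map — the restriction of $X^{G_n,\x^n}$ to (a neighborhood of) $\cmpn{o_n^i}(G_n)$ depends only on the driving processes indexed by the causal chain $\Cc_T$ of that neighborhood, which is finite a.s.\ by finite dissociability (Proposition \ref{WP:locality2}, Claim 2). By the measurable selection of representatives and isomorphisms, one can couple $\{\poiss_s\}_{s\in\Smc}$ to the noise $\poiss^{n,i}$ used in step one \emph{along this finite causal-chain set}, so that $\ic{\cmpn{o_n^i}(G_n,X^{G_n,\x^n})}$ agrees with $\ic{\cmpn{o_n^i}(G_n,X^{n,i})}$ on the good event; since the good event has probability $\to 1$ (by local convergence and $|G_n|\to\infty$) and step one gives a.s.\ convergence of the latter, \eqref{EMP:ipsolconv} follows for each $i$, with convergence in probability rather than a.s.\ because the coupling of the noises only holds on the high-probability good events.

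I would organize the write-up as: (a) reduce to the case where $o_n^1\neq o_n^2$ and both components are finite, absorbing the complement into a null-in-the-limit event; (b) invoke step one to obtain, for $i=1,2$, consistent rep-con extensions and the resulting a.s.-convergent solutions $\cmpn{o_n^i}(G_n,X^{n,i})$ via Proposition \ref{LWCpf:LWC}; (c) use the countable label set $\Smc$ to build the common driving noise $\poiss^{G_n}$ and the common solution $X^{G_n,\x^n}$, verifying the Definition \ref{mod:dnoise} conditions as in Remark \ref{MG:dpoiss}; (d) use spatial localization and the finiteness of causal chains to transfer the a.s.\ convergence from $X^{n,i}$ to $X^{G_n,\x^n}$ on the good events, yielding convergence in probability; (e) assemble. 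I expect step (d), the transfer from the per-$i$ couplings to the common driving noise, to be the main obstacle: one must carefully track which finite set of driving processes actually determines $\ic{\cmpn{o_n^i}(G_n,X^{G_n,\x^n})}$ (this is exactly the role of the localizing map $\Oo_T = \operatorname{cl}(\Cc_T)$), and argue that the relabelings/isomorphisms supplied by the rep-con extensions can be chosen consistently enough that the two constructions of the solution agree on that finite set; the measurability bookkeeping (ensuring everything is $\filtm_T$-measurable and that the coupling of Poisson processes along a \emph{random} finite index set is legitimate) is what makes this technical, and is the reason the countability assumption on $\Smc$ is imposed.
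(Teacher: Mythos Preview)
Your high-level strategy (build per-$i$ consistent rep-con extensions, invoke Proposition~\ref{LWCpf:LWC} for a.s.\ convergence, then merge into a single common driving noise) matches the paper's. However, the merging step (your step~(d)) has a genuine gap, and the discussion of connected components in your first paragraph reflects a misconception that propagates into that gap.

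First, the claim that ``the probability that a uniform second vertex lands in the same component as $o_n^1$ tends to $0$'' is false in the cases of primary interest: if $G_n$ is connected (e.g.\ the giant component of a supercritical Erd\H{o}s--R\'enyi graph, or a random regular graph), then $\cmpn{o_n^1}(G_n)=\cmpn{o_n^2}(G_n)=G_n$ always. What \emph{does} go to infinity in probability is the graph distance $L_n\defeq d_{G_n}(o_n^1,o_n^2)$, and this---not component disjointness---is the quantity the merging step must exploit.

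Second, and more seriously, your construction of the common noise is inverted. You propose to fix $\poiss^{G_n}_v=\poiss_v$ from a label-indexed family $\{\poiss_s\}_{s\in\Smc}$ and then ``couple $\{\poiss_s\}$ to $\poiss^{n,i}$ along the causal-chain set.'' But the $\poiss^{n,i}$ were already produced by the rep-con extensions, and the common noise is now also fixed; you cannot retroactively force them to agree on a random finite set, let alone for both $i$ simultaneously when $G_n$ is connected (so that the relevant vertex sets potentially overlap). The paper's resolution is to build the common noise \emph{forward} by patching: first arrange (via the odd/even trick on driving-map targets) that the Poisson pools underlying $\poiss^{n,1}$ and $\poiss^{n,2}$ are disjoint, then set the common driving map $\psi^n$ equal to $\psi^{n,1}$ on $\{v:d_{G_n}(v,o_n^1)\le \ov M_n\}$ and to $\psi^{n,2}$ elsewhere, where $\ov M_n<L_n/2$. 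The constraint $\ov M_n<L_n/2$ guarantees the two regions are disjoint, so $\psi^n$ is injective and the resulting $\poiss^{G_n}$ agrees with $\poiss^{n,i}$ on $\trnc{\ov M_n}(\cmpn{o_n^i}(G_n))$ for \emph{both} $i$. This makes the tuple a bona fide consistent rep-con extension for each $i$, so Proposition~\ref{LWCpf:LWC} applies directly---no separate causal-chain transfer argument is needed.

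Finally, you are missing the subsequence argument: $L_n\to\infty$ only in probability, so $\ov M_n$ need not diverge a.s.; one passes to a subsequence along which $L_{n_k}\to\infty$ a.s., obtains a.s.\ convergence there via Proposition~\ref{LWCpf:LWC}, and then uses the ``every subsequence has a further a.s.-convergent subsequence'' criterion to conclude convergence in probability along the full sequence. This is precisely why the conclusion~\eqref{EMP:ipsolconv} is in probability rather than almost sure.
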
 
\begin{proof} 
Fix any deterministic injection $\wh{\psi}: \Smc \to \Nb$. For each $n\in\Nb$, let $(\alt{G}_n,\alt{\x}^n)$ be the unique $\wh{\gs}\sp{\emksp,\vmksp\times\Xc}$-random element such that $\wh{\psi}|_{V_{G_n}} \in I((G_n,\x^n),(\alt{G}_n,\alt{\x}^n))$. To be precise, if for each $n \in \Nb$, $(G_n,\x^n) = (V_n,E_n,\root_n,\ems^n,\vms^n,\x^n)$, then 
\[(\alt{G}_n,\alt{\x}^n) \defeq (\alt{V}_n,\alt{E}_n,\alt{\root}_n,\alt{\ems}^n,\alt{\vms}^n,\alt{\x}^n) = (\wh{\psi}(V_n),\wh{\psi}(E_n),\wh{\psi}(\root_n),(\ems^n_{\wh{\psi}(e)})_{e \in \alt{E}_n},(\vms^n_{\wh{\psi}(v)},\x^n_{\wh{\psi}(v)})_{v \in \alt{V}_n}).\] 
Noting that all the remaining statements of the lemma depend only on $\ic{\cmpn{o^i_n}(G_n,\x^n)} = \ic{\cmpn{\wh{\psi}(o^i_n)}(\alt{G}_n,\alt{\x}^n)}$, we may assume without loss of generality that $\Smc \subseteq \Nb$ and therefore that $\cmpn{o^i_n}(G_n,\x^n)$ is a $\wh{\gs}\sp{\emksp,\vmksp\times \Xc}$-random element for each $n \in\Nb$ and $i=1,2$. 

Now, by Assumption \ref{mod:assu2} and the spatial localization assumption, Proposition \ref{WP:WP2} implies that the SDE \eqref{mod:infpart} is strongly well-posed for all initial data in the collection of marked graph representatives $\{\cmpn{o^i_n}(G_n,\x^n),(G^{(i)},\x^{(i)})\}_{n\in\Nb,i=1,2}$, where for $i = 1, 2$ $(G^{(i)},\x^{(i)})$ is a random representative of $\ic{(G^{(i)},\x^{(i)})}$ (whose existence is guaranteed by Lemma \ref{msbl:rep}). Also, by assumption, the collection of $\wh{\gs}\sp{\emksp,\vmksp\times \Xc}$-random elements $\{\cmpn{o^i_n}(G_n,\x^n)\}_{n \in \Nb,i=1,2}$ is $\wh{\filtm}$-measurable. For each $i= 1,2,$ Lemma \ref{MG:repextexist} implies the existence of an associated consistent rep-con extension $((\Omega^i,\filtm^i,\filt^i,\PP^i),\{\cmpn{o^i_n}(G_n,\x^n,\poiss^{n,i}),M_n^{(i)}\}_{n \in \Nb},(G^{(i)},\x^{(i)},\poiss^{G^{(i)}}),\{\varphi^{(i)}_{n,m}\}_{n \in \Nb, m \leq M_n})$ where $(\Omega^i,\filtm^i,\filt^i,\PP^i)$ supports a collection of i.i.d. $\filt^i$-Poisson processes $\ov{\poiss}^i \defeq \{\ov{\poiss}^i_k\}_{k \in \Nb}$ which together with the driving maps $\ov{\psi}^{n,i}$, $n \in \Nb,$ and $\ov{\psi}^{(i)}$ generate the respective $\filt^i$-driving noises $\poiss^{n,i}$, $n \in \Nb,$ and $\poiss^{G^{(i)}}$. We may also identify $\fpspace \defeq (\Omega^1,\filtm^1,\filt^1,\PP^1)=(\Omega^2,\filtm^2,\filt^2,\PP^2)$ and assume that $\ov{\poiss}^1$ is independent of $\ov{\poiss}^2$ so that both collections consist of $\filt$-Poisson processes. On this space, define for each $k \in \Nb$ and $i = 1,2,$ $\poiss_{2k-2+i} \defeq \ov{\poiss}^i_k$. Then for each $n\in\Nb$ and $i = 1,2,$ $\poiss^{n,i}$ and $\poiss^{G^{(i)}}$ are generated from $\{\poiss_k\}_{k \in \Nb}$ by the driving maps $\psi^{n,i}(\cdot) \defeq 2\ov{\psi}^{n,i}(\cdot)-2+i$ and $\psi^{(i)}(\cdot) \defeq 2\ov{\psi}^{(i)}(\cdot) - 2 + i$, respectively.

For each $n$, let $L_n \defeq d_{G_n}(o_n^1,o_n^2)$ and let $\ov{M}_n$ be the maximal $\filtm_0$-measurable random variable such that $\ov{M}_n < \frac{L_n}{2}$ and $\ov{M}_n \leq \min_{i=1,2}M_n^{(i)}$ a.s.. Define the mapping $\psi_n:V_n \to \Nb$ by 
\[\psi^n(v) \defeq 
\begin{cases} 
\psi^{n,1}(v) &\te{ on the event }\{d_{G_n}(v,o^1_n)\leq \ov{M}_n\},\\ 
\psi^{n,2}(v) &\te{ otherwise.} 
\end{cases}\] 
Since for $i = 1, 2$, the driving maps $\psi^{n,i}$ have disjoint images and $\ov{M}_n < \frac{1}{2}d_{G_n}(o_n^1,o_n^2)$, it follows that $\psi^n$ is $\filtm_0$-measurable and injective and is therefore also a driving map. For each $n \in \Nb$, let $\poiss^{G_n}$ be the $\filt$-driving noise generated by the driving map $\psi^n$. Then $\poiss^{G_n}$ is compatible with $\cmpn{o^i_n}(G_n),$ $i = 1, 2$. Because $\ov{M}_n \leq M_n^{(i)}$, $i = 1,2$, it follows that for any $i=1,2,$ $n\in\Nb,$ and $v \in \trnc{m}(\cmpn{o^i_n}(G_n))$, 
\begin{equation} 
\label{EMP:poisseq} 
\poiss^{G_n}_v = \poiss^{n,i}_v = \poiss^{G^{(i)}}_{(\varphi_{n,m}^{(i)})^{-1}(v)} \te{ on the event } \{m \leq \ov{M}_n\}. 
\end{equation}

Let $\{n_k\}_{k \in \Nb}$ be any deterministic sequence such that $L_{n_k}$ (and therefore $\ov{M}_{n_k}$) diverges to infinity a.s. as $k \rightarrow \infty$. Such a sequence exists because $L_n \to \infty$ in probability. Fix $i \in \{1, 2\}$ and consider the tuple $\mathbf{T}_i \defeq (\fpspace, \{\cmpn{o^i_{n_k}}(G_{n_k},\x^{n_k},\poiss^{G_{n_k}}),\ov{M}_{n_k}\}_{k\in\Nb},$ $(G^{(i)},\x^{(i)},\poiss^{G^{(i)}}),\{\varphi^{(i)}_{n_k,m}\}_{k \in \Nb, m \leq \ov{M}_{n_k}})$. Using \eqref{EMP:poisseq}, the definition of $\ov{M}_n$ and directly checking the three properties of Definition \ref{MG:extdef}, it is easy to see that this tuple is a consistent rep-con extension of $\mathbf{R}_i := (\pzspace,\{\ic{\cmpn{o^i_{n_k}}(G_{n_k},\x^{n_k})}\},\ic{(G^{(i)},\x^{(i)})})$. The verification is provided below for completeness.

\begin{description}
\item[Property 1:] By construction, $(\{\cmpn{o^i_n}(G_n,\x^n), M^{(i)}_n\}_{n \in \Nb}, (G^{(i)},\x^{(i)})$,$\{\varphi^{(i)}_{n,m}\}_{n \in \Nb, m \leq M_n})$ is a rep-con sequence of $(\{\ic{\cmpn{o^i_n}(G_n,\x^n)}\}_{n \in \Nb},\ic{(G^{(i)},\x^{(i)})})$. Since $\ov{M}_n \leq M^{(i)}_n$ for every $n \in\Nb$, the tuple $\mathbf{\ov{R}}_i \defeq (\{\cmpn{o^i_{n_k}}(G_{n_k},\x^{n_k}), \ov{M}_{n_k}\}, (G^{(i)},\x^{(i)}),\{\varphi^{(i)}_{n_k,m}\}_{k \in \Nb, m \leq \ov{M}_{n_k}})$ satisfies Properties 1,2 and 4-6 of Definition \ref{MG:rcs}. Because $\ov{M}_{n_k} \to \infty$ a.s. and $\ov{M}_{n_k} \leq M^{(i)}_{n_k}$ for each $k \in \Nb$, it follows that Property 3 of Definition \ref{MG:rcs} holds. Thus, $\mathbf{\ov{R}}_i$ is a rep-con sequence of $(\{\ic{\cmpn{o^i_{n_k}}(G_n,\x^n)}\}_{n \in \Nb},\ic{(G^{(i)},\x^{(i)})})$. 
\item[Property 2:] This holds because $\fpspace$ is part of the consistent rep-con extension of $\mathbf{R}_i$ and $\poiss^{G_n}$ is an $\filt$-driving noise. 
\item[Property 3:] This holds by \eqref{EMP:poisseq}.
\end{description}
Given that $\mathbf{T}_i$ is a consistent rep-con extension of $\mathbf{R}_i$, by Proposition \ref{LWCpf:LWC}, 
\[\ic{\cmpn{o^i_{n_k}}(G_{n_k},X^{G_{n_k},\x^{n_k}})} \to \ic{(G^{(i)},\x^{(i)})} \te{ a.s..}\] 
Lastly, note that for any deterministic subsequence $\{n_k\}_{k\in\Nb}\subseteq \Nb$, there exists a further deterministic subsequence $\{n_{k_\ell}\}_{\ell\in\Nb}$ such that $\lim_{\ell\to\infty}L_{n_{k_\ell}} = \infty$ a.s.. By the above result, this implies that 
\[\lim_{\ell\to\infty}\ic{\cmpn{o^i_{n_{k_\ell}}}(G_{n_{k_\ell}},X^{G_{n_{k_\ell}},\x^{n_{k_\ell}}})} = \ic{(G^{(i)},\x^{(i)})} \te{ a.s.,}\] 
which immediately implies \eqref{EMP:ipsolconv} and completes the proof. 
\end{proof}

\begin{remark}[Asymptotic independence for more general IPS] 
\label{EMP:modeltype} 
Note that the proof of Lemma \ref{EMP:inspce} does not make direct use of the specific form of the SDE \eqref{mod:infpart} but only relies on spatial localization of the SDE, Assumptions \ref{mod:assu2} and \ref{LWCpf:finconv}, and Proposition \ref{LWCpf:LWC}, all of which could potentially be verified for solutions $X^{G,\x}$ of more general Poisson-driven SDEs. 
\end{remark}

Applying Lemmas \ref{EMP:dLWC} and \ref{EMP:inspce}, we now prove Theorem \ref{GEM:deccor}. 

\begin{proof}[Proof of Theorem \ref{GEM:deccor}:] 
Lemma \ref{EMP:dLWC} implies that 
\[(\ic{\cmpn{o_n^1}(G_n,\x^n)},\ic{\cmpn{o_n^2}(G_n,\x^n)}) \Rightarrow (\ic{(G^{(1)},\x^{(1)})},\ic{(G^{(2)},\x^{(2)})}),\] 
\sloppy where $\ic{(G^{(i)},\x^{(i)})},i=1,2,$ are i.i.d. copies of $\ic{(G,\x)}$. By the Skorokhod representation theorem and Lemma \ref{msbl:rep}, there exist $\{\ic{(\alt{G}^{(i)},\alt{\x}^{(i)})}\}_{i=1,2}\deq \{\ic{(G^{(i)},\x^{(i)})}\}_{i=1,2}$, and finite, $\wh{\gs}\sp{\emksp,\vmksp\times\Xc}$-random elements $\{(\alt{G}_n,\alt{o}^i_n,\alt{\x}^n)\}_{n\in\Nb,i=1,2}\deq \{(G_n,o^i_n,\x^n)\}_{n\in\Nb,i=1,2}$ such that 
\[\ic{\cmpn{\alt{o}^i_n}(\alt{G}_n,\alt{\x}^n)} \to \ic{(\alt{G}^{(i)},\alt{\x}^{(i)})}\te{, }i=1,2,\te{ a.s..}\] 
\sloppy By assumption, $\alt{G}_n \deq G_n$ and $\alt{G}^{(i)} \deq G$ are a.s. finitely dissociable for $i=1,2,$ and $n \in \Nb$. So by Assumption \ref{mod:assu} and Proposition \ref{WP:locality2}, $\{(\alt{G}_{n}, \alt{o}^i_n,\alt{\x}^n),(\alt{G}^{(i)},\alt{\x}^{(i)})\}_{n\in\Nb,i=1,2}$ consistently spatially localizes the SDE \eqref{mod:infpart}. Assumptions \ref{mod:assu} and \ref{mod:cont} imply by Lemma \ref{LWCpf:nograph} that the tuple $(\pzspace,\{\ic{\cmpn{o^i_n}(\alt{G}_n,\alt{\x}^n)}\}_{n\in\Nb},\ic{(\alt{G}^{(i)},\alt{\x}^{(i)})})$ satisfies Assumption \ref{LWCpf:finconv} for each $i$. Thus, by Lemma \ref{EMP:inspce}, it is possible to construct a collection of driving noises $\{\poiss^{\alt{G}_n},\poiss^{\alt{G}^{(i)}}\}_{n \in \Nb,i=1,2}$ such that for each $n \in \Nb$ and $i=1,2,$ the respective $\poiss^{\alt{G}_n}$ and $\poiss^{\alt{G}^{(i)}}$-solutions $X^{\alt{G}_n,\alt{\x}_n}$ and $X^{\alt{G}^{(i)},\alt{\x}^{(i)}}$ satisfy 
\[\ic{\cmpn{\alt{o}^n_i}(\alt{G}_n,X^{\alt{G}_n,\alt{\x}^n})} \to \ic{(\alt{G}^{(i)},X^{\alt{G}^{(i)},\alt{\x}^{(i)}})}\te{ in probability as } n\to \infty.\] 
By the bounded convergence theorem, for any bounded, continuous function $f: (\gs\sp{\emksp,\vmksp\times\cad})^2\to \Rb$, 
\begin{equation} 
\label{EMP:fconv} 
\lim_{n\to\infty}\ex{f\left(\left(\ic{\cmpn{\alt{o}^n_i}(\alt{G}_n,X^{\alt{G}_n,\alt{\x}^n})}\right)_{i=1,2}\right) } =\ex{f\left(\left(\ic{(\alt{G}^{(i)},X^{\alt{G}^{(i)},\alt{\x}^{(i)}})}\right)_{i=1,2}\right) }. 
\end{equation} 
By well-posedness of the SDE \eqref{mod:infpart} on each of the graphs $(\alt{G}_n,\alt{\x}^n), n \in \Nb,$ and $(\alt{G},\alt{\x})$, which holds by Theorem \ref{WP:WP}, it follows that for every $n \in \Nb$, $\left(\ic{\cmpn{\alt{o}^n_i}(\alt{G}_n,X^{\alt{G}_n,\alt{\x}^n})}\right)_{i=1,2} \deq \left(\ic{\cmpn{o_n^i}(G_n,X^{G_n,\x^n})}\right)_{i=1,2}$ and $(\alt{G}^{(1)},X^{G^{(1)},\x^{(1)}}) \deq (\alt{G}^{(2)},X^{G^{(2)},\x^{(2)}}) \deq (G,X^{G,\x})$. Together with \eqref{EMP:fconv} and the independence of $\ic{(\alt{G}^{(1)},X^{\alt{G}^{(1)},\alt{\x}^{(1)}})}$ and $\ic{(\alt{G}^{(2)},X^{\alt{G}^{(2)},\alt{\x}^{(2)}})}$, for any bounded, continuous functions $f_1,f_2: \gs\sp{\emksp,\vmksp\times\cad}\to \Rb$, it follows that 
\[
\begin{array}{l} 
\lim_{n\to\infty}\ex{f_1(\ic{\cmpn{o_n^1}(G_n,X^{G_n,\x^n})})f_2(\ic{\cmpn{o_n^2}(G_n,X^{G_n,\x^n})})} \\ 
\qquad \qquad =\lim_{n\to\infty}\ex{f_1(\ic{\cmpn{\alt{o}^n_1}(\alt{G}_n,X^{\alt{G}_n,\alt{\x}^n})})f_2(\ic{\cmpn{\alt{o}^n_2}(\alt{G}_n,X^{\alt{G}_n,\alt{\x}^n})})}\\ 
\qquad \qquad =\ex{f_1(\ic{(\alt{G}^{(1)},X^{\alt{G}^{(1)},\alt{\x}^{(1)}})})f_2(\ic{(\alt{G}^{(2)},X^{\alt{G}^{(2)},\alt{\x}^{(2)}})})}\\ 
\qquad\qquad =\ex{f_1(\ic{(\alt{G}^{(1)},X^{\alt{G}^{(1)},\alt{\x}^{(1)}})})}\ex{f_2(\ic{(\alt{G}^{(2)},X^{\alt{G}^{(2)},\alt{\x}^{(2)}})})}\\ 
\qquad\qquad =\ex{f_1(\ic{(G,X^{G,\x})})}\ex{f_2(\ic{(G,X^{G,\x})})}. 
\end{array} 
\] 
This implies the desired asymptotic correlation decay in \eqref{GEM:corlim}. 
\end{proof}

\begin{remark}[Correlation decay for more general IPS] 
\label{EMP:unbdded} 
From the proof of Theorem \ref{GEM:deccor}, it is not hard to see that, in fact, the conclusion of Theorem \ref{GEM:deccor} holds under Assumptions \ref{mod:assu2} and \ref{LWCpf:finconv} and the spatial localization property in place of Assumptions \ref{mod:assu} and \ref{mod:cont} and the finite dissociability property respectively. 
\end{remark}

\appendix
\section{Counterexample: When the SDE is not Well-Posed} 
\label{illpf}
There are two ways in which the SDE \eqref{mod:infpart} can fail to be well-posed. The IPS may either admit no solution, or it may admit multiple solutions. It is easy to construct a time-inhomogeneous IPS that does not admit any solution by simply specifying unbounded jump rates that increase sufficiently fast with time such that any stochastic process $X \defeq X^{G,\x}$ for which \eqref{mod:infpart} holds almost surely, must with positive probability explode (i.e., have infinitely many discontinuities on some compact interval in $\Rb_+$). However, in this section, we show that well-posedness may fail even if we restrict ourselves to time-homogeneous Markov processes with uniformly bounded jump rates (which is even stronger than Assumption \ref{mod:assu}). More precisely, we identify a graph $G$ and an associated family of jump rates ${\bf r^G}$ for which the SDE \eqref{mod:infpart} has multiple strong solutions with different laws.

\begin{proposition}[An IPS that is not well-posed] 
\label{WP:ill} 
Let $\tree$ be a rooted tree such that for every $k \in \Nb$, all vertices $v$ in the $k$th generation of $\tree$ (i.e., those that are distance $k$ from the root) have $4^k$ children. Consider the Markovian IPS with state space $\Xc = \{0,1\}$, allowable jump set $\jmps = \{1\}$, and for $v \in \tree$, $t \in \Rb_+$, and $x \in \cad^\tree$, let 
\begin{equation} 
\label{rate-prop} 
\rate\gvpara{\tree}{v}\stpara{1} (t,x) = 
\begin{cases} 
1 & \te{ if } x_v (t-) = 0 \te{ and } \sum_{u \in \neigh{v}} x_{u} (t-) > 0, \\ 
0 & \te{ otherwise.} 
\end{cases} 
\end{equation} 
Then the corresponding SDE \eqref{mod:infpart} with $G = \tree$ and initial state $\x_v = 0, v \in \tree$, has multiple strong solutions with different laws. 
\end{proposition}

Note that for the SDE \eqref{mod:infpart} with $G = \tree$, the driving noises $\poiss^\tree$ are simply the collection of i.i.d. Poisson processes $(\poiss^{\tree}_v)_{v \in \tree}$ on $[0,\infty)^2 \times \{1\}$ with intensity measure ${\rm Leb}^2 \otimes \delta_1$. Also observe that the family of jump rates ${\bf r^\tree}$ in Proposition \ref{WP:ill} satisfy Assumption \ref{mod:assu} with constants $C_{k,T} = 1$ for all $k \in \Nb$ and $T \in \Rb_+$.

To prove Proposition \ref{WP:ill}, we first establish a certain property of the tree $\tree$. We start with a definition that generalizes the notion of causal chains given in Definition \ref{pfwp:causal}, for the above example. 

\begin{definition}[Infinite causal chains] 
\label{def:infcchain} 
Fix $T \in [0,\infty)$, and let $\{\poiss_{v}\}_{v \in \tree}$ be a collection of i.i.d. rate $1$ Poisson processes on $[0,T]$. Then an infinite path $\Gamma = \{u_0, u_1, \ldots \}$ with $u_i \in \tree$, $i \in \Nb_0,$ is said to be a $(\tree, {\bf \poiss})$-infinite causal chain ending at $u_0$ during $[0,T]$ if there exists a decreasing sequence $T > s_0 > s_1 > \cdots > 0$ such that $s_i \in \{t \in [0,T]: \poiss_{u_i}(\{t\})=1\}$ for all $i \in \Nb_0.$ Moreover, we say a $(\tree, {\bf \poiss})$-infinite causal chain exists if there exists some $T \in \Rb_+$, $u_0 \in \tree$, and a $(\tree, {\bf \poiss})$-infinite causal chain ending at $u_0$ during $[0,T]$. 
\end{definition}

\begin{lemma}[Existence of infinite causal chains] 
\label{illpf:tchn} 
There exists a $(\tree,\poiss)$-infinite causal chain. 
\end{lemma} 
\begin{proof} 
Fix $T \in [0,\infty)$. Define the infinite path $\Gamma' \defeq \{u'_0, u'_1,\dots\}$ using the following recursive construction. Set $u'_0 = \root$ and for $n \in \Nb_0$, first check if there exists $v \in \chil{u'_n}{\tree}$ such that $\poiss_v(2^{-n-2}T,2^{-n-1}T) > 0$. If this condition is satisfied, then set $u'_{n+1} = v$ (if multiple children satisfy this then we choose one arbitrarily from amongst them). If not, then we set $u'_{n+1}$ to be any child of $u'_n$.

Next, set $\mathcal{I} := \{ n \in \mathbb{N}_0: \poiss_{u'_n} (2^{-n-1}T,2^{-n}T) > 0\}$, and define $U := \{u'_n, n \in \mathcal{I}\} \subseteq \Gamma'$. We now claim that $U$ contains an infinite path a.s.. To see why, we prove the equivalent claim that $U$ a.s. contains all but finitely many vertices in $\Gamma'$. Let $\alpha_n \defeq \PP(u'_{n+1} \notin U)$. Note that for each $n \in \Nb_0$ and $v \in \chil{u'_n}{\tree}$, 
\[\PP\left(\poiss^{\tree,T}_v(2^{-n-2}T,2^{-n-1}T) = 0\right) = e^{-2^{-n-2}T}.\] 
The independence of the Poisson processes $(\poiss_v)_{v \in \tree}$ then implies that 
\begin{align*} 
\alpha_n &= \prod_{v \in \chil{u'_n}{\tree}}\PP\left(\poiss_{v}(2^{-n-2}T,2^{-n-1}T) = 0\right) \leq \left(e^{-2^{-n-2}T}\right)^{4^n}= e^{-2^{n-2}T}. 
\end{align*} 
Because $\alpha_n$ decreases super-exponentially fast, $\sum_{n=0}^\infty \alpha_n < \infty$. By the Borel-Cantelli lemma, it follows that with probability 1, there exists $m = m(\omega)$ such that $\{u'_n\}_{n \in \Nb_0, n \geq m} \subseteq U$, thus proving the claim. In turn, setting $u_k := u'_{m+k}$, $k = \Nb_0$, the claim implies that the infinite path $\Gamma := (u_0, u_1, \ldots) \subset U$ and thus, a.s. there exists a decreasing sequence of times $\{t_k \in (2^{-(m+k+1)}T,2^{-(m+k)}T)\}, k \in\Nb_0,$ such that $\poiss_{u_{k}}(\{t_k\}) = \poiss_{u'_{m+k}}(\{t_k\}) = 1$. The lemma follows on setting $s_k = t_{k}$.
\end{proof}

\begin{remark}[Infinite causal chains] 
\label{rem-tchn} 
The notion of an infinite causal chain can be extended in a natural way to the general class of IPS considered in this article with rates that satisfy Assumption \ref{mod:assu} with a family of constants ${\bf C}$, by simply replacing $\tree$ by a general graph $G$ in Definition \ref{def:infcchain} and, for each $T \in \Rb_+$, the point process family $\poiss$ by the family $\poiss^{G,T}$ defined via \eqref{findis:poiss}. Note that this is consistent with the choice of $\poiss$ in Definition \ref{def:infcchain} because the IPS from Proposition \ref{WP:ill} is such that $\jmps = \{1\}$ and Assumption \ref{mod:assu} holds with $C_{k,T} := 1$ for all $k \in \Nb, T \in \Rb_+$. From the proof of Proposition \ref{WP:locality2}, it is easy to see that for any IPS, the non-existence of infinite causal chains implies spatial localization. By Proposition \ref{WP:WP2}, spatial localization and Assumption \ref{mod:assu2} imply strong well-posedness of the SDE \eqref{mod:infpart}. Therefore, if Assumption \ref{mod:assu2} holds, then to construct a graph $G$ and jump rates $\mathbf{r}^G$ for which \eqref{mod:infpart} is ill-posed, it is necessary to ensure that an infinite causal chain exists. 
\end{remark}

For the particular IPS from Proposition \ref{WP:ill}, we now show that the existence of an infinite causal chain is also sufficient to conclude that the IPS is not well-posed.

\begin{proof}[Proof of Proposition \ref{WP:ill}:] 
Although there are infinitely many strong solutions to the SDE \eqref{mod:infpart} for the initial data $(\tree,\x)$ with $\x \equiv 0$ and family of jump rates $\{\rate^{\tree,v}_1\}_{v \in \tree}$ as given in \eqref{rate-prop}, we explicitly construct two of them. First, note that the process $X^{\tree,\x}_v(t) = 0$ for all $v \in V$ and $t \in \Rb_+$ is clearly one strong solution.

Next, note that the driving noises $\poiss^{\tree}_v, v \in \tree,$ of the SDE are i.i.d. Poisson process on $[0,\infty)^2 \otimes \{1\}$, and $\poiss_v(A) := \poiss^{\tree}_v(A \times (0,1]\times \{1\})$, $v \in \tree,$ are i.i.d. unit rate Poisson processes. Then, for $(v,t) \in \tree \times (0,T)$, we say $(v,t)$ has an infinite causal chain if there exists an infinite $(\tree,\poiss)$-causal chain ending at $v$ on the interval $[0,t]$, in the sense of Remark \ref{rem-tchn}. Then for $v \in V$ and $t \in [0,T]$, define 
\begin{equation} 
\label{illpf:sol} 
\alt{X}^{\tree,\x}_v(t)\defeq 
\begin{cases} 
1 &\te{ if } (v,t)\te{ has an infinite causal chain,}\\ 
0 &\te{ otherwise.} 
\end{cases} 
\end{equation}

We claim $\alt{X}^{\tree,\x}$ is also a solution to the same SDE. To see why the claim is true, define 
\begin{equation} 
\label{illpf:whinfpart} 
\wh{X}_v(t) \defeq \int_{(0,t]\times \Rb_+\times \jmps} \indic{r \leq \rate_1^{\tree,v}(s,(\wh{X}_v,\alt{X}^{\tree,\x}_{\tree \setminus \{v\}}))}\,\poiss^{\tree}_v(ds,dr,dj),\quad t \in \Rb_+. 
\end{equation} 
If $(v,t) \in \tree \times \Rb_+$ has no infinite causal chain, then by definition, for any $s \in [0, t]$, either $\poiss_v([s,t]) = \poiss^{\tree}_v([s,t]\times (0,1]\times \{1\}) = 0$, or there does not exist any $u \in \gneigh{v}{\tree}$ for which $(u,s)$ has an infinite causal chain. Since, from the form given in \eqref{rate-prop}, $\rate_1^{\tree,v}(s,(\wh{X}_v,\alt{X}^{\tree,\x}_{\tree \setminus \{v\}}))$ depends on $\alt{X}^{\tree,\x}_{\tree \setminus \{v\}}$ only via $\alt{X}^{\tree,\x}_{\gneigh{v}{\tree}}(s-)$, it immediately follows that for every event $\sigma$ in $\poiss_v$ that lies in the interval $(0,t]$, $\rate^{\tree,v}_1(\sigma,\wh{X}_v,\alt{X}^{\tree,\x}_{\tree \setminus \{v\}}) = 0$, and hence, $\wh{X}_v(t) = \alt{X}^{\tree,\x}_v(t) = 0$. 

If $(v,t)$ has an infinite causal chain then $\alt{X}_v(t) = 1$ by \eqref{illpf:sol} and Definition \ref{def:infcchain}, there exists an infinite path $\Gamma = \{v=u_0,u_1,\dots\}$ and a sequence $t \geq s_0 > s_1 > \dots > 0$ for which $\poiss_{u_i}(\{s_i\}) = 1$ for all $i \in \Nb_0$. This naturally implies that for any $s_0 > s \geq s_1$, $(u_1,s)$ also has an infinite causal chain, so $\alt{X}^{\tree,\x}_{u_1}(s) = 1$ by \eqref{illpf:sol}. On the other hand, invoking \eqref{rate-prop} and the fact that $u_1 \sim v$, it follows that for all $s > s_1$ such that $\wh{X}_v(s) = 0$, we have 
\[\rate^{\tree,v}_1\left(s,(\wh{X}_v,\alt{X}^{\tree,\x}_{\tree\setminus\{v\}})\right) = 1.\] 
Also, note that by \eqref{rate-prop} and \eqref{illpf:whinfpart}, for any $s > 0$, 
\[\wh{X}_v(s-) = 1 \quad \Rightarrow \quad \rate^{\tree,v}_1\left(s,(\wh{X}_v,\alt{X}^{\tree,\x}_{\tree\setminus\{v\}})\right) = 0 \quad \Rightarrow \quad \wh{X}_v(s) = 1,\]
which shows that $1$ is an absorbing state of $\wh{X}_v$. Together, the last two displays imply that $\wh{X}_v(s_0) = 1$. Indeed, because either $\wh{X}_v(s_0-) = 1$, in which case $\wh{X}_v(s_0) = 1$ by the last display, or $\wh{X}_v(s_0-)=0$ in which case by the form of the SDE \eqref{illpf:whinfpart}, $\wh{X}_v(s_0) = \wh{X}_v(s_0-)+\poiss^{\tree}_v(\{s_0\}\times [0,1]\times \{1\}) = \poiss_v(\{s_0\}) = 1$, with the last equality holding by the definition of the infinite causal chain and the fact that $u_0 = v$. Because $s_0 \leq t$ and $1$ is an absorbing state of $\wh{X}_v$, it follows that $\wh{X}_v(t) = 1 = \alt{X}_v(t)$.

Thus, we have shown that $\wh{X}_v = \alt{X}^{\tree,\x}_v$. Because $v$ and $t$ were chosen arbitrarily, it follows that $\alt{X}^{\tree,\x}$ satisfies \eqref{mod:infpart}. Finally, because $(\tree,\poiss)$-causal chains are $\filt^{\tree,\x,\poiss^\tree}$-adapted, $\alt{X}^{\tree,\x}$ must also be $\filt^{\tree,\x,\poiss^\tree}$-adapted by \eqref{illpf:sol}. Thus, $\alt{X}^{\tree,\x}$ is also a strong solution to \eqref{mod:infpart}, and by Lemma \ref{illpf:tchn}, $X^{\tree,\x} \neq \alt{X}^{\tree,\x}$ a.s.. 
\end{proof} 

\section{Measurable Representatives of Graph Isomorphism Classes} 
\label{msbl}
In this section, we establish the measurability of marked graph representatives of random isomorphism classes, culminating in the proofs of Lemmas \ref{MG:iso} and \ref{MG:repextexist}. Along the way, we introduce a Polish space of canonical representative graphs that is compatible with local convergence, which may be of independent interest. We start with preliminaries in Appendix \ref{subs-Cprelim} and then establish the main measurable selection results in Appendix \ref{subs-Cmain}.

Throughout the section, we let $\ov{\Pol}$ and $\Pol$ be Polish spaces denoting the respective spaces in which edge and vertex marks lie, and let $d_{\bar{\Pol}}$ and $d_{\Pol}$ be associated metrics that induce the respective topologies. Let $\etra$ be an arbitrary point not lying in $\ov{\Pol}\cup \Pol$, define the spaces $\et{\ov{\Pol}}\defeq \ov{\Pol}\cup \{\etra\}$ and $\et{\Pol} \defeq \Pol\cup\{\etra\}$, and endow them with the corresponding Polish topologies from $\ov{\Pol}$ and $\Pol$, respectively, with $\etra$ being an isolated point. Throughout the section, we often implicitly denote (possibly random) $\sp{\ov{\Pol},\Pol}$-graphs by $G \defeq (V,E,\root,\ov{\dvms},\dvms)$ and $G_n \defeq (V_n,E_n,\root_n,\ov{\dvms}^n,\dvms^n)$. 
\begin{definition}[Random closed subsets] 
\label{def:mst} 
Given a Polish space $\Pol'$, let $\clos (\Pol')$ denote the set of closed subsets of $\Pol'$. Given in addition a measurable space $(\Omega,\filtm)$, a mapping $F: \Omega \mapsto \clos (\Pol')$ is said to be an $\filtm$-random closed subset of $\Pol'$ if it is weakly measurable in the following sense: for every open set $U\subseteq \Pol'$, the set $\{\omega \in \Omega: U \cap F(\omega) \neq \emptyset\}$ lies in $\filtm$. Moreover, $F$ is said to be non-empty if $F(\omega) \neq \emptyset$ for every $\omega \in \Omega$. 
\end{definition}

Several results in this appendix make use of the following measurable selection theorem.

\begin{theorem}[Kuratowski \& Ryll-Nardzewski Measurable Selection Theorem] 
\label{msbl:mst} 
Suppose $\Pol'$ is a Polish space, $(\Omega,\filtm)$ is a measurable space and $F: \Omega \mapsto \clos (\Pol')$ is a non-empty $\filtm$-random closed subset of $\Pol'$. Then there exists an $\filtm$-measurable function $Z:\Omega \to \Pol'$ such that $Z(\omega) \in F(\omega)$ for all $\omega \in \Omega$. 
\end{theorem}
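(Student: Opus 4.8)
The statement is the classical Kuratowski--Ryll-Nardzewski measurable selection theorem, and the plan is to prove it by the standard method of successive measurable approximations: build a $d$-Cauchy sequence of $\filtm$-measurable ``approximate selectors'' and show its limit lies in $F(\omega)$ for every $\omega$. First I would fix a complete metric $d$ on $\Pol'$ compatible with its topology and bounded by $1$ (such a metric exists because $\Pol'$ is Polish), together with a countable dense set $D \defeq \{x_k\}_{k \in \N} \subseteq \Pol'$. The preliminary step is to restate the weak-measurability hypothesis in a usable form: for each $x \in \Pol'$ and $r > 0$ the ball $B(x,r)$ is open, so
\[
\{\omega \in \Omega : d(x, F(\omega)) < r\} = \{\omega \in \Omega : F(\omega) \cap B(x,r) \neq \emptyset\} \in \filtm,
\]
whence $\omega \mapsto d(x, F(\omega))$ is $\filtm$-measurable for every $x$; it is also finite since $F(\omega) \neq \emptyset$.

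Next I would construct, by induction on $n \in \N_0$, $\filtm$-measurable maps $Z_n : \Omega \to \Pol'$, with $Z_n(\Omega) \subseteq D$ for $n \geq 1$, such that for all $\omega \in \Omega$,
\[
d(Z_n(\omega), F(\omega)) < 2^{-n} \quad \text{and} \quad d(Z_n(\omega), Z_{n-1}(\omega)) < 3 \cdot 2^{-n} \ \ (n \geq 1).
\]
For $n = 0$, set $Z_0(\omega) \defeq x_k$ with $k$ the least index satisfying $d(x_k, F(\omega)) < 1$, which exists by density of $D$ and $F(\omega) \neq \emptyset$; $Z_0$ is measurable by the preliminary step. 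For the inductive step, given $Z_{n-1}$, put
\[
A_k \defeq \bigl\{\omega \in \Omega : d(x_k, Z_{n-1}(\omega)) < 3 \cdot 2^{-n} \text{ and } d(x_k, F(\omega)) < 2^{-n}\bigr\} \in \filtm,
\]
the membership in $\filtm$ following from the preliminary step and measurability of $Z_{n-1}$. The sets $\{A_k\}_{k \in \N}$ cover $\Omega$: given $\omega$, pick $y \in F(\omega)$ with $d(Z_{n-1}(\omega), y) < 2^{-(n-1)}$ (possible by the inductive bound) and then $x_k \in D$ with $d(x_k, y) < 2^{-n}$; a triangle-inequality estimate shows $\omega \in A_k$. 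Setting $Z_n(\omega) \defeq x_{k(\omega)}$ with $k(\omega) \defeq \min\{k : \omega \in A_k\}$ yields an $\filtm$-measurable map, since $\{k(\omega) = j\} = A_j \setminus \bigcup_{k < j} A_k \in \filtm$ and these sets partition $\Omega$, and $Z_n$ satisfies the required bounds by construction.

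The increment bounds make $(Z_n(\omega))_{n \in \N_0}$ a $d$-Cauchy sequence for every $\omega$, hence convergent by completeness of $d$ to some $Z(\omega) \in \Pol'$; the map $Z$ is $\filtm$-measurable as a pointwise limit of $\filtm$-measurable $\Pol'$-valued maps. Letting $n \to \infty$ in $d(Z(\omega), F(\omega)) \leq d(Z(\omega), Z_n(\omega)) + d(Z_n(\omega), F(\omega))$ gives $d(Z(\omega), F(\omega)) = 0$, and since $F(\omega)$ is closed this forces $Z(\omega) \in F(\omega)$, completing the argument.

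I expect the one genuinely delicate point to be the inductive step: one must simultaneously arrange that the countable family $\{A_k\}$ exhausts $\Omega$ while keeping both the distance from $Z_n(\omega)$ to $F(\omega)$ and the increment $d(Z_n(\omega), Z_{n-1}(\omega))$ under geometric control, and verify that the sets entering the construction all lie in $\filtm$. No conceptual difficulty arises beyond this bookkeeping, and the remaining steps are routine; alternatively, one may simply invoke this as a standard result from the literature on descriptive set theory.
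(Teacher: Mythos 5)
Your proposal is a correct and complete proof, but it takes a genuinely different route from the paper: the paper does not prove this statement at all, instead observing that it is a direct restatement of the Kuratowski--Ryll-Nardzewski measurable selection theorem as stated in \cite[Theorem 6.9.3]{Bog07}, whereas you reconstruct the classical proof from first principles via the standard successive-approximation argument. Your argument is sound at each step: the preliminary reformulation of weak measurability as measurability of $\omega \mapsto d(x,F(\omega))$ is exactly right (using that $\{d(x,F(\omega)) < r\} = \{F(\omega)\cap B(x,r)\neq\emptyset\}$); the inductive construction of $D$-valued selectors $Z_n$ with $d(Z_n(\omega),F(\omega)) < 2^{-n}$ and $d(Z_n(\omega),Z_{n-1}(\omega)) < 3\cdot 2^{-n}$ is well-defined because the sets $A_k$ cover $\Omega$ (your triangle-inequality check is correct) and lie in $\filtm$ (by the preliminary step together with continuity of $y\mapsto d(x_k,y)$ composed with measurable $Z_{n-1}$); the geometric increment bound yields a Cauchy sequence, whose pointwise limit $Z$ is measurable (e.g.\ because $f\circ Z = \lim_n f\circ Z_n$ is measurable for every continuous $f:\Pol'\to\R$, and the Borel $\sigma$-algebra of a Polish space is generated by continuous functions); and closedness of $F(\omega)$ forces $Z(\omega)\in F(\omega)$. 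What your approach buys is a self-contained, elementary proof that makes the completeness and separability hypotheses on $\Pol'$ and the weak-measurability hypothesis on $F$ transparent; what the paper's citation buys is brevity, which is entirely appropriate since this is a well-known result being used as a tool rather than established. You yourself note the alternative of simply invoking the literature, which is precisely what the paper does.
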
 
\begin{proof} 
This is simply a restatement of \cite[Theorem 6.9.3]{Bog07} in our notation, in particular with $X, \borel$, and $\Psi$ in \cite{Bog07} replaced by $\Pol'$, $\filtm$, and $F$, respectively. 
\end{proof}

\subsection{A Canonical Subspace of Rooted Graphs and its Properties} 
\label{subs-Cprelim}
In this section, we construct a canonical subspace of the space of $\sp{\ov{\Pol},\Pol}$-graphs introduced in Section \ref{nota:LWC} and equip it with a topology that is compatible with the topology of $\gs\sp{\ov{\Pol},\Pol}$. In the ensuing definition, we use the following standard notion of convergence of subsets of $\Nb$. Given $S_n \subseteq \Nb,n \in \Nb,$ and $S \subseteq \Nb$, we write $S_n \to S$ if and only if
\[S = \bigcup_{n \in \Nb}\bigcap_{n' > n} S_{n'} = \bigcap_{n \in \Nb}\bigcup_{n' > n} S_{n'}.\] 
Equivalently, $S_n \to S$ if and only if for every $k \in S$, there exist only finitely many $n\in\Nb$ such that $k \notin S_n$ and for every $k' \notin S$ there exist only finitely $n'\in\Nb$ such that $k' \in S_{n'}$. In addition, we equip $\Nb$ with the discrete topology. Lastly, recall the definition of $\gcl{v}{G}$ from Section \ref{nota:grph}.

\begin{definition}[Canonical space of representative graphs] 
\label{msbl:whgspce} 
We equip the canonical space $\wh{\gs}\sp{\ov{\Pol},\Pol}$ of rooted $\sp{\ov{\Pol},\Pol}$-graphs 
\begin{equation} 
\wh{\gs}\sp{\ov{\Pol},\Pol} \defeq \{\sp{\ov{\Pol},\Pol}\te{-graphs } G = (V,E,\root, \ov{\dvms}, \dvms) \te{ s.t. } V \subseteq \Nb\}, 
\end{equation} 
with the following notion of convergence: $G_n \to G$ in $\wh{\gs}\sp{\ov{\Pol},\Pol}$ if and only if 
\begin{enumerate} 
\item $\lim_{n\to\infty} V_n = V$; 
\item $\lim_{n\to\infty} E_n = E$; 
\item $\lim_{n\to\infty} \root_n = \root$; 
\item $\lim_{n\to\infty} \ov{\dvms}^n_e \indic{e \in E_n} = \ov{\dvms_e}$ for all $e \in E$; 
\item $\lim_{n\to\infty} \dvms^n_v \indic{v \in V_n} = \dvms_v$ for all $v \in V$; 
\item $\lim_{n\to\infty} \left[\max\{u \in \gcl{v}{G_n}\} \indic{v \in V_n}\right] = \max\{u \in \gcl{v}{G}\}$ for each $v \in V$. 
\end{enumerate}
For $G,G' \in\wh{\gs}\sp{\ov{\Pol},\Pol}$, notions such as graph distance $d_G(\cdot,\cdot)$, truncations $\trnc{m}(G)$, sets of isomorphisms $I(G,G')$ and isomorphism classes $\ic{G}$ are all defined as in Sections \ref{nota:grph} and \ref{nota:LWC}. 
\end{definition}

\begin{remark}[Definition \ref{msbl:whgspce}.6] 
\label{msbl:defcounter}
The least intuitive condition is perhaps condition 6 of Definition \ref{msbl:whgspce}, but it is necessary for the topology on $\wh{\gs}\sp{\ov{\Pol},\Pol}$ to be compatible with the topology of $\gs\sp{\ov{\Pol,\Pol}}$ (in the sense made precise in Lemma \ref{msbl:alphcont}). This is best illustrated via an example of how compatibility could fail without condition 6. Let $\ov{\Pol} = \Pol = \{1\}$ be trivial and suppose that for each $n \in \Nb$, $\root_n = \root = 1$, $V_n = \{1,n+1\}$, $V = \{1\}$, $E_n = \{\{1,n+1\}\}$, and $E = \emptyset$. Then conditions 1-5 of Definition \ref{msbl:whgspce} are all satisfied, and only condition 6 fails. However, note that $G_n \cong G_{n'} \ncong G$ for all $n,n'\in\Nb$, so $\ic{G_n} \to \ic{G_1} \neq \ic{G}$ in $\gs\sp{\ov{\Pol},\Pol}$.
\end{remark}

In order to apply Theorem \ref{msbl:mst} to find measurable representatives of isomorphism classes, it is necessary to prove that the space $\wh{\gs}\sp{\ov{\Pol},\Pol}$ is Polish. This can be done by direct verification using Definition \ref{msbl:whgspce}. We fill in the details for completeness.

\begin{lemma} 
\label{msbl:Pol} 
The space $\wh{\gs}\sp{\ov{\Pol},\Pol}$ is Polish. 
\end{lemma}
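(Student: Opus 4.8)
The goal is to show that $\wh{\gs}\sp{\ov{\Pol},\Pol}$, equipped with the notion of convergence in Definition \ref{msbl:whgspce}, is a Polish space. The plan is to exhibit an explicit complete separable metric inducing this convergence, or equivalently to embed $\wh{\gs}\sp{\ov{\Pol},\Pol}$ as a $G_\delta$ (or closed) subset of a countable product of known Polish spaces and then check that the subspace topology matches the prescribed convergence. First I would encode a graph $G = (V,E,\o,\ov{\dvms},\dvms) \in \wh{\gs}\sp{\ov{\Pol},\Pol}$ as a point in the Polish space
\[
\Ymc \defeq \{0,1\}^{\N} \times \{0,1\}^{\N \times \N} \times \N \times (\et{\ov{\Pol}})^{\N\times\N} \times (\et{\Pol})^{\N},
\]
where the first coordinate is $\indic{\cdot \in V}$, the second is $\indic{\{\cdot,\cdot\}\in E}$, the third is the root $\o$, the fourth records $\ov{\dvms}_{\{i,j\}}$ when $\{i,j\}\in E$ and the cemetery point $\etra$ otherwise, and the fifth records $\dvms_i$ when $i\in V$ and $\etra$ otherwise. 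Each factor is Polish (using that $\et{\ov{\Pol}}$ and $\et{\Pol}$ are Polish, $\N$ is discrete Polish, and countable products of Polish spaces are Polish), so $\Ymc$ is Polish.

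\textbf{Key steps.} (i) Show the encoding map is a bijection from $\wh{\gs}\sp{\ov{\Pol},\Pol}$ onto its image $\Zmc \subseteq \Ymc$. (ii) Identify $\Zmc$ as a Borel — in fact $G_\delta$ — subset of $\Ymc$: the conditions ``$G$ is a simple graph'', ``$\o \in V$'', ``$G$ is locally finite'', and ``marks are $\etra$ exactly off $E$ (resp.\ $V$)'' are each countable intersections of clopen conditions, and local finiteness of each vertex $v$, i.e.\ $|\gcl{v}{G}| < \infty$, is a countable intersection over $v$ of countable unions over $N$ of the clopen events $\{|\gcl{v}{G}| \le N\}$; hence $\Zmc$ is $G_\delta$. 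By Alexandrov's theorem a $G_\delta$ subset of a Polish space is Polish in the subspace topology, so $\Zmc$ is Polish. (iii) Verify that convergence in the subspace topology of $\Zmc$ coincides with the six-condition convergence of Definition \ref{msbl:whgspce}. Conditions 1--5 translate directly into coordinatewise convergence in the respective factors of $\Ymc$ (using that $\dvms^n_v \to \dvms_v$ in $\et{\Pol}$ together with eventual membership $v \in V_n$ forces the $\et{\Pol}$-coordinate to stabilize away from $\etra$, which is exactly ``$\lim V_n = V$'' plus mark convergence). Condition 6 is the subtle one: it is not automatically implied by coordinatewise convergence because a vertex $u \notin \gcl{v}{G}$ of very large index could join $\gcl{v}{G_n}$ for arbitrarily large $n$ while each fixed edge indicator still converges. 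So I would show that Condition 6, together with Conditions 1--2, is equivalent to convergence of the finite sets $\gcl{v}{G_n} \to \gcl{v}{G}$ in the Fell/finite-set sense, and then observe that this is precisely what is needed to make the encoding a homeomorphism onto $\Zmc$; conversely, that a metric on $\Ymc$ augmented by the extra separable pseudometric $d_6(G,G') \defeq \sum_v 2^{-v}\big(1 \wedge |\max\gcl{v}{G} - \max\gcl{v}{G'}|\big)$ (interpreted with a cemetery value when $v \notin V$) induces exactly the six-condition convergence. One must check $d_6$ is lower semicontinuous / Borel on $\Zmc$ so that the augmented metric still generates a Polish topology — equivalently, that $\{G: \max\gcl{v}{G} = N\}$ is Borel, which holds since it is clopen in $\Zmc$.

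\textbf{Main obstacle.} The delicate point is Condition 6 and its interplay with the product topology: one must confirm that adding $d_6$ does not destroy completeness, i.e.\ that a $d$-Cauchy sequence in $\Zmc$ that is also $d_6$-Cauchy has a limit in $\Zmc$ rather than escaping (for instance to a non-locally-finite graph, or to a graph where some $\max\gcl{v}{\cdot}$ ``converges'' to $+\infty$). The resolution is that $d_6$-Cauchyness forces $\max\gcl{v}{G_n}$ to be eventually constant for each $v$ (since the values are integers), which pins down $\gcl{v}{G}$ as a finite set and simultaneously guarantees local finiteness of the limit and that no spurious high-index neighbors appear; thus the limit lies in $\Zmc$. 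Once this is established, $(\Zmc, \max(d, d_6))$ — or more precisely a standard bounded metric combining $d$ and $d_6$ — is a complete separable metric whose convergence is exactly Definition \ref{msbl:whgspce}, proving that $\wh{\gs}\sp{\ov{\Pol},\Pol}$ is Polish. Separability is inherited from $\Ymc$ together with the countability of the range of the integer-valued $\max\gcl{v}{\cdot}$ functionals. I expect the bookkeeping in step (iii) — carefully matching each of the six conditions to a metric contribution and checking measurability of the $\max\gcl{v}{\cdot}$ maps — to be the bulk of the work, while the Polishness of $\Ymc$ and the $G_\delta$ claim are routine.
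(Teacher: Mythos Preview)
Your approach is workable but takes a detour compared to the paper, and one of your steps is actually superfluous. The paper encodes $G$ in the product space $\Space = (\N \times \et{\Pol} \times (\et{\ov{\Pol}})^{\N})^{\N} \times \N$ by recording, for each $k\in\N$, the triple $(c'_k,\dvms'_k,(\ov{\dvms}'_{\{k,k'\}})_{k'})$ together with the root, where the crucial extra coordinate is $c'_k \defeq \max\gcl{k}{G}$ when $k\in V$ (and $c'_k=k$ otherwise). Because $c'_k$ is built into the encoding, Condition~6 becomes ordinary coordinatewise convergence in a discrete factor; local finiteness is automatic (all neighbors of $k$ are $\le c'_k \in \N$); and the image $\Range$ is a \emph{closed} subset of $\Space$, so Polishness and the topology match are immediate from the homeomorphism. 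This is exactly the role your auxiliary pseudometric $d_6$ is trying to play after the fact, and it is cleaner to fold it into the encoding from the start.

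Two specific comments on your version. First, your Alexandrov/$G_\delta$ step is a red herring: it shows $\Zmc$ is Polish in the subspace topology of $\Ymc$, but you then acknowledge that this is \emph{not} the topology of Definition~\ref{msbl:whgspce}, so that conclusion is never used. All the real work is in your direct argument that $d+d_6$ (or $\max(d,d_6)$) is complete and separable on $\Zmc$; you may as well drop the $G_\delta$ discussion. Second, your completeness sketch is essentially right (integer-valued $\max\gcl{v}{\cdot}$ must stabilize under $d_6$-Cauchyness, forcing the $d$-limit in $\Ymc$ to be locally finite and hence in $\Zmc$), but your separability claim (``inherited from $\Ymc$'') is not: the $d_6$-refined topology is strictly finer than the subspace topology, and separability is not preserved under refinement. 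You would need to argue directly, e.g.\ that finite graphs with marks in countable dense subsets of $\ov{\Pol},\Pol$ are $d_6$-dense because $\max\gcl{v}{\trnc{m}(G)} = \max\gcl{v}{G}$ once $m$ is large enough. This is not hard, but it is a genuine gap as written.
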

\begin{proof}
Define the Polish space $\Space \defeq \left(\Nb\times \et{\Pol}\times (\et{\ov{\Pol}})^\Nb\right)^{\Nb} \times \Nb$, equipped with the product topology, and  consider the map $\psi: \wh{\gs}\sp{\ov{\Pol},\Pol} \to \Space$  defined by $\psi(G) := ((\psi_k(G))_{k \in \Nb},\root)$, where for each $k \in \Nb$, $\psi_k(G) = (c'_k,\dvms'_k,(\ov{\dvms}'_{\{k,k'\}})_{k'\in\Nb}),$ with 
\[c'_k \defeq \begin{cases}
\max\{\gcl{k}{G}\} &\te{ if }k\in V,\\
k &\te{otherwise,}
\end{cases}\;
\quad \dvms'_k := \begin{cases}
\dvms_k&\te{ if } k \in V,\\
\etra &\te{ otherwise,}
\end{cases}\;
\quad 
\ov{\dvms}'_{\{k,k'\}} := 
\begin{cases}
\ov{\dvms}_{\{k,k'\}}&\te{ if } \{k,k'\}\in E,\\
\etra&\te{ otherwise.}
\end{cases}
\]
We then have the following observations:
\begin{itemize}
\item[(i)] \sloppy  The map $\psi$ is a bijection from $\wh{\gs}\sp{\ov{\Pol},\Pol}$ to $\Range$, where  $\Range$ is the subset of elements  $\zeta = ((c'_k,\dvms'_k,\{\ov{\dvms}'_{\{k,k'\}}\}_{k'\in\Nb})_{k \in \Nb},\root) $ in $\Space$ that satisfy the following constraints: 
\begin{enumerate}[(a)]
\item $\root\in V_\zeta := \{k \in \Nb: \dvms'_k \neq \etra\}$;
\item for every $k \notin V_{\zeta}$, $c'_k = k$;
\item One has $\{k,k'\}\subseteq V_{\zeta}$ 
for every $\{k,k'\}\in E_{\zeta} \defeq \cup_{k\in \Nb} E_\zeta(k)$, where  $E_\zeta(k) := \{\{k,k'\} \subset \Nb: k \neq k', \ov{\dvms}'_{\{k,k'\}} \neq \etra\}$;
\item for every $k \in \Nb$, $c'_k = \max[\{k\}\cup \{k'\in \Nb:\{k,k'\}\in E_\zeta(k)\}]$. 
\end{enumerate}
It is trivial to check  that for every $G\in\wh{\gs}\sp{\ov{\Pol},\Pol}$, $\psi(G)$ must satisfy conditions (a)--(d) above. Thus, the image of $\wh{\gs}\sp{\ov{\Pol},\Pol}$ under $\psi$ is contained in $\Range$. On the other hand, note that  any $\zeta = ((c'_k,\dvms'_k,\{\ov{\dvms}'_{\{k,k'\}}\}_{k'\in\Nb})_{k \in \Nb},\root) \in \Range$ has a unique inverse under $\psi$, described by $\psi^{-1}(\zeta) = G_{\zeta} \defeq (V_\zeta, E_\zeta, \root_\zeta, \ov{\dvms}_{k}(\zeta),\ov{\dvms}'_{\{k,k'\}}(\zeta))$, where $V_\zeta$ and $E_{\zeta}$ are respectively defined as in (a) and (c) above,  $\root_\zeta = \root$, $\dvms_{k}(\zeta) = \dvms'_{k} \in \Pol$ for $k \in V_\zeta$, and $\ov{\dvms}_{\{k,k'\}} (\zeta)= \ov{\dvms}'_{\{k,k'\}} \in \bar{\Pol}$ for $\{k,k'\} \in E_\zeta$. Condition (c) also ensures that $\zeta \in \Range$ implies $|E_\zeta(k)| < \infty$ for every $k \in V_\zeta$, and so the resulting graph $G_\zeta$ is locally finite. Moreover, conditions (a) and (c) together ensure that the edge and vertex marks lie in $\ov{\Pol}$ and $\Pol$, respectively, thus showing that $G_\zeta$ is a $\sp{\ov{\Pol},\Pol}$-graph. Lastly, it is easy to see that for any $G \in \wh{\gs}\sp{\ov{\Pol},\Pol}$ and $\zeta \in \Range$, $\psi\circ\psi^{-1}(\zeta) = \zeta$ and $\psi^{-1}\circ \psi(G) = G$, thus proving $\psi$ is a bijection between $\wh{\gs}\sp{\ov{\Pol},\Pol}$ and $\Range$.
\item[(ii)] $\Range$ is a closed subset of $\Space$ under componentwise convergence:  suppose the sequence $\zeta^n = ((c'^{,n}_k,\dvms'^{,n}_k,\{\ov{\dvms}'^{,n}_{\{k,k'\}}\}_{k'\in\Nb})_{k \in \Nb},\root^n) \in \Range, n \in \Nb,$ converges to $\zeta = ((c'_k,\dvms'_k,\{\ov{\dvms}'_{\{k,k'\}}\}_{k'\in\Nb})_{k \in \Nb},\root)$ pointwise. Then  $\zeta \in \Space$ because $\Space$ is Polish. To show $\zeta \in \Range$, it suffices to show that $\zeta$ satisfies the constraints (a)--(d) in (i) above. To prove condition (a), we argue by contradiction. Suppose $\dvms'_{\root} = \etra$. Then  since $\root^n \rightarrow \root$ it follows that there exists $N < \infty$ such  that $\root^n = \root$ for all $n \geq N$. Since $\dvms'^{,n}_{\root} \rightarrow \dvms'_{\root}$ and $\etra$ is isolated, this implies that for all sufficiently large $n$, $\dvms'^{,n}_{\root_n} = \dvms'^{,n}_{\root} = \etra$, which implies  $\root_n \not \in V_{\zeta^n}$ and thus contradicts the assumption that $\zeta^n \in \Range$. Thus, this proves that $\zeta$ satisfies condition (a). Condition (c) can be established in an exactly analogous fashion. Next, suppose $k \notin V_\zeta$.  Then since $\zeta$ satisfies (a) as shown above, $\dvms_k = \etra$ and since $\etra$ is isolated and  $\dvms'^{,n}_k \rightarrow \dvms_k$, it follows that  $\dvms'^{,n}_k = \etra$ for all sufficiently large $n$. In turn, since $\zeta^n \in \Range$, this implies $k \not \in V_{\zeta^n}$ and hence that $c_k'^{,n} = k$. Since $c_k'^{,n} \rightarrow c_k'$, this implies $c_k' = k$ and condition (b) follows for $\zeta$. Finally, condition (d) for $\zeta$, which implies each $E_{\zeta}(k)$, $k\in\Nb,$ is a finite set, can be deduced by similarly observing that $c_k'^{,n} = c_k$ and therefore $E_{\zeta^n}(k) = E_{\zeta}(k)$ for all $k\in\Nb$ and all sufficiently large $n$, and the fact that each $\zeta^n$ satisfies condition (d). 
\item[(iii)] The map $\psi$ is a homeomorphism from $\wh{\gs}\sp{\ov{\Pol},\Pol}$ to $\Range$: For each $n \in \Nb$, let $G_n = (V_n,E_n,\root_n,\ov{\dvms}_n,\dvms_n)\in \wh{\gs}\sp{\ov{\Pol},\Pol}$ and $\psi(G_n) = ((c'^{,n}_{k},\dvms'^{,n}_{k},\{\ov{\dvms}'^{,n}_{\{k,k'\}}\}_{k'\in\Nb})_{k\in \Nb},\root^n) \in \Range$. Then we wish to show that $G_n \to G = (V,E,\root,\dvms,\ov{\dvms})$ in $\wh{\gs}\sp{\ov{\Pol},\Pol}$ if and only if $\psi(G_n) \to \psi(G) \defeq ((c'_k,\dvms'_k,\{\ov{\dvms}'_{\{k,k'\}}\}_{k'\in\Nb})_{k\in\Nb},\root)$ in $\Range$.
\begin{description}
\item[(a)] If $G_n \to G$, then $\lim_{n\to\infty} \psi(G_n) = \psi(G)$: for any $k,k' \in \Nb$, conditions 1 and 2 of Definition \ref{msbl:whgspce} imply that if $k \in V$ and $\{k,k'\} \in E$, then $k \in V_n$ and $\{k,k'\}\in E_n$ for $n$ sufficiently large. Then for any $k \in V$, condition 6 implies that $\lim_{n\to\infty} c'^{,n}_k \to \lim_{\substack{n\to\infty\\k \in V_n}}\max \{\cl{k}(G_n)\} = \max\{\cl{k}(G)\} = c'_k$ and for $k \notin V$, $k \notin V_n$ for $n$ sufficiently large so $\lim_{n\to\infty}c'^{,n}_k = k = c'_k$. Conditions 4 and 5 imply that for $k \in V$ and $\{k,k'\}\in E$, $\lim_{n\to\infty} (\dvms'^{,n}_k,\ov{\dvms}'^{,n}_{\{k,k'\}}) = \lim_{\substack{n\to\infty,(\dvms'^{,n}_k,\ov{\dvms}'^{,n}_{\{k,k'\}})\in V_n\times E_n}} (\dvms^{n}_k,\ov{\dvms}^{n}_{\{k,k'\}}) = (\dvms_k,\ov{\dvms}_{\{k,k'\}}) = (\dvms'_k,\ov{\dvms}'_{\{k,k'\}})$. If $k \notin V$, then $k \notin V_n$ for $n$ sufficiently large and because $\etra$ is an isolated point, this implies that $\lim_{n\to\infty} \dvms'^{,n}_k = \etra = \dvms'_k$. By the same argument, if $\{k,k'\} \notin E$, then $\lim_{n\to\infty} \ov{\dvms}'^{,n}_{\{k,k'\}} = \etra = \ov{\dvms}^{'}_{\{k,k'\}}$. Lastly, $\root^n \to \root$ by condition 3 of Definition \ref{msbl:whgspce}. Thus, $\psi(G_n) \to \psi(G)$, which establishes the continuity of $\psi$.

\skipLine

\item[(b)] If $\psi(G_n) \to \psi(G)$, then $G_n \to G$: conditions 1 and 2 of Definition \ref{msbl:whgspce} follow from the convergence of $\dvms'^{,n}_k$ to $\dvms'_k$ and the convergence of $\ov{\dvms}'^{,n}_{\{k,k'\}}$ to $\ov{\dvms}^{'}_{\{k,k'\}}$ and the fact that $\etra$ is isolated in $\et{\Pol}$ and $\et{\ov{\Pol}}$. This directly implies that every vertex $k\in V$ and edge $\{k,k'\} \in E$ is in $V_n$ and $E_n$ respectively for all sufficiently large $n$, and likewise every non-vertex $k \in \Nb\setminus V$ and non-edge $\{k,k'\}\in \Nb\setminus E$ is not in $V_n$ or $E_n$ respectively for all sufficiently large $n$. Condition 3 follows from the convergence of $\root^n$ to $\root$. To prove condition 4, recall that we have already shown that any $k \in V$ is in $V_n$ for $n$ sufficiently large. Thus, $\lim_{\substack{n \to \infty\\ k \in V_n}} \dvms^n_k = \lim_{\substack{n \to \infty\\ k \in V_n}} \dvms'^{,n}_k = \lim_{n\to\infty}\dvms'^{,n}_k = \dvms'_k = \dvms_k$. The proof of condition 5 is exactly analogous except we apply the fact that $\{k,k'\} \in E$ implies $\{k,k'\}\in E_n$ for $n$ sufficiently large and then apply the convergence of $\ov{\dvms}'^{,n}_{\{k,k'\}}$ to $\ov{\dvms}'_{\{k,k'\}}$. Lastly, for each $k \in V$, because $c'^{,n}_k = c'_k$ for sufficiently large $n$, this implies that $E_{\zeta}(k)\cap \{c'_k+1,\dots\} = \emptyset$ for $n$ sufficiently large so $c'_k \geq \max \cl{k}(G)$. However, $\{k,c'_k\} \in E$ if and only if $\{k,c'_k\} \in E_n$ for all $n$ sufficiently large, and because $\{k,c'^{,n}_k\} \in E_n$ and $c'^{,n}_k = c'_k$ for $n$ sufficiently large, this implies that $\{k,c'_k\}\in E$ so $c'_k = \max\cl{k}(G)$. Thus, condition 6 holds.

\end{description}
Because $\psi$ is a one-to-one map onto $\Range$, this proves the claim. 
\end{itemize}
\sloppy Since $\wh{\gs}\sp{\ov{\Pol},\Pol}$ is homeomorphic to the closed subset $\Range$ of the Polish space $\Space$, it is also Polish. 
\end{proof}

Next, in Lemma \ref{msbl:alphcont} below, we show that the map $G\mapsto \ic{G}$ is continuous and its set-valued inverse map $\ic{G} \mapsto \{G\in \wh{\gs}\sp{\ov{\Pol},\Pol}: G \in \ic{G}\}$ is lower semicontinuous in the sense of \cite[Definition 1.4.2]{AubFra09}.

\begin{lemma}[Equivalence with the local topology] 
\label{msbl:alphcont} 
If $G_n \to G$ in $\wh{\gs}\sp{\ov{\Pol},\Pol}$, then the isomorphism classes also converge locally, that is, $\ic{G_n} \to \ic{G}$ in $\gs\sp{\ov{\Pol},\Pol}$. Moreover, given the limit $\ic{G_n} \to \ic{G}$ in $\gs\sp{\ov{\Pol},\Pol}$ and any representative $G \in \ic{G}$ such that $G \in \wh{\gs}\sp{\ov{\Pol},\Pol}$, there exists a sequence of representatives $G_n \in \ic{G_n}$, $n \in \Nb,$ such that $G_n \to G$ in $\wh{\gs}\sp{\ov{\Pol},\Pol}$. In other words, the correspondence $\gs\sp{\ov{\Pol},\Pol} \ni \ic{G} \mapsto \{G\in \wh{\gs}\sp{\ov{\Pol},\Pol}: G \in \ic{G}\}$ is lower semicontinuous. 
\end{lemma}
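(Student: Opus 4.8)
The plan is to prove the two halves of Lemma~\ref{msbl:alphcont} separately: first continuity of $G \mapsto \ic{G}$, and then the lower semicontinuity of the inverse correspondence. For the first half, suppose $G_n \to G$ in $\wh{\gs}\sp{\ov{\Pol},\Pol}$ in the sense of Definition~\ref{msbl:whgspce}. Fix $m \in \N$; we must produce $n_m$ and isomorphisms witnessing $\trnc{m}(\nm{G_{n,*}}) \cong \trnc{m}(\nm{G_*})$ with mark convergence, as required by Definition~\ref{def-locconvnoiso}. The key point is that Conditions 1--3 and 6 of Definition~\ref{msbl:whgspce} pin down the \emph{combinatorial} structure of a fixed-radius ball around the root eventually: since $\root_n \to \root$ and $\N$ is discrete, $\root_n = \root$ for $n$ large; and since each $V$-vertex within distance $m$ of the root is eventually in $V_n$ (Condition 1), each such edge is eventually in $E_n$ (Condition 2), and Condition 6 guarantees no \emph{spurious} neighbors $c_k'^{,n} > c_k'$ creep into $\cl{k}(G_n)$, I can conclude that for all sufficiently large $n$, $\trnc{m}(\nm{G_{n,*}})$ and $\trnc{m}(\nm{G_*})$ are literally equal as subgraphs of $\N$ (hence trivially isomorphic via the identity, which I take as $\varphi_{n,m}$). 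Then Conditions 4 and 5 directly give $\ems^n_e \to \ems_e$ and $\vms^n_v \to \vms_v$ for $e, v$ in this common ball, which is exactly mark convergence along $\varphi_{n,m} = \mathrm{id}$. This establishes $\ic{G_n} \to \ic{G}$.

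For the second half, assume $\ic{G_n} \to \ic{G}$ in $\gs\sp{\ov{\Pol},\Pol}$ and fix a representative $G \in \ic{G}$ with $G \in \wh{\gs}\sp{\ov{\Pol},\Pol}$; I must construct $G_n \in \ic{G_n} \cap \wh{\gs}\sp{\ov{\Pol},\Pol}$ with $G_n \to G$. By the definition of local convergence, for each $m$ there is $n_m$ and, for $n > n_m$, an isomorphism $\varphi_{n,m}$ from $\trnc{m}(\nm{G_*})$ onto $\trnc{m}$ of some representative of $\ic{G_n}$ with the requisite mark convergence. Without loss of generality (passing to a subsequence of the thresholds and relabeling) I may take $n_1 < n_2 < \cdots$ and, for $n_m \le n < n_{m+1}$, work with radius $m$. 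The construction is then: for such $n$, relabel the representative of $\ic{G_n}$ so that on the ball $\trnc{m}(G_n)$ the vertex labels agree with those of $\trnc{m}(G) \subseteq \N$ via $\varphi_{n,m}^{-1}$, and assign the remaining (exterior) vertices of that representative arbitrary distinct labels in $\N$ disjoint from $\trnc{m}(G)$ --- this is possible because $V_{G_n}$ is countable and we only need an injection into $\N$. One must take some care to make the labelings consistent as $m$ increases, which is handled by choosing, for each $n$, a single compatible relabeling built from the (nested, up to further subsequencing as in a rep-con sequence) isomorphisms $\varphi_{n,m}$; alternatively, since only one $m = m(n)$ is used per $n$, no nesting is needed and a fresh choice per $n$ suffices. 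It then remains to verify $G_n \to G$ by checking Conditions 1--6 of Definition~\ref{msbl:whgspce}: Conditions 1--3 and 6 hold because the ball $\trnc{m(n)}(G_n)$ coincides with $\trnc{m(n)}(G)$ as a labeled graph and $m(n) \to \infty$; Conditions 4--5 are precisely the mark convergence supplied by local convergence.

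The main obstacle I anticipate is bookkeeping around Condition~6 and the relabeling of exterior vertices in the second half: I need to ensure that after relabeling, no vertex of $G_n$ outside $\trnc{m(n)}(G_n)$ accidentally receives a label that makes it a neighbor, in the sense of $\cl{k}(G_n)$, of a vertex $k \in \trnc{m(n)-1}(G)$ --- otherwise Condition~6 (convergence of $\max \cl{k}(G_n)$) could fail. This is the same subtlety flagged in Remark~\ref{msbl:whysix}. The fix is to reserve, for each vertex $k$ in the interior ball, all labels already used for $\cl{k}(G)$ and push every exterior vertex of $G_n$ to a label strictly larger than $\max\{\max\cl{k}(G) : k \in \trnc{m(n)}(G)\}$ \emph{and} larger than the label of any interior vertex of $G_n$ it is adjacent to; since an exterior vertex adjacent to the ball is adjacent only to vertices on the sphere $\partial \trnc{m(n)}(G_n)$, and those are not in $\trnc{m(n)-1}$, this keeps $\cl{k}(G_n) = \cl{k}(G)$ for all $k \in \trnc{m(n)-1}(G)$, which is all that is needed since $m(n) \to \infty$. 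Once this labeling discipline is in place, all six conditions follow routinely, and the lower semicontinuity statement --- phrased as the existence of the convergent selection $G_n \in \ic{G_n}$ --- is immediate.
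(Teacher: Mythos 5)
Your proposal is essentially the paper's argument, and the overall structure (balls eventually coincide as labeled subgraphs via Conditions 1--3 and 6; marks converge via Conditions 4--5; relabel $G_n$ so that a ball of growing radius matches $G$ and control the exterior labels) is the right one. Two details are worth flagging.

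First, the diagonal over radii needs a quantitative choice. Taking ``for $n_m \le n < n_{m+1}$, work with radius $m$'' with the thresholds $n_m$ straight out of Definition~\ref{def-locconvnoiso} does \emph{not} yield mark convergence along $\varphi_{n,m(n)}$, because for each fixed $m$ the mark distances along $\varphi_{n,m}$ are only small for $n$ large relative to $m$, and near $n = n_m$ they may still be far from zero. The paper handles this by exploiting finiteness of $\trnc{m}(G)$ to pick a nondecreasing $M_n \to\infty$ with the property that all mark distances along $\varphi_{n,M_n}$ on $\trnc{M_n}(G)$ are $< 2^{-M_n}$; you need to build an analogous ``small error'' requirement into your choice of thresholds. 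This is a small but genuine gap as written.

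Second, the role of the labeling discipline is misattributed. Once you have $\trnc{m(n)}(G_n) = \trnc{m(n)}(G)$ as labeled subgraphs and $m(n) \to \infty$, Condition~6 is automatic: relabeling cannot change adjacency, and for any $k \in \trnc{m(n)-1}(G)$ every $G_n$-neighbor of $k$ already lies in $\trnc{m(n)}(G_n) = \trnc{m(n)}(G)$, so $\gcl{k}{G_n} = \gcl{k}{G}$ regardless of what labels the exterior vertices receive. What the ``push exterior labels large'' step actually buys you is Condition~1: if $V \neq \N$, you must prevent a fixed $k \notin V$ from being recycled as an exterior label for infinitely many $n$, and sending exterior labels past a threshold that grows with $m(n)$ does exactly that (with the finite-$G$ case handled separately since there $G_n \cong G$ eventually and there are no exterior vertices at all). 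The paper avoids this bookkeeping altogether by first reducing WLOG to canonical vertex sets $V = [|V|]$ and $V'_n = [|V'_n|]$ and then extending $\varphi_{n,M_n}$ to a bijection $\ov{\varphi}_n$ of $\N$ order-preservingly on the complement; if you prefer your direct route, that is fine, but you should justify Condition~1 explicitly rather than Condition~6.
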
 
\begin{proof} 
For convenience of notation, let $[n]\defeq \{1,\dots,n\}$. We start by proving the first statement. Fix any $m \in \Nb$, and let $M_m\defeq \max\{v \in \trnc{m}(G)\}$. Then conditions 1 and 2 of Definition \ref{msbl:whgspce} imply that for sufficiently large $n$, $V\cap [M_m] = V_n\cap [M_m]$ and $E\cap [M_m]^2 = E_n \cap [M_m]^2$. Condition 3 implies that $\root_n = \root$ for $n$ sufficiently large. Thus, the subgraphs of $\trnc{m}(\nm{G_*})$ and $\trnc{m}(\nm{G_{n,*}})$ induced by the set $[M_m]$ exactly match for sufficiently large $n$. Then by condition 6, $\max\{\gcl{v}{G_n}\} = \max\{\gcl{v}{G_n}\}$ for all $v \in V\cap [M_m]$ and $n$ sufficiently large. Because $\trnc{m}(G) \subseteq [M_m]$, these statements imply that all of the vertices in $\trnc{m}(G_n)$ also fall inside $[M_m]$, and hence, $\trnc{m}(\nm{G_*}) = \trnc{m}(\nm{G_{n,*}})$ for all sufficiently large $n$, that is $N_m \defeq \min \{n \in \Nb: \trnc{m}(\nm{G_{n',*}}) = \trnc{m}(\nm{G_*})\te{ for all }n' \geq n\}$ is finite (where the minimum of an empty set is taken to be $\infty$). For $n > N_m$, let $\varphi_{n,m}: \trnc{m}(G) \to \trnc{m}(G_n)$ be the identity isomorphism. Then for each $v \in \trnc{m}(G)$ and $e \in E_{\trnc{m}(G)}$, conditions 4 and 5 of Definition \ref{msbl:whgspce} imply $\lim_{\substack{n\to\infty\\n > N_m}} \dvms^n_{\varphi_{n,m}(v)} = \lim_{\substack{n\to\infty\\v \in V_n}} \dvms^n_v = \dvms_v,$ and $\lim_{\substack{n\to\infty\\n > N_m}} \ov{\dvms}^n_{\varphi_{n,m}(e)} = \lim_{\substack{n\to\infty\\ e \in E_n}} \ov{\dvms}^n_e = \ov{\dvms}_e$. By Definition \ref{def-locconvnoiso}, this proves that $\ic{G_n} \to \ic{G}$.

To prove the second statement, first consider the case when the representative $G \in \ic{G}$ has a vertex set that is canonical in the sense that $V = [|V|] = \{1,\dots,|V|\}$ where we interpret $[\infty]$ as $\Nb$. It is easy to see that one can always choose representatives $G'_n = (V'_n,E'_n,\root'_n,\ov{\dvms}'^{,n},\dvms'^{,n})$ of $\ic{G_n},$ $n\in\Nb,$ such that 
\begin{align} 
V'_n &= [|V'_n|] = \{1,\dots,|V'_n|\} \te{ for }n\in\Nb. \label{msbl:vnnogaps} 
\end{align} 
Then by Definition \ref{def-locconvnoiso}, for each $m \in \Nb_0$, there exist $n_m < \infty$ and a collection of isomorphisms $\varphi_{n,m}\in I(\trnc{m}(\nm{G_*}),\trnc{m}(\nm{G'_{n,*}}))$, $m \in \Nb_0, n > n_m,$ such that for each $m\in \Nb,$ the inclusions $v \in \trnc{m}(G)$ and $e \in E_{\trnc{m}(G)}$ imply 
\begin{equation} 
\label{msbl:notaconv} 
\lim_{\substack{n \to \infty\\n > n_m}} d_{\ov{\Pol}}(\ov{\dvms}'^{,n}_{\varphi_{n,m}(e)},\ov{\dvms}_e) = 0 \quad \te{ and } \quad \lim_{\substack{n \to \infty\\n > n_m}} d_{\Pol}(\dvms'^{,n}_{\varphi_{n,m}(v)},\dvms_v)= 0. 
\end{equation} 
Hence, there exists a sequence of non-decreasing integers $M_n, n \in \Nb,$ converging to infinity such that for each $n \in \Nb$, the inclusions $v \in \trnc{M_n}(G)$ and $e \in E_{\trnc{M_n}(G)}$ imply $d_{\Pol}(\dvms'^{,n}_{\varphi_{n,M_n}(v)},\dvms_v) < 2^{-M_n}$ and $d_{\ov{\Pol}}(\ov{\dvms}'^{,n}_{\varphi_{n,M_n}(e)},\ov{\dvms}_e) < 2^{-M_n}$ when $M_n > 0$. Set $\varphi_n \defeq \varphi_{n,M_n}$, and for each $n\in\Nb,$ define $\ov{\varphi}_n: \Nb \to \Nb$ by 
\[\ov{\varphi}_n(v) = 
\begin{cases} 
\varphi_n(v) &\te{ if } v \in \trnc{M_n}(G),\\ 
w^n_v &\te{ otherwise, } 
\end{cases}\] 
where if $v$ is the $k$th smallest element of $\Nb\setminus \trnc{M_n}(G)$, then $w^n_v$ is the $k$th smallest element of $\Nb\setminus \trnc{M_n}(G_n')$. Now for each $n\in\Nb$, define 
\[G_n \defeq (V_n,E_n,\root_n,\ov{\dvms}^n,\dvms^n)\defeq \left(\ov{\varphi}^{-1}_n(V'_n),\ov{\varphi}^{-1}_n(E'_n),\ov{\varphi}_n^{-1}(\root'_n),(\ov{\dvms}'^{,n}_{\ov{\varphi}_n(e)})_{e \in E_n},(\dvms'^{,n}_{\ov{\varphi}_n(v)})_{v \in V_n}\right).\] 
By construction, for each $n \in \Nb$, $V_n = V'_n = \{1,\dots,|V_n|\}$, just as in \eqref{msbl:vnnogaps}. Then by definition, for each $m \in \Nb$ and noting that (i) when $m \leq M_n$, $\ov{\varphi}_n|_{\trnc{m}(G)} = \varphi_{n,M_n}|_{\trnc{m}(G)}$ (which implies $\ov{\varphi}^{-1}_n|_{\trnc{m}(G_n)} = \varphi_{n,M_n}^{-1}|_{\trnc{m}(G_n)}$) and (ii) $M_n$ increases to infinity, one has 
\begin{equation}
\label{msbl:conv}
\trnc{m}(\nm{G_{n,*}}) = \varphi^{-1}_{n,M_n}(\trnc{m}(\nm{G'_{n,*}})) = \trnc{m}([G_*]) \te{ for }n\te{ sufficiently large.}
\end{equation}
For every $v \in V$ and $e \in E$, there exists $m \in \Nb$ such that $v \in \trnc{m}(G)$, $e \in E_{\trnc{m}(G)}$. Then by \eqref{msbl:conv}, $v \in V_n$ and $e \in E_n$ for $n$ sufficiently large. On the other hand, suppose $v \notin V$. Then, because $V$ is in canonical form, $|V|< v$ is finite. Thus, $G$ has radius $M < \infty$. Setting $m = M+1$ in \eqref{msbl:conv}, $\trnc{M+1}(\nm{G_{n,*}}) = \trnc{M+1}(\nm{G_*}) = \nm{G_*}$ for $n$ sufficiently large. Furthermore, because $\trnc{M+1}(\nm{G_{n,*}})$ has radius $M$, it follows that $\nm{G_{n,*}} = \trnc{M+1}(\nm{G_{n,*}}) = \nm{G_*}$. Thus, $v \notin V_n$ for $n$ sufficiently large, which implies that $V_n \to V$. Now, suppose $e = \{u,v\} \notin E$. Then either (without loss of generality) $u \notin V$, in which case $u \notin V_n$ for sufficiently large $n$ and $\{u,v\}\notin E_n$ for such $n$, or $u, v \in V$. In the latter case, we can fix $m$ so that $u,v \in \trnc{m}(G)$. Then for $n$ sufficiently large, $\trnc{m}(\nm{G_{n,*}}) = \trnc{m}(\nm{G_{*}})$ by \eqref{msbl:conv}, so $\{u,v\} \notin E_n$. Thus, $E_n \to E$. Setting $m = 0$, $\root_n = \ov{\varphi}_0^{-1}(\root'_n) = \root$ for all $n \in \Nb$. Lastly, for each $v \in V$, fix $m$ such that $v \in \trnc{m-1}(G)$. Then by \eqref{msbl:conv}, $\gcl{v}{G_n} = \gcl{v}{G}$ for $n$ sufficiently large. Thus, conditions 1-3 and 6 of Definition \ref{msbl:whgspce} hold. Now for each $v \in V$ and $e \in E$, there must exist $m \in \Nb$ such that $v \in \trnc{m}(G)$ and $e \in E_{\trnc{m}(G)}$. Then, by the definition of $M_n$, it follows that 
\begin{align*}
\lim_{n\to\infty} d_{\Pol}(\dvms^n_v,\dvms_v) &= \lim_{n\to\infty} d_{\Pol}(\dvms^{'n}_{\varphi_{n,M_n}(v)},\dvms_v) = \lim_{n\to\infty} 2^{-M_n} = 0\\
\lim_{n\to\infty} d_{\Pol}(\ov{\dvms}^n_e,\ov{\dvms}_e) &= \lim_{n\to\infty} d_{\Pol}(\ov{\dvms}^{'n}_{\varphi_{n,M_n}(e)},\ov{\dvms}_e) = \lim_{n\to\infty} 2^{-M_n} = 0.
\end{align*}
Thus, conditions 4 and 5 of Definition \ref{msbl:whgspce} also hold, proving that $G_n \to G$ in $\wh{\gs}\sp{\ov{\Pol},\Pol}$.

We finish the proof of the second assertion by considering the general case in which $G$ is an arbitrary representative of $\ic{G}$ with no restriction on the vertex set. In this case, there exists $G'\defeq (V',E',\root',\ov{\dvms}',\dvms') \cong G$ whose vertex set $V' \defeq V_{G'}$ is in canonical form. Using the argument above, construct the sequence $\{G'_n\}$ whose vertex sets $V'_n \defeq V_{G'_n}$ are in canonical form and such that $G'_n \to G'$ in $\wh{\gs}\sp{\ov{\Pol},\Pol}$. Given any isomorphism $\varphi \in I(G',G)$, define 
\[\ov{\varphi}(v) \defeq 
\begin{cases} 
\varphi(v) &\te{ if } v \in V_{G'},\\ 
w_v &\te{ otherwise,} 
\end{cases}\] 
where if $v$ is the $k$th smallest element of $\Nb\setminus V'$, then $w_v$ is the $k$th smallest element of $\Nb\setminus V$. Then $\ov{\varphi}: \Nb\to\Nb$ is a bijection, so for each $n \in \Nb$, 
\[G_n \defeq \ov{\varphi}(G_n) \defeq (\ov{\varphi}(V'_n),\ov{\varphi}(E'_n),\ov{\varphi}(\root'_n),(\ov{\dvms}^{'n}_{\ov{\varphi}^{-1}(e)})_{e \in \ov{\varphi}(E'_n)},(\dvms^{'n}_{\ov{\varphi}^{-1}(v)})_{v \in \ov{\varphi}(V'_n)}),\] 
is isomorphic to $G'_n$ and $G_n \to G$. Thus, the correspondence $\ic{G}\mapsto \{G'\in \wh{\gs}\sp{\ov{\Pol},\Pol}: G' \in \ic{G}\}$ is lower semicontinuous by \cite[Definition 1.4.2]{AubFra09}. 
\end{proof}

\subsection{Existence of Measurable Selections} 
\label{subs-Cmain}
The goal of this section is to establish Lemmas \ref{MG:iso} and \ref{MG:repextexist}. We begin this section by showing that every random isomorphism class of rooted graphs has a measurable random representative. 

\begin{lemma}[Measurable selection of representative graphs] 
\label{msbl:rep} 
Given any $\gs\sp{\ov{\Pol},\Pol}$-random element $\ic{G}$, there exists a $\sigma(\ic{G})$-measurable representative $\sp{\ov{\Pol},\Pol}$-random graph $G$ in $\wh{\gs}\sp{\ov{\Pol},\Pol}$. 
\end{lemma}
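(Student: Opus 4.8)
The plan is to apply the Kuratowski--Ryll-Nardzewski measurable selection theorem (Theorem \ref{msbl:mst}) to the set-valued map that assigns to an isomorphism class its set of canonical representatives. Concretely, let $F: \Omega \to \clos(\wh{\gs}\sp{\ov{\Pol},\Pol})$ be defined by $F(\omega) \defeq \{H \in \wh{\gs}\sp{\ov{\Pol},\Pol}: H \in \ic{G}(\omega)\}$, i.e. the collection of all representatives of $\ic{G}(\omega)$ whose vertex set is a subset of $\N$. Three things must be checked: that $F$ takes values in nonempty closed subsets of $\wh{\gs}\sp{\ov{\Pol},\Pol}$, that $F$ is weakly measurable with respect to $\sigma(\ic{G})$ in the sense of Definition \ref{def:mst}, and that $\wh{\gs}\sp{\ov{\Pol},\Pol}$ is Polish (the last is exactly Lemma \ref{msbl:Pol}, so it is already available). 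Then Theorem \ref{msbl:mst} produces a $\sigma(\ic{G})$-measurable selection $G: \Omega \to \wh{\gs}\sp{\ov{\Pol},\Pol}$ with $G(\omega) \in F(\omega)$ for every $\omega$, which is precisely the desired measurable representative random graph.

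First I would verify nonemptiness: any isomorphism class $\ic{H} \in \gs\sp{\ov{\Pol},\Pol}$ contains a representative with vertex set an initial segment (or all) of $\N$ — simply enumerate the vertices of an arbitrary connected representative, transport the edges and marks along the enumeration, and observe that the resulting graph satisfies the constraints defining $\wh{\gs}\sp{\ov{\Pol},\Pol}$ (it is locally finite and the marks lie in the correct spaces). So $F(\omega) \neq \emptyset$ always. For closedness: $F(\omega)$ is the preimage of the single point $\ic{G}(\omega)$ under the map $\wh{\gs}\sp{\ov{\Pol},\Pol} \ni H \mapsto \ic{H} \in \gs\sp{\ov{\Pol},\Pol}$, and Lemma \ref{msbl:alphcont} tells us this map is continuous; since $\gs\sp{\ov{\Pol},\Pol}$ is Polish, singletons are closed, so $F(\omega)$ is closed.

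The main obstacle is the weak measurability of $F$, i.e. showing $\{\omega : U \cap F(\omega) \neq \emptyset\} \in \sigma(\ic{G})$ for every open $U \subseteq \wh{\gs}\sp{\ov{\Pol},\Pol}$. This is where the lower semicontinuity half of Lemma \ref{msbl:alphcont} does the work. Fix open $U$. I claim the set $A_U \defeq \{[H] \in \gs\sp{\ov{\Pol},\Pol} : U \cap \{K \in \wh{\gs}\sp{\ov{\Pol},\Pol} : K \in [H]\} \neq \emptyset\}$ is open in $\gs\sp{\ov{\Pol},\Pol}$: if $[H_0] \in A_U$ with some $K_0 \in U$ representing $[H_0]$, and $[H_n] \to [H_0]$, then by lower semicontinuity there exist representatives $K_n \in [H_n]$ with $K_n \to K_0$ in $\wh{\gs}\sp{\ov{\Pol},\Pol}$; since $U$ is open, $K_n \in U$ eventually, so $[H_n] \in A_U$ eventually, which shows the complement of $A_U$ is sequentially closed, hence closed (the space is metrizable), so $A_U$ is open. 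Then $\{\omega : U \cap F(\omega) \neq \emptyset\} = \{\omega : \ic{G}(\omega) \in A_U\} = \ic{G}^{-1}(A_U) \in \sigma(\ic{G})$ since $A_U$ is Borel (open) and $\ic{G}$ is measurable. With nonemptiness, closedness, and weak measurability established, Theorem \ref{msbl:mst} applies and yields the $\sigma(\ic{G})$-measurable representative $G$ with $G \in \ic{G}$ surely (hence almost surely), completing the proof. The only subtlety to handle carefully is the interplay between "$\in$" as membership in an isomorphism class versus "$\cong$"; since $\wh{\gs}\sp{\ov{\Pol},\Pol}$-graphs are genuine graphs and $\ic{H}$ is defined as the set of graphs isomorphic to $H$, these coincide, and $H \in \ic{G}$ is literally the statement that $H$ is a representative, so no extra argument is needed there.
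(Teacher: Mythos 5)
Your proposal is correct and follows essentially the same argument as the paper: your $F(\omega)$ is the paper's $\Psi_*(\ic{G}(\omega))$, your $A_U$ is the paper's $\Lambda_*^U$, and you establish nonemptiness, closedness (via continuity of $H \mapsto \ic{H}$), and weak measurability (via the lower-semicontinuity half of Lemma \ref{msbl:alphcont}) in the same way before invoking the Kuratowski–Ryll-Nardzewski theorem. Your handling of nonemptiness (by explicitly re-enumerating vertices into $\N$) is slightly more careful than the paper's, which tacitly assumes the chosen representative already lies in $\wh{\gs}\sp{\ov{\Pol},\Pol}$, but this is a negligible difference.
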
 
\begin{proof} 
For every $\ic{H} \in \gs\sp{\ov{\Pol},\Pol}$, define 
\begin{equation} 
\label{def-psistar} 
\Psi_*(\ic{H}):= \{H' \in \wh{\gs}\sp{\ov{\Pol},\Pol}: H' \in \ic{H}\}. 
\end{equation} 
Then the set $\Psi_*(\ic{H})$ is non-empty since it contains $H$ and is also closed due to the continuity of the map $H \mapsto \ic{H}$ established in Lemma \ref{msbl:alphcont}. Thus $\Psi_*$ maps any isomorphism class in $\gs\sp{\ov{\Pol},\Pol}$ to the closed set in $\clos(\wh{\gs}\sp{\ov{\Pol},\Pol})$ that contains all graphs in $\wh{\gs}\sp{\ov{\Pol},\Pol}$ that lie in that isomorphism class. Given the $\gs\sp{\ov{\Pol},\Pol}$-random element $\ic{G}$, we first argue that to prove the lemma it suffices to prove the claim that for every open set $U\subseteq \wh{\gs}\sp{\ov{\Pol},\Pol}$, the following set is open in $\gs\sp{\ov{\Pol},\Pol}$: 
\[\Lambda_*^U := \{\ic{H}\in \gs\sp{\ov{\Pol},\Pol}: \Psi_*(\ic{H})\cap U \neq \emptyset\} = \bigcup_{H \in U} \{\ic{H}\}.\] 
Indeed, since $\ic{G}$ is a $\gs\sp{\ov{\Pol},\Pol}$-random element, the claim implies that $\psi_*(\ic{G})$ is a non-empty $\sigma(\ic{G})$-random closed subset of $\wh{\gs}\sp{\ov{\Pol},\Pol}$ in the sense of Definition \ref{def:mst}, and the lemma follows on applying Theorem \ref{msbl:mst} with $\Pol' = \wh{\gs}\sp{\ov{\Pol},\Pol}$, $F = \psi_*(\ic{G})$, and ${\mathcal F} = \sigma(\ic{G})$. 

We now turn to the proof of the claim. If $U$ is empty then so is $\Lambda_*^U$. Now, fix $U\subseteq \wh{\gs}\sp{\ov{\Pol},\Pol}$ non-empty. Then $\Lambda_*^U$ is trivially non-empty as well. For $\ic{H} \in \Lambda_*^U$, suppose there exists a sequence $\{\ic{H_n}\}_{n \in\Nb}$ of (deterministic) elements of $\gs\sp{\ov{\Pol},\Pol}$ converging to $\ic{H}$. Select $H'\in \Psi_*(\ic{H})\cap U$ (which is non-empty since $\Lambda_*^U \neq \emptyset$). By Lemma \ref{msbl:alphcont}, there exists a sequence of representative graphs $H'_n\in \ic{H_n}$, $n\in\Nb,$ that converges to $H'$ in $\Pol'$. Because $H'\in U$ and $U$ is open, $H'_n \in U$ for all but finitely many $n$. By the definition of $\Lambda_*^U$, this implies $\ic{H_n} \in \Lambda_*^U$ for all but finitely many $n$. Because this is true for any sequence converging to an element in $\Lambda_*^U$, and because $\gs\sp{\ov{\Pol},\Pol}$ is a Polish space (in which convergence is equivalent to sequential convergence) $\Lambda_*^U$ is open. This concludes the proof. 
\end{proof}

\begin{remark}[A Polish space of maps] 
\label{msbl:mpspace} 
For $W_1,W_2 \subseteq \Nb$, let $\mps(W_1,W_2)$ be the space of mappings from $W_1$ to $W_2$, and define $\mps \defeq \bigcup_{W_1,W_2\subseteq \Nb} \mps(W_1,W_2)$. Then any map $f \in \mps(W_1,W_2)\subset \mps$ can be embedded in $\Nb_0^{\Nb}$ (equipped with the discrete product topology) via the bijective map 
\begin{equation} 
\label{map-beta} 
\mpsmp(f) \defeq (\mpsmp_n(f))_{n\in\Nb}\te{ where } 
\begin{cases}
f(n) & \te{ if } n\in W_1, \\ 
0 & \te{ otherwise.}
\end{cases}
\quad \mbox{ for } f \in \mps(W_1,W_2). 
\end{equation} 
We equip $\mps$ with the topology induced by $\mpsmp$; that is, we define a subset $U\subseteq \mps$ to be open if and only if $\mpsmp(U)\subseteq \Nb_0^{\Nb}$ is open. With this definition, $\mpsmp$ is automatically a homeomorphism, and $\mps$ is a Polish space. We now apply Theorem \ref{msbl:mst} to select the isomorphisms in Lemma \ref{MG:iso} in a measurable manner. 
\end{remark}

Applying Theorem \ref{msbl:mst}, it is possible to establish Lemma \ref{msbl:iso}, which shows that the isomorphisms in Lemma \ref{MG:iso} can be chosen in a measurable manner, and Lemma \ref{msbl:dmap}, which shows that driving maps on different random graphs can be constructed so as to be consistent with random isomorphisms between those graphs. The latter property is used to construct the driving noise in Lemma \ref{MG:repextexist} in a measurable manner.

\begin{lemma}[Measurable selection of isomorphisms] 
\label{msbl:iso} 
Fix a probability space $\pspace$ that supports finite, $\filtm$-measurable $\wh{\gs}\sp{\ov{\Pol},\Pol}$-random elements $G_i$, $i=1,2,$ that satisfy $\PP(G_1\cong G_2) > 0$. Then given any $\filtm$-random closed subset $A$ of $\mps$ such that $A$ is a non-empty subset of $I(G_1,G_2)$ on the event $\{G_1\cong G_2\}$, there exists an $\filtm$-measurable map $\varphi \in \mps(V_1,V_2)$ such that $\varphi\in A$ on the event $\{G_1\cong G_2\}$. 
\end{lemma}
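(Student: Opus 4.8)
The statement is a measurable-selection result: given two finite measurable random graphs $G_1, G_2$ in the canonical space $\wh{\gs}\sp{\ov{\Pol},\Pol}$, and a random closed subset $A$ of the Polish space $\mps$ that is a nonempty subset of $I(G_1,G_2)$ on the event $\{G_1 \cong G_2\}$, we want to select a single measurable map $\varphi$ lying in $A$ on that event. The plan is to reduce this to a direct application of the Kuratowski--Ryll-Nardzewski Theorem (Theorem \ref{msbl:mst}) with $\Pol' = \mps$, the measurable space being $\pspace$ restricted to the event $\Imc := \{G_1 \cong G_2\}$, and the random closed set being $A$ itself. The only things requiring care are (i) that $\Imc \in \filtm$, so that we can work on the trace $\sigma$-algebra $\filtm|_{\Imc}$; (ii) that $A$, viewed as a map from $\Imc$ into $\clos(\mps)$, is a genuine \emph{nonempty} $\filtm|_{\Imc}$-random closed subset in the sense of Definition \ref{def:mst}; and (iii) extending the resulting selector off $\Imc$ in an arbitrary measurable way (e.g. mapping everything to a fixed constant map) to obtain a global $\filtm$-measurable $\varphi \in \mps(V_1, V_2)$.

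For step (i), I would argue that $\Imc = \{G_1 \cong G_2\}$ is measurable. Since $G_1, G_2$ are finite $\wh{\gs}\sp{\ov{\Pol},\Pol}$-random elements, the map $\omega \mapsto \ic{G_1(\omega)}$ (and likewise for $G_2$) is measurable by the continuity of $G \mapsto \ic{G}$ established in Lemma \ref{msbl:alphcont}, and $\{G_1 \cong G_2\} = \{\ic{G_1} = \ic{G_2}\}$ is the preimage of the diagonal in the Polish space $\gs\sp{\ov{\Pol},\Pol} \times \gs\sp{\ov{\Pol},\Pol}$, hence measurable. (Alternatively, and perhaps more cleanly given that the graphs are finite: $\{G_1 \cong G_2\} = \bigcup_{\varphi} \{\varphi \in I(G_1, G_2)\}$ where the union is over the countably many candidate finite injections $\varphi: \N \to \N$, and each event $\{\varphi \in I(G_1,G_2)\}$ is a countable intersection of events of the form $\{v \in V_1\} \cap \{\varphi(v) \in V_2\} \cap \{\dvms^1_v = \dvms^2_{\varphi(v)}\} \cap \cdots$, which are measurable because the components of the canonical representatives are measurable.) Having established $\Imc \in \filtm$, step (ii) is essentially the hypothesis: we are given that $A$ is an $\filtm$-random closed subset of $\mps$, and that on $\Imc$ it is nonempty and contained in $I(G_1, G_2)$; restricting the weak-measurability condition of Definition \ref{def:mst} to $\omega \in \Imc$ shows $A|_{\Imc}$ is a nonempty $\filtm|_{\Imc}$-random closed subset of $\mps$.

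For step (iii), Theorem \ref{msbl:mst} applied on $(\Imc, \filtm|_{\Imc})$ with $F = A|_{\Imc}$ yields an $\filtm|_{\Imc}$-measurable $\tilde{\varphi}: \Imc \to \mps$ with $\tilde{\varphi}(\omega) \in A(\omega) \subseteq I(G_1(\omega), G_2(\omega))$ for all $\omega \in \Imc$; in particular $\tilde{\varphi}(\omega) \in \mps(V_1(\omega), V_2(\omega))$ on $\Imc$. Define $\varphi$ globally by setting $\varphi := \tilde{\varphi}$ on $\Imc$ and, say, $\varphi(\omega) := (v \mapsto \root_2(\omega))$ (the constant map from $V_1(\omega)$ sending every vertex to the root of $G_2(\omega)$) on $\Imc^c$; this lies in $\mps(V_1, V_2)$ everywhere and is $\filtm$-measurable since $\Imc \in \filtm$ and both pieces are measurable (the off-$\Imc$ piece because $\root_2$ and $V_1$ are measurable components of the canonical representatives). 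Then $\varphi \in A$ on $\{G_1 \cong G_2\}$, as required.

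I do not expect a serious obstacle here; the main point of care is simply confirming measurability of the isomorphism event $\{G_1 \cong G_2\}$ and making sure that "random closed subset of $\mps$" genuinely plugs into Theorem \ref{msbl:mst} — which it does once we recall (Remark \ref{msbl:mpspace}) that $\mps$ has been equipped with a Polish topology via the embedding $\mpsmp$ into $\N_0^{\N}$. Everything else is bookkeeping with trace $\sigma$-algebras and an arbitrary measurable extension off the event.
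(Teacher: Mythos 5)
Your proposal is correct, and it reaches the same endpoint (an application of the Kuratowski--Ryll-Nardzewski theorem) via a genuinely different decomposition than the paper. The paper does not restrict to the trace $\sigma$-algebra on $\Imc := \{G_1 \cong G_2\}$; instead it modifies the set-valued map globally, defining $B(\omega) := A(\omega)$ on $\Imc$ and $B(\omega) := \mps(V_1(\omega),V_2(\omega))$ on $\Imc^c$, then verifies directly that the pushforward $\ov{B} = \mpsmp(B)$ is a non-empty $\filtm$-random closed subset of $\N_0^{\N}$ and applies Theorem~\ref{msbl:mst} on all of $(\Omega,\filtm)$. Verifying weak measurability of $\ov{B}$ off $\Imc$ is the part of the paper's proof that carries the real work: it requires the countable decomposition $\{\mps(V_1,V_2) \cap U' \neq \emptyset\} = \bigcup_{W_1,W_2 \text{ finite}} \{V_i = W_i\} \cap \{U' \cap \mps(W_1,W_2) \neq \emptyset\}$, exploiting finiteness of the graphs and openness of $U'$. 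Your route trades that decomposition for the small extra tasks of (i) verifying measurability of $\Imc$ (the paper takes this as given, noting it is implicit in the hypothesis $\PP(G_1 \cong G_2) > 0$, whereas you supply a proof via continuity of $G \mapsto \ic{G}$ or a countable-union argument) and (ii) extending the selector off $\Imc$ by a fixed measurable map into $\mps(V_1,V_2)$ such as $v \mapsto \root_2$. Both steps are routine and your reasoning for each is sound; your approach is arguably cleaner conceptually, since it isolates the content of the selection theorem on the event where the target set $A$ is already nonempty and avoids any measurability verification for the auxiliary "filler" set on $\Imc^c$.
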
 
\begin{proof} 
Given any $\filtm$-random closed subset $A$ of $\Mmc$ such that $A\subseteq I(G_1,G_2)$, consider the set-valued mapping $B(\omega) \defeq \mps(V_1,V_2)$ if $G_1\ncong G_2$ and $B(\omega) \defeq A$ otherwise. Note that by finiteness of $G_1$ and $G_2$, $B$ is also finite because $B\subseteq \mps(V_1,V_2)$, and thus $B(\omega)$ is closed for all $\omega\in \Omega$. Let $\ov{B} = \{\mpsmp(f):f \in B\}$, with $\mpsmp$ as in \eqref{map-beta}, and note that $\ov{B}(\omega)$ is similarly finite and therefore closed and non-empty for all $\omega \in \Omega$. We claim that $\ov{B}(\omega)$ is weakly $\filtm$-measurable. That is, for every open $U \subseteq \Nb_0^{\Nb}$, the set $\{\ov{B}\cap U \neq \emptyset\}$ is $\filtm$-measurable. If the claim holds, then $\ov{B}$ is a non-empty $\filtm$-random closed subset, so by Theorem \ref{msbl:mst} with $\Pol' = \Nb_0^{\Nb}$ and $F = \ov{B}$, there exists an $\filtm$-measurable random variable $b$ such that $b(\omega) \in \ov{B}(\omega)$ for every $\omega\in\Omega$. The lemma follows on setting $\varphi = \mpsmp^{-1}(b)$.

To prove the claim, first note that the set $\{\omega: G_1 \cong G_2\}$ is $\filtm$-measurable by assumption. Moreover, fixing any open set $U \subseteq \Nb_0^{\Nb}$, note that $\{G_1 \cong G_2\}\cap \{\ov{B}\cap U \neq \emptyset\} = \{G_1 \cong G_2\}\cap \{A\cap \mpsmp^{-1}(U) \neq \emptyset\}$, and also that $U' := \mpsmp^{-1}(U) \subset \mps$ is open since $\mpsmp$ is continuous. Because $A$ is an $\filtm$-random closed subset, this implies $\{G_1 \cong G_2\}\cap \{\ov{B}\cap U \neq \emptyset\}$ is $\filtm$-measurable. Next, observe that $\{G_1 \ncong G_2\}\cap \{\ov{B}\cap U \neq \emptyset\} = \{G_1 \ncong G_2\}\cap \{\mps(V_1,V_2)\cap \mpsmp^{-1}(U) \neq \emptyset\} = \{G_1 \ncong G_2\}\cap \{\mps(V_1,V_2)\cap U' \neq \emptyset\}$. However, note that 
\[\{\mps(V_1,V_2)\cap U' \neq \emptyset\} = \bigcup_{\te{finite } W_1,W_2\subset \Nb} \{V_i(\omega) =W_i,i=1,2\} \cap \{ U' \cap \mps(W_1,W_2)\neq \emptyset\},\] 
which is a countable union of $\filtm$-measurable sets since $U'$ is open. Thus, $\{\ov{B}\cap U \neq \emptyset\}$ is $\filtm$-measurable for every open $U$, and the claim follows. 
\end{proof}

The following lemma ensures driving maps on different graphs can be constructed so as to be consistent with isomorphisms between those graphs, which is required in Sections \ref{LWCpf} and \ref{GEM}. 

\begin{lemma}[Existence of consistent driving maps] 
\label{msbl:dmap} 
Fix a probability space $\pspace$ that supports the $\filtm$-measurable $\wh{\gs}\sp{\ov{\Pol},\Pol}$-random elements $G_i$, $i=1,2$. Let $M$ be an $\filtm$-measurable random variable such that $M (\omega) \in \{m \in \Nb_0: \trnc{m}(G_{1}(\omega))\cong \trnc{m}(G_{2}(\omega))\}$ for every $\omega \in \Omega$. Then for any $\filtm$-measurable isomorphism $\varphi\in I(\trnc{M}(G_1),\trnc{M}(G_2))$ and $\filtm$-measurable driving map $\psi_1:V_1 \to \Nb$ such that $\Nb\setminus \psi_1(V_1)$ is infinite, there exists an $\filtm$-measurable driving map $\psi_2: V_2 \to \Nb$ such that $\psi_1(v) = \psi_2(\varphi(v))$ for every $v \in \trnc{M}(G_1)$ and $\psi_1(G_1)\cap \psi_2(G_2\setminus\trnc{M}(G_2)) = \emptyset$. 
\end{lemma}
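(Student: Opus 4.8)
The plan is to construct $\psi_2$ explicitly by first fixing its values on the image $\varphi(\trnc{M}(G_1))$ so as to be consistent with $\psi_1$, and then extending measurably to the remaining vertices of $V_2$ using the infinitely many ``free'' labels available. First I would define, for each $\omega$, the finite sets $W_1 \defeq \trnc{M}(G_1)$ and $W_2 \defeq \trnc{M}(G_2) = \varphi(W_1)$, noting that $\varphi$ restricts to a bijection between them. On $W_2$, set $\psi_2(w) \defeq \psi_1(\varphi^{-1}(w))$; since $\varphi$ is $\filtm$-measurable and $\psi_1$ is $\filtm$-measurable this assignment is $\filtm$-measurable (as a map on the random domain $W_2$, in the sense of $\mps$-valued measurability from Remark \ref{msbl:mpspace}). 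This part is routine.

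The main work is extending $\psi_2$ to $V_2 \setminus W_2$ as an injection into $\N \setminus \psi_1(G_1)$, in an $\filtm$-measurable way. The key point is that $\psi_1(G_1) = \psi_1(V_1)$ is a subset of $\N$ whose complement is infinite by hypothesis, so its intersection with the complement of the already-used labels $\psi_2(W_2) = \psi_1(W_1)$ remains infinite; hence there is room. Concretely, I would enumerate $\N \setminus \psi_1(V_1)$ in increasing order as $a_1 < a_2 < \cdots$ (an $\filtm$-measurable enumeration, since $\psi_1(V_1)$ is an $\filtm$-measurable random subset of $\N$ with infinite complement — measurability of the $k$-th element follows because $\{a_k = j\}$ is expressible via finitely many conditions on membership of integers $\le j$ in $\psi_1(V_1)$), and similarly enumerate $V_2 \setminus W_2$ in increasing order as $v_1 < v_2 < \cdots$. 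Then define $\psi_2(v_k) \defeq a_k$. This makes $\psi_2$ injective on $V_2 \setminus W_2$, with image disjoint from $\psi_1(V_1) \supseteq \psi_1(W_1) = \psi_2(W_2)$, so $\psi_2$ is globally injective on $V_2$ and $\psi_2(V_2 \setminus W_2) \cap \psi_1(G_1) = \emptyset$, which gives both required properties: $\psi_1(v) = \psi_2(\varphi(v))$ for $v \in W_1 = \trnc{M}(G_1)$, and $\psi_1(G_1) \cap \psi_2(G_2 \setminus \trnc{M}(G_2)) = \emptyset$.

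For the measurability bookkeeping I would invoke the same device used in the proof of Lemma \ref{msbl:iso}: decompose $\Omega$ according to the (countably many) possible values of the finite graph data $\trnc{M}(G_1)$, $\trnc{M}(G_2)$, the isomorphism $\varphi$ restricted to these finite sets, $V_1$, $V_2$ (or rather the countable partition generated by which integers lie in $V_1$, $V_2$ and in $\psi_1(V_1)$), and on each piece the construction above is a deterministic function of that data, so the resulting map $\psi_2$, viewed as a $\mps$-valued random element via the embedding $\mpsmp$ of \eqref{map-beta}, is $\filtm$-measurable. Alternatively, one can phrase the extension step as a measurable selection: the set of injections $V_2 \to \N$ agreeing with $\psi_1 \circ \varphi^{-1}$ on $W_2$ and avoiding $\psi_1(V_1)$ off $W_2$ is a non-empty $\filtm$-random closed subset of $\mps$ (non-empty precisely because $\N \setminus \psi_1(V_1)$ is infinite), and Theorem \ref{msbl:mst} produces a measurable selection; this mirrors exactly how Lemma \ref{msbl:iso} was proved and avoids hand-coding the enumeration.

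The main obstacle is purely the measurability of the extension, i.e. verifying that ``fill in the unused vertices with the unused labels in increasing order'' defines an $\filtm$-measurable map when the domain $V_2$, the exceptional set $W_2 = \varphi(\trnc{M}(G_1))$, and the forbidden label set $\psi_1(V_1)$ are all random; the combinatorial content (injectivity and disjointness of images) is immediate once the infinite-complement hypothesis is used. I expect the cleanest write-up is the measurable-selection route, reusing the weak-measurability argument from Lemma \ref{msbl:iso} almost verbatim, with the only new input being that the target correspondence is non-empty because $|\N \setminus \psi_1(V_1)| = \infty \ge |V_2 \setminus W_2|$.
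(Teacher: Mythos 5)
Your proposal is correct and follows essentially the same route as the paper: the paper also builds $\psi_2$ explicitly by copying $\psi_1\circ\varphi^{-1}$ on $\trnc{M}(G_2)$ and filling in the remaining vertices via an order‑preserving injection into $\N\setminus\psi_1(V_1)$ (the paper's $\alpha(k)$ = $k$‑th smallest unused label, applied directly to the vertex label $k$, vs.\ your rank‑to‑rank pairing — a purely cosmetic difference), then verifies $\filtm$-measurability through the embedding $\mpsmp$ of \eqref{map-beta}. Your alternative measurable‑selection phrasing via Theorem \ref{msbl:mst} is a valid variant but not the one the paper uses.
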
 
\begin{proof} 
Fix an $\filtm$-measurable random variable $M$ and $\filtm$-measurable isomorphism $\varphi\in I(\trnc{M}(G_1),\trnc{M}(G_2))$ as in the statement of the lemma. Let $\widetilde{\Pol} \defeq (\Nb_0^{\Nb})^2$, equipped with the product topology, and consider the $\widetilde{\Pol}$-random element $b = (b_1, b_2)$ with $b_i = (b_{i,k})_{k\in \Nb}$ given, for $i = 1, 2$ and $k \in \Nb$, by 
\[b_{i,k} = 
\begin{cases} 
0 &\te{ if } k \notin G_i,\\ 
\psi_1(k) & \te{ if } k \in V_{G_1},\\ 
\psi_1(\varphi^{-1}(k)) & \te{ if } k \in \trnc{M}(G_2)\te{ and } i = 2,\\ 
\alpha(k) & \te{ if } k \in V_2\setminus \trnc{M}(G_2) \te{ and } i = 2, 
\end{cases}\] 
where $\alpha:\Nb\to\Nb$ is an injection mapping each $k$ to the $k$th smallest element of $\Nb\setminus \psi_1(V_1)$. Observe that $\alpha$ is well defined because $\Nb\setminus\psi_1(V_1)$ is infinite and is also an $\filtm$-measurable element of $\mps$ because $\psi_1$ is $\filtm$-measurable. Thus, $b$ is clearly also $\filtm$-measurable. Define $\psi_i \defeq \mpsmp^{-1}(b_i)$, $i=1,2$, with $\mpsmp$ as in \eqref{map-beta}, and note that since $\beta^{-1}$ is Borel measurable, each $\psi_i$ is also $\filtm$-measurable. Because $\psi_1(V_1)$ and $\alpha(\Nb)$ are disjoint, $\psi_2$ is also injective and therefore a driving map. Furthermore, for each $k \in \trnc{M}(G_1)$, $\psi_1(k) = \psi_1(\varphi^{-1}(\varphi(k))) = \psi_2(\varphi(k))$, and $\psi_1(G_1) \subseteq \psi_1(V_1)$ and $\psi_2(G_2\setminus\trnc{M}(G_2)) \subseteq \alpha(\Nb)$, so $\psi_1(G_1) $ and $\psi_2(G_2\setminus \trnc{M}(G_2))$ are disjoint. This completes the proof. 
\end{proof}

We finish the appendix with proofs of Lemmas \ref{MG:iso} and \ref{MG:repextexist}.

\begin{proof}[Proof of Lemma \ref{MG:iso}] 
The first assertion follows from Lemma \ref{msbl:rep} and the fact that $\sigma(\ic{(\ov{G}_n,\ov{\x}^n)})\subseteq \wh{\filtm}$. For the second assertion, let $\{(G_n,\x^n)\}_{n \in \Nb},(G,\x)$ be any sequence of measurable representatives satisfying Properties 1 and 2 of Definition \ref{MG:rcs}. By Definition \ref{def-locconvnoiso}, the convergence of $\ic{(G_n,\x^n)}$ to $\ic{(G,\x)}$ in $\gs\sp{\emksp,\vmksp\times \Xc}$ implies that for every $m \in \Nb$, $\trnc{m}(\nm{G_{*}})\cong \trnc{m}(\nm{G_{n,*}})$ for sufficiently large $n$. Since $\Xc$ is discrete, this further implies that there exists an a.s. finite random variable $N_m$ such that $\trnc{m}(\nm{G_{*}},\x) \cong \trnc{m}(\nm{G_{n,*}},\x^n)$ for all $n \geq N_m$ and $N_m$ is $\wh{\filtm}$-measurable. Then for any $\varphi \in \mps$, define the following random variable: 
\[\Psi_{n,m}(\varphi) \defeq 
\begin{cases} 
\sum_{v \in \trnc{m}(G)} d_{\vmksp}(\vms^n_{\varphi(v)},\vms_v) + \sum_{e \in E_{\trnc{m}(G)}} d_{\emksp}(\ems^n_{\varphi(e)},\ems_e)&\te{ if } \varphi\in I_{n,m},\\ 
\infty &\te{ otherwise,} 
\end{cases}\] 
where $I_{n,m} \defeq I(\trnc{m}(\nm{G_*},\x),\trnc{m}(\nm{G_{n,*}},\x^n))$. Let $\minset_{n,m} = \te{argmin}_{\varphi \in \mps} \Psi_{n,m}(\varphi)$, where we define $\minset_{n,m}$ to be empty if $\Psi_{n,m}(\varphi) = \infty$ for all $\varphi \in\mps$. On the $\wh{\filtm}$-measurable set $\{n \geq N_m\}$, $I_{n,m}$ is non-empty and finite, and so $\minset_{n,m}$ is also non-empty and finite. We first show that to prove the following claim:\\

\noindent \textbf{Key claim: } $\minset_{n,m}$ is a $\wh{\filtm}$-random closed subset of $\mps$ that is non-empty on the event $\{I_{n,m} \neq \emptyset\}$.\\

Deferring the proof of the claim, first note that given the claim, applying Lemma \ref{msbl:iso}, with $G_1 = \trnc{m}(\nm{G_*},\x)$, $G_2 = \trnc{m}(\nm{G_{n,*}},\x^n)$, and $A = \minset_{n,m}$, there exists a sequence of $\wh{\filtm}$-measurable maps $\{\ov{\varphi}_{n,m}\}_{n,m \in \Nb}$ such that $\ov{\varphi}_{n,m} \in \minset_{n,m}$ for every $n,m \in \Nb$ on the event $\{n \geq N_m\}$. Hence, by the definition of $\minset_{n,m}$, the fact that $\ov{\varphi}_{n,m} \in I_{n.m}$ and by Definition \ref{def-locconvnoiso}, for any $m \in \Nb$, 
\[\lim_{n \to \infty,n > \ov{N}_m} \min_{\varphi\in I_{n,m}} \Psi_{n,m}(\varphi) = \lim_{n \to \infty,n > N_m}\Psi_{n,m}(\ov{\varphi}_{n,m}) = 0.\] 
This implies that for every $m \in \Nb$ there exists a $\wh{\filtm}$-measurable integer $\wh{N}_m$ such that $\Psi_{n,m}(\ov{\varphi}_{n,m}) < 2^{-m}$ for all $n \geq \wh{N}_m$. Moreover, $\wh{N}_m$ is non-decreasing and a.s. finite and so $M_n \defeq \max\{m \in \Nb: n \geq \wh{N}_m\}$ is $\wh{\filtm}$-measurable and increases to infinity. Therefore, property 3 of Definition \ref{MG:rcs} is satisfied. For each $n\in \Nb$ and $m \leq M_n$, define $\varphi_{n,m} \defeq \ov{\varphi}_{n,M_n}|_{\trnc{m}(G)}$. It follows that $\{\varphi_{n,m}\}_{n \in \Nb,m\leq M_n}$ is also an $\wh{\filtm}$-measurable sequence and satisfies property 4 of Definition \ref{MG:rcs}. Furthermore, Properties 5 and 6 follow directly from the fact that for $v \in \trnc{m}(G)$ and $e \in E_{\trnc{m}(G)}$, 
\[\max\{d_{\vmksp}(\vms^n_{\varphi_{n,m}(v)},\vms_v),d_{\emksp}(\ems^n_{\varphi_{n,m}(e)},\ems_e)) \leq \Psi_{n,M_n}(\ov{\varphi}_{n,M_n}) < 2^{-M_n} \to 0\quad \te{as}\quad n\to \infty.\] 

We now turn to the proof of the key claim. Fix $n,m \in \Nb$. We first prove the following:\\

\noindent \textbf{Sub-Claim 1:} For each $\varphi \in \mps$, $\Psi_{n,m}(\varphi)$ is $\wh{\filtm}$-measurable.\\ \noindent \emph{Proof of Sub-Claim 1:} Fix $\varphi \in \mps$. Define ${\mathcal A}^\varphi_{m} := {\mathcal A}_{m,1} \cap {\mathcal A}^\varphi_{m,2}$, where 
\begin{eqnarray*} 
{\mathcal A}_{m,1} & := & \left\{(H_1,H_2) \in \wh{\gs}\sp{\emksp,\vmksp \times \Xc}^2: \max_{i=1,2,v \in V_{H_i}}d_{H_i}(\root,v) \leq m\right\}, \\ 
{\mathcal A}^\varphi_{m,2} & := & \left\{(H_1,H_2) \in \wh{\gs}\sp{\emksp,\vmksp \times \Xc}^2: \varphi \in I(\theta(H_1),\theta(H_2))\right\}, 
\end{eqnarray*} 
with $\theta: \wh{\gs}\sp{\emksp,\vmksp\times\Xc} \to \wh{\gs}\sp{1,\Xc}$ being the mapping that takes $H = (V_{H},E_{H},\root_{H},\emsn{H},\vmsn{H},\x^{H}) \in \wh{\gs}\sp{\emksp,\vmksp \times \Xc}$ to $(V_{H},E_{H},\root_{H}, \x^{H}) \in \wh{\gs}\sp{1,\Xc}$, which is the rooted representative graph $H$ with only the $\Xc$-valued vertex marks retained. Then consider the mapping $\Map_m^\varphi:(\wh{\gs}\sp{\emksp,\vmksp\times \Xc})^2\to\Rb_+\cup\{\infty\}$ given by 
\begin{align*} 
&\Map_{m}^\varphi (H_1,H_2) \defeq 
\begin{cases} 
\sum_{v \in V_{H_1}} d_{\vmksp}(\vmsn{H_2}_{\varphi(v)},\vmsn{H_1}_v) + \sum_{e \in E_{H_1}} d_{\emksp}(\emsn{H_2}_{\varphi(e)},\emsn{H_1}_e) & \te{ if } (H_1, H_2) \in {\mathcal A}^\varphi_{m}, \\ 
\infty &\te{ otherwise. } 
\end{cases} 
\end{align*} 
Since $\Psi_{n,m}(\varphi) = \Map_m^{\varphi}(\trnc{m}(G,\x),\trnc{m}(G_n,\x^n))$, to prove Sub-Claim 1, it suffices to show the following:\\

\noindent \textbf{Sub-Claim 2:} The map $\Map_m^{\varphi}$ is continuous.\\ \noindent \emph{Proof of Sub-Claim 2:} Fix $\varphi \in \mps(W_1,W_2)$ and $m \in \Nb$. If $W_1$ is infinite, $\Map^{\varphi}_m\equiv \infty$, and so is trivially continuous. Next, suppose $|W_1| < \infty$. Then, since $\theta$ and $H \mapsto \max_{v \in V_H} d_H(\root,v)$ are continuous and $\{(H_1,H_2)\in (\wh{\gs}\sp{1,\Xc})^2: \varphi \in I(H_1,H_2)\}$ is closed, it follows that ${\mathcal A}_{m,1}$ and ${\mathcal A}^{\varphi}_{m,2}$ are closed, so ${\mathcal A}_{m}^{\varphi}$ is likewise closed. Let $\{(H^n_1,H^n_2)\}_{n\in\Nb} \subseteq \wh{\gs}\sp{\emksp,\vmksp} \subset {\mathcal A}^\varphi_m$ be any sequence that is convergent in $\wh{\gs}\sp{\emksp,\vmksp}$, and let $(H^{\infty}_1,H^{\infty}_2)$ denote its limit. Then $(H^{\infty}_1,H^{\infty}_2) \in {\mathcal A}^{\varphi}_m$ since ${\mathcal A}^{\varphi}_m$ is closed. Moreover, by conditions 1 and 2 of Definition \ref{msbl:whgspce}, $V_{H^n_1} = V_{H^\infty_1} = W_1$ and $E_{H^n_1} = E_{H^{\infty}_1}$ for all sufficiently large $n$. Then the convergence of $\Map^{\varphi}_m(H^n_1,H^n_2)$ to $\Map^{\varphi}_m(H^\infty_1,H^\infty_2)$ is an immediate consequence of the definition of $\Map^{\varphi}_m$ and conditions 4 and 5 of Definition \ref{msbl:whgspce}. Lastly note that the map $\theta_2: (\wh{\gs}\sp{\emksp,\vmksp\times \Xc})^2 \to (\wh{\gs}\sp{1,\Xc})^2$ defined by $\theta_2(H_1,H_2)\defeq (\theta(H_1),\theta(H_1))$ is continuous and that $A^{\varphi}_m = \theta_2^{-1}(\ov{A}^{\varphi}_m)$ where $\ov{A}^{\varphi}\subseteq (\wh{\gs}\sp{1,\Xc})^2$ consists entirely of pairs of graphs with radius at most $m$. However, it is easily verified by Definition \ref{msbl:whgspce} that (since $\Xc$ is a discrete space) any finite graph is an isolated point in $\wh{\gs}\sp{1,\Xc}$, which implies that $\ov{A}^{\varphi}_m$, and therefore $A^{\varphi}_m$, must also be open. Sub-Claim 2 then follows on noting that $\Map^{\varphi}_m$ is identically equal to infinity and thus trivially continuous on the closed set $({\mathcal A}^{\varphi}_m)^c$. 

\skipLine

Next, define $\Psi^{\te{min}}_{n,m} \defeq \min_{\varphi \in \mps}\Psi_{n,m}(\varphi)$ where $\Psi^{\te{min}}_{n,m} = \infty$ when $I_{n,m} = \emptyset$. Then $\Psi^{\te{min}}_{n,m}$ always exists because $I_{n,m}$ is finite. Note that since $|B_m(G)| + |B_m(G_n)| < \infty$, it follows that 
\[\Psi^{\te{min}}_{n,m} = \min_{\substack{\varphi\in \mps(W_1,W_2):\\|W_1|+|W_2|<\infty}} \Psi_{n,m}(\varphi),\] 
is a minimum over a countable collection of $\wh{\filtm}$-measurable random variables. Sub-Claim 1 then shows that $\Psi^{\te{min}}_{n,m}$ is also $\wh{\filtm}$-measurable. For any open $U\subseteq \mps$, 
\[\{\minset_{n,m}\cap U \neq\emptyset\} = \bigcup_{\substack{\varphi \in \mps(W_1,W_2)\cap U: \\ |W_1|+|W_2|<\infty}} \{\Psi_{n,m}(\varphi) = \Psi^{\te{min}}_{n,m}\}\cap \{\Psi^{\te{min}}_{n,m} < \infty\}.\] 
Since this is a countable union of $\wh{\filtm}$-measurable sets, the key claim follows from Definition \ref{def:mst}. 
\end{proof}

\begin{proof}[Proof of Lemma \ref{MG:repextexist}:] 
Let $\{\ic{(G_n,\x^n)}\}_{n \in \Nb}$ be a random sequence on $\pzspace$ that converges a.s. to $\ic{(G,\x)}$ in $\gs\sp{\emksp,\vmksp}$. By Lemma \ref{MG:iso}, there exists a rep-con sequence $(\{(G_n,\x^n),M_n\}_{n \in \Nb}$, $(G,\x),\{\varphi_{n,m}\}_{n\in\Nb,m\leq M_n})$ of $(\{\ic{(G_n,\x^n)}\}_{n \in \Nb},\ic{(G,\x)})$ defined on the same probability space $\pzspace$. Let $\psi$ be a $\wh{{\mathcal F}}$-measurable driving map $\psi: V_G \to \Nb$ such that $\Nb \setminus \psi(G)$ is infinite (e.g., consider the map $\psi(k) = 2k$). Then, invoking Properties 3 and 4 of Definition \ref{MG:rcs} and repeatedly applying Lemma \ref{msbl:dmap} with $M = M_n$, $G_1 = (\nm{G_*},\x)$, $G_2 = (\nm{G_{n,*}},\x^n)$, $\filtm = \wh{\filtm}$, $\varphi = \varphi_{n,M_n}$, and $\psi_1 = \psi$, for each $n \in \Nb$ we can construct a $\wh{\filtm}$-measurable driving map $\psi_n:G_n \to \Nb$ such that for every $m \leq M_n$ and $v \in \trnc{m}(G)$, $\psi_n(\varphi_{n,m}(v)) = \psi(v)$. Then extending the space $\pzspace$ to add a countable sequence of i.i.d. Poisson processes $\{\poiss_k\}_{k \in \Nb}$ and using the driving maps $\psi_n$, $n \in \Nb$, and $\psi$ to generate the respective $\filt$-driving noise $(G_n,\poiss^{G_n})$, $n\in\Nb,$ and $(G,\poiss^G)$ as in Definition \ref{MG:driving}, we obtain a consistent rep-con extension. 
\end{proof}

\section{Verification of Assumptions} 
\label{aver} 
\subsection{Well-Posedness Under Spatially Heterogeneous Dynamics} 
\label{saver}
In this section, we extend our well-posedness results to spatially heterogeneous dynamics as mentioned in Remark \ref{rem-semiregularity}. Consider IPS with local jump rates that are parameterized by jumps $j \in \jmps$ and graphs $H$ whose vertex labels lie in $\Nb$ (one can trivially replace $\Nb$ by any countable, deterministic set) rather than the slightly more general space $\gsone$ of rooted graphs of radius one considered in Section \ref{mod:dyn}; see Remark \ref{rem-restriction} below for the reason for this restriction. Recalling the Polish spaces $\wh{\gs}$ and $\wh{\gs}\sp{\emksp,\vmksp}$ from Definition \ref{msbl:whgspce}, let $\wgsone \defeq \wh{\gs}\cap \gsone \subset \wh{\gs}$ be the space of rooted (unmarked) graphs with vertex labels in $\Nb$ and with radius one. In this framework, we denote the local jump rates as follows: 
\[\wh{\mathbf{\locrate}} \defeq \{\wh{\locrate}^H_j: \Rb_+\times\cad^{V_H}\times \emksp^{E_H}\times \vmksp^{V_H}\to \Rb_+, H = (V_H,E_H,\root_H)\in \wgsone,j \in \jmps\}.\] 
Then, just as in the homogeneous case, the jump rates ${\bf \rate^G}$ are derived from the local jump rates in a manner analogous to \eqref{eq:standing}: 
\begin{equation} 
\label{eq:altstanding} 
r^{G,v}_j(t,x) = \wh{\locrate}^{H_v}_j(t,x_{H_v},(\ems_e)_{e \in E_{H_v}},(\vms_v)_{v \in V_{H_v}}),\te{ for all } (t,x) \in \Rb_+\times \cad^{V_G}, 
\end{equation} 
where $H_v \defeq (G[\cl{v}],v)$. Note that the jump rates ${\bf \rate^G}$ are now only well defined for $G \in \wh{\gs}\sp{\emksp,\vmksp}$, and the SDE \eqref{mod:infpart} is likewise also only well defined when the initial data $(G,\x)$ is a $\wh{\gs}\sp{\emksp,\vmksp\times\x}$-random element.

\begin{remark}[Vertex labels] 
\label{rem-restriction} 
Considering only graphs with vertex labels in $\Nb$ is not very restrictive in practice and is done for the purely technical reason of simplifying measurability considerations. This restriction allows us to parameterize the dynamics of particles by their labels in $\Nb$, in which case the space $\wgsone$ is countable and discrete (all points are isolated), and the local jump rates are trivially measurable as a function of their inputs and the underlying graph that parameterizes them. 
\end{remark}

We now define the notion of \emph{quasi-regular} local jump rates:

\begin{definition}[Quasi-regularity of local jump rates] 
\label{def-semiregular} 
Given $j \in \jmps$, the family of local jump rates $\wh{\locrate}^H_j: \Rb_+\times \cad^{V_H}\times \emksp^{E_H}\times \vmksp^{V_H}\to \Rb_+$, $H = (V_H,E_H,\root_H) \in \wgsone$, is said to be quasi-regular if for each $H \in \wgsone, j \in \jmps$, and $(\ems,\vms) \in \emksp^{E_H}\times \vmksp^{V_H}$, $\wh{\locrate}^H_j$ is Borel measurable and the map $(t,x)\mapsto \wh{\locrate}^H_j(t,x,\ems,\vms)$ is predictable in the sense that for every $t > 0$ and $x,y \in \cad^{V_H}$, 
\[x(s) = y(s) \te{ for all }s \in [0,t)\quad \Rightarrow \quad \wh{\locrate}^H_j(t,x,\ems,\vms) = \wh{\locrate}^H_j(t,y,\ems,\vms).\] 
\end{definition}

Definition \ref{def-semiregular} only differs from Definition \ref{def-regular} in that the symmetry condition is removed. Thus, we replace the standing assumption with the condition that the local jump rates are quasi-regular.

\begin{assumptio}[A weaker standing assumption] 
\label{assu-newstanding} 
Given any $G = (V,E, \root, \ems,\vms)\in \wh{\gs}\sp{\emksp,\vmksp}$, the jump rates ${\bf \rate^G}$ for the IPS dynamics on the graph $G$ satisfy \eqref{eq:altstanding} for local jump rates $\wh{\mathbf{\locrate}} = \{\wh{\locrate}^{H}_j: \Rb_+ \times \cad^{V_H} \times \emksp^{E_H} \times \vmksp^{V_H} \rightarrow \Rb_+\}_{H = (V_H, E_H, \root_H) \in \wgsone}, j \in \jmps,$ that are quasi-regular in the sense of Definition \ref{def-semiregular}. 
\end{assumptio}

 The definitions of weak/strong solutions to the SDE \eqref{mod:infpart}, and weak/strong well-posedness of \eqref{mod:infpart} are identical to those given in Definition \ref{mod:sol} and Definition \ref{mod:WP}, respectively, with the driving noises $(G,\poiss^G)$ being a simple sub-collection of i.i.d. Poisson processes, as specified in Remark \ref{rem:drivingnoise}.  

We now introduce the analog of Assumption \ref{mod:assu}, which is essentially exchanged except for the fact that the maximum of the local jump rates is now defined over a smaller set of graphs.

\begin{assumptionp}{$\wh{\ref{mod:assu}}$}[Bounds on the heterogenous local jump rates] 
\label{ass:auxrate} 
There exists a family of constants $\mathbf{C} := \{C_{k,T}\}_{k \in \Nb, T \in \Rb_+ \subset (0,\infty)}$ with $(k,T) \mapsto C_{k,T}$ being componentwise non-decreasing such that for every $H \in \wh{\gs}$ and $T \in \Rb_+,$ 
\[\wh{\locrate}^H_j (t,x,\ems, \vms) \leq C_{|V_H|,T}, \quad \mbox{ for all } t \in [0,T], x \in \cad^{V_H}, \ems \in \emksp^{E_H}, \vms \in \vmksp^{V_H}.\] 
\end{assumptionp}

Combined with \eqref{eq:altstanding}, Assumption \ref{assu-newstanding} implies that the jump rate $\rate^{G,v}_j$ may depend on $v$ arbitrarily with no need to respect graph isomorphisms as in the homogeneous case. As noted in Remark \ref{rem-classfn}, removal of the symmetry condition implies that the jump rates do not necessarily satisfy the class property \eqref{mod:rsym0}. Therefore, we now state the main result of the section, noting that well-posedness is now framed as a property of a marked graph rather than an isomorphism class as in Theorem \ref{WP:WP}.

\begin{theorem}[Strong well-posedness of heterogeneous IPS] 
\label{saver:all} 
Fix a complete, filtered probability space $\fpspace$. Let $(G,\x)$ be an $\filtm_0$-measurable $\wh{\gs}\sp{\emksp,\vmksp\times \x}$-random element. If Assumption \ref{ass:auxrate} holds for a family of constants $\mathbf{C}$ and $G$ is a.s. finitely dissociable with respect to $\mathbf{C}$, then the SDE \eqref{mod:infpart} is strongly well-posed for the initial data $(G,\x)$. 
\end{theorem} 
\begin{proof} 
Define the mapping $\psi: \wh{\gs}\sp{\emksp,\vmksp}\to \wh{\gs}\sp{\emksp,\vmksp\times\Nb}$ as follows: 
\[\psi((G,\ems,\vms))\defeq (G,\ems,\vms,w)\te{ where }w_v = v\te{ for }v \in V.\] 
We slightly abuse notation by defining $\psi$ as in the display above for all choices of $(\emksp,\vmksp)$. Note that for any $H \in \wh{\gs}\sp{\emksp,\vmksp}$, the marked graph $\psi(H)$ has a trivial automorphism group. The key steps of the proof are as follows. We first use this fact to construct from the quasi-regular local jump rates ${\bf \wh{\locrate}}$, a collection of regular local jump rates ${\bf \locrate}$ (in the sense of Definition \ref{def-regular}) that act on marks that have been augmented by $\psi$. Next, we consider an auxiliary SDE \eqref{mod:infpart} with jump rates ${\bf \rate^{\psi(G)}}$ defined in terms of the local jump rates ${\bf \locrate}$ via \eqref{eq:standing}, and observe that this is strongly well-posed by Theorem \ref{WP:WP}. Then we establish a one-to-one correspondence between weak solutions to the SDE \eqref{mod:infpart} with jump rates ${\bf \rate^G}$ for the initial data $(G,\x),$ stated in the theorem, and weak solutions to the auxiliary SDE with jump rates ${\bf \rate^{\psi(G)}}$ for the initial data $\psi(G,\x).$ \\

\noindent {\em Step 1: Construction of regular local jump rates on an augmented graph.} Fix $H = (V_H,E_H,\root_H) \in \gsone$ and $(\ems,\vms,w) \in \emksp^{E_H}\times(\vmksp\times \Nb)^{V_H}$. Then define $\varphi_w: V_H \to \Nb$ by $\varphi_w(v) = w_v$, and by some abuse of notation, define $\varphi_{w}:E_H \to \{\{i,j\}: i, j \in \Nb\}$ by $\varphi_{w}(\{u,v\}) = \{\varphi_w(u),\varphi_w(v)\}$ for $\{u,v\} \in E_H$, with the obvious definition for $\varphi_w(A)$ when $A$ is a subset of $V_H$ or $E_H$. Also, when $\varphi_w$ is injective, we define  
\begin{equation} 
\label{def-Hw} 
H_w \defeq (\varphi_w(V_H),\varphi_w(E_H),\varphi_w(\root_H)). 
\end{equation} 
Observe that $H_w$ is the unique element of $\wgsone$ such that $\varphi_w \in I(H,H_w)$ or, in other words, $H_w$ is the unique graph that is isomorphic to $H$ and has vertex labels in $\Nb$ that are compatible with $w$ in the sense made explicit by \eqref{def-Hw}. Note that $H_w$ may fail to be well defined when $\varphi_w$ is not injective, but we consider this case separately. Then for $(t,x) \in [0,\infty)\times \cad^{V_H}$, and $j \in \jmps$, define the auxiliary local jump rates 
\begin{align*} 
&\locrate^H_j(t,x,\ems,\vms,w)\\ 
& \defeq 
\begin{cases} 
\wh{\locrate}^{\nm{H_w}}_j(t, (x_{\varphi_w^{-1}(v)})_{v \in \varphi_w(V_H)},(\ems_{\varphi_{w}^{-1}(e)})_{e \in \varphi_{w}(E_H)}, (\vms_{\varphi^{-1}_w(v)})_{v \in \varphi_w(V_H)}) &\te{ if } \varphi_w \te{ is injective},\\ 
0 &\te{ otherwise.} 
\end{cases} 
\end{align*}

We now show that this auxiliary family of local jump rates satisfies the symmetry property in Definition \ref{def-regular}. Suppose that $\wh{H}\cong H$, and fix some $\wh{\varphi}\in I(\wh{H},H)$. For $w \in \Nb^{V_H}$ (chosen such that $\varphi_w$ is injective), define 
\begin{equation} 
\label{def-whw} 
\wh{w}_v \defeq w_{\wh{\varphi}(v)} \te{ for all } v \in V_{\wh{H}}. 
\end{equation} 
We claim that $H_w = \wh{H}_{\wh{w}}$, where $\wh{H}_{\wh{w}}$ is defined as in \eqref{def-Hw}, but with $H$ and $w$ replaced by $\wh{H}$ and $\wh{w}$, respectively. To see why, note that for any $v \in V_{\wh{H}}$, by the definition of $\varphi_{\wh{w}}$ given above, \eqref{def-whw}, and the choice of $\wh{\varphi}$, $\varphi_{\wh{w}}(v) = \wh{w}_v = w_{\wh{\varphi}(v)} = \varphi_w (\wh{\varphi}(v))$. Thus, we have shown that $\varphi_{\wh{w}} = \varphi_w\circ \wh{\varphi}$. Together with the definition of $\wh{H}_{\wh{w}}$ from \eqref{def-Hw}, this implies that 
\[V_{H_w} = \varphi_w(V_H) = \varphi_w\circ \wh{\varphi}(\wh{\varphi}^{-1}(V_H)) = \varphi_{\wh{w}}(V_{\wh{H}}) = V_{\wh{H}_{\wh{w}}}.\] 
Likewise, $E_{H_w} = E_{\wh{H}_{\wh{w}}}$ and $\root_{H_w} = \root_{\wh{H}_{\wh{w}}}$, and so $H_w = \wh{H}_{\wh{w}}$ as claimed. 

Fix $t > 0$, $j \in\jmps$, $\wh{H}\cong H \in \gsone$, $\wh{\varphi} \in I(\wh{H},H)$, and $(x,\ems,\vms,w) \in \cad^{V_H}\times\emksp^{E_H}\times(\vmksp\times\Nb)^{V_H}$. Again, define $\wh{w}$ as in \eqref{def-whw}. If $\varphi_w$ is injective, applying the definitions of $\locrate^H$ and $\locrate^{\wh{H}}$ in the first and last equalities, respectively, and using $\wh{\varphi}\in I(\wh{H},H)$ and $(\wh{H}_{\wh{w}},\varphi_{\wh{w}}) = (H_w, \varphi_w\circ\wh{\varphi})$ in the second and third equalities, respectively, we conclude that 
\begin{align*} 
\locrate^H_j(t,&(x_v)_{v \in V_H},(\ems_e)_{e \in E_H},(\vms_v,w_v)_{v \in V_H}) \\ 
&= \wh{\locrate}^{\nm{H_w}}_j(t, (x_{\varphi_w^{-1}(v)})_{v \in \varphi_w(V_H)},(\ems_{\varphi_{w}^{-1}(e)})_{e \in \varphi_w(E_H)}, (\vms_{\varphi^{-1}_w(v)})_{v \in \varphi_w(V_H)})\\ 
&= \wh{\locrate}^{\nm{H_{w}}}_j(t, (x_{(\varphi_w\circ\wh{\varphi})^{-1}(v)})_{v \in \varphi_w\circ\wh{\varphi}(V_{\wh{H}})}, (\ems_{(\varphi_{w}\circ\wh{\varphi})^{-1}(e)})_{e \in \varphi_w\circ\wh{\varphi}(E_{\wh{H}})}, (\vms_{(\varphi_w\circ\wh{\varphi})^{-1}(v)})_{v \in \varphi_w\circ\wh{\varphi}(V_{\wh{H}})})\\ 
&= \wh{\locrate}^{\nm{\wh{H}_{\wh{w}}}}_j(t, (x_{(\varphi_{\wh{w}})^{-1}(v)})_{v \in \varphi_{\wh{w}}(V_{\wh{H}})}, (\ems_{(\varphi_{\wh{w}})^{-1}(e)})_{e \in \varphi_{\wh{w}}(E_{\wh{H}})}, (\vms_{(\varphi_{\wh{w}})^{-1}(v)})_{v \in \varphi_{\wh{w}}(V_{\wh{H}})})\\ 
&=\locrate^{\wh{H}}_j(t, (x_{\wh{\varphi}(v)})_{v \in V_{\wh{H}}}, (\ems_{\wh{\varphi}(e)})_{e \in E_{\wh{H}}}, (\vms_{\wh{\varphi}(v)}, w_{\wh{\varphi}(v)})_{v \in V_{\wh{H}}}), 
\end{align*} 
where we make use of the fact that the map $\varphi_w\circ\wh{\varphi}: V_{\wh{H}} \to \Nb$ satisfies $\varphi_w\circ\wh{\varphi}(v) = w_{\wh{\varphi}(v)}$ and is injective if and only if $\varphi_w$ is injective. If $\varphi_w$ is not injective, then 
\[\locrate^H_j(t,(x_v)_{v \in V_H},(\ems_e)_{e \in E_H},(\vms_v,w_v)_{v \in V_H}) = 0 = \locrate^{\wh{H}}_j(t, (x_{\wh{\varphi}(v)})_{v \in V_{\wh{H}}}, (\ems_{\wh{\varphi}(e)})_{e \in E_{\wh{H}}}, (\vms_{\wh{\varphi}(v)}, w_{\wh{\varphi}(v)})_{v \in V_{\wh{H}}}).\] 
Thus, $\mathbf{\locrate} = \{\locrate^H_j\}_{H \in \gsone,j\in\jmps}$ satisfies the symmetry condition of Definition \ref{def-regular}. The predictability condition follows directly from Assumption \ref{assu-newstanding}. Thus, $\{\locrate^H_j\}_{H \in \gsone,j\in\jmps}$ is a family of regular local jump rates. \\

\noindent {\em Step 2: Introducing an auxiliary SDE.} Next, in terms of the specified $\wh{\gs}\sp{\emksp,\vmksp\times\x}$ initial data $(G,\x)$ and the local jump rates $\mathbf{\locrate}$ defined above, we define the jump rates ${\bf \rate^{\psi(G)}}=\{\rate^{\psi(G),v}_j\}_{v \in V_G,j \in \jmps}$ via \eqref{eq:standing}. Then we define the auxiliary SDE to be the SDE \eqref{mod:infpart} with jump rates ${\bf \rate^{\psi(G)}}$ in place of ${\bf \rate^G}.$ Since $G$ is a.s. finitely dissociable by assumption, Definition \ref{findis:decom} trivially ensures that the unmarked version $\nm{G}$ and hence, $\psi(G)$, are also a.s. finitely dissociable. Further, since ${\bf \wh{\locrate}}$ satisfies Assumption \ref{ass:auxrate}, it follows from the definition given above that ${\bf \locrate}$ satisfies Assumption \ref{mod:assu}. Thus, by Theorem \ref{WP:WP}, the auxiliary SDE is strongly well-posed for the initial data $\ is {\psi(G,\x)}$. \\

\noindent {\em Step 3: Establishing a one-to-one correspondence.} For $G = (V,E,\root,\ems,\vms,w)$, $v \in V_G$, $j \in \jmps$, and $(t,x) \in \Rb_+\times\cad^{V_G}$, setting $H^v := \nm{G\subg{\cl{v}}} = (V_{H^v},E_{H^v},v=\root_{H^v})$, and using first \eqref{eq:standing}, then the definition of ${\bf \locrate}$ in Step 1 and the fact that $\varphi_w$ is the identity map, and lastly \eqref{eq:altstanding}, it follows that 
\begin{align*} 
\rate^{\psi(G),v}_j(t,x) &= \locrate^{H^v}_j(t,(x_u)_{u \in V_{H^v}}, (\ems_e)_{e \in E_{H^v}},(\vms_u)_{u \in V_{H^v}},(u)_{u\in V_{H^v}})\\ 
&= \wh{\locrate}^{H^v}_j(t,(x_u)_{u \in V_{H^v}},(\ems_e)_{e \in E_{H^v}},(\vms_u)_{u\in V_{H^v}})\\ 
&= \rate^{G,v}_j(t,x). 
\end{align*} 
It follows immediately from the equivalence above that a $\wh{\gs}\sp{\emksp,\vmksp\times \cad}$-random element $(G,X)$ is an $(\filt,\poiss^G)$-weak solution to the SDE \eqref{mod:infpart} (with jump rates ${\bf \rate^G}$) for the initial data $(G,\x)$ if and only if $\psi(G,X)$ is an $(\filt,\poiss^G)$-weak solution to the auxiliary SDE with jump rates ${\bf \rate^{\psi(G)}}$ for the initial data $\psi(G,\x)$. \\

To conclude the proof, note that from step 2, we know that the auxiliary SDE is strongly well-posed for the initial data $\ic{\psi(G,\x)}$. Therefore, by Remark \ref{isotograph}, it is also strongly well-posed for the initial data $\psi(G,\x)$. The one-to-one correspondence established in Step 3 then shows that the SDE \eqref{mod:infpart} (with jump rates $\mathbf{r}^G$) is strongly well-posed for the initial data $(G,\x)$. This concludes the proof. 
\end{proof}

\subsection{Characterization of Strong Well-Posedness on Random Graphs}
\label{cond}

\begin{proof}[Proof of Lemma \ref{mod:condwp}]
The key issue here is to show that conditioning on the initial data does not change the driving noise structure. Note that this is slightly non-standard as the driving noise is indexed by the vertices of the graph and thus is not completely independent of the initial data. Let $\fpspace$ be a complete, filtered probability space that supports $(G,\x)$ and a \fpp\ $(\filt,\poiss^G)$ such that $(G,X)$ and $(G,Y)$ are two $(\filt,\poiss^G)$-weak solutions of \eqref{mod:infpart} for $(G,\xi)$. To prove the lemma, it suffices to prove the following claim: for $\PP$-a.s. $\omega\in\Omega$, setting $(H,\x^H) \defeq (G(\omega),\x(\omega))$, there exists a \fpp\ $(\filtm^H,\poiss^H)$ on some probability space $(\Omega^H,\filtm^H,\filt^H,\PP^H)$ and two $(\filtm^H,\poiss^H)$-weak solutions $(H,X^H)$ and $(H,Y^H)$ to \eqref{mod:infpart} for $(H,\x^H)$ such that $\law((G,X,Y,\poiss^G)|\filtm_0)(\omega) = \law((H,X^H,Y^H,\poiss^H))$. Indeed, if the claim holds, then a.s. strong well-posedness of \eqref{mod:infpart} for every realization of the random graph $(G,\x)$ implies that $\PP(X = Y|\filtm_0)(\omega) = \PP^H(X^H = Y^H) = 1$ for $\PP$-a.s. $\omega \in \Omega$. Hence $\PP(X=Y) = 1$, which proves strong well-posedness. This claim can be proved via direct verification of Definitions \ref{mod:dnoise}, \ref{mod:sol}. We include the details for completion.

To prove the claim, let $(H,X^H,Y^H,\poiss^H)$ be a random element with law $\law((G,X,Y,\poiss^G)|\filtm_0)(\omega)$ and fix a complete, filtered probability space $(\Omega^H,\filtm^H,\filt^H,\PP^H)$ that supports $(H,X^H,Y^H,\poiss^H)$ where $\filt^H$ is the minimal filtration satisfying the usual conditions such that $X^H,Y^H$ and $\poiss^H$ are all $\filt^H$-adapted in the sense that for any $v \in V_H$, $X^H_v,Y^H_v$ and $\poiss^H_v$ are all $\filt^H$-adapted (point) processes. Since by assumption $\poiss^G$ is a $\filt$-driving noise, condition 1 of Definition \ref{mod:dnoise} immediately implies $\poiss^H$ is a collection of i.i.d. Poisson processes on $\Rb_+^2\times\jmps$ with intensity $\leb^2\otimes\jmps$, indexed by the vertices of $H$, and hence that $(H,\poiss^H)$ is a $\filt^{\poiss^H}$-driving noise.

Since \eqref{mod:infpart} holds a.s., for $\PP$-a.s. $\omega \in \Omega$, $(H,X^H)$ and $(H,Y^H)$ are both graphs with random c\`adl\`ag marks that, together with $(H,\poiss^H)$, $\PP(\cdot|\filtm_0)(\omega)$-a.s. solve \eqref{mod:infpart} for $(H,\x^H)$. Thus, $(H,X^H)$ and $(H,Y^H)$ satisfy conditions 1 and 3 of Definition \ref{mod:sol} $\PP$-a.s. with respect to the filtered probability space $(\Omega^H,\filtm^H,\filt^H,\PP^H)$ (noting that any $\PP(\cdot|\filtm_0)(\omega)$-null event is also $\PP^H$-null), and so it suffices to prove that $\poiss^H$ is $\PP$-a.s. a collection of i.i.d. $\filt^H$-Poisson processes. To do this, we note that condition 2 of Definition \ref{mod:dnoise} implies that for any $t > 0$ and $A \in \borel((t,\infty)\times \Rb_+\times\jmps)$, the random element $(G,\poiss^G(A))$ is conditionally independent of the $\filtm_t$-measurable random element $(G,X[t],Y[t],\poiss^G_t)$ given $\filtm_0$, where $\poiss^G_t = \poiss^G|_{[0,t]\times\Rb_+\times\jmps}$. Thus, $\PP$-a.s., $(H,\poiss^H(A))$ is independent of $(H,X^H[t],Y^H[t],\poiss^H_t)$. Then using a standard approximation argument exploiting the fact that Borel sigma algebras of subsets of Polish spaces are countably generated and that $\filt^H$ is complete, it follows that $\PP$-a.s., $\poiss^H_v(A)$ is independent of $\filtm^H_t$ for all $v \in V_H$, $t \in \Rb_+$ and $A \in \borel((t,\infty)\times\Rb_+\times\jmps)$. Thus, $\{\poiss^H_v\}_{v \in V_H}$ is $\PP$-a.s. a collection of $\filt^H$-Poisson processes. Therefore, $\poiss^H$ is $\PP$-a.s. an $\filt^H$-driving noise in the sense of Definition \ref{mod:dnoise}, so condition 2 of Definition \ref{mod:sol} is also $\PP$-a.s. satisfied. This concludes the proof of the claim. 
\end{proof}

\subsection{Verification of Assumptions for Examples in Section \ref{mod:examples}}
\label{varex} 
In this section, we verify conditions under which the examples in Section \ref{mod:examples} satisfy Assumptions \ref{mod:assu} and \ref{mod:cont}.

\noindent\textbf{Example \ref{mod:Markov}:} If there exist non-decreasing constants $\{C_k\}_{k \in \Nb}$ such that $\sup_{x \in \Xc^{V_H}}|\alt{\rate}^H_j(x)| \leq C_{|H|}$ for all $j \in \jmps$ and $H \in \gsone$, then Assumption \ref{mod:assu} holds with $C_{k,T} \defeq C_k$. Moreover, Assumption \ref{mod:cont} holds trivially for all initial data because the mark spaces $\emksp = \vmksp = \{1\}$ are trivial. Note that in this case, the solution $X^{G,\x}$ to the SDE \eqref{mod:infpart} with initial data $(G, \x)$ is a homogeneous Markov process. 

\noindent\textbf{Example \ref{ex:RE}:} The jump rate is continuous with respect to the initial marks $(\ems,\vms)$, so Assumption \ref{mod:cont} holds for all initial data. This model satisfies Assumption \ref{mod:assu} when (1) there exist deterministic constants $\vms^*$ and $\ems^*$ such that $\sup_{v\in V}\vms_v \leq \vms^* < \infty$ and $\sup_{e\in E}\ems_e \leq \ems^* <\infty$, and (2) the functions $\lambda$ and $\rho$ are bounded from above by respective non-decreasing functions $\wh{\lambda},\wh{\rho}: \Rb_+\to \Rb_+$. Under these conditions, $C_{k,T} = \vms^*\wh{\rho}(T) + (k-1)\ems^*\wh{\lambda}(T)$ for all $k\in\Nb,T \in\Rb_+$.

\noindent\textbf{Example \ref{ex:NMcont}:} Assumption \ref{mod:assu} holds when the functions $\beta$ and $\gamma$ are bounded from above. Note that in the Markov case, $\beta \equiv \lambda$ and $\gamma \equiv 1$ are both constant. Finally, if both $\beta$ and $\gamma$ are also continuous, then Assumption \ref{mod:cont} holds for all initial data.

\section{Well-Posedness for Finite Initial Data}
\label{WPfin}

Under Assumption \ref{mod:assu},  well-posedness of \eqref{mod:infpart} is common knowledge when the initial data is finite, but we establish it here for completeness. In this case, we also show that the trajectories also satisfy the following additional regularity property. Recall the definition of the discontinuity set $\jmp{x}{t}$ given in \eqref{skor:disc}. 

\begin{definition}[Proper trajectories] 
\label{skor:proper}
Given a countable set $W$ and $t \in [0,\infty)$, we say $x\in \cad_t^{W}$ is \emph{proper} if $\jmp{x_w}{t} \cap \jmp{x_v}{t} = \emptyset$ for all distinct $v, w \in W$. Moreover, we say a trajectory $x \in \cad^{W}$ is proper if its restriction $x[t]$ to $[0,t]$ is proper for all $t \in \Rb_+$. 
\end{definition}

\begin{proof}[Proof of Proposition \ref{WP:fin} and trajectories being a.s. proper:]
Let $(G,\x) = (V,E,\root,\ems,\vms,\x)$ any a.s. finite $\sp{\emksp,\vmksp\times \Xc}$-random graph. By Lemma \ref{mod:condwp}, it suffices to prove that \eqref{mod:infpart} is strongly well-posed for $(G,\x)$ with a.s. proper solutions under the additional assumption that $(G,\x)$ is deterministic.

Let $(\filt,\poiss^G)$ be a \fpp\ in the sense of Remark \ref{mod:concise}. First note that for any $(\filt,\poiss^G)$-weak solution $(G,X)$ to \eqref{mod:infpart}, any distinct vertices $u,v \in V$ and any $T > 0$,
\[\jmp{X_v}{T}\cap\jmp{X_u}{T} \subseteq \{s \in [0,T]: \poiss^G_u(\{s\}\times (0,C_{k,T}]\times\jmps)\poiss^G_v(\{s\}\times (0,C_{k,T}]\times\jmps) = 1\} = \emptyset \te{ a.s.,}\]
by \eqref{mod:infpart} and Assumption \ref{mod:assu} where $\{C_{k,T}\}$ is the family of constants from Assumption \ref{mod:assu} and $k \defeq \max\{|\cl{u}|,|\cl{v}|\}$. Since this holds for all $T$, $(G,X)$ is a.s. proper. 

Next, fix a \fpp\ $(\filt,\poiss^G)$. Define the finite collection,
\[\Emc \defeq \{(\tau_n,r_n,j_n,v_n)\in [0,T]\times[0,C_{K,T}]\times\jmps\times V: \poiss^G_{v_n}(\{(\tau_n,r_n,j_n)\}) = 1\},\]
$K = \max_{v\in V} |\cl{v}|$ and $\{\tau_n\}_{n=1}^{|\Emc|}$ is increasing. Note that $\{\tau_n\}$ is, in fact, strictly increasing. define the $\cad^V_T$-random element $X[T]$ by,
\[X_v(t) = \begin{cases}
\x_v &\te{ if } 0 \leq t < \tau_1,\\
X_v(\tau_n) &\te{ if } \tau_n\leq t <\tau_{n+1}, n < |\Emc|,\\
X_v(\tau_n) &\te{ if } \tau_n\leq t \leq T, n = |\Emc|,\\
X_v(\tau_n) &\te{ if } t = \tau_{n+1}, r_{n+1} > \rate^{G,v}_{j_{n+1}}(t_{n+1},X),\\
X_v(\tau_n) + j_{n+1} &\te{ if } t = \tau_{n+1}, r_{n+1} > \rate^{G,v}_{j_{n+1}}(t_{n+1},X).
\end{cases}\]
Then note that for $t \in [0,T]$, $X(t)$ is clearly $\filtm^{G,\x,\poiss^G}_t$-measurable so $X$ is a $\poiss^G$-strong solution to \eqref{mod:infpart} for $(G,\x)$ on the interval $[0,T]$. Furthermore, it is clear that any $(\filt,\poiss^G)$-weak solution to \eqref{mod:infpart} must satisfy the above display, so all $(\filt,\poiss^G)$-weak solutions equal $X$ on the interval $[0,T]$. Because $T$ is arbitrary, and for any $T' > T$ the corresponding solution $X'$ satisfies $X'[T] = X[T]$ a.s., it follows that there exists a $\poiss^G$-strong solution to \eqref{mod:infpart} and that solution is pathwise unique. Therefore \eqref{mod:infpart} is strongly well-posed for $(G,\x)$.
\end{proof}



\bibliographystyle{plain}
\bibliography{reference}

%


\end{document}